\title{Stein fillings and $SU(2)$ representations}
\author{John A. Baldwin}
\email{john.baldwin@bc.edu}
\address{Department of Mathematics\\Boston College}
\author{Steven Sivek}
\email{sivek@math.uni-bonn.de}
\address{Mathematical Institute\\University of Bonn}
\thanks{JAB was supported by NSF Grant DMS-1406383 and NSF CAREER Grant DMS-1454865.}
\thanks{SS was supported by NSF Grant DMS-1506157.}
\newtheorem {theorem}{Theorem}
\newtheorem {lemma}[theorem]{Lemma}
\newtheorem {proposition}[theorem]{Proposition}
\newtheorem {corollary}[theorem]{Corollary}
\newtheorem {conjecture}[theorem]{Conjecture}
\newtheorem {question}[theorem]{Question}
\newtheorem {claim}[theorem]{Claim}
\numberwithin{equation}{section}
\numberwithin{theorem}{section}
\theoremstyle{definition}
\newtheorem{definition}[theorem]{Definition}
\newtheorem{remark}[theorem]{Remark}
\newlist{pcases}{enumerate}{1}
\setlist[pcases]{
  label=\bf{Case~\arabic*:}\protect\thiscase.~,
  ref=\arabic*,
  align=left,
  labelsep=0pt,
  leftmargin=0pt,
  labelwidth=0pt,
  parsep=0pt
}
\newcommand{\case}[1][]{%
  \if\relax\detokenize{#1}\relax
    \def\thiscase{}%
  \else
    \def\thiscase{~#1}%
  \fi
  \item
}
\newcommand{\ZZ}{\mathbb{Z}}
\newcommand{\Z}{\mathbb{Z}}
\newcommand{\R}{\mathbb{R}}
\newcommand{\C}{\mathbb{C}}
\newcommand{\Q}{\mathbb{Q}}
\newcommand{\ssm}{\smallsetminus}
\newcommand{\maxtb}{\overline{tb}}
\newcommand{\maxsl}{\overline{sl}}
\newcommand{\rank}{\operatorname{rank}}
\newcommand{\Hom}{\operatorname{Hom}}
\newcommand{\Aut}{\operatorname{Aut}}
\newcommand\bA{\mathbb{A}}
\newcommand{\cD}{\mathcal{D}}
\newcommand{\pbot}{p_{\mathrm{bot}}}
\newcommand\data{\mathscr{D}}
\newcommand\SHI{SHI}
\newcommand\SHIt{\underline{\SHI}}
\newcommand\KHI{KHI}
\newcommand\KHIt{\underline{\KHI}}
\newcommand{\definefunctor}[1]{\textbf{\textup{#1}}}
\newcommand\SHItfun{\definefunctor{\SHIt}}
\newcommand\KHItfun{\definefunctor{\KHIt}}
\newcommand\iinvt{\Theta} 
\DeclareFontFamily{U}{mathx}{\hyphenchar\font45}
\DeclareFontShape{U}{mathx}{m}{n}{
      <5> <6> <7> <8> <9> <10>
      <10.95> <12> <14.4> <17.28> <20.74> <24.88>
      mathx10
      }{}
\DeclareSymbolFont{mathx}{U}{mathx}{m}{n}
\DeclareMathAccent{\widecheck}{0}{mathx}{"71}
\newcommand{\inr}{\operatorname{int}}
\newcommand{\img}{\operatorname{Im}}
\newcommand{\hfhat}{\widehat{HF}}
\newcommand{\ad}{\operatorname{ad}}
\newcommand{\id}{\operatorname{id}}
\newcommand{\coker}{\operatorname{coker}}
\newcommand{\knotdraw}[1]{%
  \begin{tikzpicture}[baseline=-\the\dimexpr\fontdimen22\textfont2\relax] #1 \end{tikzpicture}}
\tikzset{every picture/.style=semithick}
\newcommand{\skeinpositive}{\knotdraw{%
    \draw[->] (-0.1,0.1) -- (-0.25,0.25);
    \draw[->] (-0.25,-0.25) -- (0.25,0.25);
    \draw (0.25,-0.25) -- (0.1,-0.1); }}
\newcommand{\skeinnegative}{\knotdraw{%
    \draw[->] (0.25,-0.25) -- (-0.25,0.25);
    \draw (-0.25,-0.25) -- (-0.1,-0.1);
    \draw[->] (0.1,0.1) -- (0.25,0.25); }}
\newcommand{\skeinresolve}{\knotdraw{%
    \draw[->] (-0.25,-0.25) arc (-45:45:0.3535);
    \draw[->] (0.25,-0.25) arc (225:135:0.3535); }}
\newcommand{\unknot}{ \knotdraw{\draw circle (0.25);} }
\newcommand{\positivemeridian}{\knotdraw{%
    \draw[->] (0,0) [insert path={+ (110:0.3 and 0.1) arc (110:430:0.3 and 0.1)}];
    \draw (0,-0.25) -- (0,-0.15);
    \draw[->] (0,-0.05) -- (0,0.25); }}
\newcommand{\negativemeridian}{\knotdraw{%
    \draw[->] (0,0) [insert path={+ (430:0.3 and 0.1) arc (430:110:0.3 and 0.1)}];
    \draw (0,-0.25) -- (0,-0.15);
    \draw[->] (0,-0.05) -- (0,0.25); }}
\newcommand{\upstrand}{\knotdraw{%
    \useasboundingbox (-0.25,-0.25) rectangle (0.25,0.25);
    \draw[->] (0,-0.25) -- (0,0.25); }}
\begin{document}

\begin{abstract}
We recently defined invariants of contact 3-manifolds using a version of instanton Floer homology for sutured manifolds. In this paper, we prove that if several contact structures on a 3-manifold are induced by Stein structures on a single 4-manifold with distinct Chern classes modulo torsion then their contact invariants in sutured instanton homology are linearly independent.  As a corollary, we show that if a 3-manifold bounds a Stein domain that is not an integer homology ball then its fundamental group admits a nontrivial homomorphism to $SU(2)$. We   give several new applications of these results,  proving the existence of nontrivial and irreducible $SU(2)$ representations for a variety of $3$-manifold groups.
\end{abstract}

\maketitle

%
%
%
%

\section{Introduction} \label{sec:intro}

In \cite{bs-shi}, we defined invariants of contact 3-manifolds  using Kronheimer-Mrowka's sutured instanton Floer homology. These invariants are formally similar to those of Ozsv{\'a}th-Szab{\'o} and Honda-Kazez-Mati{\'c} in the Heegaard Floer setting. Our motivation was to take advantage of features unique to the instanton  setting to prove new results linking contact geometry to topology. The most important such feature is the relationship between instanton Floer homology and the fundamental group. In this paper, we use that relationship in combination with our contact invariants to establish the following new  connection between  Stein fillings of a closed 3-manifold $Y$ and representations from $\pi_1(Y)$ to $SU(2)$.

\begin{theorem} \label{thm:main}
If $Y$ is the boundary of a Stein domain which is not an integer homology ball, then there is a nontrivial homomorphism $\pi_1(Y) \to SU(2)$.
\end{theorem}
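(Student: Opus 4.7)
The plan is to combine the linear-independence theorem above with a result of Kronheimer--Mrowka to the effect that the sutured instanton homology of an integer homology sphere $Y$ has rank one precisely when every $SU(2)$ representation of $\pi_1(Y)$ is trivial.

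I first reduce to $Y$ being an integer homology sphere. Since a Stein domain admits a handle decomposition with handles only of index $\leq 2$, we have $H_k(X;\Z) = 0$ for $k \geq 3$; the long exact sequence of $(X,Y)$ combined with Lefschetz duality then shows that $H_1(Y;\Z) \twoheadrightarrow H_1(X;\Z)$. If $H_1(Y;\Z) \neq 0$, the composition $\pi_1(Y) \twoheadrightarrow H_1(Y;\Z) \to U(1) \subset SU(2)$ gives a nontrivial representation and we are done. Otherwise, $H_1(Y;\Z) = 0$ forces $H_1(X;\Z) = 0$, and since $H_2(X;\Z)$ is free abelian (no $3$-handles) while $X$ is not an integer homology ball, we must have $b_2(X) > 0$.

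Next I construct two Stein structures $J_0, J_1$ on $X$ with $c_1(J_0) \not\equiv c_1(J_1) \pmod{\mathrm{tors}}$ in $H^2(X;\Z)$. Fix a Stein handle presentation of $X$ with $2$-handles attached along Legendrian components $L_1,\dots,L_n$ in the standard contact $\#_k(S^1 \times S^2)$; by Gompf, $c_1(J)$ evaluates on the core of the $i$-th $2$-handle as the rotation number $r(L_i)$. Since $b_2(X) > 0$, some $2$-handle core represents a non-torsion class in $H_2(X;\Z)$, whose Poincar\'e dual is non-torsion in $H^2(X, Y;\Z)$. Standard Stein-topology techniques (Gompf, Eliashberg, Lisca--Mati\'c) --- either by varying Legendrian representatives with the same Thurston--Bennequin number but differing rotation numbers, or by globally conjugating $J_0$ to $\overline{J_0}$ after first arranging $c_1(J_0)$ to be non-torsion --- then supply a $J_1$ on the same smooth $X$ with $c_1(J_1) - c_1(J_0)$ non-torsion. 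These induce contact structures $\xi_0, \xi_1$ on $Y$ meeting the hypotheses of the main theorem, so $\invt(\xi_0)$ and $\invt(\xi_1)$ are linearly independent in $\SHI(-Y)$, giving $\dim_\C \SHI(-Y) \geq 2$. The Kronheimer--Mrowka result cited above now yields a nontrivial homomorphism $\pi_1(Y) \to SU(2)$.

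The main obstacle I anticipate is the middle step: producing two Stein structures on a single smooth $X$ whose first Chern classes differ modulo torsion, rather than on two distinct smooth $4$-manifolds or with Chern-class difference lying in the torsion subgroup. Navigating this hinges on a careful choice of Legendrian representatives in the Stein handle presentation --- or alternatively on guaranteeing that some Stein structure on $X$ has non-torsion $c_1$ so that conjugation supplies the second one --- and on identifying Poincar\'e duals of $2$-handle cores in $H^2(X;\Z)$ precisely enough to apply the main theorem as stated.
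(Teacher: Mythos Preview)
Your reduction to the integer homology sphere case and the conclusion that $b_2(W)>0$ are fine, and when $c_1(J)\neq 0$ your plan is exactly the paper's: conjugating $J$ gives $\bar J$ with $c_1(\bar J)=-c_1(J)$, these are distinct in $H^2(W;\R)$ since $H^2(W)$ is torsion-free, and Theorem~\ref{thm:main-linear-independence} yields $\rank I^\#(Y)\geq 2$.

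The genuine gap is the case $c_1(J)=0$, which you yourself flag. Your proposed fixes do not work in general. ``Varying Legendrian representatives with the same $tb$ but different rotation numbers'' is not always available: a given Legendrian link need not have such companions, and stabilizing changes $tb$ and hence the smooth $4$-manifold. ``First arranging $c_1(J_0)$ to be non-torsion'' begs the question; there are Stein domains (e.g.\ the $E_8$ plumbing, whose intersection form is even) for which every Stein structure has $c_1=0$, so no amount of Legendrian or conjugation tricks on $W$ alone will produce a second Chern class. The paper does \emph{not} manufacture a second Stein structure here. Instead it argues: $c_1(J)$ is characteristic for $Q_W$, so $c_1(J)=0$ forces $Q_W$ to be even; being unimodular and (after a separate case for $b_2^+(W)\geq 1$, handled by Lisca--Mati\'c and a Donaldson-invariant pairing on $HF(Y)$) negative definite, $Q_W$ is non-diagonalizable, and Fr{\o}yshov's theorem gives $h(Y)>0$, forcing $HF(Y)\neq 0$.

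One smaller correction: the statement you attribute to Kronheimer--Mrowka, that $\SHI(-Y)$ has rank one \emph{precisely} when every $SU(2)$ representation is trivial, is false as an ``iff'' (the Poincar\'e sphere is an instanton L-space but has nontrivial representations). Only the implication you actually need---rank $>1$ forces a nontrivial representation---holds, and it is proved in the paper as Theorem~\ref{thm:su2-reps} via a nondegeneracy argument for the Chern--Simons functional, not quoted from \cite{km-excision}.
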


Beyond the intrinsic appeal of this connection, we show that Theorem \ref{thm:main} (together with our related Theorem \ref{thm:mainqhs}) is also a useful tool for establishing the existence of both nontrivial and irreducible $SU(2)$ representations for a variety of 3-manifold groups. 

For context, recall that understanding representations of 3-manifold groups into $SL_2(\C)$ and its compact real form $SU(2)$ has been critical   in low-dimensional topology, whether for studying hyperbolic 3-manifolds, incompressible surfaces \cite{culler-shalen},  Dehn surgery \cite{cgls, km-p}, or quantum invariants \cite{km-khovanov}. Even so,  some fundamental  questions about the existence of such representations  remain unanswered. The most basic, Problem 3.105 (A) on Kirby's problem list, asks whether the $SU(2)$ representation variety is nontrivial for every compact 3-manifold with nontrivial fundamental group \cite{kirby-list}. For closed 3-manifolds, we express this in the form of the conjecture below, a strengthening of the Poincar{\'e} Conjecture.

\begin{conjecture} \label{conj:su2-reps}
If $Y\not\cong S^3$ is a closed 3-manifold, then there is a nontrivial homomorphism $\pi_1(Y) \to SU(2)$.
\end{conjecture}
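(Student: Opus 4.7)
The plan is to handle the conjecture in cases, reducing as far as possible to the setting where Theorem~\ref{thm:main} applies, and then to describe what additional input would be needed for the residual case.

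First, I would reduce to integer homology spheres. If $H_1(Y;\ZZ) \neq 0$, then as a nontrivial finitely generated abelian group it contains a cyclic subgroup isomorphic to $\ZZ$ or $\ZZ/n$ for some $n \geq 2$, and any such subgroup embeds into the maximal torus $U(1) \subset SU(2)$. Composing the abelianization $\pi_1(Y) \to H_1(Y;\ZZ)$ with such an embedding yields a nontrivial homomorphism to $SU(2)$. Hence I may assume throughout that $Y$ is an integer homology sphere with $Y \not\cong S^3$, and in particular $\pi_1(Y)$ is perfect.

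Second, I would try to exhibit a Stein domain filling $Y$ or $-Y$ to which Theorem~\ref{thm:main} applies. Note $\pi_1(-Y) = \pi_1(Y)$, so either orientation is admissible. Since $Y$ is an integer homology sphere, any Stein filling $X$ has $H_1(X;\ZZ) = 0$ (Stein handlebodies have no $3$- or $4$-handles) and $H_2(X;\ZZ)$ torsion-free, so ``$X$ is not an integer homology ball'' is equivalent to $b_2(X) > 0$. In that case, after possibly modifying handle attaching data one can arrange a Stein structure on $X$ whose Chern class is nontorsion in $H^2(X;\ZZ)$, and Theorem~\ref{thm:main} immediately produces the desired nontrivial representation (in fact even linearly independent contact classes in sutured instanton homology). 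So if any Stein filling of $Y$ or $-Y$ with nonzero second Betti number can be constructed---for instance by attaching 2-handles to $B^4$ along a suitable Legendrian link via Eliashberg's characterization of Stein handlebodies---the conjecture holds for $Y$.

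Third, and this is where the main obstacle lies, there remains the case of an integer homology sphere $Y\not\cong S^3$ such that neither $Y$ nor $-Y$ bounds any Stein domain with $b_2 > 0$. Here Theorem~\ref{thm:main} is silent, and genuinely new input is required. The natural routes would be to compute Kronheimer-Mrowka's framed instanton homology $I^{\sharp}(Y)$ and appeal to their principle that rank greater than one forces an irreducible $SU(2)$ representation; or to exploit geometric structure on $Y$ (hyperbolic, Seifert-fibered, or toroidal, via geometrization) and invoke Culler-Shalen style character-variety arguments. No uniform technique is currently known to cover this third case, which is why Conjecture~\ref{conj:su2-reps} remains open; the aim of the present paper is to make substantial progress on the first two cases via Theorem~\ref{thm:main} rather than to resolve the conjecture outright.
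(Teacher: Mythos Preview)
The statement you are addressing is a \emph{conjecture}, not a theorem, and the paper does not claim to prove it; there is no ``paper's proof'' to compare against. Your proposal correctly acknowledges this in its final paragraph, and your overall framing---reduce to integer homology spheres, then try to produce a Stein filling with $b_2>0$ so that Theorem~\ref{thm:main} applies, and admit that the residual case is open---is an accurate summary of what the paper accomplishes and what it does not.

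That said, your second paragraph contains an unjustified and unnecessary step. You assert that when $b_2(X)>0$ one can ``arrange a Stein structure on $X$ whose Chern class is nontorsion,'' but this is not obvious and in fact need not hold (e.g.\ if the intersection form of $X$ is even, $c_1(J)$ may well vanish). More to the point, Theorem~\ref{thm:main} already applies to any Stein domain which is not an integer homology ball, with no hypothesis on $c_1(J)$; the paper's own proof of Theorem~\ref{thm:main} handles the case $c_1(J)=0$ separately, via Fr{\o}yshov's $h$-invariant rather than via Theorem~\ref{thm:main-linear-independence}. So if you only mean to cite Theorem~\ref{thm:main}, drop that clause entirely; if you mean to re-derive Theorem~\ref{thm:main} from Theorem~\ref{thm:main-linear-independence}, you must supply the missing argument for the $c_1(J)=0$ case.
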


\begin{remark}
Zentner \cite{zentner2} has recently proven a version of this conjecture with $SL_2(\C)$ in place of $SU(2)$.  He  relies on geometrization and the  fact that every hyperbolic 3-manifold automatically admits a nontrivial $SL_2(\C)$ representation, which  is far from clear for $SU(2)$. 
\end{remark}

Note that Conjecture \ref{conj:su2-reps} is easily satisfied if $Y$ is not an integer homology sphere, since there are then nontrivial, reducible representations \[\pi_1(Y) \to H_1(Y) \to U(1)\subset SU(2).\]  If $Y$ is an integer homology sphere, then nontrivial representations were previously   known to exist in at least the following cases:
\begin{itemize}
\item when the Casson invariant $\lambda(Y)$ is nonzero \cite{akbulut-mccarthy};
\item \label{i:symplectic-filling} when $Y$ admits a co-orientable taut foliation, or a symplectic filling $(W,\omega)$ with $b^+_2(W) \geq 1$ \cite{km-p};
\item when $Y$ is Seifert fibered \cite{fs-seifert}, a graph manifold \cite{zentner}, or more, generally, the branched double cover of a nontrivial knot in $S^3$ \cite{cns,zentner} (using deep results of \cite{km-excision});
\item when $Y$ is surgery on a nontrivial knot in $S^3$ \cite{km-dehn}.
\end{itemize}

Theorem \ref{thm:main} provides a new  method of attacking Conjecture \ref{conj:su2-reps}, using Stein surfaces, and expands our repertoire of 3-manifolds known to satisfy the conjecture. To wit, in Section \ref{sec:examples}, we show that Theorem \ref{thm:main} can be used to give a new, simple proof of Fintushel-Stern's result that  Seifert fibered spaces satisfy Conjecture \ref{conj:su2-reps} (see Theorem \ref{thm:sfs-nontrivial}). We then use it to prove the existence of nontrivial $SU(2)$ representations for 3-manifolds where  previously existing methods do not appear to suffice, per the following theorem.

\begin{theorem} \label{thm:intro-hyperbolic-examples}
There are infinitely many hyperbolic integer homology spheres $Y_n$ such that
\begin{itemize}
\item $Y_n$ has Casson invariant zero;
\item $Y_n$ is not a branched double cover of a knot in $S^3$;
\item $Y_n$ bounds a negative definite Stein manifold which is not a homology ball.
\end{itemize}
By the latter property, there is a nontrivial homomorphism $\pi_1(Y_n) \to SU(2)$ for all $n$.
\end{theorem}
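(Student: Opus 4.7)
The plan is to exhibit the $Y_n$ explicitly as $(-1)$-surgeries on suitable knots $K_n \subset S^3$, with Stein fillings produced via Legendrian surgery. I would seek an infinite family of knots $K_n$ admitting Legendrian representatives in $(S^3, \xi_{\mathrm{std}})$ with maximum Thurston-Bennequin number $\maxtb(K_n) = 0$, and set $Y_n := S^3_{-1}(K_n)$. Contact $(-1)$-surgery on such a Legendrian produces a Stein domain $W_n$ with $b_2^+(W_n)=0$ and $b_2^-(W_n)=1$, so $W_n$ is automatically negative definite and, having $b_2 > 0$, is not an integer homology ball. Since $K_n$ is a knot, $Y_n$ is an integer homology sphere; this settles the third bullet, and hence the final conclusion via Theorem \ref{thm:main}, as soon as the first two bullets are verified.

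For the Casson invariant, Casson's surgery formula
\[
\lambda(S^3_{-1}(K_n)) \;=\; -\tfrac{1}{2}\,\Delta''_{K_n}(1)
\]
reduces the problem to arranging that $\Delta''_{K_n}(1)=0$. A natural source is the class of knots with trivial Alexander polynomial (for which the identity holds trivially), though other families, for instance suitably tuned pretzel knots, also work. Hyperbolicity of the $Y_n$ would be secured by choosing $K_n$ with hyperbolic exteriors and invoking Thurston's hyperbolic Dehn surgery theorem, which excludes only finitely many slopes; pairwise distinctness would be certified by growth of the hyperbolic volume, producing infinitely many genuinely non-homeomorphic examples. Verification of $\maxtb(K_n)=0$ would proceed by exhibiting an explicit Legendrian diagram realizing $tb=0$ together with a matching upper bound from the slice-Bennequin inequality or from Kronheimer-Mrowka's bound $\maxtb+|r|\leq 2g_4-1$.

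The hardest step is ruling out that any $Y_n$ is the branched double cover of some knot $J\subset S^3$. The plan here is to exploit Mostow rigidity: any smooth orientation-preserving involution of the hyperbolic 3-manifold $Y_n$ is conjugate to an isometry, and so lies in the finite group $\mathrm{Isom}^+(Y_n)$. I would then compute this isometry group, with rigorous verification via SnapPy, and check that no order-two element has quotient $S^3$ (equivalently, no involution is free on one-dimensional fixed set with quotient having trivial fundamental group). In many candidates one in fact finds $\mathrm{Isom}^+(Y_n)$ trivial or devoid of involutions, in which case the obstruction is automatic. Iterating this check over a one-parameter family and discarding the finitely many members that fail any of the three bullets produces the desired infinite subfamily, and Theorem \ref{thm:main} then furnishes the nontrivial $SU(2)$ representation for each.
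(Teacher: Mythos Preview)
Your outline identifies the right ingredients, but it has a genuine gap at the most delicate step—ruling out that $Y_n$ is a branched double cover. You propose to compute $\operatorname{Isom}^+(Y_n)$ via SnapPy and check for involutions with $S^3$ quotient. But SnapPy certifies one closed manifold at a time; there is no mechanism in your plan to conclude that \emph{only finitely many} members of the family fail this check, which is exactly what you assume when you write ``discarding the finitely many members that fail.'' A related issue afflicts your hyperbolicity argument: Thurston's theorem excludes finitely many slopes for a \emph{fixed} knot, but you are varying the knot $K_n$ and fixing the slope $-1$, so the theorem does not directly say $-1$ is non-exceptional for all but finitely many $n$. Without a concrete family and an asymptotic result controlling symmetries and geometry uniformly, the argument is not complete.

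The paper's construction is organized precisely to overcome this. Rather than varying a knot and performing a fixed surgery, it fixes a two-component hyperbolic link $L \subset S^3$ (with components $\overline{8_{21}}$ and $\overline{11a_{20}}$, of linking number zero) and takes $Y(m,n)$ to be $(-\tfrac{1}{m},-\tfrac{1}{n})$-surgery on $L$. All of the $Y(1,n)$ are then Dehn fillings of a \emph{single} one-cusped hyperbolic manifold $M$. One SnapPy computation verifies rigorously that $M$ has trivial isometry group, and then Auckly's exceptional symmetry theorem guarantees that all but finitely many fillings of $M$ inherit trivial isometry group and hence cannot be branched double covers. This same framing makes the other bullets uniform: the Gromov--Thurston $2\pi$-theorem gives hyperbolicity for large $n$, Hoste's surgery formula for links (plus one skein computation showing $\varphi_1(L)=0$) yields $\lambda(Y(m,n))=0$ for all $m,n$ at once, and Legendrian surgery on $L$ together with chains of $(-2)$-framed unknots produces the negative definite Stein fillings. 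Your single-knot scheme could in principle be salvaged by arranging that the $K_n$ arise by twisting along an unknotted component of a fixed link, so that the $Y_n$ are again fillings of one cusped manifold—but as written it lacks this structure.
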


In a different, somewhat amusing direction, Theorem \ref{thm:main}  implies the following.

\begin{theorem}
\label{thm:stein-contractible}
Every Stein domain bounded by a homotopy 3-sphere is contractible.
\end{theorem}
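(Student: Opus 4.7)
The strategy is to deduce the theorem from Theorem \ref{thm:main} together with the Eliashberg handle structure on a Stein domain. Let $W$ be a Stein domain whose boundary $Y = \partial W$ is a homotopy $3$-sphere. Since $\pi_1(Y) = 1$, the group $\pi_1(Y)$ admits no nontrivial homomorphism to $SU(2)$, so the contrapositive of Theorem \ref{thm:main} forces $W$ to be an integer homology ball.

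Next I would verify that $W$ is simply connected. By a classical theorem of Eliashberg, every Stein domain of complex dimension $2$ admits a handle decomposition using only $0$-, $1$-, and $2$-handles. Turning this decomposition upside down presents $W$ as $Y \times [0,1]$ with $2$-, $3$-, and $4$-handles attached. Attaching a $2$-handle to $Y \times [0,1]$ can only introduce a relation in $\pi_1$, while attaching handles of index $\geq 3$ leaves $\pi_1$ unchanged. Hence the inclusion $Y \hookrightarrow W$ induces a surjection $\pi_1(Y) \twoheadrightarrow \pi_1(W)$, and since $\pi_1(Y) = 1$ we conclude $\pi_1(W) = 1$.

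At this point $W$ is a simply connected CW complex with $\widetilde H_\ast(W;\Z) = 0$, so the Hurewicz theorem gives $\pi_k(W) = 0$ for every $k \geq 1$, and Whitehead's theorem then implies that $W$ is contractible. The only substantive input is Theorem \ref{thm:main}; the remaining steps are routine handle-theoretic and homotopy-theoretic arguments, and I do not anticipate a further obstacle.
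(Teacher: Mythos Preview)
Your proof is correct and follows essentially the same route as the paper: apply Theorem~\ref{thm:main} to conclude $W$ is an integer homology ball, use the Eliashberg handle decomposition (turned upside down) to get $\pi_1(Y)\twoheadrightarrow\pi_1(W)$ and hence $\pi_1(W)=1$, then finish with Hurewicz and Whitehead. The paper's argument is identical in content and structure.
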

We give the proof here as it is very short.
\begin{proof}
Suppose $Y$ is a homotopy $3$-sphere bounding a Stein domain $W$. Since every homomorphism $\pi_1(Y)\to SU(2)$ is  trivial, Theorem \ref{thm:main} implies that $W$ is an integer homology ball. Any Stein domain  can be obtained from the $4$-ball by attaching handles of index $1$ and $2$.  In particular, $W$ can be obtained in reverse by thickening $Y$ and  then attaching handles of index $2$, $3$, and $4$.  Thus, $\pi_1(Y)$ surjects onto $\pi_1(W)$, which implies that $\pi_1(W)$ is trivial. As $W$ is an integer homology $4$-ball with trivial $\pi_1$, the Hurewicz theorem implies that all homotopy groups of $W$ vanish. The Whitehead theorem then says that $W$ is contractible.
\end{proof}

Of course, Theorem \ref{thm:stein-contractible} also follows from the Poincar{\'e} Conjecture together with Eliashberg's foundational result \cite{eliashberg-filling} that $S^3$ bounds a unique Stein domain, of the form $(B^4,J)$, but our proof is unique in that it does not make any use of Ricci flow or  holomorphic curves.

Theorem \ref{thm:main} and most of the other results in this article follow from a new theorem on the relationship between Stein fillings and the rank of sutured instanton homology, proved using the contact invariants we defined in \cite{bs-shi}. We describe this relationship below.

Suppose $(Y,\xi)$ is a closed contact 3-manifold with base point $p$. Removing a Darboux ball around $p$, we obtain a balanced sutured manifold $Y(p) = (Y \ssm B^3, S^1)$. In \cite{bs-shi}, we defined an element 
\[ \iinvt(\xi) \in \SHItfun(-Y(p)) \]
which is an invariant of $\xi|_{Y \ssm B^3}$ up to isotopy rel boundary.  Here, $\SHItfun$ refers to our \emph{natural} refinement of Kronheimer-Mrowka's sutured instanton homology \cite{km-excision, bs-naturality}. A Stein domain $(W,J)$ with boundary $Y$ induces a natural contact structure on $Y$,
\[ \xi = TY \cap J(TY), \] consisting of the complex lines tangent to $Y$, and hence an element $\iinvt(\xi)$ of $\SHItfun(-Y(p))$.  Our main  technical result is the following, which proves \cite[Conjecture~1.6]{bs-shi}.

\begin{theorem} \label{thm:main-linear-independence}
Suppose $W$ is a compact 4-manifold with boundary $Y$, and that $W$ has Stein structures $J_1,\dots,J_n$ which induce contact structures $\xi_1,\dots,\xi_n$ on $Y$.  If the Chern classes $c_1(J_1),\dots,c_1(J_n)$ are distinct as elements of $H^2(W;\R)$, then the invariants \[\iinvt(\xi_1), \dots, \iinvt(\xi_n)\in \SHItfun(-Y(p))\] are linearly independent; in particular, the rank of $\SHItfun(-Y(p))$ is at least $n$.
\end{theorem}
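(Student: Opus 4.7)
The overarching strategy mirrors Plamenevskaya's approach to the analogous statement in the Heegaard Floer setting, but must cope with the absence of a $\mathrm{Spin}^c$ decomposition in instanton Floer theory. The first step is to realize each $\iinvt(\xi_i)$ as the image of a canonical class under a Weinstein cobordism map built from $(W, J_i)$. Removing a Darboux ball near $p$, I would view $(W, J_i)$ as a sutured Weinstein cobordism from the empty sutured manifold to $-Y(p)$, and use the naturality and gluing properties of $\iinvt$ developed in \cite{bs-shi} (together with Kronheimer--Mrowka's instanton cobordism maps for sutured manifolds) to produce a map $F^{J_i}_W \colon \SHItfun(\emptyset) \to \SHItfun(-Y(p))$ sending the unit generator to $\iinvt(\xi_i)$ up to a nonzero scalar. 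This part is essentially a packaging of the Legendrian handle attachment invariance proved in our previous work.

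The decisive step is to separate these images as the $c_1(J_i)$ vary. Since there is no $\mathrm{Spin}^c$ grading to segregate the $\iinvt(\xi_i)$ automatically, I would instead use the $\mu$-action of $H_\ast(W;\R)$ on cobordism maps, i.e.\ the insertion of surfaces representing classes in $H_2(W;\R)$. For each class $\alpha \in H^2(W;\R)$, let $\Sigma_\alpha \subset W$ be a properly embedded surface Poincar\'e dual to $\alpha$. The key technical assertion I would aim to prove is that inserting $\mu(\Sigma_\alpha)$ into $F^{J_i}_W$ multiplies its output by a universal scalar $p_i(\alpha)$ depending on $\alpha$ only through the pairing $\langle \alpha, c_1(J_i) \rangle$. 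Morally, this should follow because the $\mu$-action on a Stein/Weinstein cobordism map detects the first Chern class of the compatible almost complex structure, in much the same way that the canonical $\mathrm{Spin}^c$ grading detects it in Heegaard Floer. Concretely, one hopes to reduce the computation to a dimension/energy analysis of the moduli spaces of anti-self-dual connections on $W$ compatible with the Weinstein structure, using positivity of intersection of $\Sigma_\alpha$ with the pseudoholomorphic curves that cut out the contact class.

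Granted such an eigenvalue formula, the conclusion is pure linear algebra. Since the $c_1(J_i)$ are distinct in $H^2(W;\R)$, I can choose $\alpha$ so that $\langle \alpha, c_1(J_i)\rangle$ takes $n$ distinct real values; the operators obtained by varying $\alpha$, or by taking successive powers of a single $\mu(\Sigma_\alpha)$, then act on the $\iinvt(\xi_i)$ with distinct eigenvalues, and a Vandermonde argument forces the $\iinvt(\xi_i)$ to be linearly independent. The rank bound on $\SHItfun(-Y(p))$ is immediate.

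The main obstacle, as always in instanton-theoretic arguments of this flavor, is the second step: establishing the Chern-class-eigenvalue identity for the deformed cobordism maps on classes coming from Stein fillings. This demands a careful marriage between the contact-invariant package of \cite{bs-shi} and the $\mu$-action on instanton cobordism maps, together with a moduli-theoretic calculation that isolates the contribution of $c_1(J_i)$ to the $\mu$-class evaluated on $\iinvt(\xi_i)$.
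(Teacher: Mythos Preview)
Your proposal identifies the right inspiration (Plamenevskaya) and the right obstacle (no $\mathrm{Spin}^c$ decomposition), but the mechanism you propose for separating the $\iinvt(\xi_i)$ is not one that is known to work, and the paper proceeds quite differently.

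The gap is your ``key technical assertion'': that inserting $\mu(\Sigma_\alpha)$ into a cobordism map coming from a Stein filling $(W,J_i)$ multiplies $\iinvt(\xi_i)$ by a scalar depending only on $\langle\alpha, c_1(J_i)\rangle$. There is no such eigenvalue formula available for relative invariants of open $4$-manifolds; the $\mu$-map on a noncompact piece does not behave this simply, and no moduli-space or positivity argument of the kind you sketch is in the literature. You acknowledge this is the hard step, but it is not merely hard---it is the entire content of the theorem, and your outline gives no indication of how to establish it.

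What the paper actually does is close up the picture and appeal to the Kronheimer--Mrowka structure theorem for Donaldson series. Each Stein structure $J_i$ gives a Lefschetz fibration on $W$, which is extended to a relatively minimal Lefschetz fibration $\tilde{X}_i\to S^2$ with closed fiber $R$, built so that $\bar{Y}$ splits $\tilde{X}_i = X_i \cup_{-\bar{Y}} \tilde{C}$ with $\tilde{C}$ \emph{independent of $i$}. One arranges (by fiber-summing and blowing up) that $\tilde{X}_i$ satisfies Witten's conjecture, so its Donaldson basic classes are its Seiberg--Witten basic classes; in particular the canonical class $K_{\tilde{X}_i}$ is the \emph{unique} basic class with $K\cdot R = 2g(R)-2$. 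A polynomial $p_{\mathrm{bot}}$ in $\mu(-R)$ projects the relative invariant $\Psi_{X_i}(1)$ onto the correct eigenspace, yielding exactly $\iinvt(\xi_i,\data)$. One then chooses $\Sigma\in H_2(\tilde{C})$ on which the $K_{\tilde{X}_i}$ take distinct values $k_i$ (this is where the hypothesis on the $c_1(J_i)$ enters, via $K_W = -c_1(J)$), and builds differential operators in $\partial/\partial t$ which, applied to the Donaldson series $\cD_{\tilde{X}_j}(s(-R)+t\Sigma)$, kill all terms except the one for $K_{\tilde{X}_j}$ when $j=i$. Translating back via the pairing formula $D_{\tilde{X}_j} = \langle \Psi_{X_j},\Psi_{\tilde{C}}\rangle$, one obtains linear functionals on $I_*(-\bar{Y})$, coming from relative invariants of the fixed piece $\tilde{C}$, which vanish on $\iinvt(\xi_j,\data)$ for $j\neq i$ but not for $j=i$. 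Linear independence follows.

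So the separation of Chern classes is achieved not by an eigenvalue formula on $W$ but by the structure theorem on a closed $4$-manifold, together with uniqueness of the canonical class among basic classes with maximal pairing against the fiber. Your Vandermonde intuition survives in spirit---the operators $d_i$ in the paper are essentially Lagrange interpolation---but the input that makes it work is the structure theorem, not a direct computation on the Stein filling.
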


Theorem \ref{thm:main-linear-independence} is an instanton Floer analogue of a theorem of Plamenevskaya \cite{plamenevskaya} regarding the contact invariant in Heegaard Floer homology (which was, in turn, inspired by work of Lisca-Mati{\'c} \cite{lisca-matic} in Seiberg-Witten theory). As such, Theorem \ref{thm:main-linear-independence} can be viewed as evidence for Kronheimer-Mrowka's conjectured isomorphism \cite[Conjecture~7.24]{km-excision} \[\SHItfun(Y(p)) \cong \hfhat(Y) \otimes \C,\] relating instanton and Heegaard Floer homologies. Our proof is inspired by Plamenevskaya's, but requires  many new ideas as  our basic geometric setup is  quite different  and because some useful structure in Heegaard Floer homology is  as yet unavailable in the instanton setting. 

The application of Theorem~\ref{thm:main-linear-independence} to $SU(2)$ representations, as expressed in Theorem \ref{thm:main}, comes from  an isomorphism
\begin{equation}\label{eqn:isoshi} \SHItfun(-Y(p)) \cong  I^\#(Y) \otimes \C, \end{equation}
where $I^\#(Y) := I^\natural(Y,U)$ is the singular instanton knot homology of the unknot $U \subset Y$, as defined in   \cite{km-khovanov}.  The latter is a form of Morse homology for a Chern-Simons functional whose space of critical points is naturally identified with the representation variety \[R(Y):=\Hom(\pi_1(Y), SU(2)).\] To see how Theorem \ref{thm:main} follows from Theorem \ref{thm:main-linear-independence},  suppose $Y$ is the boundary of a Stein domain $(W,J)$ which is not an integer homology ball. Assume, for a contradiction, that the  trivial homomorphism is the only element of $R(Y)$.  Then $\rank I^\#(Y)=1$ since the trivial homomorphism is a nondegenerate critical point of the Chern-Simons functional (see Section \ref{sec:representations}).  By an argument involving Fr{\o}yshov's $h$-invariant \cite{froyshov} (again, see Section \ref{sec:representations}), we may assume that $c_1(J)$ is nonzero in $H^2(W;\R)$. It then follows that $J$ and its conjugate Stein structure satisfy the hypotheses of Theorem \ref{thm:main-linear-independence}, which, together with the isomorphism \eqref{eqn:isoshi}, implies that $\rank I^\#(Y)\geq 2$, a contradiction.

Theorem \ref{thm:main-linear-independence} can also be used to understand $SU(2)$ representations for rational homology spheres. In this case, the natural question to ask is about the existence of \emph{irreducible} rather than nontrivial  representations (since  nontrivial representations always exist for  non integer homology spheres, as explained above). Not all rational homology spheres admit irreducible $SU(2)$ representations (e.g.,  lens spaces), so this is a more delicate question. 

To set the stage for our results on the existence of irreducible representations, recall that work of Scaduto \cite{scaduto} shows that if $Y$ is a rational homology sphere then \[\rank I^\#(Y) \geq |H_1(Y)|.\] Following the Heegaard Floer literature, we call $Y$  an \emph{instanton L-space} if equality holds. An important principle underlying  our main results and their applications (which is also at play in the proof sketch of Theorem \ref{thm:main} above) is that one can show, under favorable conditions (such as \emph{cyclical finiteness} below),   that $\pi_1(Y)$ admits an irreducible $SU(2)$ representation whenever $Y$ is not an instanton L-space. We use this principle in combination with Theorem \ref{thm:main-linear-independence} to prove the following.

\begin{theorem} \label{thm:mainqhs}
Suppose $Y$ is a rational homology sphere with $\pi_1(Y)$ cyclically finite. If $Y$ bounds a $4$-manifold $W$ with Stein structures $J_1,\dots,J_n$ whose Chern classes are distinct in $H^2(W;\R)$, and $n>|H_1(Y)|$, then there is an irreducible representation $\pi_1(Y)\to SU(2)$.
\end{theorem}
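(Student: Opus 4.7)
The strategy is to combine Theorem~\ref{thm:main-linear-independence} with the principle highlighted in the introduction: under cyclical finiteness, any rational homology sphere that fails to be an instanton L-space must admit an irreducible $SU(2)$ representation. So the plan is to use the Stein structures on $W$ to produce enough linearly independent classes in $\SHItfun(-Y(p))$ to force the rank of $I^\#(Y)$ to strictly exceed $|H_1(Y)|$.

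First, I would let $\xi_i = TY \cap J_i(TY)$ denote the contact structure induced on $Y$ by $J_i$ and apply Theorem~\ref{thm:main-linear-independence} directly to the data $(W, J_1, \dots, J_n)$: since the Chern classes $c_1(J_i) \in H^2(W;\R)$ are pairwise distinct, the contact invariants $\iinvt(\xi_1), \dots, \iinvt(\xi_n)$ are linearly independent in $\SHItfun(-Y(p))$, and hence
\[ \rank_{\C} \SHItfun(-Y(p)) \geq n. \]
Next I would invoke the isomorphism $\SHItfun(-Y(p)) \cong I^\#(Y) \otimes \C$ from \eqref{eqn:isoshi} to translate this into the inequality $\rank I^\#(Y) \geq n > |H_1(Y)|$. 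By Scaduto's bound $\rank I^\#(Y) \geq |H_1(Y)|$, equality characterizes $Y$ being an instanton L-space, so this strict inequality shows that $Y$ is \emph{not} an instanton L-space.

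Finally, I would apply the principle stated just before the theorem: for a rational homology sphere $Y$ with $\pi_1(Y)$ cyclically finite, if all representations $\pi_1(Y) \to SU(2)$ were reducible then each would factor through $H_1(Y) \to U(1) \subset SU(2)$, and cyclical finiteness would ensure that these $|H_1(Y)|$ reducible representations are precisely the critical points of the Chern-Simons functional computing $I^\#(Y)$ and contribute nondegenerately, giving $\rank I^\#(Y) = |H_1(Y)|$. This contradicts the previous step, so there must exist an irreducible $\pi_1(Y) \to SU(2)$.

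The only genuinely delicate piece is the last paragraph: extracting an honest rank equality from the assumption that all representations are reducible, via cyclical finiteness, is the main obstacle. Everything else is a direct plug-in of Theorem~\ref{thm:main-linear-independence} and \eqref{eqn:isoshi}. I expect the proof in the paper to defer the nondegenerate-counting statement to the L-space/representation-theoretic discussion in Section~\ref{sec:representations}, where cyclical finiteness is defined and the relevant Morse-theoretic control over the Chern-Simons functional is established.
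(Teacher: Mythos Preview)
Your approach is correct and matches the paper's: the proof there simply combines Corollary~\ref{cor:rank-i-sharp} (which packages Theorem~\ref{thm:main-linear-independence} and \eqref{eqn:isoshi} into the bound $\rank I^\#(Y)\geq n$) with Theorem~\ref{thm:reducibles-morse-bott}, the precise version of the principle you invoke in your last paragraph. One small correction to your sketch of that principle: the reducible conjugacy classes in $R(Y)$ are not $|H_1(Y)|$ isolated nondegenerate points but rather a mix of points and $2$-spheres, and cyclical finiteness guarantees Morse--Bott (not Morse) nondegeneracy; the count $|H_1(Y)|$ arises as $\rank H_*(R(Y))$ via a Morse--Bott spectral sequence.
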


The \emph{cyclical finiteness} of $\pi_1(Y)$ amounts to  a technical condition on certain finite cyclic covers of $Y$ which is satisfied, for instance, if the universal abelian cover of $Y$ is a rational homology sphere. As we shall see (Proposition \ref{prop:morse-bott-criterion}), it ensures that reducible representations  are  Morse-Bott nondegenerate critical points of the Chern-Simons functional. This   enables us to prove  (Theorem \ref{thm:reducibles-morse-bott}) that if $\pi_1(Y)$ is cyclically finite and $Y$ is not an instanton L-space then $\pi_1(Y)$ admits an irreducible $SU(2)$ representation. So, to prove Theorem \ref{thm:mainqhs}, one need only show that any $Y$ satisfying the hypotheses of the theorem is not an instanton L-space, but that follows immediately from Theorem \ref{thm:main-linear-independence}.


To put Theorem \ref{thm:mainqhs} and   general questions regarding  the existence of irreducible $SU(2)$ representations in context, recall that little is known even about which Dehn surgeries on knots admit  irreducible $SU(2)$ representations. In \cite{km-dehn}, Kronheimer-Mrowka proved that the fundamental group of $r$-surgery on a nontrivial knot in $S^3$ admits an irreducible $SU(2)$ representation for any rational $r$ with $|r|\leq 2$. This was later strengthened by Lin  \cite{lin}.  However, the following basic question, posed by Kronheimer-Mrowka in \cite{km-dehn}, remains open.

\begin{question}
Suppose $K$ is a nontrivial knot in $S^3$. Is  there necessarily an irreducible representation $\pi_1(S^3_r(K))\to SU(2)$ for $r=3$ and $4$?\end{question}

\begin{remark}
Note that there is not always an irreducible representation for $r=5$, as $5$-surgery on the right-handed trefoil is a lens space.
\end{remark}

Lin's results provide an affirmative answer to this question  when $K$ is amphichiral. Using Theorem \ref{thm:mainqhs}, we can also provide an affirmative answer for knots whose mirrors have nonnegative maximal self-linking numbers. Such knots include  mirrors of strongly quasipositive knots. More generally,   from a combination of Theorem \ref{thm:main-linear-independence} and  a careful study of instanton L-spaces obtained by Dehn surgery (in particular, Theorem \ref{thm:l-space-slopes}), we prove the following.

\begin{theorem} \label{thm:intro-irreps-sl-positive}
Suppose $\maxsl (\overline K)\geq 0$ and fix a rational number $r = p/q > 0$. Then there is an irreducible representation \[\pi_1(S^3_{r}(K)) \to SU(2)\] if no zero of $\Delta_K(t^2)$ is a $p$th  root of unity, where $\Delta_K(t)$ is  the Alexander polynomial of $K$.
\end{theorem}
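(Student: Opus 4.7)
The strategy is to apply Theorem~\ref{thm:mainqhs} to $Y := S^3_{p/q}(K)$: I must verify that $\pi_1(Y)$ is cyclically finite and produce more than $|H_1(Y)|=p$ Stein structures on a single 4-manifold bounding $Y$ (or, up to orientation, its mirror) with pairwise distinct Chern classes in $H^2(W;\R)$.

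For cyclical finiteness, the first homologies of the finite cyclic covers of $Y=S^3_{p/q}(K)$ can be computed in the classical way as products of values of $\Delta_K$ at roots of unity, combined with the Dehn surgery formula. The hypothesis that no zero of $\Delta_K(t^2)$ is a $p$-th root of unity is exactly what forces all such covers to have vanishing first Betti number, so $\pi_1(Y)$ is cyclically finite; the $t^2$ reflects the orientation double cover implicit in the formulation of cyclical finiteness.

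For the Stein fillings, note that $S^3_{p/q}(K)$ is orientation-reversing diffeomorphic to $S^3_{-p/q}(\overline K)$, and both cyclical finiteness and the existence of irreducible $SU(2)$ representations depend only on the underlying smooth manifold; so it suffices to build Stein fillings of $S^3_{-p/q}(\overline K)$. Using $\maxsl(\overline K)\geq 0$ together with stabilization, one obtains a Legendrian representative $L_0$ of $\overline K$ whose Thurston--Bennequin and rotation numbers are far enough into the Stein range that a negative continued fraction expansion of $-p/q$, implemented as a chain of Legendrian unknots linked onto $L_0$, realizes $-p/q$-surgery as Legendrian surgery on a Stein handlebody $W$. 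Varying the signs of the stabilizations on $L_0$ and on each unknot yields a family of Stein structures on $W$, and their first Chern classes are determined by the rotation numbers of the individual Legendrian components via the standard formula. A combinatorial count---analogous to Plamenevskaya's in the Heegaard Floer setting, and strengthened here using Theorem~\ref{thm:l-space-slopes} to reduce the non-integer case to integer surgeries---shows that once $L_0$ is sufficiently stabilized the number of distinct Chern classes exceeds $p$.

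With cyclical finiteness in hand and more than $p$ distinct Chern classes realized, Theorem~\ref{thm:mainqhs} delivers the desired irreducible representation $\pi_1(Y)\to SU(2)$. The main obstacle will be the Chern-class count: ensuring that at least $p+1$ of the Stein structures constructed have pairwise distinct $c_1$ in $H^2(W;\R)$, uniformly in the denominator $q$. This requires a careful analysis of how the rotation numbers of the unknots in the continued fraction chain interact with those of $L_0$, and the hypothesis $\maxsl(\overline K)\geq 0$ is precisely the threshold that makes this count always exceed $p$; translating the bound from integer surgeries to arbitrary positive rational $r=p/q$ is where Theorem~\ref{thm:l-space-slopes} enters in an essential way.
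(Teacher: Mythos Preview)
Your ingredients are all correct—cyclical finiteness via the Alexander polynomial condition, Stein fillings from Legendrian surgery on $\overline K$, and Theorem~\ref{thm:l-space-slopes}—and the overall architecture matches the paper's. But you have mislocated the role of Theorem~\ref{thm:l-space-slopes}, and this leads you to set up machinery (continued-fraction chains of Legendrian unknots, a Chern-class count on a filling of the \emph{rational} surgery) that the paper never needs.

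The paper builds Stein fillings only of \emph{integer} surgeries $S^3_{-n}(\overline K)\cong -S^3_n(K)$: stabilizing a Legendrian $\Lambda$ with $tb(\Lambda)-r(\Lambda)=\maxsl(\overline K)\geq 1$ down to $tb=-n+1$ yields at least $\maxsl(\overline K)+n\geq n+1$ distinct rotation numbers, so by Theorem~\ref{thm:gompf} and Corollary~\ref{cor:rank-i-sharp} one has $\rank I^\#(S^3_n(K))\geq n+1$ for all large $n$. Thus $S^3_n(K)$ is not an instanton L-space for large $n$. Theorem~\ref{thm:l-space-slopes} enters only now, and at the level of the L-space property rather than of any Chern-class count: its contrapositive says that if $S^3_r(K)$ were an instanton L-space for some $r>0$, then $S^3_n(K)$ would be one for all $n\geq\lfloor r\rfloor$. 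Hence $S^3_{p/q}(K)$ is not an L-space, i.e.\ $\rank I^\#(S^3_{p/q}(K))>p$, and Corollary~\ref{cor:cycfinitesurg} (which packages Proposition~\ref{prop:cf-alexander-polynomial} with Theorem~\ref{thm:reducibles-morse-bott}) finishes. So you are not applying Theorem~\ref{thm:mainqhs} to $S^3_{p/q}(K)$ directly; you are producing the rank inequality for integer surgeries and then transporting the ``not an L-space'' conclusion. The ``main obstacle'' you anticipate—getting more than $p$ distinct Chern classes on a filling of the rational surgery itself—simply does not arise.

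One minor point: your heuristic for the $t^2$ in the Alexander polynomial condition (``the orientation double cover implicit in the formulation of cyclical finiteness'') is not right. Cyclical finiteness is about covers associated to $\ker(\ad\rho)$ rather than $\ker\rho$, and the squaring reflects the two-to-one map $SU(2)\to SO(3)$ through which $\ad$ factors; the precise statement is Proposition~\ref{prop:cf-alexander-polynomial}, due to Boyer--Nicas.
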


\begin{remark}The condition on $\Delta_K(t^2)$ guarantees that $\pi_1(S^3_{r}(K))$ is cyclically finite, and  is automatically satisfied when $p=3$ or $4$. 
\end{remark}

For example, the $5_2$ knot satisfies $\maxsl(\overline{5_2})=1$. Moreover, no zero of $\Delta_{5_2}(t^2) = 2t^2 - 3 + 2t^{-2}$ is a root of unity. We may therefore conclude from Theorem \ref{thm:intro-irreps-sl-positive} that there is an irreducible representation
\[ \pi_1(S^3_{r}(5_2)) \to SU(2) \]
for all rational $r > 0$.

\begin{remark} It is worth pointing out that our method of proving the existence of irreducible representations for rational homology spheres via instanton Floer means differs in an interesting way from  previous such methods. Namely, ours uses a result (Theorem \ref{thm:main-linear-independence}) about a version of instanton Floer homology  defined for any closed $3$-manifold, whereas previous methods (like that  in \cite{km-dehn}) used Floer's original construction for integer homology spheres together with clever tricks involving holonomy perturbations.
\end{remark}

We conclude by remarking that Theorems~\ref{thm:main-linear-independence} and \ref{thm:mainqhs} are often easy to apply in practice thanks to work of Gompf. Recall that a Stein domain $(W,J)$ has a handlebody decomposition specified by an oriented Legendrian link $L$ in the tight contact structure on some $\#^k (S^1 \times S^2)$, in which we attach 2-handles to $\natural^k (S^1 \times B^3)$ along each component $L_i \subset L$ with framing $tb(L_i) {-} 1$.  Gompf \cite[Proposition~2.3]{gompf} gave an explicit formula for the Chern class $c_1(J) \in H^2(W;\Z)$, which is easiest to state when there are no 1-handles, as follows.
\begin{theorem}[Gompf]
\label{thm:gompf}
Let $(W,J)$ be a Stein domain built by attaching $(tb(L_i)-1)$-framed 2-handles to $B^4$ along a Legendrian link $L = L_1 \cup \dots \cup L_k$ in $S^3$.  Then $H_2(W)$ has a basis $\Sigma_1,\dots,\Sigma_k$ built by gluing Seifert surfaces for each $L_i$ to the cores of their 2-handles, and
\[ \langle c_1(J),\Sigma_i\rangle = r(L_i), \]
where $r(L_i)$ is the rotation number of $L_i$, for all $i$.
\end{theorem}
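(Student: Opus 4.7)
The plan is to compute $\langle c_1(J),\Sigma_i\rangle$ by obstruction theory applied to the complex line bundle $\det_\C TW = \Lambda^2 T^{1,0}W$, whose first Chern class equals $c_1(J)$. I would split $\Sigma_i = F_i\cup_{L_i} D_i$, produce a nowhere-zero section of $\det_\C TW$ on each piece, and identify $\langle c_1(J),\Sigma_i\rangle$ with the degree of the resulting transition function along $L_i$.

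On $B^4\subset\C^2$ the global holomorphic frame $\{\partial_{z_1},\partial_{z_2}\}$ gives a canonical trivialization $\sigma_B$ of $\det_\C TW|_{B^4}$, and in particular over $F_i$. On each Stein 2-handle $h_i\cong D^2\times D^2$, attached with framing $tb(L_i)-1$ as in Eliashberg's theorem, Weinstein's explicit model yields a compatible integrable complex structure, and $\det_\C TW|_{h_i}$ admits a nowhere-zero section $\sigma_i$ since $h_i$ is contractible. The restrictions $\sigma_B|_{L_i}$ and $\sigma_i|_{L_i}$ then differ by a transition function $\tau_i\colon L_i \to \C\ssm\{0\}$, and standard obstruction theory on $\Sigma_i=F_i\cup_{L_i} D_i$ gives
\[\langle c_1(J),\Sigma_i\rangle = \deg(\tau_i).\]
To evaluate this degree, I would use the canonical isomorphism $\det_\C TW|_Y \cong \xi$ coming from the complex splitting $TW|_Y = \xi\oplus\C\langle R,\nu\rangle$, in which the second summand is globally trivialized by the Reeb vector field $R$ and the outward normal $\nu$. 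Under this isomorphism, $\sigma_B|_{L_i}$ corresponds to a section of $\xi|_{L_i}$ that is constant with respect to any chosen global trivialization of $\xi$ on $S^3$, while $\sigma_i|_{L_i}$ corresponds (up to a nowhere-zero factor homotopic to a constant) to the Legendrian tangent $T_{L_i}\in\xi|_{L_i}$. Since by definition the winding of $T_{L_i}$ in $\xi|_{L_i}$ against such a global trivialization is $r(L_i)$, we conclude $\deg(\tau_i)=r(L_i)$.

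The main obstacle I anticipate is the identification $\sigma_i|_{L_i}\leftrightarrow T_{L_i}$ in the last step. Carrying this out requires writing Weinstein's critical Stein handle in explicit complex coordinates and checking that the natural holomorphic trivialization on $h_i$, when compared to the standard trivialization of $B^4\subset\C^2$ along $L_i$, differs by precisely the tangent-rotation cocycle of $L_i$ in $\xi$. Orientation conventions and the role of the $-1$ correction in the framing $tb(L_i)-1$, which is needed for $J$ to extend but only twists $TW$ in the already-trivialized normal complex line $\C\langle R,\nu\rangle$ rather than in $\xi$, are subtle points that must be tracked carefully to avoid off-by-one errors.
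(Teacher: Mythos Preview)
The paper does not prove this theorem: it is stated as a result of Gompf and attributed to \cite[Proposition~2.3]{gompf}, with no proof given. So there is nothing in the paper to compare against.

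That said, your outline is essentially Gompf's own argument. He too computes $c_1(J)$ as the obstruction to extending the standard trivialization of $T\C^2|_{B^4}$ across the cores of the Stein 2-handles, using the splitting $TW|_{S^3}\cong \xi\oplus\underline{\C}$ and the fact that the core disk of a Weinstein 2-handle is a complex curve whose tangent along the attaching circle is the Legendrian tangent $TL_i$. The winding of $TL_i$ against the global trivialization of $\xi$ is then exactly $r(L_i)$, which gives the formula. Your honesty about the ``main obstacle'' is well placed: verifying that the handle-side trivialization restricts to $TL_i$ (up to homotopy) does require looking at the explicit model, and this is where Gompf does the work. One minor correction: the framing shift by $-1$ is not merely a twist in the trivialized normal line as you suggest; it is precisely what allows the almost complex structure to extend over the 2-handle in the first place (Eliashberg's theorem), so it is logically prior to the Chern class computation rather than a side effect to be tracked within it.
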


%
In particular, if one can find $n$ Legendrian representatives of $L$ for which the tuple of rotation numbers $(r(L_1),\dots,r(L_k))$ takes $n$ different values while the tuple $(tb(L_1),\dots,tb(L_k))$ remains constant then Theorem \ref{thm:gompf} together with Theorem \ref{thm:main-linear-independence} tells us that the rank of $I^\#(Y)$ is at least $n$, where $Y$ is the result of Legendrian surgery on $L$. This principle is used in the proof of Theorem \ref{thm:intro-irreps-sl-positive}.

\subsection{Organization} In Section \ref{sec:background}, we provide background on Donaldson invariants, instanton Floer homology, open book decompositions, Stein manifolds and Lefschetz fibrations, and our contact invariant in sutured instanton homology. Section \ref{sec:stein-fillings} is the heart of this paper, where we prove our main technical result, Theorem \ref{thm:main-linear-independence}. In Section \ref{sec:representations}, we elaborate on the relationship between sutured instanton homology and $SU(2)$ representations, and we address questions of nondegeneracy. In doing so, we prove Theorems \ref{thm:main} and \ref{thm:mainqhs}. We also  develop a better understanding  in Section \ref{sec:representations} of when manifolds obtained by Dehn surgery are instanton L-spaces, proving analogues of results in the Heegaard and monopole Floer settings. As discussed above, such results are important in proving the existence of irreducible $SU(2)$ representations; they  may also be of independent interest. Finally, in Section \ref{sec:examples}, we give several applications of our apparatus, proving Theorems \ref{thm:intro-hyperbolic-examples} and \ref{thm:intro-irreps-sl-positive}.

\subsection{Acknowledgments}

We thank Lucas Culler, Stefan Friedl,  Tye Lidman, and Tom Mrowka for helpful conversations.

%
%
%
%

\section{Background} \label{sec:background}

In this section, we provide   reviews of   the background material necessary for this paper. Much of our discussion here is adapted from \cite{bs-naturality,bs-shi}.

\subsection{Donaldson invariants}
\label{ssec:donaldson}
We recall  below some basic facts about Donaldson invariants of smooth 4-manifolds and their relationship with the Seiberg-Witten invariants.

Suppose $X$ is a  closed, oriented, smooth  4-manifold with $b_1(X) = 0$ and $b^+_2(X) > 1$. Fix a homology orientation. For every  $w \in H^2(X)$, Donaldson \cite{donaldson-invariants} defines a linear map \[ D^w_X: \bA(X) \to \R, \]
where $\bA(X)$ is the  symmetric algebra on \[H_2(X;\R) \oplus H_0(X;\R).\]  This map is defined roughly as follows. Let $E \to X$ be  a $U(2)$-bundle  with $c_1(E) = w$, and set \[k=c_2(E) - \frac{1}{4}c_1(E)^2.\] For each $\lambda \in \bA(X)$, one obtains a  number $q_{k,w}(\lambda)$ by evaluating a certain cohomology class $\mu(\lambda)$ on a fundamental class of the \emph{ASD moduli space} for $E$, whose dimension varies linearly with $k$.  
The \emph{Donaldson invariant} $D^w_X$ is then obtained  by summing  the corresponding  maps \[q_{k,w}:\bA(X) \to \R\] for a fixed $w$ (as $c_2(E)$ and, hence, $k$ varies). 
The 4-manifold $X$ is said to have \emph{simple type} if \[D^w_X(x^2z) = 4D^w_X(z)\] for any $w$ and any $z \in \bA(X)$, where $x$ is the class of a point. It is known that a $4$-manifold has simple type if it contains a \emph{tight surface}, which is a closed, embedded surface $S$ of genus at least 2 and self-intersection $2g(S) - 2$ \cite[Theorem~8.1]{km-embedded}. For $X$ having simple type, Kronheimer-Mrowka defined, for any $h \in H_2(X)$, the \emph{Donaldson series}
\[ \cD^w_X(h) = D^w_X\left(\left(1+\frac{x}{2}\right)e^h\right) = \sum_{d=0}^\infty \frac{D^w_X(h^d)}{d!} + \frac{1}{2} \sum_{d=0}^\infty \frac{D^w_X(xh^d)}{d!}, \] and proved the following structure theorem \cite[Theorem~1.7]{km-embedded}.

\begin{theorem} \label{thm:km-structure}
Suppose $b_1(X) = 0$ and $b^+_2(X) > 1$ is odd, that $X$ has simple type, and that its Donaldson invariants are not all  zero. Then there are finitely many classes $K_1,\dots,K_s \in H^2(X;\Z)$ and nonzero constants $\beta_1,\dots,\beta_s \in \Q$ such that
\[ \cD^w_X(h) = e^{Q(h) / 2} \sum_{r=1}^s (-1)^{(w^2+K_r\cdot w)/2} \beta_r e^{K_r \cdot h}, \]
where $Q(h) = h\cdot h$ is the intersection form on $X$. Moreover, if $R$ is a smoothly embedded, oriented, homologically nontrivial surface in $X$ with nonnegative self-intersection, then \[|K_r \cdot R| + R\cdot R \leq 2g(R)-2\] for all $r=1,\dots,s$.
\end{theorem}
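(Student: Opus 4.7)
The plan is to derive the structure formula for $\cD^w_X$ from the simple type condition combined with adjunction-type estimates coming from embedded surfaces. First I would translate the simple type relation $D^w_X(x^2 z) = 4 D^w_X(z)$ into an operator statement: cupping with $\mu(x)$ acts on the $\bA(X)$-valued invariants with ``eigenvalue'' $\pm 2$, so the idempotent $\tfrac12(1+\tfrac{x}{2})$ projects onto one eigenspace and the series $\cD^w_X$ lands in a much smaller functional space. Second, for each $\alpha \in H_2(X;\R)$, insertion of $\mu(\alpha)$ acts as a derivation on this space, and the blow-up/connect-sum relations together with slant products over embedded surfaces yield a polynomial identity satisfied by this derivation once simple type is in force. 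A formal manipulation then forces $\cD^w_X(h)$ to lie in the finite-dimensional span of functions of the form $e^{Q(h)/2}\cdot e^{L(h)}$ with $L \in H^2(X;\R)$, and choosing a finite spanning set produces the claimed finite sum $\sum_r \beta_r e^{K_r \cdot h}$ after factoring off the Gaussian.

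For the identification of the quadratic exponent as precisely $Q(h)/2$ and for the sign convention, I would track the dimension formula $\dim \mathcal{M}_{k,w} = 8k - 3(1+b_2^+)$ together with the degree of $\mu$: summing the contributions over $k$ generates the Taylor expansion of $e^{Q(h)/2}$. Integrality of each $K_r \in H^2(X;\Z)$ follows because replacing $w$ by $w+2\alpha$ changes the $U(2)$-bundle only by tensoring with a line bundle and therefore multiplies the invariant by an explicit sign depending on $w^2$ and on $\alpha$; matching this against the expected transformation $\cD^w \mapsto \cD^{w+2\alpha}$ forces the mod-$2$ parity of $K_r\cdot\alpha$ to be correct, which yields the prescribed $(-1)^{(w^2 + K_r\cdot w)/2}$.

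The adjunction inequality $|K_r \cdot R| + R\cdot R \le 2g(R)-2$ is the hardest step, and it is where I would import the geometric machinery from Kronheimer--Mrowka's proof of the Thom conjecture. Given an embedded oriented surface $R$ of genus $g$ with $R\cdot R \ge 0$, one considers ASD connections whose holonomy around $R$ is prescribed, assembled into moduli spaces parametrized by the holonomy angle; a Weitzenb\"ock estimate on a tubular neighborhood $\nu(R)$ bounds the curvature in terms of the topology of $R$, which in turn bounds the pairing $|K_r\cdot R|$. The main obstacle is controlling the noncompactness of these holonomy-perturbed moduli spaces and preventing bubbling along $R$: the simple-type hypothesis is crucial here because it guarantees that only finitely many basic classes contribute and ensures that the pairings $\langle \mu(R)^d, [\mathcal{M}]\rangle$ are bounded by the topology of $R$, rather than merely growing in some averaged sense. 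I expect the precise bookkeeping of orientation conventions and the verification that the a priori estimate on $\nu(R)$ is saturated only by the basic classes to be the technically most demanding portion of the argument.
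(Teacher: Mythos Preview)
This theorem is not proved in the paper at all: it is quoted as \cite[Theorem~1.7]{km-embedded}, the Kronheimer--Mrowka structure theorem, and used as a black box. So there is no ``paper's own proof'' to compare your proposal against.

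As a sketch of the Kronheimer--Mrowka argument itself, your outline has the architecture somewhat inverted. In their paper the adjunction-type input comes first, not last: the core technical result is a \emph{universal relation} satisfied by $D^w_X$ whenever $X$ contains an embedded surface of given genus and self-intersection, proved via moduli of singular ASD connections with prescribed holonomy along the surface. The structure formula is then deduced from these relations together with simple type by an essentially algebraic argument (a recursion on the polynomial coefficients which forces the exponential form). Your proposal tries to obtain the finite exponential sum first by abstract operator considerations and only afterward import the surface geometry for the inequality; but the step ``a formal manipulation then forces $\cD^w_X(h)$ to lie in the finite-dimensional span of $e^{Q(h)/2}e^{L(h)}$'' is exactly the place where the surface relations are needed, and without them there is no mechanism producing finitely many exponentials. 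The integrality and sign discussion you give is along the right lines, but the heart of the proof --- the singular-instanton compactness and the resulting polynomial identities --- is not something that can be summarized as ``a Weitzenb\"ock estimate on $\nu(R)$''; that is the content of the bulk of \cite{km-embedded}.
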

The $K_r$ above are called the (\emph{Donaldson}) \emph{basic classes} of $X$. We recall that $K$ is a basic class if and only if $-K$ is.   Witten's conjecture \cite{witten} asserts the following relationship  between the Donaldson and Seiberg-Witten invariants.
\begin{conjecture} \label{conj:witten-conjecture}
Suppose  that $b_1(X)=0$, $b_2^+(X) > 1$ is odd, and  $X$ has \emph{Seiberg-Witten simple type}.  Then $X$ has simple type; the basic classes of $X$ are precisely the Seiberg-Witten basic classes of $X$ (those $K$ for which $SW_X(K)$ is nonzero); and there is a nonzero constant $c(X)$ depending only on $X$ such that \[\beta_r = c(X) \cdot SW_X(K_r)\] for all $r=1,\dots,s$. 
\end{conjecture}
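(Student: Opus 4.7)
The plan is to realize both Donaldson and Seiberg--Witten invariants as different ``boundary contributions'' in a single moduli space that cobounds them, namely the Pidstrygach--Tyurin $PU(2)$-monopole moduli space. This is the strategy initiated by Pidstrygach--Tyurin and carried out in the program of Feehan--Leness. Concretely, I would fix a $\mathrm{spin}^u$ structure $\mathfrak{t}$ on $X$ (a $U(2)$-bundle together with a ``twisted'' spinor bundle $W^\pm\otimes E$) and study the moduli space $\mathcal{M}_\mathfrak{t}$ of pairs $(A,\Phi)$ consisting of a unitary connection on the associated $SO(3)$-bundle and a section of $W^+\otimes E$ satisfying a coupled Dirac/curvature equation. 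Generically this space is a smooth, oriented manifold of computable dimension; its non-compactness is controlled by Uhlenbeck-type bubbling together with ideal boundary strata along lower level $PU(2)$-monopole spaces.

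Next I would analyze the two distinguished \emph{reducible} loci inside $\overline{\mathcal{M}}_\mathfrak{t}$. Setting $\Phi=0$ recovers the (projectively) anti-self-dual moduli space, whose intersection theory with $\mu$-classes computes Donaldson invariants $D_X^w$; collapsing the $E$-factor onto a line subbundle splits $W^+\otimes E$ and exhibits the other reducible locus as a disjoint union of Seiberg--Witten moduli spaces $\mathcal{M}_{\mathrm{SW}}(\mathfrak{s})$ over characteristic lifts $\mathfrak{s}$ of $\mathfrak{t}$. The master equation will then come from cutting $\overline{\mathcal{M}}_\mathfrak{t}$ open along tubular neighborhoods of each reducible stratum and applying Stokes' theorem to the $\mu$-class cocycles: the Donaldson side contributes $D_X^w$ applied to an arbitrary monomial $\lambda\in\bA(X)$, while the Seiberg--Witten side contributes a universal (cohomological) polynomial in $Q$, $h$, and the classes $c_1(\mathfrak{s})$, with coefficients that are $SW_X(\mathfrak{s})$ multiplied by integrals over the \emph{link} of $\mathcal{M}_{\mathrm{SW}}(\mathfrak{s})$ in $\overline{\mathcal{M}}_\mathfrak{t}$. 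Summing over all $\mathfrak{t}$ and repackaging via the generating series $\cD_X^w(h)$ should produce, under the SW simple type hypothesis, an identity of the form
\[ \cD_X^w(h) = e^{Q(h)/2} \sum_{\mathfrak{s}} c(X)\, SW_X(\mathfrak{s})\, (-1)^{(w^2+c_1(\mathfrak{s})\cdot w)/2}\, e^{c_1(\mathfrak{s})\cdot h}, \]
matching the structure of Theorem \ref{thm:km-structure} with $K_r=c_1(\mathfrak{s}_r)$ and $\beta_r=c(X)\,SW_X(\mathfrak{s}_r)$. In particular this would automatically force Donaldson simple type and identify the two collections of basic classes.

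The hard part, by a wide margin, will be the link computation along the Seiberg--Witten stratum. The obstruction bundle over $\mathcal{M}_{\mathrm{SW}}(\mathfrak{s})$ in $\overline{\mathcal{M}}_\mathfrak{t}$ is built from the normal directions transverse to the reducible locus; one must establish a gluing theorem that identifies a neighborhood of each level in the Uhlenbeck compactification as (a cone on) an explicit projectivized obstruction bundle, then compute the pairing of $\mu(\lambda)$ with the associated Euler class. This requires (i) Kuranishi-type transversality at reducibles in the presence of Uhlenbeck bubbling, (ii) a consistent choice of orientations so that the sign $(-1)^{(w^2+c_1(\mathfrak{s})\cdot w)/2}$ falls out, and (iii) verifying that the ``lower level'' strata (where instantons have bubbled off) contribute only through universal multiplicities absorbed into the single constant $c(X)$.

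Two secondary obstacles deserve mention. First, controlling contributions from non-simple-type $\mathrm{spin}^u$ structures: one needs simple type on the Seiberg--Witten side to imply all relevant SW moduli spaces are zero-dimensional, so that the universal polynomial reduces to the desired exponential. Second, the independence of $c(X)$ from $\mathfrak{s}$, $w$, and $\lambda$ is not obvious from the local model; it should follow from the observation that the gluing data depend only on the topology of the link, which is dictated by $b_2^+(X)$ and the Euler characteristic/signature of $X$, together with a normalization computed on a single known example (such as a $K3$ surface) where both sides of Witten's formula are independently known.
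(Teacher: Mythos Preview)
The statement you are attempting to prove is labeled \emph{Conjecture}~\ref{conj:witten-conjecture} in the paper, not a theorem; the paper offers no proof of it.  It is Witten's conjecture, still open in full generality.  The paper only invokes the special case recorded as Theorem~\ref{thm:witten-conjecture}, due to Kronheimer--Mrowka building on Feehan--Leness, under restrictive hypotheses (symplectic, $H_1=0$, $b_2^\pm$ matching a hypersurface of even degree $\geq 6$, presence of a tight surface and a $(-1)$-sphere).

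Your outline is an accurate high-level description of the Pidstrygach--Tyurin / Feehan--Leness $PU(2)$-monopole program, and that program is indeed responsible for the partial results the paper cites.  But you should not present it as a proof: the obstacles you flag in your last two paragraphs---the gluing theorem along all Uhlenbeck strata of the Seiberg--Witten reducibles, the obstruction-bundle and link computations at arbitrary levels, and the reduction of all lower-level contributions to a single universal constant $c(X)$---are precisely the parts that have not been completed in full generality.  Feehan--Leness carried this out only under additional topological constraints (essentially bounds relating the dimension of the ASD moduli space to $b_2^+$ and the signature, so that only low Uhlenbeck levels intervene), which is why Theorem~\ref{thm:witten-conjecture} requires the Betti numbers to match those of a specific hypersurface.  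Your sketch does not supply any mechanism for handling the deeper strata, so as written it is a program rather than a proof.
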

Building on work of Feehan-Leness \cite{feehan-leness}, Kronheimer-Mrowka established the following special case of Witten's conjecture \cite[Corollary~7]{km-p}.
\begin{theorem} \label{thm:witten-conjecture}
Suppose  that: 
\begin{itemize}
\item $X$ is a symplectic 4-manifold with $H_1(X) = 0$ and $b^+_2(X) > 1$, 
\item $X$ has the  same Betti numbers $b^{\pm}_2(X)$ as a smooth hypersurface in $\mathbb{CP}^3$ of even degree   at least 6, and 
\item $X$ contains  a tight surface of genus at least 2 and a sphere of self-intersection $-1$. 
\end{itemize} Then $X$ satisfies the conclusions of Conjecture~\ref{conj:witten-conjecture}.
\end{theorem}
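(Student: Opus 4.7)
The plan is to derive this as a corollary of two external inputs: Taubes's theorems on Seiberg--Witten invariants of symplectic $4$-manifolds, and Feehan--Leness's realization of Witten's conjecture via the $SO(3)$ monopole cobordism. I would proceed in three steps.

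First, I would invoke Taubes's results to conclude that any symplectic $X$ with $b_2^+>1$ has Seiberg--Witten simple type, with $\pm c_1(X,\omega)$ a basic class of SW invariant $\pm 1$; in particular the SW invariants are not identically zero. Combined with the presence of a tight surface of genus at least $2$, Kronheimer--Mrowka's embedded surface theorem (already quoted in Section~\ref{ssec:donaldson}) ensures that $X$ has Donaldson simple type, so that Theorem~\ref{thm:km-structure} applies and presents the Donaldson series $\cD^w_X$ in the stated exponential form with some basic classes $K_1,\dots,K_s$ and coefficients $\beta_1,\dots,\beta_s$.

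Second, I would appeal to Feehan--Leness's proof of Witten's conjecture (cited in the paragraph preceding the theorem): under the numerical hypothesis that the Betti numbers $b_2^{\pm}(X)$ match those of a smooth hypersurface in $\mathbb{CP}^3$ of even degree at least $6$, together with Seiberg--Witten simple type, their $SO(3)$ monopole cobordism argument expresses $\cD^w_X$ as the universal linear combination of Seiberg--Witten contributions appearing in Conjecture~\ref{conj:witten-conjecture}, with a single constant $c(X)\in\Q$ depending only on these Betti numbers. The embedded $(-1)$-sphere enters as the input that allows one to apply the blow-up and blow-down formulas needed to reduce to a geometric situation in which the Feehan--Leness gluing and intersection-theoretic analysis is known to go through.

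Comparing the two presentations of $\cD^w_X$, one from Theorem~\ref{thm:km-structure} and one from Feehan--Leness, then forces the Donaldson basic classes of $X$ to coincide with the Seiberg--Witten basic classes and the coefficients to satisfy $\beta_r = c(X)\cdot SW_X(K_r)$, which is exactly the conclusion of Conjecture~\ref{conj:witten-conjecture}. The main obstacle is of course the Feehan--Leness input itself, which rests on a long program of gluing and orientation results for $SO(3)$ monopoles and on a careful analysis of the contributions of Uhlenbeck strata and reducible loci; from the point of view of this paper, however, that is a black box, and the only real work is to verify that the hypotheses of their theorem are implied by ours, which is immediate from Taubes plus the explicit Betti number, tight-surface, and $(-1)$-sphere assumptions.
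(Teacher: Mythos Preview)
Your proposal is correct and matches the paper's treatment: the paper does not prove this theorem independently but cites it as \cite[Corollary~7]{km-p} (building on Feehan--Leness), and the accompanying remark verifies exactly what you do---that the symplectic hypothesis with $H_1(X)=0$ yields Seiberg--Witten simple type (Taubes), odd $b_2^+$, and no $2$-torsion in $H^2$, so that the stated hypotheses imply those of Kronheimer--Mrowka. Your sketch of the Feehan--Leness mechanism is a reasonable elaboration of this black-box citation.
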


\begin{remark}
Readers familiar with \cite{km-p} should observe that the hypotheses of Theorem~\ref{thm:witten-conjecture} imply the more general ones used in that paper. Indeed, since $X$ is symplectic with $H_1(X)=0$, it is automatically true that $X$ has Seiberg-Witten simple type; that $b_2^+(X)$ is odd (since $b_2^+-b_1+1$ is even for almost complex manifolds); that $b^{\pm}_2(X)$ determines the Euler characteristic and signature of $X$; and that $H^2(X)$ has no 2-torsion.
\end{remark}

Lastly, we record the following observation; see e.g.\ \cite[Proposition~2.9]{sivek-donaldson}.

\begin{remark} \label{rmk:blowup-donaldson} If $X \# \overline{\mathbb{CP}^2}$ satisfies Conjecture~\ref{conj:witten-conjecture}, then so does $X$.
\end{remark}

\subsection{Instanton Floer homology}
\label{ssec:floer}
We recall below the construction of instanton Floer homology, which is used to define sutured instanton homology.

Let $(Y,\alpha)$ be a pair consisting of a closed 3-manifold $Y$ and an oriented, smoothly embedded 1-cycle $\alpha \subset Y$ intersecting some smoothly embedded surface in an odd number of points.  We   associate the following data to this pair:
\begin{itemize}
\item A Hermitian line bundle $w \to Y$ with $c_1(w)$ Poincar\'e dual to $\alpha$;
\item A $U(2)$ bundle $E \to Y$ equipped with an isomorphism $\theta: \wedge^2 E \to w$.
\end{itemize}
Let $\mathcal{C}$ be the space of $SO(3)$ connections on $\operatorname{ad}(E)$. The group $\mathcal{G}$  of ``determinant-1" gauge transformations of $E$  (the automorphisms of $E$ that respect $\theta$) acts naturally on $\mathcal{C}$, and the  instanton Floer homology group $I_*(Y)_\alpha$ is the $\ZZ/8\ZZ$-graded $\C$-module arising as the Morse homology for the Chern-Simons functional on $\mathcal{B} = \mathcal{C}/\mathcal{G}$, as in \cite{donaldson-book}. 

The empty manifold is considered to have Floer homology equal to $\C$.

Given a closed, embedded surface $R \subset Y$ of genus $g(R) \geq 1$ such that $\alpha \cdot R$ is odd, the cap product with a certain associated class $\mu(R) \in H^2(\mathcal{B})$ defines an endomorphism of degree 2, which we also denote by
\[ \mu(R): I_*(Y)_\alpha \to I_{*-2}(Y)_\alpha. \]
Any two such operators $\mu(R)$ and $\mu(S)$ commute.  Moreover, the eigenvalues of $\mu(R)$ all have the form $\pm 2k$ or $\pm 2ki$, where $k$ is an integer satisfying \[0 \leq k \leq g(R)-1,\] by \cite[Corollary~7.2]{km-excision}.
\begin{definition} \label{def:instanton-floer-top}
Given $(Y,\alpha)$ and a surface $R\subset Y$ of genus at least 2 as above, we define
\[ I_*(Y|R)_\alpha \subset I_*(Y)_\alpha \]
as the generalized eigenspace of $\mu(R)$ with eigenvalue $2-2g(R)$.
\end{definition}

\begin{remark}The $(2g(R)-2)$-generalized eigenspace is used instead in \cite{km-excision, bs-naturality}, but Definition~\ref{def:instanton-floer-top} is consistent with our choice in \cite{bs-shi} and produces isomorphic invariants.
\end{remark}

\begin{remark}For a given triple $(Y,\alpha,R)$, the constructions above technically depend on the choice of triple $(w,E,\theta)$, but any two  such choices result in $\C$-modules that are related by canonical isomorphisms which are well-defined up to sign (see \cite[Section~4]{km-khovanov}). So, technically, these groups form what we call a \emph{$\{\pm 1\}$-transitive system of $\C$-modules} in \cite{bs-naturality}. It is really this kind of system that we are  referring to when we write $I_*(Y|R)_\alpha$ or $I_*(Y)_\alpha$. However, we will generally gloss over that subtlety in this paper, and think of these systems as honest $\C$-modules.
\end{remark}

Suppose  $R_0$ and $R_1$ are embedded surfaces in $Y_0$ and $Y_1$ as above. A cobordism $(W,\nu)$ from $(Y_0,\alpha_0)$ to $(Y_1,\alpha_1)$ together with an embedded surface $R_W\subset W$ containing $R_0$ and $R_1$ as components gives rise to a map \[I_*(W|R_W)_{\nu}:I_*(Y_0|R_0)_{\alpha_0}\to I_*(Y_1|R_1)_{\alpha_1}.\] This map depends only on the  homology class $[\nu]\subset H_2(W,\partial W;\mathbb{Z})$ and the isomorphism class of $(W,\nu)$, where two pairs  are isomorphic if they are diffeomorphic by a map which intertwines the (generally implicit)  boundary identifications. 

Finally, we describe  the relationship between   Donaldson invariants and Floer homology. Suppose   first that  $X$ is a  smooth 4-manifold with nonempty boundary $\partial X = Y$, and $\alpha$ is a $1$-cycle in $Y$ as above. Then each class $w \in H^2(X)$ whose restriction to $Y$ is Poincar\'e dual to $\alpha$ determines a relative Donaldson invariant
\[ \Psi_{w,X}: \bA(X) \to I_*(Y)_{\alpha}. \] 
Given a   surface $R\subset Y$ and a polynomial $p\in \mathbb{Q}[t]$, we may   view $p(R)$ as an element of $\mathbb{A}(X)$, in which case we have the  relation \begin{equation}\label{eqn:relfloer}\Psi_{w,X}(p(R)) = p(\mu(R))\cdot \Psi_{w,X}(1)\in I_*(Y)_\alpha.\end{equation}
Moreover, if $\nu$ is a $2$-cycle in $X$ with $\partial \nu = \alpha$, and $\nu$ is Poincar{\'e} dual to $w$, then we have a cobordism map \[I_*(X|R)_{\nu}:\C\to I_*(Y|R)_{\alpha},\] and $I_*(X|R)_{\nu}(1)$ is simply the projection of $\Psi_{w,X}(1)$ to the generalized eigenspace $I_*(Y|R)_{\alpha}.$

Now, suppose that $X = X_1 \cup_{Y} X_2$ is obtained by gluing together two  4-manifolds along a common boundary component,  $Y \subset \partial X_1$ and $-Y \subset \partial X_2$. Suppose further  that $w \in H^2(X)$ restricts to $Y$ as the Poincar{\'e} dual of $\alpha$.  Then there is a   pairing \[\langle\cdot,\cdot\rangle: I_*(Y)_\alpha \otimes I_*(-Y)_\alpha \to \C\] as well as a natural product
\[ \bA(X_1) \otimes \bA(X_2) \to \bA(X) \]
such that if we let $w_i = w|_{X_i}$ and take $\lambda_i \in \bA(X_i)$, then
\begin{equation}\label{eqn:pairingrelinvts} D^w_X(\lambda_1 \lambda_2) = \langle \Psi_{w_1,X_1}(\lambda_1), \Psi_{w_2,X_2}(\lambda_2) \rangle, \end{equation}
as described in \cite[Section~6.4]{donaldson-book}. (The reader might  also refer to \cite{fukaya,bd-gluing}, though we will not need a pairing theorem for relative classes in $H_*(X_i,Y)$ as given there.)

\subsection{Open book decompositions}
\label{ssec:obd}
The discussion below is adapted from \cite{bs-shi}, though  simplified somewhat by only considering open book decompositions of 3-manifolds which are the complement of a Darboux ball in a closed contact 3-manifold.

\begin{definition} \label{def:open-book}
An \emph{open book} is a triple $(S,h,{\bf c})$, where
\begin{itemize}
\item $S$ is a compact, oriented surface with nonempty boundary, called the \emph{page};
\item $h: S \to S$ is a diffeomorphism which restricts to the identity on $\partial S$;
\item ${\bf c} = \{c_1,\dots,c_{b_1(S)}\}$ is a set of disjoint, properly embedded arcs such that $S \ssm {\bf c}$ deformation retracts onto a point; these are often known as \emph{basis} arcs.
\end{itemize}
\end{definition}

The product manifold $S \times [-1,1]$ admits an $[-1,1]$-invariant contact structure in which each $S \times \{t\}$ is a convex surface with collared Legendrian boundary and dividing set consisting of one boundary-parallel arc on each component of $\partial S$, oriented in the same direction as $\partial S$.  Upon rounding corners, we obtain a \emph{product sutured contact manifold} in the terminology of \cite{bs-shi}, denoted by $H(S)$, which is topologically a handlebody with boundary the double of $S$ and dividing set $\partial S \subset \partial H(S)$. For notational convenience, we will  often simply equate  $H(S)$ and $S\times[-1,1]$, as in the definition below.

\begin{definition}
\label{def:gammas}
Given an open book $(S,h,{\bf c})$, let $\gamma_1,\dots,\gamma_{b_1(S)}$  be the curves  given by
\[ \gamma_i = \big(c_i \times \{1\}\big) \cup \big(\partial c_i \times [-1,1]\big) \cup \big(h(c_i) \times \{-1\}\big)\subset \partial (S\times[-1,1])=\partial H(S). \] Each $\gamma_i$ intersects the dividing set on $\partial H(S)$  in two points.
We define $M(S,h,{\bf c})$ to be the contact manifold built from $H(S)$ by attaching contact 2-handles along these curves.
\end{definition}

\begin{definition} \label{def:ob-decomposition}
An \emph{open book decomposition} of the based contact 3-manifold $(Y,\xi,p)$ is an open book $(S,h,{\bf c})$ together with a contactomorphism
\[ f: M(S,h,{\bf c}) \to (Y(p),\xi|_{Y(p)}), \]
where $Y(p)$ is the complement of a Darboux ball around the point $p$.
\end{definition}

Suppose $(S,h,{\bf c})$ and $(S',h',{\bf c}')$ are two open books. A diffeomorphism of pairs \begin{equation}\label{eqn:gdiff}g:(S,{\bf c})\to (S',{\bf c}')\end{equation} which intertwines $h$ and $h'$  gives rise to a canonical isotopy class of contactomorphisms \begin{equation}\label{eqn:gcont}\bar g: M(S,h,{\bf c})\to M(S',h',{\bf c}').\end{equation}

\begin{definition} \label{def:ob-isomorphic}
We say that open book decompositions $(S,h,{\bf c},f)$ and $(S',h',{\bf c}',f')$ are \emph{isomorphic} if there exists a diffeomorphism $g$ as in \eqref{eqn:gdiff} such that $f=f'\circ\bar g$.
\end{definition}

The \emph{existence} part of the Giroux correspondence \cite{giroux-correspondence} asserts the following.

\begin{theorem}[\cite{giroux-correspondence}]
Every closed contact 3-manifold has an open book decomposition.
\end{theorem}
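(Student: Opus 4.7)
The plan is to follow Giroux's original strategy, which builds the open book from a \emph{contact cell decomposition} of $(Y,\xi)$. Given the closed contact 3-manifold $(Y,\xi)$ with basepoint $p$, the first step is to produce a CW structure on $Y$ in which the 1-skeleton $K$ is a Legendrian graph, each 2-cell $D$ has Legendrian boundary with $tb(\partial D)=-1$, each 3-cell is a Darboux ball, and $p$ lies in one of the 3-cells. To obtain such a decomposition, I would start with an arbitrary smooth triangulation, apply the Legendrian approximation theorem to make the 1-skeleton Legendrian after a $C^0$-small isotopy, subdivide the 2-cells by adding Legendrian arcs until each satisfies $tb=-1$ on its boundary (lowering $tb$ at each subdivision via stabilization), and finally subdivide the 3-cells until each sits inside a Darboux chart.

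Next, let $H_1$ be a standard contact tubular neighborhood of $K$. Then $H_1$ is contactomorphic to the product sutured contact manifold $H(S)$ for a surface $S$ whose spine is $K$, and under this identification the convex boundary $\partial H_1$ has dividing set $\Gamma = \partial S$. I take $S$ as the proposed page. A system of basis arcs $\mathbf{c} = \{c_1,\dots,c_{b_1(S)}\}$ is obtained by taking, for each 2-cell $D_i$, the intersection $D_i \cap \partial H_1$; the condition $tb(\partial D_i) = -1$ is precisely what ensures that this intersection is a single arc meeting $\Gamma$ only at its endpoints, so this really is a basis in the sense of Definition~\ref{def:open-book}. The associated curve $\gamma_i$ from Definition~\ref{def:gammas} is then isotopic to $\partial D_i$, so attaching the contact 2-handle along $\gamma_i$ recovers $D_i$.

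The remaining step is to identify $M(S,h,\mathbf{c})$ with $Y(p)$ as contact manifolds for an appropriate monodromy $h$. Topologically, once the $b_1(S)$ contact 2-handles have been attached, the result is $H_1$ together with all the 2-cells, and the complement in $Y$ is a disjoint union of 3-cells, each a Darboux ball. Removing a small ball around $p$ from one of these cells and invoking Eliashberg's uniqueness of tight contact structures on $B^3$ with a standard convex boundary, one can canonically fill in the remaining Darboux balls to produce a contactomorphism $f: M(S,h,\mathbf{c}) \to (Y(p),\xi|_{Y(p)})$, where $h$ is read off as the return map of the characteristic foliation on the pages through the second ``handlebody'' made of the 3-cells.

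The main obstacle is the first step: producing the contact cell decomposition in which \emph{every} 2-cell boundary has $tb=-1$. This requires a delicate combinatorial argument involving repeated Legendrian subdivision and control of the Thurston--Bennequin number, and it is essentially the only place where the specific geometry of the contact structure enters the proof. Everything after that is a formal unpacking of the sutured contact data carried by a neighborhood of the Legendrian 1-skeleton, together with standard uniqueness results for tight contact structures on the ball.
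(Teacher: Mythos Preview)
The paper does not give a proof of this theorem; it is simply quoted as the existence half of the Giroux correspondence and attributed to \cite{giroux-correspondence}. So there is no ``paper's own proof'' to compare against, and your sketch is not filling a gap the authors left --- it is reproducing Giroux's argument, which the paper takes as a black box.

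That said, your outline is the right one (contact cell decomposition $\Rightarrow$ ribbon of the Legendrian $1$-skeleton as page), but there are two places where it does not quite line up with the paper's specific Definition~\ref{def:ob-decomposition}. First, $M(S,h,\mathbf{c})$ is by definition $H(S)$ with exactly $b_1(S)$ contact $2$-handles attached, and its boundary is a single $2$-sphere; this forces the cell decomposition to have exactly one $3$-cell (an Euler characteristic count gives $F = b_1(S) - 1 + C$, so $C=1$ iff $F=b_1(S)$). Your sketch allows several $3$-cells and then ``fills in the remaining Darboux balls,'' but that filling is not part of the construction of $M(S,h,\mathbf{c})$; you would instead need to cancel the extra $3$-cells against $2$-cells beforehand. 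Second, your description of how the monodromy $h$ arises (``the return map of the characteristic foliation on the pages through the second handlebody'') is too vague to pin down $h$ as a diffeomorphism of $S$ fixing $\partial S$; in Giroux's argument $h$ comes from comparing the two product structures on the complementary handlebody, and making this precise is where most of the remaining work lies.
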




\begin{remark} The harder \emph{uniqueness} part of the Giroux correspondence  asserts that any two open book decompositions of a  contact 3-manifold are related, up to isomorphism, by a sequence of \emph{positive stabilizations}; see \cite[Definition 2.24]{bs-shi}. Our invariance proof for the contact element introduced in \cite{bs-shi} and described in Subsection \ref{ssec:def-invt} makes use of this. However, we will not need this invariance  for our results relating Stein fillings to $SU(2)$ representations. See Remark \ref{rmk:noinvceta}.
\end{remark}

\subsection{Stein manifolds and Lefschetz fibrations}
\label{ssec:steinlef}

Recall that if $(W,J)$ is a compact Stein 4-manifold and $Y = \partial W$ is a regular level set of a proper, strictly plurisubharmonic function on $W$, then \[\xi = TY \cap J(TY)\] is a contact structure on $Y$. Moreover, $(W,J)$ is called a \emph{Stein domain} and is said to be a \emph{Stein filling} of $(Y,\xi)$.
A smooth, surjective map $\pi: W \to D^2$ is called a \emph{positive allowable Lefschetz fibration} (PALF) if:
\begin{itemize}
\item its critical points are isolated and in distinct fibers over the interior of $D^2$;
\item it has the form $\pi(z_1,z_2) = z_1^2+z_2^2$ on a neighborhood $D^4 \subset \C^2$ of each critical point;  
\item the vanishing cycles in each singular fiber are nonseparating.
\end{itemize}  

Loi-Piergallini \cite{loi-piergallini} and Akbulut-\"Ozba\u{g}c{\i} \cite{akbulut-ozbagci} proved that every Stein domain admits a PALF, and Plamenevskaya \cite[Appendix~A]{plamenevskaya} showed that said PALF is compatible with the induced contact structure on the boundary. 

More precisely, suppose $(W,J)$ is a Stein filling of $(Y,\xi)$ and let $p$ be a point of $Y$. Then $(W,J)$ admits a PALF $\pi:W\to D^2$ whose regular fibers $\Sigma_x = \pi^{-1}(x)$ are surfaces with boundary. Define $S = \Sigma_{\pi(p)}$ and fix any collection \[{\bf c} = \{c_1,c_2,\dots,c_{b_1(S)}\}\] of basis arcs on $S$ which avoids  $p$. Let $h$ be the diffeomorphism of $S$ given by a product $\tau_{v_1}\tau_{v_2} \dots \tau_{v_m}$ of positive Dehn twists along  vanishing cycles $v_1,\dots,v_m$ of $\pi$ in the usual manner.  The natural identification of the mapping torus of $h$ with the union of fibers $\bigcup_{\theta\in\partial D^2} \Sigma_{\theta}$ gives rise to a contactomorphism
\[ f: M(S,h,{\bf c}) \to (Y(p),\xi|_{Y(p)}), \]
which is canonical up to isotopy. The corresponding open book decomposition $(S,h,{\bf c}, f)$
of $(Y,\xi,p)$ is uniquely determined, up to isomorphism, by $\pi$ together with the basis of arcs ${\bf c}$. 


\begin{remark} As noted in \cite[Appendix~A]{plamenevskaya}, positively stabilizing the above open book decomposition  corresponds to taking a boundary connected sum of $W$ with the standard Stein 4-ball.  In particular, given Stein structures $J_1,\dots,J_k$ on $W$, we can find corresponding PALFs $\pi_1,\dots,\pi_k: W \to D^2$ so that the fibers of the various $\pi_i$ (and, hence, the pages of the induced open book decompositions for  the   contact structures $\xi_i = TY \cap J_i(TY)$ on $Y$) are all diffeomorphic to a common surface.  We can, moreover, take this common surface to have arbitrarily large genus and exactly one boundary component.
\end{remark}

\subsection{An instanton contact invariant} \label{ssec:def-invt} We recall below the construction of our contact invariant in sutured instanton homology \cite{bs-shi}. Our review is tailored to  the needs of this paper.

Suppose  $(Y,\xi,p)$ is a based contact manifold with  open book decomposition \[(S,h,{\bf c}=\{c_1,\dots, c_{b_1(S)}\}, f).\]  Let $\gamma_1,\dots,\gamma_{b_1(S)}$ be the curves in the boundary of $S\times [-1,1]$ given by 
\[ \gamma_i = \big(c_i \times \{1\}\big) \cup \big(\partial c_i \times [-1,1]\big) \cup \big(h(c_i) \times \{-1\}\big). \] Recall  from Subsection \ref{ssec:obd} that $M(S,h,{\bf c})$ is obtained by attaching  2-handles to   $S\times[-1,1]$ along these curves. Fix a compact, connected, oriented surface $T$ of genus at least $8$, with an identification $\partial T\cong -\partial S$, and let $R=S\cup T$ be the closed surface obtained by gluing $S$ to $T$. In a slight abuse of notation, we will also use $h$ to denote the extension of $h$ to $R$ by the identity on $T$. We  think of the mapping torus of $h:R\to R$ as given by \[R\times_h S^1:= R \times [-1,3] / \big((x,3) \sim (h(x),-1)\big).\] Likewise, in what follows, we will think of the product $R\times S^1$ as given by \[R\times S^1:= R \times [-1,3] / \big((x,3) \sim (x,-1)\big).\] In order to eventually define instanton Floer homology, we  also fix an embedded, nonseparating curve \[\eta\subset \inr(T)\times\{2\}\subset R\times\{2\}\subset R\times_h S^1 \]  and a closed curve  \[\alpha = \{q\}\times[-1,3]\subset R\times_h S^1,\] where $q$ is a point of $ \inr(T)\ssm \eta\subset R$.   See Figure \ref{fig:hs-closure} for a schematic of  $R\times_h S^1$, $\eta$,  and $\alpha$.
 Since $S\times[-1,1]$ is naturally a codimension 0 submanifold of $R\times_h S^1$, we can and will view  the $\gamma_i$  as curves in $R\times_h S^1$, as shown in the figure. As we shall see, the  topological result  below enables the definition of our contact invariant.
\begin{figure}
\labellist
\small
\pinlabel $S$ at 120 340
\pinlabel $T$ at 300 340
\pinlabel $h$ at -10 176
\pinlabel $\alpha$ at 270 207
\pinlabel $\eta$ at 353 193
\pinlabel $3$ [l] at 398 284
\pinlabel $1$ [l] at 398 180
\pinlabel ${-}1$ [l] at 387 73
\tiny
\pinlabel $\gamma_1$ at 76 176
\pinlabel $\gamma_2$ at 139 165
\endlabellist
\centering
\includegraphics[width=6.5cm]{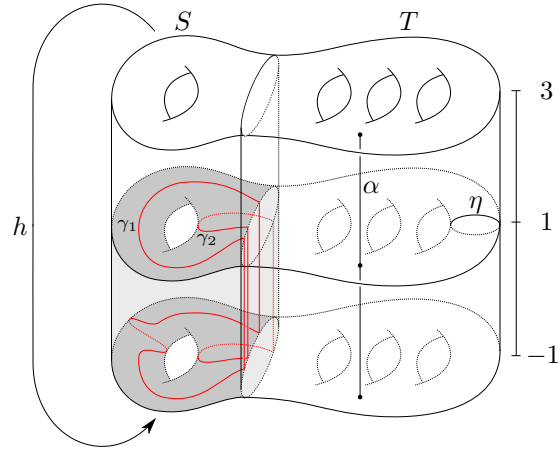}
\caption{A schematic of $R\times_h S^1$, $\eta$,  $\alpha$, and the $\gamma_i$ where $h$ is a positive Dehn twist and $S\times[-1,1]$ is shaded. For ease of drawing, we have shown the curve $\eta$ as lying on the fiber $R\times\{1\}$ rather than $R\times\{2\}$. Moreover, $g(T)=3$ in this illustration, whereas we actually require that $g(T)\geq 8$.}
\label{fig:hs-closure}
\end{figure}

\begin{proposition}
\label{prop:surgery-sum}
The 3-manifold obtained via   surgery on the $\partial (S\times[-1,1])$-framed link \[\mathbb{L}=\gamma_1\cup\dots\cup\gamma_{b_1(S)}\subset R\times_h S^1\] is canonically diffeomorphic (up to isotopy) to the union \[M(S,h,{\bf c})\cup_\partial \big((R\times S^1)\ssm B^3\big),\] where $B^3$ is a small ball around a point in $S\times\{0\}\subset R\times S^1$.
\end{proposition}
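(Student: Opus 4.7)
The plan is to decompose $R \times_h S^1$ along the separating surface $\Sigma := \partial(S \times [-1, 1])$ into the handlebody $H := S \times [-1, 1]$ and its complement $C$. Since $\mathbb{L} \subset \Sigma$ is surgered with the $\Sigma$-framing, the surgery decomposes as a $2$-handle attachment on each side of $\Sigma$, so the surgered manifold is $H' \cup_{\Sigma'} C'$, where $\Sigma'$ denotes $\Sigma$ after surgery. By the very definition of $M(S, h, \mathbf{c})$, we have $H' = H \cup (\text{contact }2\text{-handles}) = M(S, h, \mathbf{c})$, so it suffices to identify $C' := C \cup (2\text{-handles})$ with $(R \times S^1) \ssm B^3$.

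The key observation, which I expect to be the main obstacle (really a matter of careful bookkeeping), is the following. Because $h$ is the identity on $T$ and acts nontrivially only inside $S$, excising $H$ from the mapping torus $R \times_h S^1$ eliminates all of the monodromy: both $C$ and $(R \times S^1) \ssm H$ admit the same description as $(T \times S^1) \cup (S \times [1, 3])$ glued along $\partial S \times [1, 3]$, yielding a canonical diffeomorphism between them. Under this diffeomorphism, $\gamma_i \subset \partial C$ corresponds to the untwisted curve $\gamma_i^0 := \partial(c_i \times [-1, 1]) \subset \partial((R \times S^1) \ssm H)$. Although $\gamma_i$ and $\gamma_i^0$ are not isotopic as curves in the closed surface $\Sigma$ (their bottom arcs are $h(c_i) \times \{-1\}$ versus $c_i \times \{-1\}$), the mapping-torus identification $(x, 3) \sim (h(x), -1)$ in $R \times_h S^1$ rewrites $h(c_i) \times \{-1\}$ as $c_i \times \{3\}$, while the analogous identification in $R \times S^1$ rewrites $c_i \times \{-1\}$ as $c_i \times \{3\}$; since $H$ absorbs precisely the $\{-1\}$-representatives of these arcs, both curves reduce to the same arc $c_i \times \{1\} \cup \partial c_i \times [-1, 1] \cup c_i \times \{3\}$ in the complement. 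The $\Sigma$-framings correspond automatically because both arise from the same embedded surface.

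Given this identification, the rest is a standard dual handlebody computation. Because $\mathbf{c}$ is a basis for $S$, the disks $D_i := c_i \times [-1, 1]$ form a complete system of compressing disks for the handlebody $H$, with $\partial D_i = \gamma_i^0$, and these disks cut $H$ into a $3$-ball. Dually, $H \ssm B^3$ is built from the collar $\Sigma \times [0, 1]$ by attaching $b_1(S)$ $2$-handles along $\gamma_i^0$ with the $\Sigma$-framing. Assembling the pieces,
\[
(R \times S^1) \ssm B^3 \;=\; \bigl((R \times S^1) \ssm H\bigr) \cup \bigl(H \ssm B^3\bigr) \;\cong\; C \cup (2\text{-handles along } \gamma_i^0) \;\cong\; C',
\]
which completes the proof; the resulting diffeomorphism is canonical up to the isotopy of $B^3$ in $S \times \{0\}$.
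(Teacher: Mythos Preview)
Your proof is correct and follows essentially the same approach as the paper's: both cut $R\times_h S^1$ along $\Sigma=\partial(S\times[-1,1])$, identify the handlebody side with $2$-handles as $M(S,h,\mathbf{c})$, and identify the complement side with $2$-handles as $(R\times S^1)\ssm B^3$ by observing that the monodromy disappears once $S\times[-1,1]$ is excised. Your tracking of $\gamma_i$ versus $\gamma_i^0$ through the mapping-torus identifications and the dual-handlebody description of $H\ssm B^3$ are minor presentational variants of the paper's explicit map $(x,t)\mapsto(x,t)$ between the two complements.
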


\begin{proof}
It is an easy exercise to check that surgering along the framed link $\mathbb{L}$ is topologically equivalent to first cutting $R\times_h S^1$ open along $\partial (S\times[-1,1])$ to obtain the two pieces \[P=S\times[-1,1]\,\,\,\,\textrm{ and }\,\,\,\,Q=(R\times_h S^1)\ssm \inr(S\times[-1,1]),\] then gluing $2$-handles to each  piece along the components of $\mathbb{L}$, and then gluing the resulting $3$-manifolds back together along their $2$-sphere boundaries. Note that the result of attaching these $2$-handles to $P$ is precisely the $3$-manifold $M(S,h,{\bf c})$. It thus suffices to show that attaching these $2$-handles to $Q$ yields something which is  homeomorphic to $(R\times S^1)\ssm B^3$. We can think of these $2$-handles as  \[H_i=D^2_i\times I := (c_i\times[-1,1])\times I,\]  for $i=1,\dots, b_1(S)$, where $H_i$  is attached  to $Q$ along $\gamma_i$ according to the map \[\partial D_i^2\to \partial Q=-\partial (S\times[-1,1])\] which is the identity on $c_i\times\{1\}$ and on $\partial c_i\times[-1,1]$, and $h$ on $c_i\times\{-1\}$. We will write \begin{equation}\label{eqn:Q}Q\cup_h H_1\cup \dots \cup H_{b_1(S)}\end{equation} for the result of this attachment. On the other hand, note that if we attach these $H_i$ to \[Q'= (R\times S^1)\ssm \inr(S\times[-1,1])\] according to the map \[\partial D_i^2\to \partial Q'=-\partial (S\times[-1,1])\] which is the identity on all of $\partial D_i^2=\partial (c_i\times[-1,1])$, then we recover precisely the complement $(R\times S^1)\ssm B^3$. Let us write \begin{equation}\label{eqn:Q'}Q'\cup_{\id} H_1\cup \dots \cup H_{b_1(S)}\end{equation} for the result of this attachment. The canonical homeomorphism from \eqref{eqn:Q} to \eqref{eqn:Q'} is then given by the map which is the identity on each $H_i$ and sends $(x,t)$ to $(x,t)$ for each \[(x,t)\in Q=(R\times_h S^1)\ssm \inr(S\times[-1,1]). \qedhere \]
\end{proof}

Let $\bar{Y}$ denote the connected sum \[\bar{Y} = Y\#(R\times S^1),\] formed as the union of $Y(p)$ with the complement of a small ball around a point in $S\times\{0\}\subset R\times S^1$. Note that the diffeomorphism $f$ extends in a canonical way to a diffeomorphism \begin{equation}\label{eqn:identMY}M(S,h,{\bf c})\cup_\partial \big((R\times S^1)\ssm B^3\big)\xrightarrow{\sim}\bar{Y}, \end{equation} which identifies the fiber $R\times\{2\}$ and the curves $\eta,\alpha$ with the corresponding objects in $\bar{Y}$. Let $V$ denote the cobordism obtained from $(R\times_h S^1)\times[0,1]$ by attaching $2$-handles to $(R\times_h S^1)\times\{1\}$ along the framed link $\mathbb{L}$  in Proposition \ref{prop:surgery-sum}. The identification in \eqref{eqn:identMY} enables us to view $V$ as a cobordism \[V:R\times_h S^1\to \bar{Y}.\] This cobordism then induces a map on instanton Floer homology, \begin{equation}\label{eqn:cobYHY} I_*(-V|{-}R)_{\nu}: I_*(-R\times_h S^1|{-}R)_{\alpha\sqcup\eta} \to I_*(-\bar{Y}|{-}R)_{\alpha\sqcup\eta},\end{equation} where $\nu\subset V$ is the properly embedded cylinder given by \[\nu=(\alpha\sqcup \eta)\times[0,1],\] and the ``$|{-}R$" in the notation for these Floer groups and map is shorthand for ``$|{-}(R\times \{2\})$". 
The codomain of this cobordism map is isomorphic to the sutured instanton homology of the manifold $-Y(p)$ (with a single equatorial suture on its boundary), as defined by Kronheimer-Mrowka in \cite{km-excision}. We will refer to the data $\data=(\bar{Y},R,\eta,\alpha)$ as a \emph{marked odd closure} of $Y(p)$, and define the \emph{sutured instanton homology of $-\data$} to be this $\C$-module, \[\SHIt(-\data):=I_*(-\bar{Y}|{-}R)_{\alpha\sqcup\eta}.\] While this module depends on the choice of closure, we proved in \cite{bs-naturality} that the modules associated to different closures are related by \emph{canonical} isomorphisms which are well-defined up to multiplication by units in $\C$.\footnote{Kronheimer-Mrowka showed that different choices give isomorphic modules.} These modules therefore form what we call a \emph{projectively transitive system of $\C$-modules} in \cite{bs-naturality}, and denote by $\SHItfun(-Y(p))$.

It is shown in  \cite{km-excision} that \[I_*(-R\times_h S^1|{-}R)_{\alpha\sqcup\eta}\cong \C.\] Let $1$ refer to any nonzero generator of this module, and define \[\iinvt(\xi,\data):= I_*(-V|{-}R)_{\nu}(1)\in I_*(-\bar{Y}|{-}R)_{\alpha\sqcup\eta}=\SHIt(-\data).\] This class is well-defined up to multiplication by units in $\C$. Moreover, we show in \cite{bs-shi} that for two different choices of marked odd closure $\data,\data'$ the elements $\Theta(\xi,\data)$ and $\Theta(\xi,\data')$ are related by the canonical isomorphism relating the modules associated to these closures.  In the language of \cite{bs-shi}, the collection $\{\Theta(\xi,\data)\}_\data$ thus defines an element  \[\Theta(\xi)\in\SHItfun(-Y(p)).\] As the notation suggests, this is an invariant of the contact manifold $(Y,\xi,p)$. We summarize that and other important properties in the theorem below, from \cite{bs-shi}.

\begin{theorem}\label{thm:theta-is-invariant}
The class $\iinvt(\xi)$ is invariant under positive stabilization of the open book decomposition $(S,h,{\bf c},f)$, and is therefore an invariant of the based contact manifold $(Y,\xi,p)$.
The invariant $\iinvt(\xi)$ is zero if $\xi$ is overtwisted and nonzero if $\xi$ is Stein fillable.
\end{theorem}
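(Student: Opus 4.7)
The plan is to prove the three claims separately: invariance under positive stabilization, vanishing in the overtwisted case, and non-vanishing in the Stein fillable case. Since the Giroux correspondence reduces contact-invariance to stabilization-invariance, and the definition already yields a well-defined element in $\SHItfun(-Y(p))$ for each open book, the first bullet contains the main bookkeeping.

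For invariance under positive stabilization, suppose $(S',h',{\bf c}')$ is obtained from $(S,h,{\bf c})$ by a single positive stabilization: one attaches a 1-handle to $S$, adjoins a new basis arc $c_{b_1(S)+1}$ running once over it, and replaces $h$ by $h \circ \tau_\delta$ for a curve $\delta$ meeting $c_{b_1(S)+1}$ transversely in one point. I would choose closures $\data$ of $(S,h,{\bf c})$ and $\data'$ of $(S',h',{\bf c}')$ sharing the same auxiliary surface $T$, and compare the surgery presentations of $\bar Y$ and $\bar Y'$ given by Proposition~\ref{prop:surgery-sum}. The presentation for $\bar Y'$ contains the surgery link for $\bar Y$ together with two new curves coming from $\delta$ and $c_{b_1(S)+1}$, arranged as a cancelling pair once one isotopes $\gamma_{b_1(S)+1}$ across the stabilizing handle. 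Standard handle cancellation applied to the cobordism $V$ then realizes $\iinvt(\xi, \data')$ as the image of $\iinvt(\xi, \data)$ under the canonical isomorphism relating the two closures, establishing invariance.

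For the overtwisted case, I would use the Honda-Kazez-Mati\'c/Goodman characterization: $\xi$ is overtwisted iff it admits an open book whose monodromy fails to be right-veering along some basis arc, or equivalently (after stabilization) one whose monodromy factorization contains an appropriately placed negative Dehn twist. Choose such an open book, and observe that the surgery link of Proposition~\ref{prop:surgery-sum} then contains a curve bounding a disk in the complement of the remaining surgery curves inside $R \times_h S^1$. Blowing down along this disk produces, inside the cobordism $V$, an embedded closed surface disjoint from $R \times \{2\}$ whose $\mu$-eigenvalue bound from \cite[Corollary~7.2]{km-excision} is incompatible with the eigenvalue $2-2g(R)$ cutting out $I_*(-\bar Y|{-}R)_{\alpha \sqcup \eta}$. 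The projection of the cobordism image to this generalized eigenspace therefore vanishes, so $\iinvt(\xi) = 0$.

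For the Stein fillable case, let $(W,J)$ be a Stein filling. By Loi-Piergallini and Plamenevskaya (Subsection~\ref{ssec:steinlef}), $W$ admits a PALF $\pi:W \to D^2$ with fiber $S$ and monodromy $h = \tau_{v_1}\cdots\tau_{v_m}$ compatible with $\xi$. Gluing $W$ to $-V$ across $\bar Y$ and then capping off the resulting Lefschetz fibration over the disk to a closed Lefschetz fibration over $S^2$ yields a closed 4-manifold $X$ containing $R$ as a tight surface (possibly after enlarging $g(S)$) together with a section of self-intersection $-1$. After possibly blowing up to meet the Betti-number hypotheses and invoking Remark~\ref{rmk:blowup-donaldson}, Theorem~\ref{thm:witten-conjecture} guarantees that $X$ has nonzero Donaldson series, hence a basic class $K_r$ with $\beta_r \neq 0$. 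The pairing formula \eqref{eqn:pairingrelinvts} expresses the corresponding Donaldson invariant of $X$ as a pairing involving $\iinvt(\xi, \data)$ on one side, so this class must be nonzero.

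The hard step will be the overtwisted vanishing. Unlike the Heegaard Floer setting, instanton Floer homology has no diagrammatic model in which a contact generator can be written down and checked to be a boundary, so the argument must be run entirely at the level of 4-manifold cobordism maps and eigenspace decompositions of $\mu(R)$. The delicate point is to produce an auxiliary surface inside $V$ whose adjunction-type eigenvalue constraint is tight enough to force the image of the generator to lie outside the $(2-2g(R))$-eigenspace of $\mu(R)$, while simultaneously ensuring that this surface does not disrupt the structure of the closure $\bar Y$.
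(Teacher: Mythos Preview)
The paper does not prove this theorem; it is stated as background and attributed to \cite{bs-shi}. So there is no ``paper's own proof'' to compare against, and your task is really to reconstruct the arguments of that earlier paper.

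That said, your sketch has genuine gaps in two of the three parts.

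\textbf{Overtwisted vanishing.} Your key claim---that a non-right-veering arc or a negative twist in the monodromy produces a surgery curve in the link $\mathbb{L}$ ``bounding a disk in the complement of the remaining surgery curves inside $R\times_h S^1$''---is not justified and is not how the argument goes. The curves $\gamma_i$ in $\partial(S\times[-1,1])$ do not acquire compressing disks simply because $h$ fails to be right-veering along $c_i$. Even granting some auxiliary surface in $V$, your eigenvalue argument is also off: a closed surface $S'\subset V$ constrains the eigenvalues of $\mu(S')$, not of $\mu(R)$, and there is no mechanism here forcing the image of the cobordism map to miss the $(2-2g(R))$-eigenspace of $\mu(-R)$. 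The actual proof in \cite{bs-shi} proceeds instead through the behavior of $\iinvt$ under bypass attachment (or equivalently under adding a boundary-parallel Dehn twist of the wrong sign), showing directly that a trivial bypass kills the class.

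\textbf{Stein fillable nonvanishing.} Your outline is correct in spirit and close to what is carried out in Section~\ref{sec:stein-fillings} of the present paper, but ``gluing $W$ to $-V$ across $\bar Y$'' does not typecheck: $\partial W = Y$, while $-V$ has boundary components $-R\times_h S^1$ and $-\bar Y = -(Y\#(R\times S^1))$. One must first promote $W$ to a cobordism $W^\dagger$ with the correct boundary (as in Subsection~\ref{ssec:lf-closures}) before any such gluing and capping makes sense. Once that is done, the closed-manifold Donaldson-invariant argument you describe is essentially the right one.

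\textbf{Stabilization invariance.} Your description is plausible but too schematic to count as a proof; the compatibility with the canonical isomorphisms of \cite{bs-naturality} is where the work lies, and ``standard handle cancellation'' does not by itself establish it.
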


In particular, the second statement is equivalent to the assertion that $\Theta(\xi,\data)$ is zero if $\xi$ is overtwisted and nonzero if $\xi$ is Stein fillable, for any marked odd closure $\data$ of $Y(p)$. We will think exclusively in terms of closures for the remainder of this article.

\begin{remark}
\label{rmk:noinvceta}
For the applications to $SU(2)$ representations in this paper, we will not actually need invariance of the class $\iinvt(\xi)$ under positive stabilization. In a related vein, the curve $\eta$ in the definition of a marked odd closure is used to define the canonical isomorphisms relating various closures and the corresponding elements $\Theta(\xi,\data)$. It turns out that we will not need these  isomorphisms either for our results about $SU(2)$ representations, though we do need them for precise statements about the linear independence of our contact elements. So, if one is only interested in our results about $SU(2)$ representations, she may ignore the curve $\eta$.
\end{remark}

%
%
%

\section{Stein fillings and the contact class} \label{sec:stein-fillings}

Our goal in this section is to prove Theorem~\ref{thm:main-linear-independence}, restated here in the language of closures.
{
\renewcommand{\thetheorem}{\ref{thm:main-linear-independence}}
\begin{theorem} Suppose $W$ is a compact 4-manifold with boundary $Y$,  that $W$ admits Stein structures $J_1,\dots,J_n$  with induced contact structures $\xi_1,\dots,\xi_n$ on $Y$, and  that  the Chern classes $c_1(J_1),\dots,c_1(J_n)$ are distinct as elements of $H^2(W;\R)$. Then there is a  marked odd closure $\data = (\bar{Y}, R, \eta,\alpha)$ of $Y(p)$ such that the elements \[\iinvt(\xi_1,\data), \dots, \iinvt(\xi_n,\data)\in \SHIt(-\data) = I_*(-\bar{Y}|{-}R)_{\alpha\sqcup\eta}\] are linearly independent, for any $p\in Y$.

\end{theorem}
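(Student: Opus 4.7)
The plan is to adapt Plamenevskaya's Heegaard Floer strategy to the instanton setting, using Kronheimer-Mrowka's structure theorem for Donaldson series (Theorem~\ref{thm:km-structure}) combined with the form of Witten's conjecture in Theorem~\ref{thm:witten-conjecture} in place of Taubes's $SW=Gr$. First, I would use the Loi-Piergallini and Akbulut-\"Ozba\u{g}ci PALF constructions, together with the stabilization flexibility recorded after Theorem~\ref{thm:gompf}, to arrange PALFs $\pi_i\colon W\to D^2$ compatible with each $J_i$ so that the induced open book decompositions $(S,h_i,\mathbf{c},f_i)$ of $(Y,\xi_i,p)$ share a common page $S$ (with one boundary component and arbitrarily large genus) and a common set of basis arcs $\mathbf{c}$; only the monodromies $h_i$, which are products of positive Dehn twists along the vanishing cycles of $\pi_i$, would depend on $i$. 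I then fix a single marked odd closure $\data=(\bar{Y},R,\eta,\alpha)$ with $R=S\cup T$ and $g(T)\geq 8$; this $\data$ is a closure of $Y(p)$ for every $\xi_i$, and $\iinvt(\xi_i,\data)=I_*(-V_i|{-}R)_\nu(1)$ is produced by the surgery cobordism $V_i$ of Section~\ref{ssec:def-invt}, differing from $V_j$ only in the monodromy used on its incoming end $-R\times_{h_i}S^1$.

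Next, I close the picture into a symplectic $4$-manifold. The Stein manifold $W^+_i := W\,\natural\,(R\times D^2)$, equipped with the Stein structure coming from $J_i$ on $W$ and a standard Stein structure on $R\times D^2$, is a Stein filling of $\bar{Y}$. Capping the other side of $\bar{Y}$ by a Lisca-Mati\'c-type symplectic cap and then stabilizing by fiber sum with sufficiently many copies of a smooth hypersurface in $\mathbb{CP}^3$ of large even degree produces a closed symplectic $X$ that contains $W^+_i$, contains a tight surface built from $R$ of genus $\geq 8$, and satisfies all the hypotheses of Theorem~\ref{thm:witten-conjecture}. Crucially, the smooth type of $X$ and the cap on the $(-\bar{Y})$-side can be chosen independently of $i$; only the Stein structure on $W^+_i$ varies. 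The pairing formula~\eqref{eqn:pairingrelinvts} applied to the splitting $X = W^+_i \cup_{\bar{Y}} X_0$ then expresses the Donaldson invariants of $X$, for the characteristic class $w_i\in H^2(X;\Z)$ whose restriction to $W$ equals $c_1(J_i)\pmod 2$, as pairings of $\Psi_{w_i,W^+_i}(1)$ with a fixed, $i$-independent element of $I_*(-\bar{Y})$ coming from $X_0$. A Lefschetz-fibration argument, mirroring the definition of $\iinvt$ via $V_i$, identifies the image of $\Psi_{w_i,W^+_i}(1)$ in the top eigenspace of $\mu(R)$ with a nonzero scalar multiple of $\iinvt(\xi_i,\data)$, reducing the problem to separating the contact elements by Donaldson invariants of $X$.

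The crux is then to use $c_1(J_i)$ to accomplish the separation. Gompf's formula (Theorem~\ref{thm:gompf}) determines $c_1(J_i)\in H^2(W)$ from rotation numbers of the vanishing cycles of $\pi_i$ and controls $w_i\in H^2(X)$; the hypothesis is that the $c_1(J_i)$ are pairwise distinct in $H^2(W;\R)$. By Theorems~\ref{thm:km-structure} and~\ref{thm:witten-conjecture},
\[
\cD^{w_i}_X(h)\;=\; e^{Q(h)/2}\sum_{r}(-1)^{(w_i^2+K_r\cdot w_i)/2}\beta_r\, e^{K_r\cdot h},
\]
and Taubes's identification of the canonical class as a Seiberg-Witten basic class provides a distinguished basic class $K_{r_0}$ whose restriction to $W$ is $\pm c_1(J_i)$. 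For $h\in H_2(W)\subset H_2(X)$, the adjunction inequality applied to the tight surface $R\subset X$ and to surfaces built by tubing $h$ with $R$-parallel copies bounds $|K_r\cdot h|$ for every basic class uniformly in $i$, except for $K_{r_0}$, which alone carries genuine $i$-dependence. Letting $h$ range over $n$ classes in $H_2(W)$ for which the numbers $c_1(J_i)\cdot h$ are pairwise distinct as $i$ varies, and scaling so that the $K_{r_0}$-term dominates, a Vandermonde argument produces $n$ linear functionals on $\SHIt(-\data)$ against which the $\iinvt(\xi_i,\data)$ form a nonsingular matrix, yielding the desired linear independence.

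The principal obstacle is the construction in the second step: producing a closed symplectic $X$ that simultaneously accommodates all $n$ Stein fillings $(W^+_i, J_i)$ without altering its underlying smooth type, satisfies the rigid hypotheses of Theorem~\ref{thm:witten-conjecture} (tight surface, $(-1)$-sphere, Betti numbers, no $2$-torsion in $H^2$), and admits enough control over the ``extra'' basic classes coming from the symplectic cap that the $c_1(J_i)$-dependence of $K_{r_0}$ dominates for some choice of $h$. A companion subtlety is the bookkeeping of the bundle data $(w,E,\theta)$: one must match the bundle defining $\SHIt(-\data)$, whose first Chern class is Poincar\'e dual to $\alpha\sqcup\eta$, with the characteristic classes $w_i$ on $X$, so that the pairing formula~\eqref{eqn:pairingrelinvts} intertwines the Donaldson invariants with the contact elements correctly.
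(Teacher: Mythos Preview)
Your proposal has a genuine gap at the heart of the second step, and it is not merely the technical obstacle you flag at the end. You want the projection of $\Psi_{w_i,W^+_i}(1)$ to the top $\mu(R)$-eigenspace to be a nonzero multiple of $\iinvt(\xi_i,\data)$, but this cannot hold. The smooth 4-manifold $W^+_i=W\natural(R\times D^2)$ is independent of $i$; its relative Donaldson invariant depends only on the smooth type and on the $SO(3)$ bundle, and instanton cobordism maps carry no Spin$^c$ refinement that could record the Stein structure $J_i$. Moreover the bundle on $\bar{Y}$ is forced to be the fixed class $PD(\alpha\sqcup\eta)$, and every $c_1(J_i)$ reduces mod~2 to $w_2(TW)$, so varying $w_i$ changes $\Psi_{w_i,W^+_i}(1)$ at most by signs. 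Hence the $n$ classes you produce on the $W^+$-side are all (up to sign) the \emph{same} element of $I_*(\bar{Y})_{\alpha\sqcup\eta}$, and there is nothing for a Vandermonde argument to separate. For the same reason, once you fix $X$ smoothly, its basic classes are fixed; there is no ``distinguished $K_{r_0}$ whose restriction to $W$ is $\pm c_1(J_i)$'' that moves with $i$.

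The paper's argument inverts your picture. The Stein filling $W$ is turned upside down into a cobordism $W^\dagger:-\bar{Y}\to -R\times S^1$ and placed, together with a fixed cap $C$, on the \emph{$i$-independent} side $\tilde{C}=W^\dagger\cup C$. All of the $i$-dependence is carried by the other piece $X_i=Z\cup(-V_i)$, where $V_i$ is precisely the surgery cobordism built from the monodromy $h_i$ of the PALF $\pi_i$; thus the closed manifolds $\tilde{X}_i=X_i\cup_{-\bar{Y}}\tilde{C}$ are \emph{different} relatively minimal Lefschetz fibrations over $S^2$, and one proves (Lemma~\ref{lem:canonical-class}) that their canonical classes $K_{\tilde{X}_i}$ restrict to $-c_1(J_i)$ on $H_2(W)\hookrightarrow H_2(\tilde{C})$. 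A single class $\Sigma\in H_2(\tilde{C})$ is then chosen on which the $K_{\tilde{X}_i}$ take distinct values, and each $\iinvt(\xi_j,\data)=\Psi_{\tilde{w}_j,X_j}(\pbot(-R))$ is paired against relative invariants $\Psi_{\tilde{w},\tilde{C}}\bigl((1+\tfrac{x}{2})g_i(\Sigma)\bigr)$ of the common piece $\tilde{C}$; the resulting Donaldson invariants of the \emph{varying} $\tilde{X}_j$ are computed via the structure theorem and vanish unless $i=j$. The idea you are missing is that the contact element is the relative invariant of the variable piece $X_i$---the one built from the monodromy---not of anything assembled directly from the Stein filling.
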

\addtocounter{theorem}{-1}
}

Our proof occupies the next three subsections, so we sketch it here. 

First, given a Lefschetz fibration on a Stein filling $(W,J)$ of $(Y,\xi)$ and a point $p\in Y$, we construct in Subsection~\ref{ssec:lf-closures} a $4$-manifold $X^\circ$ admitting a Lefschetz fibration over the annulus with closed fibers $R$, and a marked odd closure $\data =(\bar{Y}, R, \eta,\alpha) $ of $Y(p)$ such that $\bar{Y}$ separates $X^\circ$ into two pieces, \[X^\circ = -V\cup_{-\bar{Y}} W^\dagger,\] where $W^\dagger$ depends only on the smooth topology of $W$. See Figure \ref{fig:lf-over-annulus} for a schematic of $X^\circ$. 
 
Then, we describe  in Subsection~\ref{ssec:lf-capping} how to cap off $X^\circ$ with $4$-manifolds $Z$ and $C$, to obtain a closed $4$-manifold \[\tilde{X} = X\cup_{-\bar{Y}}\tilde{C}:=(Z\cup -V)\cup_{-\bar{Y}} (W^\dagger\cup C),\] still separated by $\bar{Y}$, admitting a Lefschetz fibration over the sphere which naturally extends that on $X^\circ$, and where $\tilde{C}$ depends only on $W$. Moreover, we perform this capping off in a manner that enables us  to understand the Donaldson invariants (in particular, the basic classes) of $\tilde{X}$. See Figure \ref{fig:lf-over-sphere} for a schematic of $\tilde{X}$. 

Given $n$  different Stein structures $J_1,\dots,J_n$ on $W$, inducing contact structures $\xi_1,\dots,\xi_n$ on $Y$ as in Theorem \ref{thm:main-linear-independence}, we thus obtain closed $4$-manifolds $\tilde{X}_1,\dots,\tilde{X}_n$ of the form described above. Further, we can arrange that the same marked odd closure $\data$ of $Y(p)$ separates each of these $\tilde{X}_i$ into two pieces, \[\tilde{X}_i =  X_i\cup_{-\bar{Y}}\tilde{C},\] with $\tilde{C}$ as above, such that $X_i$ has relative invariant the contact class \[\Psi_{\tilde{w_i},X_i}(\pbot(-R))=\iinvt(\xi_i,\data)\in\SHIt(-\data)=I_*(-\bar{Y}|{-}R)_{\alpha\sqcup\eta}\subset I_*(-\bar{Y})_{\alpha\sqcup\eta},\] for some polynomial $\pbot(t) \in \Q[t]$.

Finally, supposing the Chern classes of these $J_i$ are distinct as in the hypotheses of Theorem \ref{thm:main-linear-independence}, we describe in Subsection~\ref{ssec:detect-chern} how to distinguish the elements $\iinvt(\xi_1,\data), \dots, \iinvt(\xi_n,\data)$  by computing certain Donaldson invariants of the various $\tilde{X}_i$.  Specifically, we first identify a class $\Sigma \in H_2(\tilde{C})$ on which the canonical classes of the Lefschetz fibrations $\tilde{X}_i \to S^2$ take different values, $k_1,\dots,k_n$. We then  define, for each $i$, a polynomial $g_i(t)\in\Q[t]$ such that pairing the class \[\iinvt(\xi_j,\data) \in I_*(-\bar{Y})_{\alpha\sqcup\eta}=I_*(-\partial\tilde{C})_{\alpha\sqcup\eta}\] with the  relative invariant \[\Psi_{\tilde{w},\tilde C}\left(\left(1+\frac{x}{2}\right)g_i(\Sigma)\right)\in I_*(\partial \tilde{C})_{\alpha\sqcup\eta}\] gives a certain Donaldson invariant of $\tilde{X}_j$ which vanishes unless $k_j=k_i$, or equivalently unless $j=i$. This guarantees that $\iinvt(\xi_i,\data)$ could not have been a linear combination of the other $\iinvt(\xi_j,\data)$, proving Theorem~\ref{thm:main-linear-independence}.


\subsection{Lefschetz fibrations and marked odd closures} \label{ssec:lf-closures}

Suppose $(W,J)$ is a Stein filling of $(Y,\xi)$ and fix a point $p\in Y$. As described in Subsection \ref{ssec:steinlef}, $W$ admits a corresponding Lefschetz fibration \begin{equation}\label{eqn:lf}\pi:W\to D^2.\end{equation} Let $(S,h,{\bf c},f)$ be an open book decomposition of $(Y,\xi,p)$ constructed from this  Lefschetz fibration, also described therein. In particular, $h$ is a product, \[h=\tau_{v_1}\tau_{v_2} \dots \tau_{v_m}\] of positive Dehn twists along  vanishing cycles for  the  fibration.  Fix a surface $T$ of genus at least $8$, with an identification $\partial T \cong -\partial S$, and let $R = S \cup T$. In a slight abuse of notation, we will also use $h$ to denote the extension of $h$ to $R$ by the identity on $T$. By construction, $p$ is also a point of $S$. Let \[\bar{Y} = Y\#(R\times S^1),\] formed as the union of $Y(p)$ with the complement of a small ball around \[p\times\{0\}\in S\times\{0\}\subset R\times S^1,\] and fix curves $\eta,\alpha$ as described in Subsection \ref{ssec:def-invt}, so that $\data = (\bar{Y},R,\eta,\alpha)$ is a marked odd closure of $Y(p)$.

As outlined at the beginning of this section, we will define a 4-manifold $X^\circ$, admitting a Lefschetz fibration over the annulus with generic fiber $R$, which is separated by $\bar{Y}$ into two 
 cobordisms, \[-V: -R\times_h S^1 \to -\bar{Y}\qquad\textrm{and}\qquad W^\dagger: -\bar{Y} \to -R\times S^1.\] The cobordism $V$ is simply  the cobordism  used  in Subsection \ref{ssec:def-invt} to define the contact invariant $\Theta(\xi)$. In particular, letting $\nu\subset V$ denote the cylindrical cobordism $(\eta\sqcup\alpha)\times[0,1]$, as in that subsection, we have that the induced cobordism map  sends a nonzero generator to the appropriate contact class, by definition. We record this in the form of a lemma for easy reference. 
 
 \begin{lemma}
 \label{lem:Vmap} The cobordism map \[I_*(-V|{-}R)_{\nu}: \mathbb{C}\cong I_*(-R\times_h S^1|{-}R)_{\alpha\sqcup\eta} \to I_*(-\bar{Y}|{-}R)_{\alpha\sqcup\eta}=\SHIt(-\data)\] satisfies
\[I_*(-V|{-}R)_{\nu}(1) = \Theta(\xi,\data),\]  where $1$ represents any nonzero element of the domain.\qed
 \end{lemma}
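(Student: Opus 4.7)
The plan is essentially to observe that this lemma is immediate from the definition of $\Theta(\xi,\data)$ given in Subsection~\ref{ssec:def-invt}. Recall from that subsection that once one fixes an open book decomposition $(S,h,{\bf c},f)$ of $(Y,\xi,p)$ together with an auxiliary surface $T$ (and hence $R = S \cup T$) and curves $\eta,\alpha$, the contact class $\Theta(\xi,\data)$ is \emph{defined} to be the image of any nonzero generator of $I_*(-R\times_h S^1|{-}R)_{\alpha\sqcup\eta}\cong\C$ under the cobordism map associated to $(-V,\nu)$, where $V$ is built from $(R\times_h S^1)\times[0,1]$ by attaching $2$-handles along the framed link $\mathbb{L} = \gamma_1\cup\dots\cup\gamma_{b_1(S)}$ and $\nu = (\alpha\sqcup\eta)\times[0,1]$.

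Thus the first step is simply to verify that the data appearing in the current subsection matches the data used in the definition. The open book decomposition $(S,h,{\bf c},f)$ here is the one induced by the PALF $\pi:W\to D^2$ as in Subsection~\ref{ssec:steinlef}, with $h$ the product of positive Dehn twists along the vanishing cycles; the surface $T$, the closure $R = S\cup T$, the marked closure $\data = (\bar{Y},R,\eta,\alpha)$, and the cobordism $V: R\times_h S^1 \to \bar{Y}$ are then constructed exactly as in Subsection~\ref{ssec:def-invt}, and $\nu = (\eta\sqcup\alpha)\times[0,1]$ is the same cylindrical $1$-cycle.

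Given this identification, the statement $I_*(-V|{-}R)_{\nu}(1) = \Theta(\xi,\data)$ is literally the definition of $\Theta(\xi,\data)$; the only thing to note is that the equality holds for \emph{any} nonzero $1\in I_*(-R\times_h S^1|{-}R)_{\alpha\sqcup\eta}$ precisely because both $\Theta(\xi,\data)$ and the generator of this one-dimensional $\C$-module are only well-defined up to units in $\C$, as emphasized in Subsection~\ref{ssec:def-invt}. There is no real obstacle here: the lemma is recorded purely for convenience of reference in the arguments of the following subsections, where $V$ will be identified as one of the two pieces into which the manifold $X^\circ$ is split by $\bar{Y}$.
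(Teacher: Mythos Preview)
Your proposal is correct and matches the paper's approach exactly: the lemma is stated with a \qed and no proof, since the text immediately preceding it explains that $V$ is the very cobordism used in Subsection~\ref{ssec:def-invt} to define $\Theta(\xi,\data)$, so the statement is literally the definition and is recorded only for ease of reference.
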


We now describe the cobordism $W^\dagger$. The Lefschetz fibration $\pi$ in \eqref{eqn:lf} specifies a handle decomposition of $W$ in which we first attach $b_1(S)$ $1$-handles to the 4-ball to form $S\times D^2$, and then attach  $-1$-framed $2$-handles along copies of the vanishing cycles $v_i$ in  fibers $S \times \theta_i$, $\theta_i \in \partial D^2$, to obtain $W$. 
 
If we turn the above handle decomposition of $W$ upside-down and omit the 4-ball, we obtain a cobordism \[W\ssm B^4:-Y\to-S^3.\] Furthermore, we may  describe this cobordism as formed from $-Y\times[0,1]$ by first attaching $+1$-framed $2$-handles along corresponding copies of the vanishing cycles in $\{1\}\times -Y$, producing  a cobordism from $-Y$ to $-\#^{b_1(S)}(S^1\times S^2)$, and then attaching $3$-handles to the latter boundary along $2$-spheres to cancel the $S^1\times S^2$ summands.
 
Let $N(p)$ be a small ball in $-Y$ around $p$ and let $B^3$ denote a small ball around \[p\times\{0\}\subset -(R\times S^1).\] Let $W^\circ$ denote the cobordism with corners, \[W^\circ:-Y\ssm N(p)\to-S^3\ssm N(p),\] obtained by removing from the above $W\ssm B^4$ the product $N(p)\times[0,1].$  We then define the cobordism $W^\dagger$ as the union
\[ W^\dagger = W^\circ \bigcup_{\partial N(p)\times[0,1]= \partial B^3\times[0,1]} \big(-(R\times S^1) \ssm B^3\big)\times[0,1]. \]
In a slight abuse of notation, we will also use $\nu$ to denote the cylinder $\nu\subset W^\dagger$ given by $(\eta\sqcup\alpha)\times[0,1]$ as well as the cylinder \[\nu\subset X^\circ := -V\cup_{-\bar{Y}}W^\dagger\] given by the union of the cylinders $\nu\subset -V$ and $\nu\subset W^\dagger$. See Figure \ref{fig:lf-over-annulus} for a schematic diagram of $(X^\circ,\nu)$.

\begin{figure}
\labellist
\small

\pinlabel $-V$ [B] at 27 93
\pinlabel $W^\dagger$ [B] at 78 93

\tiny
\pinlabel $-R{\times_h}S^1$ [B] at 3 141
\pinlabel $-\bar{Y}$ [B] at 51 140
\pinlabel $-R{\times}S^1$ [B] at 99 141
\pinlabel $\nu$ [B] at 12 67
\pinlabel $-Y$ [r] at 54 124
\pinlabel $-R{\times}S^1$ [r] at 55 78
\pinlabel $-R{\times}S^1$ [r] at 103 78
\pinlabel $-S^3$ [r] at 103 124
\pinlabel $S^1{\times}I$ [B] at 52 12
\endlabellist
\centering
\includegraphics[width=4.5cm]{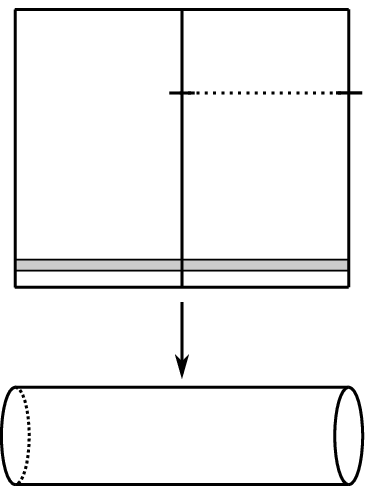}
\caption{The composition  $X^\circ = -V \cup_{-\bar{Y}} W^\dagger$ and its Lefschetz fibration over $S^1\times[0,1]$. The region of $W^\dagger$ above the dotted line consists of the handles coming from the given Lefschetz fibration on the Stein filling $W$; the region below is just a product. The cylindrical cobordism $\nu$ is represented in gray. }
\label{fig:lf-over-annulus}
\end{figure}
   
While $V$ certainly depends on the chosen Lefschetz fibration on $W$, we show that $W^\dagger$ does not. More precisely, we have the following.

\begin{lemma} \label{lem:w-dagger-smooth}
The pair $(W^\dagger, \nu )$ depends only on the smooth topology of $W$ and on the tuple $(S,R,\eta,\alpha)$, and there is a natural map \[i_\dagger: H_2(W) \to H_2(W^\dagger)\] which also depends only on this data.
\end{lemma}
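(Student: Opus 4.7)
The plan is to identify $W^\circ$ as an intrinsic object attached to the pair $(W,p)$, after which the construction of $W^\dagger$, the cylinder $\nu$, and the map $i_\dagger$ will visibly depend only on the smooth topology of $W$ together with the tuple $(S,R,\eta,\alpha)$.

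First I would argue that $W^\circ$ is canonically diffeomorphic (as a smooth 4-manifold with corners) to the complement $W\ssm\inr(D)$, where $D\subset W$ is a smoothly embedded 4-ball meeting $\partial W=Y$ in a smoothly embedded 3-ball around $p$. By construction, $W^\circ$ is obtained from $W$ by first removing an interior 4-ball $B^4$ and then removing a product neighborhood $N(p)\times[0,1]$ of a properly embedded arc running from the removed boundary ball $N(p)\subset Y$ to $\partial B^4$; together these pieces form a regular neighborhood of an embedded arc from $p$ to an interior point of $W$, which is exactly such a half-ball $D$. The Lefschetz fibration was used only to present $W\ssm B^4$ as a handlebody cobordism ending at $-S^3$, whereas the underlying 4-manifold with corners $W\ssm\inr(D)$ depends only on the smooth topology of $W$ and on the isotopy class of $p\in Y$; since $Y$ is connected the latter is not extra data (and uniqueness of tubular neighborhoods makes $D$ unique up to isotopy).

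Next, $W^\dagger$ is formed by attaching $\big({-}(R\times S^1)\ssm B^3\big)\times[0,1]$ to $W^\circ$ along the ``side'' face $\partial N(p)\times[0,1]\cong S^2\times[0,1]$, and the ball $B^3\subset R\times S^1$ is a standard small ball around a fixed point of $S\times\{0\}$, whose isotopy class is determined by the data $(S,R)$. The two identifications of $S^2\times[0,1]$ required for the gluing are canonical up to isotopy once orientations are fixed (which they are), so the resulting smooth 4-manifold $W^\dagger$ depends only on $W^\circ$ and on the smooth topology of $R\times S^1$. The cylinder $\nu=(\eta\sqcup\alpha)\times[0,1]$ lies entirely inside the glued-in piece and is determined there by the curves $\eta$ and $\alpha$, which are part of the tuple. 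This establishes that $(W^\dagger,\nu)$ depends only on the smooth topology of $W$ and on $(S,R,\eta,\alpha)$.

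For the map $i_\dagger$, the Mayer--Vietoris sequence for $W=W^\circ\cup D$, whose intersection $W^\circ\cap D$ is a 3-ball, gives an inclusion-induced isomorphism $H_2(W^\circ)\xrightarrow{\sim} H_2(W)$ since $D$ is contractible. Composing its inverse with the inclusion-induced map $H_2(W^\circ)\to H_2(W^\dagger)$ defines $i_\dagger\colon H_2(W)\to H_2(W^\dagger)$, which by the construction above depends only on the smooth topology of $W$ and on $(S,R,\eta,\alpha)$. The main obstacle is really just the first step, namely verifying that the Lefschetz-fibration-dependent presentation of $W\ssm B^4$ assembles into a smoothly canonical complement of a half-ball; but this reduces to the standard uniqueness of tubular neighborhoods of boundary-parallel arcs, which is straightforward.
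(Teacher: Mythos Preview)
Your argument is correct and, for the most part, parallels the paper's: both of you identify $W^\circ$ with the complement in $W$ of $D=B^4\cup(N(p)\times[0,1])$, and both of you define $i_\dagger$ by pushing $2$-cycles off $D$ (you phrase this as the Mayer--Vietoris isomorphism $H_2(W^\circ)\cong H_2(W)$, the paper phrases it geometrically, but these are the same map).

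Where you differ is in establishing the independence of $(W^\dagger,\nu)$ from the chosen Lefschetz fibration. The paper argues that two Lefschetz fibrations on $W$ give two handle decompositions of $W\ssm B^4$ related by handle moves that can be taken to avoid $p$, and these moves then induce a diffeomorphism between the two versions of $W^\dagger$ which is the identity on $\partial W^\dagger=-\bar{Y}\sqcup -(R\times S^1)$. You instead argue intrinsically that $W^\circ$ is the complement of a boundary half-ball, unique up to isotopy, and then assemble $W^\dagger$ by gluing on the product piece along a canonical $S^2\times[0,1]$. Your route is cleaner and avoids invoking Cerf theory for handle decompositions; the paper's route has the virtue of making the rel-boundary statement explicit, which matters later when one needs the cap $\tilde{C}$ (and its boundary identification with $\bar{Y}$) to be literally the same across different Stein structures $J_i$. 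Your approach yields this too, but you should note that the collar of $\partial N(p)$ inside the free $3$-ball boundary of $W^\circ$, and the resulting identification of the outgoing end with $-R\times S^1$, are canonical up to isotopy rel boundary (via uniqueness of collars and Smale's theorem on $\mathrm{Diff}(B^3)$), so that the boundary parametrization of $W^\dagger$ is also independent of the fibration.
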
 

\begin{proof}
Suppose we have fixed a tuple $(S,R,\eta,\alpha)$. The lemma is then saying that $(W^\dagger,\nu)$ is independent of the chosen Lefschetz fibration on $W$. As explained above, a Lefschetz fibration on $W$ provides a description of $W\ssm B^4$ as obtained from $-Y\times[0,1]$ by attaching $2$- and $3$-handles. But the handle attachments associated to different Lefschetz fibrations on $W$ can be related by a sequence of handle moves avoiding $p$. Such a sequence then gives rise to a diffeomorphism between the two corresponding versions of $(W^\dagger,\nu)$ which restricts the identity on the boundary $\bar{Y}\sqcup-(R\times S^1)$.

To describe the map \[i_\dagger: H_2(W) \to H_2(W^\dagger),\]   note that $W^\circ$ can be identified as the complement in $W$ of \[D = B^4 \cup \big(N(p) \times [0,1]\big).\]  But  $D$ retracts into $\partial W$, so any class in $H_2(W)$ can be realized by an embedded surface which avoids it and hence lies in $W^\circ \subset W^\dagger$; this defines the desired map.  To check that $i_\dagger$ is well-defined, we observe that any two surfaces in $W^\circ$ which are homologous in $W$ must cobound a 3-chain in $W$, and this 3-chain can likewise be made to avoid $D$, so they remain homologous in $W^\dagger$ as well.
\end{proof}

We next show how the Lefschetz fibration on $W$ defines a Lefschetz fibration on $X^\circ$ over the annulus, per the following lemma.  Although $X^\circ$ is constructed as a cobordism from $-R\times_h S^1$ to $-R\times S^1$, we will find it convenient to view $X^\circ$ upside-down as a cobordism to $R\times S^1$ to $R\times_h S^1$.

\begin{lemma} \label{lem:lf-cobordism}
The composite cobordism 
\[ X^\circ = -V \cup_{-\bar{Y}} W^\dagger: R \times S^1 \to R \times_h S^1 \]
admits a relatively minimal Lefschetz fibration over the annulus  with generic fiber $R$.
\end{lemma}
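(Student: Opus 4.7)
The plan is to exhibit $X^\circ$ as the preimage of an annulus under an extension of the given PALF on $W$. First I would extend $\pi\colon W\to D^2$ to a Lefschetz fibration $\hat\pi\colon \hat W\to D^2$ with closed fiber $R=S\cup T$, by setting $\hat W=W\cup_{\partial S\times D^2}(T\times D^2)$ with $\hat\pi$ equal to $\pi$ on $W$ and to projection on $T\times D^2$. The vanishing cycles of $\hat\pi$ remain $v_1,\dots,v_m\subset S\subset R$, and since $h$ is extended by the identity on $T$ there is a canonical identification $\partial\hat W\cong R\times_h S^1$. Choosing a small open disk $D_0\subset\inr(D^2)$ disjoint from the critical values and setting $A=D^2\ssm D_0$, the restriction $\hat\pi^{-1}(A)\to A$ is a relatively minimal Lefschetz fibration over an annulus with fiber $R$, whose inner boundary is canonically $R\times S^1$ and whose outer boundary is $\partial\hat W=R\times_h S^1$.

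I would then identify $-\hat\pi^{-1}(A)$ with $X^\circ$ as a cobordism from $-R\times_h S^1$ to $-R\times S^1$ via a comparison of handle decompositions. Starting from a collar of $-R\times_h S^1$, the fibration $-\hat\pi^{-1}(A)$ is built by attaching exactly $m$ two-handles, one along a pushoff of each $v_i$ in a fiber near the $-R\times_h S^1$ end with the standard PALF framing. On the other hand, $X^\circ=-V\cup_{-\bar Y}W^\dagger$ is built by first attaching the $b_1(S)$ two-handles of $-V$ along the curves $-\gamma_1,\dots,-\gamma_{b_1(S)}$, and then attaching the $m$ two-handles and $b_1(S)$ three-handles of $W^\dagger$ obtained by turning $W\ssm B^4$ upside down, the product piece $(-(R\times S^1)\ssm B^3)\times[0,1]$ contributing no additional handles.

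The key geometric observation is that the $b_1(S)$ two-handles of $-V$ form canceling pairs with the $b_1(S)$ three-handles of $W^\dagger$. Each basis arc $c_i$ is by construction the co-core of a unique $1$-handle of the page $S$, and this $1$-handle becomes a $1$-handle of $W$ (hence a $3$-handle of $W^\dagger$ after turning upside down) whose co-core is traversed exactly once by $\gamma_i$, thanks to the definition $\gamma_i=c_i\times\{1\}\cup\partial c_i\times[-1,1]\cup h(c_i)\times\{-1\}$; these data constitute a canceling $(2,3)$-pair. After carrying out these $b_1(S)$ cancellations, only $m$ two-handles remain, and the main technical step, which I expect to be the main obstacle, is checking that these can be isotoped in the boundary to become the vanishing-cycle two-handles of $-\hat\pi^{-1}(A)$ with the correct framings; this is a handle calculus computation closely parallel to the verification that a PALF on $W$ recovers the open book on $\partial W$ in the sense of \cite{akbulut-ozbagci,plamenevskaya}. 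Relative minimality of the resulting Lefschetz fibration on $X^\circ$ is then inherited from that of $\hat\pi$, since its vanishing cycles are exactly the $v_i$, which are nonseparating in $S$ and hence in $R$.
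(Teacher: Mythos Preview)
Your overall strategy—reduce to a handle calculus comparison and show that the $b_1(S)$ two-handles of $-V$ cancel against the $b_1(S)$ three-handles of $W^\dagger$, leaving only the $m$ vanishing-cycle two-handles—is exactly the paper's strategy. The gap is in your justification of the cancellation.

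You assert that the $\gamma_i$ two-handle and the $i$th three-handle form a canceling $(2,3)$-pair because ``the co-core is traversed exactly once by $\gamma_i$.'' But $\gamma_i$ contains the arc $h(c_i)\times\{-1\}$, and $h(c_i)$ will in general cross the co-core arcs $c_j$ for many $j$, so there is no reason the relevant algebraic or geometric intersection number is $\delta_{ij}$. More fundamentally, the $v_j$ two-handles are attached \emph{between} the $\gamma_i$ two-handles and the three-handles, so the attaching two-spheres of the three-handles live in a boundary that has already been modified by the $v_j$ surgeries; you have not located those spheres relative to the belt circles of the $\gamma_i$ handles. The sentence ``whose co-core is traversed exactly once by $\gamma_i$'' conflates a 2-dimensional intersection in the page with the 4-dimensional cancellation criterion, and does not establish the latter.

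The paper's proof supplies precisely the missing step: before attempting any cancellation, it \emph{slides} each $\gamma_i$ over the $v_j$ two-handles. Because the $v_j$ surgeries realize $h^{-1}$, the effect on the bottom arc is to replace $h(c_i)\times\{-1\}$ by $c_i\times\{-1\}$, yielding $\gamma_i' = \partial(c_i\times[0,1])$. Now the $\gamma_i'$ bound disjoint disks in $S\times[-1,1]$, so their two-handles visibly create $S^1\times S^2$ summands which the three-handles cancel. Only after these slides does the residual handle description consist of exactly the $(-1)$-framed $v_j$ handles on $R\times S^1$. So the ``main technical step'' is not the subsequent isotopy you flag; it is the slide that straightens $h(c_i)$ back to $c_i$, and this is what your argument is missing.
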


\begin{proof}
From the definitions of $-V$ and $W^\dagger$, we have that $X^\circ$ is formed by thickening $-R\times_h S^1$; then attaching $\partial (S\times[-1,1])$-framed $2$-handles along the curves \[\gamma_i\subset \partial (S\times[-1,1])\subset -R\times_h S^1;\] then attaching $+1$-framed $2$-handles (with respect to the $S$-framing) along copies   \[v_j\times \{\theta_j\}\subset S\times\{\theta_j\}\] of the vanishing cycles, where \[-1<\theta_1<\dots<\theta_m<1;\] and then attaching $3$-handles along  the $S^2$'s in the resulting \[-\big(\#^{b_1(S)}(S^1\times S^2)\big)\#(R\times S^1)\] boundary. The left side  of Figure~\ref{fig:closure-slides} shows a Kirby diagram in $-R \times_h S^1$ for the $2$-handle attachments along the $\gamma_i$ and the copies of the $v_j$.

Observe that if we slide the bottom arc $h(c_i) \times \{-1\}$ of each $\gamma_i$ over the $2$-handles attached along the $v_j\times\{\theta_j\}$, the effect on  $\gamma_i$ is simply to replace this arc  with a copy of $h^{-1}(h(c_i))=c_i$, as shown in the middle  of Figure~\ref{fig:closure-slides}.  This is because performing $+1$-surgeries on these copies of the $v_j$ is topologically the same as cutting along an $S\times\{\theta\}$ fiber and regluing by the corresponding composition of left-handed Dehn twists around the $v_j$, and this composition is precisely $h^{-1}$. 

\begin{figure}
\labellist

\pinlabel $T$ at 300 455
\pinlabel $S$ at 125 455
\pinlabel $h$ at 0 370
\tiny

\pinlabel ${-}1$ at 403 76
\pinlabel $0$ at 410 176
\pinlabel $1$ at 410 283
\pinlabel $3$ at 410 387
\pinlabel slide at 432 245
\pinlabel 3-hdles at 891 245
\pinlabel $c_i$ at 30 276
\pinlabel $h(c_i)$ at -29 67
\pinlabel $v_j$ at 29 173
\pinlabel $c_i$ at 505 276
\pinlabel $c_i$ at 505 170
\pinlabel $v_j$ at 504 68
\pinlabel $v_j$ at 960 68
\pinlabel $\alpha$ at 269 311
\pinlabel $\eta$ at 347 301
\endlabellist
\centering
\includegraphics[width=15.2cm]{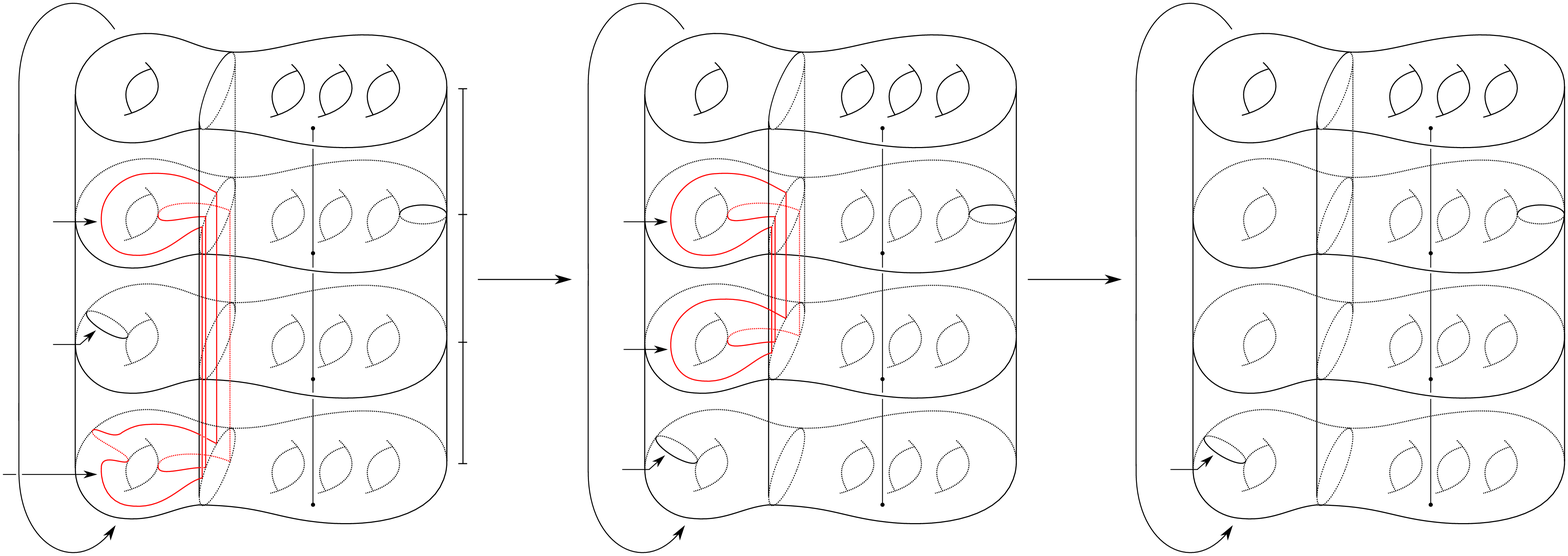}
\caption{Left, attaching $2$-handles along the $\gamma_i$ and copies of $v_j$ in $-R\times_hS^1$; we have labeled the top and bottom arcs of $\gamma_i$ by $c_i$ and $h(c_i)$. Middle, the result of sliding the $\gamma_i$ over the $v_j$ handles. Right, after attaching $3$-handles to cancel the $\gamma_i'$ handles, we are left only with $+1$-framed attachments along the copies of $v_j$.}
\label{fig:closure-slides}
\end{figure}

Let us denote the  curves obtained from $\gamma_i$ via these slides by $\gamma_i'$. After an isotopy of the $\gamma_i'$ and the $v_j$, we may view $\gamma_i'$ as given by \[\gamma_i' = \partial (c_i\times[0,1]),\] and the copies of $v_j$ as \[v_j\times \{\theta_j'\}\subset S\times\{\theta_j'\},\] where \[-1\leq \theta_1'<\dots<\theta_m'<0,\] as shown in the middle of Figure~\ref{fig:closure-slides}. 

The cobordism $X^\circ$ is thus formed by first attaching $+1$-framed $2$-handles along the $v_j\times \{\theta_j'\}$; then attaching $\partial (S\times[0,1])$-framed $2$-handles along the $\gamma_i'$, which has the effect of connect-summing the outgoing boundary with $b_1(S)$ copies of $S^1\times S^2$, as these $\gamma_i'$ bound disjoint disks in $S\times[0,1]$; and then attaching $3$-handles along the spheres $\{\ast\} \times S^2$ in these $S^1 \times S^2$ summands, canceling the $2$-handles  attached along the $\gamma_i'$. That is, $X^\circ$ is  obtained by simply thickening $-R\times_h S^1$ and then attaching $+1$-framed $2$-handles along the $v_j\times \{\theta_j'\}$, as depicted on the right of Figure~\ref{fig:closure-slides}. Therefore, as a cobordism  \[X^\circ:R \times S^1\to R\times_h S^1,\] $X^\circ$  is obtained from  $(R\times S^1)\times [0,1]$ by   attaching $-1$-framed $2$-handles along the $v_j\times \{\theta_j'\}\times\{1\}.$ This shows that $X^\circ$  admits a relatively minimal Lefschetz fibration over the annulus $S^1\times[0,1]$, with generic fiber $R$ and  vanishing cycles given by the $v_j$, as claimed.  
\end{proof}

The Lefschetz fibrations on $W$ and $X^\circ$ equip these $4$-manifolds with canonical classes, $K_W$ and $K_{X^\circ}$. The remainder of this subsection is devoted to understanding their relationship, per Lemma \ref{lem:canonical-class} below. In this lemma,  \[i: H_2(W) \to H_2(X^\circ)\] refers to the composition of the map \[i_\dagger: H_2(W) \to H_2(W^\dagger)\] of Lemma~\ref{lem:w-dagger-smooth}  with the map  $H_2(W^\dagger)\to H_2(X^\circ)$ induced by inclusion.

\begin{lemma} \label{lem:canonical-class}
The canonical classes of $W\to D^2$ and $X^\circ \to S^1 \times [0,1]$ satisfy
\[ K_W \cdot \Sigma = K_{X^\circ}\cdot i(\Sigma) \]
for all $\Sigma \in H_2(W)$.
\end{lemma}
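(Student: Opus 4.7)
The plan is to prove the identity by showing that the vertical tangent bundles of the two Lefschetz fibrations agree on an open submanifold of $X^\circ$ containing a representative of any class in $H_2(W)$.

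I first observe that for any Lefschetz fibration $f\colon X \to B$ with a compatible almost complex structure, the splitting $TX \cong T^v X \oplus f^*TB$ as complex vector bundles gives $c_1(TX) = c_1(T^v X) + f^* c_1(TB)$. Since $H^2(D^2) = H^2(S^1 \times [0,1]) = 0$, the horizontal contribution $f^*c_1(TB)$ vanishes in both fibrations, and so
\[
K_W \;=\; -c_1(T^v W) \qquad\text{and}\qquad K_{X^\circ} \;=\; -c_1(T^v X^\circ).
\]

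Next, I would identify $W$ as a sub-Lefschetz-fibration of $X^\circ$. Choose a small disk $D' \subset S^1 \times [0,1]$ containing all the critical values of $\pi_{X^\circ}$. By the proof of Lemma~\ref{lem:lf-cobordism}, $\pi_{X^\circ}^{-1}(D')$ is a PALF over $D'$ with fiber $R = S \cup T$ and vanishing cycles $v_1, \ldots, v_m$, which are exactly those of $\pi_W$. Using the handle-level identifications $W \cong (S \times D^2) \cup (\text{2-handles along }v_j)$ and $\pi_{X^\circ}^{-1}(D') \cong (R \times D') \cup (\text{same 2-handles})$, together with $S \subset R$ and an identification $D^2 \cong D'$, I get a natural embedding $W \hookrightarrow \pi_{X^\circ}^{-1}(D') \subset X^\circ$ along which $\pi_{X^\circ}$ restricts to $\pi_W$. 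Consequently, the vertical tangent bundles agree as complex line bundles on the image of $W$:
\[
T^v X^\circ \big|_W \;\cong\; T^v W.
\]

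The main technical obstacle is to verify that this sub-Lefschetz-fibration embedding induces the same map on $H_2$ as the map $i$ from Lemma~\ref{lem:w-dagger-smooth}, which is defined through the $W^\dagger$ construction. Both embeddings place a representative of $\Sigma \in H_2(W)$, chosen to avoid the retract region $D = B^4 \cup N(p)\times[0,1]$ and hence lying in $W^\circ$, into $X^\circ$; tracing the handle slides in the proof of Lemma~\ref{lem:lf-cobordism} should show that the two resulting inclusions $W^\circ \hookrightarrow X^\circ$ differ only by an ambient isotopy, and therefore agree on homology. Granted this compatibility, for $\Sigma \in H_2(W)$ with representative $\widetilde\Sigma \subset W^\circ$, the class $i(\Sigma)$ is represented by the same surface viewed in $W^\circ \subset X^\circ$, and
\[
K_W \cdot \Sigma \;=\; -\langle c_1(T^v W), [\widetilde\Sigma]\rangle \;=\; -\langle c_1(T^v X^\circ), [\widetilde\Sigma]\rangle \;=\; K_{X^\circ} \cdot i(\Sigma),
\]
completing the proof.
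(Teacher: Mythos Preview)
Your approach is correct in outline and close to the paper's, but organized around bundles rather than explicit cycles. The paper does not set up a sub-PALF embedding $W \hookrightarrow X^\circ$ and then verify it induces $i$; instead it represents every $\Sigma \in H_2(W)$ concretely as a $2$-chain $\sigma \subset S$ capped off by Lefschetz thimbles, observes that $K_W \cdot \Sigma$ is determined purely by the topology of $\sigma$ and the number of thimbles (since $K_W|_S = -e(TS)$ and each thimble contributes in a standard way), and then notes that $i(\Sigma)$ is represented by the \emph{same} $\sigma \subset S \subset R$ together with the same thimbles inside $X^\circ$, so the pairing with $K_{X^\circ}$ is computed identically.

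The payoff of the paper's route is that it dissolves your ``main technical obstacle'': you correctly flag that the sub-PALF embedding and the map $i$ (defined via $W^\circ \subset W^\dagger$) must be shown to agree on $H_2$, and you defer this to tracing the handle slides of Lemma~\ref{lem:lf-cobordism}. The paper avoids this by never comparing two embeddings; it simply identifies a common cycle representative on both sides. Your bundle-theoretic identity $T^v X^\circ|_W \cong T^v W$ is essentially the reason the paper's local computation works, so the arguments are really the same underneath --- but as written, your proof leaves the compatibility step as a promissory note, whereas the paper's representative-based argument makes it immediate.
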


\begin{proof}

In order to understand $K_W$, recall once more that $W$ is built from $S \times D^2$ by attaching $-1$-framed 2-handles along the vanishing cycles $v_1,\dots,v_m$, which we will view as living in a single fiber $S$. Since $S\times D^2$ has the homotopy type of a 1-complex, we have
\[ H_2(W) = \ker(\Z^m \to H_1(S)), \]
where the map sends $(c_1,\dots,c_m)$ to $\sum c_i[v_i]$.  In other words, every class $\Sigma \in H_2(W)$ is represented by a 2-chain $\sigma$ in $S$ with boundary $\partial \sigma$ equal to some linear combination of the $v_i$, together with Lefschetz thimbles capping off each $v_i$ component of $\partial\sigma$ in the core of the corresponding 2-handle.  

The canonical class $K_W$ is, by definition, \[c_1(\textrm{det}_{\mathbb{C}}(T^*W)) = -c_1(W,J).\]  Since the fiber $S$ can be made symplectic, $K_W$ restricts to $S$ as $-e(TS)$, and so its evaluation on the chain $\sigma$ is determined entirely by the topology of $\sigma$. Similarly, its evaluation on the Lefschetz thimbles depends only on their number, since each is identified with the core of a $-1$-framed handle in a standard way.  Thus the evaluation $K_W \cdot \Sigma$ depends only on the chain $\sigma \subset S$.  (See \cite[Section~2]{osz-symplectic} for an explicit description of this evaluation in the case where $\sigma$ is a subsurface of $S$, noting that their $c_1(k)$ is our $-K_W$.) 


To complete the proof, we simply note that if $\Sigma \in H_2(W)$ is represented by the $2$-chain $\sigma\subset S$ together with some Lefschetz thimbles, then $i(\Sigma)\in H_2(X^{\circ})$ is represented by the same $\sigma\subset S\subset R$, viewed as a $2$-chain in a fiber $R$ of the Lefschetz fibration on $X^{\circ}$, together with the corresponding Lefschetz thimbles. The evaluation $K_{X^\circ} \cdot i(\Sigma)$ is therefore computed in exactly the same way as $K_W \cdot \Sigma$.
\end{proof}


\subsection{Capping off Lefschetz fibrations} \label{ssec:lf-capping}

Below, we describe how to cap off the cobordism $X^{\circ}$ defined in  the previous subsection with 4-manifolds $C$ and $Z$ to form a closed $4$-manifold $\tilde{X}$, as outlined at the beginning of this section.  We  construct these caps  carefully, per the lemma below,  in a manner that enables us   to understand the basic classes of $\tilde{X}$. 

\begin{lemma} \label{lem:lf-cap}
There exist smooth 4-manifolds $C$ and $Z$, with boundaries \[\partial C \cong R \times S^1\,\,\,\textrm{ and }\,\,\,\partial Z \cong -R \times_h S^1\] such that: 
\begin{itemize}
\item $H_1(C) = H_1(Z) = 0$;
\item $C$ and $Z$ admit relatively minimal Lefschetz fibrations over $D^2$ with generic fiber $R$, compatible with the given fibrations of their boundaries; and
\item $C$ and $Z$ each contain two closed, smoothly embedded surfaces of genus $2$ and self-intersection $2$ which are disjoint from each other and from a generic fiber; in particular, $b_2^+(C) > 1$ and $b_2^+(Z) > 1$.
\end{itemize}
Moreover, the construction of $C$ depends only on $R$.
\end{lemma}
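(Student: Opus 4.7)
Both $C$ and $Z$ will be built as positive allowable Lefschetz fibrations (PALFs) over $D^2$ with generic fiber $R$; each is specified by a factorization of an element of $\mathrm{MCG}(R)$ into positive Dehn twists along nonseparating simple closed curves. The genus condition $g(R) \geq 8$ gives us ample room in $\mathrm{MCG}(R)$ to realize the three required conditions simultaneously.

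For $C$, I would start from a positive factorization $\tau_{u_1}\cdots\tau_{u_N} = \mathrm{id}$ in $\mathrm{MCG}(R)$. Such factorizations exist for $g(R) \geq 2$ from classical mapping-class-group relations (e.g.\ chain, lantern, and hyperelliptic relations, together with the positive factorizations underlying known closed Lefschetz fibrations on simply connected $4$-manifolds). The associated PALF has $\partial C \cong R \times S^1$ with identity monodromy. I would then enlarge this factorization by concatenating it with further positive factorizations of the identity, chosen so that (i) the collection of vanishing cycles contains a set of nonseparating curves whose homology classes span $H_1(R;\mathbb{Z})$, which forces $H_1(C) = 0$ after the corresponding $2$-handle attachments, and (ii) inside two disjoint genus-$2$ subsurfaces of $R$ there sit chain configurations giving rise to two disjoint embedded closed surfaces of genus $2$ and square $2$ in $C$, realized by a standard sub-Lefschetz-fibration construction disjoint from a generic fiber. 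Since each ingredient depends only on $R$, so does the resulting $C$.

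For $Z$, I would take the positive factorization $h = \tau_{v_1}\cdots\tau_{v_m}$ coming from the chosen PALF on $W$ (extended across $T$ by the identity) and concatenate it with a positive factorization of the identity of exactly the kind used to build $C$. Since the identity is central in $\mathrm{MCG}(R)$, the enlarged word still represents $h$, so with the standard orientation conventions the resulting PALF has boundary $-R \times_h S^1$. The appended portion once again contributes the homology-killing vanishing cycles and a pair of disjoint tight surfaces, both arranged to lie in a part of $Z$ disjoint from a generic fiber and from each other. The compatibility of the induced fibration on $\partial Z$ with the given mapping-torus structure on $-R \times_h S^1$ is automatic from the construction.

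The main obstacle is realizing the three conditions simultaneously by \emph{positive} vanishing cycles without disturbing the prescribed boundary monodromy. The decisive point is that we may insert a positive factorization of the identity into any word in $\mathrm{MCG}(R)$ without altering its image; this decouples the boundary-monodromy requirement from the $H_1$-killing and tight-surface requirements. The only nontrivial input beyond standard PALF technology is the existence of a single positive factorization of the identity in $\mathrm{MCG}(R)$ that simultaneously contains a homology-spanning set of nonseparating curves and two disjoint chain configurations yielding the required tight surfaces, which is afforded by the genus lower bound $g(R) \geq 8$.
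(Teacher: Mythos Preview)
Your overall strategy is morally the paper's: concatenating a positive word with a positive factorization of the identity in $\mathrm{MCG}(R)$ is precisely fiber summing with a closed Lefschetz fibration over $S^2$, which is how the paper builds $C$ and $Z$. But there is a genuine error in your construction of $Z$. A PALF over $D^2$ whose monodromy word represents $h$ has oriented boundary $R \times_h S^1$, not $-R \times_h S^1$; your appeal to ``standard orientation conventions'' does not reverse this. To obtain $\partial Z \cong -R \times_h S^1 \cong R \times_{h^{-1}} S^1$ you must build a PALF with total monodromy $h^{-1}$, which requires factoring $h^{-1}$---not $h$---as a product of positive Dehn twists about nonseparating curves. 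That such a factorization exists (for closed $R$ of genus at least $2$) is a nontrivial input you never invoke, and the given factorization $\tau_{v_1}\cdots\tau_{v_m}$ of $h$ coming from the Stein filling is of no direct use here. The paper handles exactly this point by first factoring $h^{-1}$ to produce a piece $Z_0$ with the correct boundary, and only then fiber summing with copies of a closed fibration $C_0 \#_R C_0$ to arrange $H_1=0$ and the tight surfaces.

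A secondary gap is that your production of the genus-$2$, self-intersection-$2$ surfaces is too vague to assess: ``chain configurations'' and a ``standard sub-Lefschetz-fibration construction'' do not by themselves yield closed surfaces of the prescribed genus and square disjoint from a generic fiber. The paper's construction is concrete: inside each $C_0 \#_R C_0$ summand one takes a $(-2)$-sphere $\mathbb{S}$ over a matching path between two critical values with the same vanishing cycle $\gamma$, and a square-zero torus $\mathbb{T}$ over a separating circle in the base whose fiberwise restriction is dual to $\gamma$; then $\mathbb{S}\cdot\mathbb{T}=1$, and surgering two parallel copies of $\mathbb{T}$ to $\mathbb{S}$ produces a genus-$2$ surface in the class $[\mathbb{S}]+2[\mathbb{T}]$ with self-intersection $2$.
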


\begin{proof}
We proceed along the lines of \cite[Section~4]{sivek-donaldson}.  To define  $C$, we first choose a relatively minimal Lefschetz fibration $C_0 \to S^2$ with generic fiber $R$ whose vanishing cycles contain a set of generators for $H_1(R)$. We then define $C$ to be the fiber sum \[C = (R \times D^2) \#_{R} \big(C_0 \#_{R} C_0\big) \#_{R} \big(C_0 \#_{R} C_0\big).\]  We have $H_1(C) = 0$ because the vanishing cycles generate the homology of the fiber.

To  verify the claim about   surfaces of positive intersection in $C$, it is enough to check that each $C_0 \#_{R} C_0$ summand contains a  surface of the form described in the lemma. We prove this exactly as in the proof of \cite[Lemma~4.6]{sivek-donaldson}.  Namely,  take a matching path between one critical value in each copy of $C_0$, each with the same vanishing cycle $\gamma \subset R$, and let $\mathbb{S}$ be a sphere of self-intersection $-2$ lying above this path.  Next,  take a circle in the base which separates the bases of the two $C_0$ summands and intersects the matching path once, and let $\mathbb{T}$ be a torus of self-intersection zero lying above this circle whose restriction to each fiber is a curve dual to $\gamma$.  Then $\mathbb{S} \cdot \mathbb{T} = 1$, and by surgering two parallel copies of $\mathbb{T}$ to $\mathbb{S}$ we obtain a genus $2$ surface of self-intersection \[([\mathbb{S}]+2[\mathbb{T}])^2 = 2\] in the summand $C_0 \#_{R} C_0$.

The construction of $Z$ proceeds identically, except for the first step: we first factor  \[h^{-1}:R\to R\] into a product of positive Dehn twists around nonseparating curves and construct a  relatively minimal Lefschetz fibration $Z_0 \to D^2$  with  corresponding vanishing cycles and generic fiber $R$, so that \[\partial Z_0 = R\times_{h^{-1}}S^1 \cong -R\times_hS^1.\]   We then define $Z$ to be the fiber sum \[Z=Z_0 \#_{R} \big(C_0 \#_{R} C_0\big) \#_{R} \big(C_0 \#_{R} C_0\big),\] with $C_0$ as above.
\end{proof}

We now define the closed $4$-manifold $\tilde{X}$ by gluing  these caps to $X^\circ$,  \[\tilde{X} =Z\,\cup_{-R\times_h S^1} \,-V\,\cup_{-\bar{Y}}\, W^\dagger\,\cup_{-R\times S^1}\, C,\] as indicated in Figure \ref{fig:lf-over-sphere}. Note that $\bar{Y}$ separates $\tilde{X}$ into the $4$-manifolds  \[X:=Z\cup -V\,\,\,\textrm{ and }\,\,\,\tilde{C}:= W^\dagger\cup C\] with boundaries \[\partial X = -\bar{Y}=-\partial \tilde{C}.\] Since $H_1(Z)=H_1(C)=0$, we may extend the cylinder $\nu \subset X^\circ$ to a closed closed surface \[\tilde{\nu} \subset \tilde{X}\] by capping off the cycles $\nu \cap \partial Z$ and $\nu \cap \partial C$ inside $Z$ and $C$. Let us fix some such $\tilde{\nu}$.

We record some of the important properties of $(\tilde{X},\tilde{\nu})$ in the following lemma.

\begin{figure}
\labellist
\small

\pinlabel $-V$ [B] at 88 95
\pinlabel $Z$ [B] at 35 95
\pinlabel $C$ [B] at 190 95
\pinlabel $W^\dagger$ [B] at 138 95

\tiny
\pinlabel $-R{\times_h}S^1$ [B] at 63 144
\pinlabel $-\bar{Y}$ [B] at 111 143
\pinlabel $-R{\times}S^1$ [B] at 159 143
\pinlabel $\tilde{\nu}$ [B] at 47 70
\pinlabel $-Y$ [r] at 114 128
\pinlabel $-R{\times}S^1$ [r] at 163 81
\pinlabel $-R{\times}S^1$ [r] at 115 81
\pinlabel $-S^3$ [r] at 163 128
\pinlabel $S^1{\times}I$ [B] at 112 12
\pinlabel $D^2$ [B] at 36 12
\pinlabel $D^2$ [B] at 190 12
\endlabellist

\centering
\includegraphics[width=9cm]{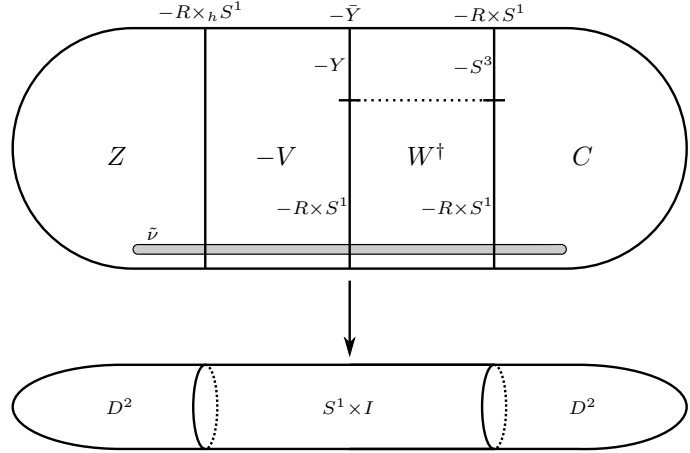}
\caption{The Lefschetz fibration $\tilde{X} = \big(-Z \cup-V \cup_{-\bar{Y}} W^\dagger \cup C\big) \to S^2$.}
\label{fig:lf-over-sphere}
\end{figure}

\begin{lemma} The closed $4$-manifold $\tilde{X}$  above satisfies the following:
\label{lem:tildex}
\begin{enumerate}
\item \label{i:tilde-x-lf} there is a relatively minimal Lefschetz fibration $\tilde{X} \to S^2$ with generic fiber $R$ extending the Lefschetz fibration $X^\circ \to S^1 \times [0,1]$;
\item \label{i:tilde-x-tight} the 3-manifold $-\bar{Y}$ separates $\tilde{X}$ into two pieces, $X \cup_{-\bar{Y}} \tilde{C}$, each of which contains two disjoint genus-2 surfaces of self-intersection 2 and thus has $b^+_2 > 1$;
\item \label{i:tilde-x-simple} $H_1(\tilde{X}) = 0$ and $\tilde{X}$ has simple type; 
\item \label{i:tilde-x-iinvt} the relative invariant of $X$ satisfies
\[ I_*(X|{-}R)_{\tilde{\nu}}(1) = \iinvt(\xi,\data). \]
\end{enumerate}
Moreover, the pair $(\tilde{C}, \tilde{\nu} \cap \tilde{C})$ is independent of the Lefschetz fibration on $W$.
\end{lemma}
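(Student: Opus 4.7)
The plan is to verify the four enumerated items essentially by unpacking the constructions of $X^\circ$, $W^\dagger$, $C$, and $Z$ given in Lemmas~\ref{lem:w-dagger-smooth}, \ref{lem:lf-cobordism}, and \ref{lem:lf-cap}, with the main substance lying in identifying the relative invariant in (\ref{i:tilde-x-iinvt}).

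Items (\ref{i:tilde-x-lf})--(\ref{i:tilde-x-simple}) should follow quickly. For (\ref{i:tilde-x-lf}), I would combine Lemma~\ref{lem:lf-cobordism} (which fibers $X^\circ$ over the annulus $S^1\times[0,1]$ with generic fiber $R$) with Lemma~\ref{lem:lf-cap} (which fibers $C$ and $Z$ over $D^2$ with the same generic fiber $R$ compatibly with their boundary fibrations), and glue the base spaces as $D^2 \cup (S^1\times[0,1]) \cup D^2 = S^2$ to assemble a relatively minimal Lefschetz fibration $\tilde{X}\to S^2$ extending the one on $X^\circ$. For (\ref{i:tilde-x-tight}), $-\bar{Y}$ visibly separates $\tilde{X}$ into $X = Z \cup -V$ and $\tilde{C} = W^\dagger \cup C$ from the construction, and the two disjoint genus-$2$ surfaces of self-intersection $2$ guaranteed by Lemma~\ref{lem:lf-cap} lie inside $Z\subset X$ and $C\subset \tilde{C}$ respectively, giving a two-dimensional positive subspace of the intersection form in each piece. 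For (\ref{i:tilde-x-simple}), $H_1(\tilde{X})=0$ because the vanishing cycles of the fibration $\tilde{X}\to S^2$ include those of a single $C_0$ summand, which by construction generate $H_1(R)$; and $\tilde{X}$ has simple type by \cite[Theorem~8.1]{km-embedded} applied to the surfaces above, which are tight since $2g-2 = 2$.

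The main obstacle is (\ref{i:tilde-x-iinvt}). I would view $X$ as the composite cobordism $\emptyset \xrightarrow{Z} -R\times_h S^1 \xrightarrow{-V} -\bar{Y}$, decompose $\tilde{\nu}\cap X$ compatibly, and apply the composition law for relative Donaldson invariants to obtain
\[ I_*(X|{-}R)_{\tilde{\nu}\cap X}(1) \;=\; I_*(-V|{-}R)_{\nu}\!\left(I_*(Z|{-}R)_{\tilde{\nu}\cap Z}(1)\right). \]
By Lemma~\ref{lem:Vmap}, once one knows that $I_*(Z|{-}R)_{\tilde{\nu}\cap Z}(1)$ is a nonzero element of $I_*(-R\times_h S^1|{-}R)_{\alpha\sqcup\eta}\cong\C$, the right-hand side equals $\iinvt(\xi,\data)$ up to a unit scalar (which we are free to absorb into the choice of generator). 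Establishing this nonvanishing is the hard part: it relies on the deliberate design of $Z$ as a Lefschetz fibration realizing $h^{-1}$ as a product of positive Dehn twists, fiber-summed with two copies of $C_0\#_R C_0$, so that the relative invariant projects nontrivially to the top-eigenvalue generalized eigenspace of $\mu(R)$. I would adapt Sivek's computation in \cite[Section~4]{sivek-donaldson}, where the analogous nonvanishing is proved by a basic-class argument showing that the $C_0\#_R C_0$ summands contribute nontrivially to the top-weight part and that fiber-summing with $Z_0$ preserves this.

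For the Moreover assertion, Lemma~\ref{lem:w-dagger-smooth} tells us that $(W^\dagger,\nu\cap W^\dagger)$ depends only on the smooth topology of $W$ and on the tuple $(S,R,\eta,\alpha)$, and Lemma~\ref{lem:lf-cap} tells us that the cap $C$ depends only on $R$. Since the closed surface $\tilde{\nu}\cap \tilde{C}$ is obtained by capping off $\nu\cap\partial C$ inside $C$ in a manner intrinsic to $C$, the pair $(\tilde{C},\tilde{\nu}\cap\tilde{C})$ has no dependence on the Lefschetz fibration chosen on $W$.
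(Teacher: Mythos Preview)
Your proposal is correct and follows essentially the same route as the paper: items (\ref{i:tilde-x-lf})--(\ref{i:tilde-x-simple}) are dispatched exactly as you describe, and for (\ref{i:tilde-x-iinvt}) the paper uses the same composition-law decomposition $I_*(X|{-}R)_{\tilde\nu}(1) = I_*(-V|{-}R)_\nu\big(I_*(Z|{-}R)_{\tilde\nu}(1)\big)$ together with Lemma~\ref{lem:Vmap}. The one refinement is that the nonvanishing of $I_*(Z|{-}R)_{\tilde\nu}(1)$ need not be re-derived---it is quoted directly as \cite[Proposition~8.2]{sivek-donaldson}, whose hypotheses (relatively minimal Lefschetz fibration over $D^2$, $b_2^+(Z)\ge 1$, fiber genus $\ge 2$, fiber dual to $\tilde\nu$) are exactly what Lemma~\ref{lem:lf-cap} was designed to guarantee.
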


\begin{remark}
\label{rmk:odd}
Note that since $\tilde X$ admits an almost complex structure, we have that \[b_2^+(\tilde X)-b_1(\tilde X)+1\] is even. The fact, from Lemma \ref{lem:tildex}, that $b_1(\tilde X)=0$ then implies that $b_2^+(\tilde X)$ is odd. We will make use of this later.
\end{remark}

\begin{proof}[Proof of Lemma \ref{lem:tildex}]
Property \eqref{i:tilde-x-lf} follows immediately from the construction of $\tilde{X}$. See Figure \ref{fig:lf-over-sphere} for a schematic of the Lefschetz fibration on $\tilde{X}$. Property \eqref{i:tilde-x-tight} follows from the fact that the same is true of $Z$ and $C$. For Property  \eqref{i:tilde-x-simple}, note that $H_1(\tilde{X}) = 0$ since  the vanishing cycles of the Lefschetz fibration on $\tilde{X}$ generate the homology of the fiber $R$, as this is true for $Z$ and $C$. Moreover, $\tilde{X}$ has simple type because, as recorded in Property \eqref{i:tilde-x-tight},  $\tilde{X}$ contains a tight surface (see the discussion in Subsection \ref{ssec:donaldson}). For Property \eqref{i:tilde-x-iinvt}, note that since $X = Z\circ -V$, we have  \begin{equation}\label{eqn:compZX} I_*(X|{-}R)_{\tilde{\nu}}(1) = I_*(-V|{-}R)_{\nu}\big(I_*(Z|{-}R)_{\tilde{\nu}}(1)\big). \end{equation} Since there is a relatively minimal Lefschetz fibration $Z \to D^2$ with $b_2^+(Z) \geq 1$ whose fiber $R$ has genus at least 2 and is dual to $\tilde{\nu}$, we know from \cite[Proposition~8.2]{sivek-donaldson} that the relative invariant 
\[ I_*(Z|{-}R)_{\tilde{\nu}}(1) \in I_*(-R \times_h S^1|{-}R)_{\alpha\sqcup\eta} \cong \C \]
is nonzero, so the element in \eqref{eqn:compZX} is simply $I_*(-V|{-}R)_{\nu}(1)$, which is equal to $\Theta(\xi,\data)$ by construction/definition, as recorded in Lemma \ref{lem:Vmap}. The last claim, that $(\tilde{C}, \tilde{\nu} \cap \tilde{C})$ is independent of the Lefschetz fibration on $W$, is self-evident.
\end{proof}

Since $\tilde{X}$ admits a relatively minimal Lefschetz fibration over the sphere with  fiber $R$ of genus at least 2, its canonical class $K_{\tilde{X}}$ is the only Seiberg-Witten basic class $K$  such that \[K\cdot R = 2g(R)-2,\] as shown by the second author in  \cite[Theorem~1.3]{sivek-donaldson}.\footnote{The proofs of the main results of \cite{sivek-donaldson} require some minor modification, as explained in the \hyperref[sec:appendix]{Appendix} to this paper, but the results themselves are still correct.}  We would like  the same to be true of the (Donaldson) basic classes of $\tilde{X}$. We    prove  below that by modifying $Z$ slightly, in a way that does not change the properties described in Lemma \ref{lem:lf-cap}, we can arrange for $\tilde{X}$ to satisfy Conjecture~\ref{conj:witten-conjecture}, in which case the two types of basic classes  coincide, as desired.
\begin{proposition}
\label{prop:modifyZ}
We can arrange that that $\tilde{X}$ satisfies Conjecture \ref{conj:witten-conjecture} by replacing $Z$ with a  suitable fiber sum $Z \#_R Z'$.
\end{proposition}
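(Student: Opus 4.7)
The plan is to apply Theorem~\ref{thm:witten-conjecture} to the blow-up $\tilde{X} \# \overline{\mathbb{CP}^2}$ and then invoke Remark~\ref{rmk:blowup-donaldson} to descend the conclusion to $\tilde{X}$. Lemma~\ref{lem:tildex} already guarantees that $\tilde{X}$ is symplectic (by Gompf's theorem on Lefschetz fibrations, valid since the fiber $R$ has genus $\geq 10$), has $H_1 = 0$ and $b_2^+ > 1$, and contains a tight surface of genus $2$; the exceptional divisor in the blow-up supplies the required sphere of self-intersection $-1$. The only hypothesis of Theorem~\ref{thm:witten-conjecture} remaining to be verified is that the Betti numbers $(b_2^+, b_2^-)$ of $\tilde{X} \# \overline{\mathbb{CP}^2}$ agree with those of a smooth hypersurface $S_d \subset \mathbb{CP}^3$ of some even degree $d \geq 6$.

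To secure this, I would fiber sum $Z$ with a suitably chosen closed simply-connected $4$-manifold $Z'$ carrying a relatively minimal Lefschetz fibration $Z' \to S^2$ with generic fiber $R$. A Mayer--Vietoris argument (using the connectedness of $R$) preserves $H_1 = 0$; the Lefschetz fibrations on $Z$ and $Z'$ glue to a relatively minimal fibration $Z \#_R Z' \to D^2$ with boundary $-R \times_h S^1$ unchanged; and the two disjoint genus-$2$, self-intersection-$2$ surfaces in $Z$ may be isotoped off the excised fiber neighborhood and so survive the fiber sum. Hence all the conclusions of Lemma~\ref{lem:lf-cap} persist for $Z \#_R Z'$. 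Substituting $Z \#_R Z'$ for $Z$ in the construction of $\tilde{X}$ replaces the closed manifold by $\tilde{X} \#_R Z'$, with
\[
\chi(\tilde{X} \#_R Z') = \chi(\tilde{X}) + \chi(Z') - 2\chi(R), \qquad \sigma(\tilde{X} \#_R Z') = \sigma(\tilde{X}) + \sigma(Z').
\]

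The construction of $Z'$ itself proceeds as an iterated fiber sum of two simply-connected model Lefschetz fibrations over $S^2$ with fiber $R$ whose $(\chi,\sigma)$ contributions are linearly independent in $\mathbb{Z}^2$. A natural choice for the first block is the doubled piece $C_0 \#_R C_0$ from the proof of Lemma~\ref{lem:lf-cap}; the second is obtained by perturbing the monodromy factorization so as to realize a different ratio of signature to Euler characteristic. The pair of block contributions then spans a finite-index sublattice $\Lambda \subset \mathbb{Z}^2$. Since $(\chi(S_d), \sigma(S_d))$ runs through an unbounded sequence of integer pairs as $d$ ranges over even integers $\geq 6$, a pigeonhole argument modulo the index $[\mathbb{Z}^2 : \Lambda]$ provides infinitely many such $d$ for which the target pair $(\chi(S_d) - \chi(\tilde{X}) + 2\chi(R) - 1,\, \sigma(S_d) - \sigma(\tilde{X}) + 1)$ lies in $\Lambda$; choosing any such $d$ large enough to make the block coefficients nonnegative produces the required $Z'$.

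The main obstacle is this final Diophantine matching step: given two independent building blocks, the algebra is routine, but constructing a second simply-connected Lefschetz fibration over $S^2$ with fiber $R$ whose $(\chi, \sigma)$ contribution is not proportional to that of $C_0 \#_R C_0$ requires an explicit geography-of-Lefschetz-fibrations input. This may either be produced directly by an ad hoc monodromy factorization, or extracted from the strategy used by Sivek in \cite{sivek-donaldson} for closely related purposes, so the argument reduces to verifying that the constraints on $\chi(Z')$ and $\sigma(Z')$ can be simultaneously satisfied while keeping the required properties of $Z$ intact.
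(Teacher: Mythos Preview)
Your overall framework is right and matches the paper's: verify the hypotheses of Theorem~\ref{thm:witten-conjecture} for a blow-up of $\tilde{X}$ after a fiber-sum modification of $Z$, then descend via Remark~\ref{rmk:blowup-donaldson}. The difference is in how the Betti-number matching is achieved, and this is where your argument is incomplete.

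By restricting yourself to a \emph{single} blow-up (just enough to produce the $(-1)$-sphere), you are forced to match both $b_2^+$ and $b_2^-$ exactly by fiber sums alone---a two-dimensional Diophantine problem in $(\chi,\sigma)$. This is what drives you to the step you admit is unfinished: constructing a second building block with $(\chi,\sigma)$-contribution independent from that of $C_0 \#_R C_0$, and then arguing that the asymptotic target direction for the hypersurfaces $S_d$ lies in the positive cone of your two blocks so that the coefficients are eventually nonnegative. Neither the construction nor the cone condition is carried out.

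The paper avoids this difficulty by allowing \emph{many} blow-ups, each incrementing $b_2^-$ by one without touching $b_2^+$. This reduces the problem to a one-dimensional match on $b_2^+$ together with an \emph{inequality} $b_2^-(\tilde{X}) < b_2^-(W_d)$. Concretely, \cite[Lemma~4.4]{sivek-donaldson} supplies two explicit fibrations $V_1,V_2$ with $\gcd(n^+(V_1),n^+(V_2))=2$, so fiber-summing $m_1$ copies of $V_1$ and a bounded number $m_2 < n^+(V_1)$ of $V_2$ realizes any sufficiently large even increment in $b_2^+$. Since $m_1 \to \infty$ dominates, the ratio $b_2^-/b_2^+$ approaches $n^-(V_1)/n^+(V_1) \leq 1.8$ (this is where $g(R) \geq 8$ is used). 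Because $b_2^-(W_d)/b_2^+(W_d) \to 2$, one can choose large even $d$ with $b_2^+(\tilde{X}) = b_2^+(W_d)$ and the ratio inequality $b_2^-(\tilde{X})/b_2^+(\tilde{X}) < 1.9 < b_2^-(W_d)/b_2^+(W_d)$, then blow up $b_2^-(W_d) - b_2^-(\tilde{X}) > 0$ times. The separation of concerns---fiber sums for $b_2^+$, blow-ups for $b_2^-$---sidesteps exactly the lattice-matching you left open.
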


\begin{proof}
By construction, $\tilde{X}$ is symplectic with $H_1(\tilde{X})=0$ and $b_2^+(\tilde{X})>1$, and contains a genus 2 surface with self-intersection 2. It therefore  suffices, by Theorem \ref{thm:witten-conjecture} and Remark \ref{rmk:blowup-donaldson},  to show that after replacing $Z$ with a suitable fiber sum $Z \#_R Z'$ and then blowing up some number of times, the resulting manifold $\tilde{X}'$ has the same Betti numbers $b_2^{\pm}(\tilde{X}')$ as a smooth hypersurface in $\mathbb{CP}^3$ of even degree at least 6. In order to show this, we first argue the following.

\begin{claim}
\label{claim:fiber}
We can arrange that \[\frac{b_2^-(\tilde{X})}{b_2^+(\tilde{X})} < 1.9,\] and that $b_2^+(\tilde{X})$ is an arbitrary sufficiently large odd number, by replacing $Z$ with a suitable fiber sum $Z \#_{R} Z'$.
\end{claim}

\begin{proof}[Proof of Claim \ref{claim:fiber}]For this claim, we borrow from an argument in \cite{sivek-donaldson}, which is in turn adapted from \cite[Lemma~13]{km-p}. According to \cite[Lemma~4.4]{sivek-donaldson}, there is a pair of relatively minimal Lefschetz fibrations $V_1, V_2 \to S^2$ with generic fibers of genus $g = g(R)$ such that taking fiber sums of $\tilde{X}$ with either $V_i$ increases $b_2^\pm(\tilde{X})$ by $n^\pm(V_i)$, where
\begin{equation}\label{eqn:v1v2} (n^+(V_1), n^-(V_1), n^+(V_2)) = \begin{cases}
(g,g+4, g+2), & g\mathrm{\ even} \\
(g+1,g+9, g+3), & g\mathrm{\ odd}.
\end{cases} \end{equation}
Since $\gcd(n^+(V_1),n^+(V_2)) = 2$, every sufficiently large even integer can be written as \[m_1 n^+(V_1) + m_2 n^+(V_2)\] for some nonnegative integers $m_1$ and $m_2$, where $m_2 < n^+(V_1)$.  In particular, by replacing $Z$ with the fiber sum
\[ Z \#_{R} (m_1V_1) \#_{R} (m_2 V_2), \]
we can increase $b_2^+(\tilde{X})$ by any sufficiently large even integer  while preserving the properties of $Z$ and $\tilde{X}$ in  Lemmas~\ref{lem:lf-cap} and \ref{lem:tildex}.  Recall from Remark \ref{rmk:odd} that $b_2^+(\tilde{X})$ is always odd. It then follows  that $b_2^+(\tilde{X})$ can be made to equal any sufficiently large odd integer via the above fiber sum.
In order to simultaneously arrange that \[\frac{b_2^-(\tilde{X})}{b_2^+(\tilde{X})} < 1.9,\] we simply note that as we make  $b_2^+(\tilde{X})$  large in the manner above, the coefficient $m_1$ must tend to infinity while $m_2$ remains bounded. It follows that the above ratio approaches \[\frac{n^-(V_1)}{n^+(V_1)}.\]  Since $g \geq 8$, this  ratio is at most $1.8 < 1.9$, according to \eqref{eqn:v1v2}. This proves the claim.
\end{proof}

To finish the proof of the proposition, recall that the degree $d$ hypersurfaces $W_d \subset \mathbb{CP}^3$ satisfy
\[\frac{b_2^-(W_d)}{b_2^+(W_d)} \to 2 \qquad \mathrm{and} \qquad b_2^+(W_d)\to \infty\]
as $d \to \infty$. Moreover, $b_2^+(W_d)$ is odd whenever $d$ is even. Claim \ref{claim:fiber} then implies  that for a    a sufficiently large, even value of $d$, we can arrange that  $b^+_2(\tilde{X}) = b^+_2(W_d)$ and
\[ \frac{b^-_2(\tilde{X})}{b^+_2(\tilde{X})} < 1.9 < \frac{b^-_2(W_d)}{b^+_2(W_d)}, \] after replacing $Z$ with a fiber sum as above. Let us choose such a $d$.
Then for  \[n =b_2^{-}(W_d)-b_2^-(\tilde{X})> 0,\] the blown-up manifold \[\tilde{X}' = \tilde{X} \# n \overline{\mathbb{CP}^2}\] satisfies \[b^\pm_2(\tilde{X}') = b^\pm_2(W_d),\] as desired. As mentioned above, it then follows from Theorem \ref{thm:witten-conjecture} that $\tilde{X}' $ satisfies Conjecture \ref{conj:witten-conjecture}, and then, from Remark \ref{rmk:blowup-donaldson}, that $\tilde{X}$ does as well.
\end{proof}

We will henceforth assume that $Z$ has been modified as in Proposition \ref{prop:modifyZ}, so that $\tilde{X}$ satisfies Conjecture \ref{conj:witten-conjecture}. As mentioned previously, the lemma below follows immediately. 

\begin{lemma}
\label{lem:can-class}
The canonical class $K_{\tilde{X}}$ is the unique (Donaldson) basic class $K$ on $\tilde{X}$ satisfying $K\cdot R = 2g(R) - 2$. \qed
\end{lemma}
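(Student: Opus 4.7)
The plan is to combine Proposition~\ref{prop:modifyZ} with the Seiberg--Witten result from \cite{sivek-donaldson} cited just before the lemma. Since we have arranged that $\tilde{X}$ satisfies Conjecture~\ref{conj:witten-conjecture}, the Donaldson basic classes of $\tilde{X}$ coincide with its Seiberg--Witten basic classes. On the other hand, Property~(\ref{i:tilde-x-lf}) of Lemma~\ref{lem:tildex} tells us that $\tilde{X}$ admits a relatively minimal Lefschetz fibration over $S^2$ whose generic fiber $R$ has genus $g(R) \geq 8 \geq 2$. By \cite[Theorem~1.3]{sivek-donaldson}, the canonical class $K_{\tilde{X}}$ is then the unique Seiberg--Witten basic class $K$ of $\tilde{X}$ for which $K \cdot R = 2g(R) - 2$, and hence the unique Donaldson basic class with this property.

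The only subtlety is checking that the hypotheses of both inputs apply. For Conjecture~\ref{conj:witten-conjecture} via Theorem~\ref{thm:witten-conjecture}: $\tilde{X}$ is symplectic (it is a Lefschetz fibration whose fibers have genus at least $2$), has $H_1(\tilde{X}) = 0$ and $b_2^+(\tilde{X}) > 1$ (Lemma~\ref{lem:tildex}, parts~(\ref{i:tilde-x-tight}) and~(\ref{i:tilde-x-simple})), contains a tight surface of genus $\geq 2$ (the genus-$2$ surface of self-intersection $2$ from Lemma~\ref{lem:tildex}(\ref{i:tilde-x-tight})), and contains a sphere of self-intersection $-1$ (e.g.\ from the blowups $n\overline{\mathbb{CP}^2}$ performed in the proof of Proposition~\ref{prop:modifyZ}), and the Betti-number constraint is precisely what was arranged in that proposition. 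Thus, the combination of these two ingredients gives the lemma immediately, with no further work required.

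I do not foresee a main obstacle here, since both the Witten-type conjecture and the Lefschetz-fibration uniqueness of the canonical basic class have been set up in the preceding subsection specifically to make this conclusion automatic. The lemma is essentially a bookkeeping step recording the payoff of Proposition~\ref{prop:modifyZ}, so a one-sentence proof citing these two results suffices.
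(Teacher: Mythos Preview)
Your first paragraph is correct and is exactly the paper's argument: the lemma is stated with a \qed\ because the preceding sentence already observes that once Proposition~\ref{prop:modifyZ} has been applied so that $\tilde{X}$ satisfies Conjecture~\ref{conj:witten-conjecture}, the Donaldson and Seiberg--Witten basic classes coincide, and \cite[Theorem~1.3]{sivek-donaldson} then gives the result.

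One small correction to your second paragraph: you should not re-verify the hypotheses of Theorem~\ref{thm:witten-conjecture} for $\tilde{X}$ itself, and in particular $\tilde{X}$ need not contain a $(-1)$-sphere. In the proof of Proposition~\ref{prop:modifyZ} the blowups are performed on $\tilde{X}$ to produce a different manifold $\tilde{X}' = \tilde{X} \# n\overline{\mathbb{CP}^2}$; it is $\tilde{X}'$ that satisfies the hypotheses of Theorem~\ref{thm:witten-conjecture}, and one then invokes Remark~\ref{rmk:blowup-donaldson} to conclude that $\tilde{X}$ satisfies Conjecture~\ref{conj:witten-conjecture}. Since Proposition~\ref{prop:modifyZ} already packages this conclusion, your second paragraph is unnecessary, and the first paragraph alone suffices.
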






\subsection{Detecting Chern classes} \label{ssec:detect-chern}

Now suppose $(W,J_1),\dots,(W,J_n)$ are Stein domains with boundary $Y=\partial W$, inducing contact structures \[\xi_i = TY \cap J_iTY\] on $Y$. Suppose the Chern classes $c_1(J_1),\dots, c_1(J_n)$ are all distinct as classes in $H^2(W;\R)$, as in the hypothesis of Theorem \ref{thm:main-linear-independence}. Let \[\pi_i: W \to D^2\] be Lefschetz fibrations for these Stein domains with common generic fiber $S$. Let $(S,h_i,{\bf c}_i,f_i)$ be open book decompositions of the based contact manifolds $(Y,\xi_i,p)$ constructed from these Lefschetz fibrations as described in Subsection \ref{ssec:steinlef}. Let \[\data=(\bar{Y}=Y\#(R\times S^1),R,\eta,\alpha)\] be an odd marked closure of $Y(p)$ as defined in Subsection \ref{ssec:lf-closures}. Let \[\tilde{X_i}=X_i\cup_{-\bar{Y}} \tilde {C}\] be the closed $4$-manifold constructed from $(W,J_i)$ as in Subsections \ref{ssec:lf-closures} and \ref{ssec:lf-capping}, and let \[\tilde{\nu}_i\subset \tilde{X}_i\] denote the corresponding closed $2$-cycle. As described in Subsection \ref{ssec:lf-capping}, we may assume that the  pair \[(\tilde{C},\tilde{\nu}):=(\tilde{C},\tilde{C}\cap \tilde{\nu}_i)\] is independent of $i$. In what follows we will let \[\tilde{w}_i \in H^2(\tilde{X}_i)\] denote the Poincar\'e dual of $\tilde{\nu}_i$. We will denote its restrictions to $X_i$ and $\tilde {C}$ by $\tilde{w_i}$ and $\tilde{w}$. For convenience, we will also simply write $I_*(-\bar{Y})$ and $I_*(-\bar{Y}|{-}R)$ in this subsection -- that is, without the subscript $\alpha \sqcup \eta$ -- and let $g = g(R)$.

We will prove Theorem \ref{thm:main-linear-independence} through a careful analysis of the Donaldson invariants of the $4$-manifolds $\tilde{X_i}$ and their relationships with the contact elements $\Theta(\xi_i,\data)$, in the manner described at the beginning of this section. We start by showing that these contact elements can be interpreted as relative invariants of the $4$-manifolds $X_i$, per the following.

\begin{proposition} \label{prop:pbot}
There is a polynomial $\pbot(t) \in \Q[t]$, which does not depend on $i$, such that each relative invariant
\[ \psi_{\tilde{w_i},X_i}(\pbot(-R)) \in I_*(-\bar{Y}) \]
lies in the generalized $(2-2g)$-eigenspace $I_*(-\bar{Y}|{-}R)$ of $\mu(-R)$ and is equal to 
\[\Theta(\xi_i,\data)=I_*(X_i|{-R})_{\tilde\nu_i}(1) \in I_*(-\bar{Y}|{-}R), \]
and such that $\pbot(2-2g)=1$ and $\pbot(m) = 0$ for all $m = 3-2g,4-2g,\dots,2g-2$.
\end{proposition}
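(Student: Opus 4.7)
The plan is to first construct $\pbot$ by Lagrange interpolation, then identify $\pbot(\mu(-R))$ with the spectral projector onto the generalized $(2-2g)$-eigenspace of $\mu(-R)$ on $I_*(-\bar{Y})_{\alpha\sqcup\eta}$, and finally appeal to Lemma~\ref{lem:tildex}\eqref{i:tilde-x-iinvt}.

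For the construction of $\pbot$, the prescribed values at the $4g-3$ distinct integer points $\{2-2g, 3-2g, \dots, 2g-2\}$ determine a unique polynomial of degree $\leq 4g-4$ with rational coefficients, via Lagrange interpolation. Since $g = g(R)$ depends only on the marked odd closure $\data$ (and not on the choice of Stein structure $J_i$), the resulting $\pbot$ is the same for all $i$, establishing the independence claim in the proposition.

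Next, applying the multiplicativity relation~\eqref{eqn:relfloer} gives
\[ \Psi_{\tilde{w}_i, X_i}(\pbot(-R)) = \pbot(\mu(-R)) \cdot \Psi_{\tilde{w}_i, X_i}(1). \]
By the discussion just after~\eqref{eqn:relfloer} in Subsection~\ref{ssec:floer}, the element $I_*(X_i|{-}R)_{\tilde{\nu}_i}(1)$ is by definition the projection of $\Psi_{\tilde{w}_i, X_i}(1)$ onto the generalized $(2-2g)$-eigenspace $I_*(-\bar{Y}|{-}R)_{\alpha\sqcup\eta}$, and by Lemma~\ref{lem:tildex}\eqref{i:tilde-x-iinvt} this equals $\Theta(\xi_i, \data)$. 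So it suffices to show that $\pbot(\mu(-R))$ acts as this spectral projector. Decomposing $I_*(-\bar{Y})_{\alpha\sqcup\eta}$ into generalized eigenspaces of $\mu(-R)$, whose eigenvalues are drawn from $\{\pm 2k, \pm 2ki : 0 \leq k \leq g-1\}$ by \cite[Corollary~7.2]{km-excision}, we see that $\pbot$ already vanishes at every real eigenvalue except $2-2g$ (these are the even integers in $\{4-2g, \dots, 2g-2\}$) and takes the value $1$ at $2-2g$; hence on each such real generalized eigenspace $\pbot(\mu(-R))$ either annihilates or (in the $(2-2g)$ case) acts as the identity, provided $\mu(-R)$ acts semisimply. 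The main obstacle will be to handle the potentially imaginary eigenvalues $\pm 2ki$ and to verify semisimplicity. I plan to address this by exploiting the factorization $X_i = Z \cup_{-R\times_h S^1} -V$: by \cite[Proposition~8.2]{sivek-donaldson} the relative invariant $\Psi_{w_Z,Z}(1)$ lies entirely in the $(2-2g)$-generalized eigenspace $I_*(-R \times_h S^1|{-}R)_{\alpha\sqcup\eta}$ (which is one-dimensional and where $\mu(-R)$ acts as the scalar $2-2g$), and the cobordism map induced by $-V$ sends generalized eigenspaces of $\mu(-R)$ to generalized eigenspaces of the same eigenvalue because $-R$ appears on both boundary components of $-V$. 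Consequently $\Psi_{\tilde{w}_i, X_i}(1)$ already lies in $I_*(-\bar{Y}|{-}R)_{\alpha\sqcup\eta}$, and one is reduced to verifying that $\pbot(\mu(-R))$ acts as the identity on this subspace. Since $\pbot(2-2g) = 1$, this amounts to showing that the nilpotent part of $\mu(-R)$ on this generalized eigenspace is killed by $\pbot$; the additional vanishings $\pbot(m) = 0$ for $m = 3-2g, \dots, 2g-2$ force $\pbot$ to vanish to the appropriate order along with the identity value at $2-2g$, and this combined with the semisimplicity inherited from the Muñoz-type structure of the closure completes the identification.
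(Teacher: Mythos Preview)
Your Lagrange interpolation polynomial does \emph{not} give a spectral projector, and this is the central gap. The polynomial you construct vanishes only to first order at the integers $3-2g,\dots,2g-2$, does not vanish at all at the purely imaginary eigenvalues $\pm 2ki$ of $\mu(-R)$, and satisfies only $\pbot(2-2g)=1$ rather than $\pbot(t)\equiv 1 \pmod{(t-(2-2g))^{m}}$ for the relevant multiplicity $m$. Consequently $\pbot(\mu(-R))$ neither annihilates the other generalized eigenspaces nor acts as the identity on the generalized $(2-2g)$-eigenspace unless $\mu(-R)$ happens to be semisimple, which you do not establish (the vague appeal to ``Mu\~noz-type structure'' is not a proof). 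The paper's construction is genuinely different: it first multiplies together factors $(t-\lambda_j)^{m_j}$ for \emph{all} other eigenvalues (real and imaginary) to the full algebraic multiplicity, clears complex coefficients by multiplying by the conjugate, and then multiplies by a truncated inverse in $\Q[t]/(t-(2-2g))^{m_1}$ to force the identity action on the target eigenspace. Only after this does one check that the extra prescribed zeros at $3-2g,\dots,2g-2$ can be built in.

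Your fallback---arguing that $\Psi_{\tilde w_i,X_i}(1)$ already lies in the $(2-2g)$-generalized eigenspace via the factorization through $Z$---also has a gap. The result you cite (\cite[Proposition~8.2]{sivek-donaldson}) concerns the \emph{projected} invariant $I_*(Z|{-}R)_{\tilde\nu}(1)$, not the full relative invariant $\Psi_{w_Z,Z}(1)\in I_*(-R\times_h S^1)_{\alpha\sqcup\eta}$; the latter could have components in other eigenspaces. Even if one grants that cobordism maps intertwine $\mu(-R)$, you would still need the full invariant of $Z$ to be a genuine $(2-2g)$-eigenvector, and then you would still need $\pbot(\mu(-R))$ to fix it, which again requires either semisimplicity or the congruence $\pbot\equiv 1\pmod{(t-(2-2g))^{m}}$ that your Lagrange polynomial does not satisfy.
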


\begin{proof}
In this proposition, we are interpreting $\pbot(-R)$ as an element of $\bA(X_i)$, just as in Subsection \ref{ssec:floer}. Recall that $\mu(-R)$ acts on $I_*(-\bar{Y}|{-}R)$ with eigenvalues given by $\pm 2k, \pm 2ki$ for $k=0,1,\dots,g-1$. Recall from Property (\ref{i:tilde-x-iinvt}) of Lemma \ref{lem:tildex} that \[\Theta(\xi_i,\data)=I_*(X_i|{-R})_{\tilde\nu_i}(1).\] As discussed in  Subsection \ref{ssec:floer}, the element $I_*(X_i|{-R})_{\tilde\nu_i}(1)$ is equal to the projection of the relative invariant $\psi_{\tilde{w_i},X_i}(1)$ to the generalized $(2-2g)$-eigenspace of $\mu(-R)$. Moreover, recall from that subsection that \[\psi_{\tilde{w_i},X_i}(\pbot(-R)) = \pbot(\mu(-R))\cdot\psi_{\tilde{w_i},X_i}(1).\] So, to show that there is a polynomial $\pbot(t)$ such that  \[ \psi_{\tilde{w_i},X_i}(\pbot(-R)) =\Theta(\xi_i,\data),\] it suffices to prove that there exists some $\pbot(t)$ for which $\pbot(\mu(-R))$ is projection onto this generalized $(2-2g)$-eigenspace of $\mu(-R)$. Such a polynomial will not depend on $i$. The existence of such a polynomial, which also meets the other requirements of the proposition, follows immediately from the linear algebra lemma below.
\begin{lemma}
Suppose $A: \C^N \to \C^N $ is a linear map with distinct eigenvalues $\lambda_1,\dots,\lambda_s \in \Q[i]$ associated to generalized eigenspaces $E_1,\dots,E_s$.  Suppose $\lambda_1$ is real and $d_1,\dots,d_n \in \Q[i]$ are distinct from $\lambda_1$. Then there exists a polynomial $p(t) \in \Q[t]$ such that $p(\lambda_1) = 1$, $p(d_j) = 0$ for all $j$, and $p(A): \C^N \to \C^N$ is the projection onto $E_1$.
\end{lemma}

\begin{proof}
If each $\lambda_j$ has multiplicity $m_j$, then $(A-\lambda_j)^{m_j}|_{E_j} = 0$ for each $j$, so if we let
\[ f(t) = \prod_{j=1}^n (t-d_j) \cdot \prod_{j=2}^s (t-\lambda_j)^{m_j} \in (\Q[i])[t]\]
then $f(d_j)=0$ for all $j$ and $f(A)$ acts as zero on $E_2,\dots,E_s$.  The polynomial $g(t) = f(t)\overline{f(t)}$ then has rational coefficients, and $g(A)$ also annihilates $E_j$ for each $j \geq 2$. Now, $g(t)$ is invertible in $\Q[[t-\lambda_1]]$ since $g(\lambda_1) \neq 0$, so by truncating the inverse power series it is also invertible in \[\Q[t-\lambda_1]/(t-\lambda_1)^{m_1} = \Q[t]/(t-\lambda_1)^{m_1}.\]  Letting $h(t)$ be a polynomial representative of such an inverse and defining $p(t) = h(t)g(t)$, we have
\[ p(t) = h(t) g(t) = q(t)(t-\lambda_1)^{m_1} + 1 \]
for some $q(t) \in \Q[t]$.  The operator $p(A)$ annihilates $E_2,\dots,E_s$ since $g(A)$ does, it acts as $1$ on $E_1$ since $p(A)-1$ annihilates $E_1$, and $p(d_j) = 0$ for all $j$, as desired.
\end{proof}
This completes the proof of Proposition \ref{prop:pbot}.
\end{proof}

In order to distinguish the contact classes $\Theta(\xi_i,\data)$, we will first identify a class $\Sigma \in H_2(\tilde{C})$ on which the canonical classes of the Lefschetz fibrations $\tilde{X}_i \to S^2$  take different values, as mentioned at the beginning of this section.

\begin{lemma}
\label{lem:integral-class}
There exists an integral class $\Sigma \in H_2(\tilde{C};\Z)$ which is disjoint from a generic fiber of each fibration $\tilde{X}_i\to S^2$ such that $K_{\tilde{X}_i}\cdot\Sigma \neq K_{\tilde{X}_j}\cdot\Sigma$ for $i\neq j$.
\end{lemma}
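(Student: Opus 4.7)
The plan is to pull back a suitably generic class from $H_2(W;\Z)$ to $H_2(\tilde{C};\Z)$ via the map $i_\dagger$ of Lemma~\ref{lem:w-dagger-smooth}, and then use Lemma~\ref{lem:canonical-class} to convert the pairing $K_{\tilde{X}_i}\cdot\Sigma$ into a pairing on $W$ that directly sees the Chern classes $c_1(J_i)$.

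First, I would choose $\Sigma_0 \in H_2(W;\Z)$ by a generic-position argument. The canonical class of the Lefschetz fibration $\pi_i: W \to D^2$ equals $K_{W,i} = -c_1(W,J_i)$, and by hypothesis these classes are distinct in $H^2(W;\R)$. Hence each difference $K_{W,i}-K_{W,j}$ (for $i \neq j$) is a nonzero linear functional on $H_2(W;\R)$ whose kernel is a proper hyperplane. The union of these finitely many hyperplanes has empty interior in $H_2(W;\R) = H_2(W;\Z)\otimes\R$, so there exists an integral class $\Sigma_0 \in H_2(W;\Z)$ avoiding all of them, for which $K_{W,i}\cdot\Sigma_0 \neq K_{W,j}\cdot\Sigma_0$ whenever $i \neq j$.

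Next, I would define $\Sigma \in H_2(\tilde{C};\Z)$ as the image of $\Sigma_0$ under the composition
\[ H_2(W) \xrightarrow{i_\dagger} H_2(W^\dagger) \hookrightarrow H_2(\tilde{C}). \]
Since the map $i: H_2(W) \to H_2(X^\circ_i)$ appearing in Lemma~\ref{lem:canonical-class} factors as $i_\dagger$ followed by the inclusion $W^\dagger \hookrightarrow X^\circ_i$, and since $W^\dagger \subset X^\circ_i \subset \tilde{X}_i$, the class $\Sigma$ pushes forward to $\tilde{X}_i$ as the same element obtained from $i(\Sigma_0)$. Because the Lefschetz fibration on $\tilde{X}_i$ extends the one on $X^\circ_i$ by Property~(\ref{i:tilde-x-lf}) of Lemma~\ref{lem:tildex}, the canonical class $K_{\tilde{X}_i}$ restricts to $K_{X^\circ_i}$ on $X^\circ_i$; combining this with Lemma~\ref{lem:canonical-class} gives
\[ K_{\tilde{X}_i}\cdot\Sigma = K_{X^\circ_i}\cdot i(\Sigma_0) = K_{W,i}\cdot\Sigma_0, \]
so the values $K_{\tilde{X}_i}\cdot\Sigma$ are pairwise distinct in $i$ by our choice of $\Sigma_0$.

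For the disjointness condition, I would represent $\Sigma_0$ (as in the proof of Lemma~\ref{lem:canonical-class}) by a 2-chain $\sigma \subset S$ together with Lefschetz thimbles in the 2-handles of $\pi_i$ capping off its boundary components. This representative lies in $W^\dagger$ and, when viewed in $\tilde{X}_i$, its image under the Lefschetz fibration $\tilde{X}_i \to S^2$ is contained in a 1-dimensional compact subset (one point carrying $\sigma$ plus finitely many arcs running to the critical values in the base of $\pi_i$). Any regular fiber above a point in the nonempty open complement of this subset is then geometrically disjoint from the representative. Since we may carry out this argument separately for each index $i$ (with possibly different representatives for different fibrations), the conclusion follows. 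The main subtlety here, as in the pairing computation, is confirming that everything computed on $W$ via $\pi_i$ transports unchanged through $W^\dagger$ into $\tilde{X}_i$; this is guaranteed by the compatibility of the Lefschetz fibrations noted above.
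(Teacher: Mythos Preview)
Your proof is correct and follows essentially the same approach as the paper: both choose $\Sigma_0\in H_2(W;\Z)$ by the hyperplane-avoidance argument, push it to $H_2(\tilde C)$ via $i_\dagger$ and the inclusion $W^\dagger\hookrightarrow\tilde C$, and then invoke Lemma~\ref{lem:canonical-class} together with the fact that $K_{\tilde X_i}|_{X^\circ_i}=K_{X^\circ_i}$. Your disjointness paragraph is more labored than necessary (and slightly conflates the base of $\pi_i$ with that of $\tilde X_i\to S^2$); the paper simply observes that since $\Sigma$ is supported in $W^\dagger\subset X^\circ$, any fiber over a regular value in the $D^2$ base of $C$ lies in $C$ and hence misses it, so no $i$-dependent representatives are needed.
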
 

\begin{proof}
First, we show that there is an integral class $\Sigma\in H_2(W;\Z)$ such that the pairings \[K_{(W,J_i)}\cdot\Sigma=-c_1(J_i)\cdot\Sigma\] are all distinct. For this, note that, since the $c_1(J_i)$ are all distinct as elements of $H^2(W;\R)$, we have that for each $i\neq j$ the linear function
\[ x \mapsto (c_1(J_i) - c_1(J_{j}))\cdot x : H_2(W;\R) \to \R \]
is not identically zero.  It therefore vanishes on a proper subspace of $H_2(W;\R)$.  The subset of  $H_2(W;\R)$ on which at least one of these functions vanishes is thus at most a union of ${n \choose 2}$ hyperplanes. It follows that the set of points where none of these functions vanish is nonempty and open.  We can then take any rational class in this open set and rescale it to get the desired integral class $\Sigma$.

Letting \[i_\dagger:H_2(W)\to H_2(W^\dagger)\] be the map  in Lemma \ref{lem:w-dagger-smooth}, we have that the class $i_\dagger(\Sigma) \in H_2(W^\dagger)$ defines (via the inclusion $W^\dagger \hookrightarrow \tilde{C}$) a class in $H_2(\tilde{C})$, which is disjoint from a generic fiber $R$. From now on, we will use $\Sigma$ to refer to this class. It then follows from    Lemma~
\ref{lem:canonical-class} that the canonical classes $K_{X_i^\circ}$ all pair differently with $\Sigma$. 
But since $K_{X_i^\circ}$ is just the restriction of $K_{\tilde{X}_i}$, we have that the canonical classes $K_{\tilde{X}_i}$ all evaluate differently on $\Sigma$ as well.
\end{proof}


We will hereafter denote by $\Sigma$ a class in $H_2(\tilde{C})$ which is disjoint from a generic fiber of each $\tilde{X}_i$ and on which the various canonical classes $K_{\tilde{X}_i}$ pair differently, as in Lemma \ref{lem:integral-class}.

By  construction, $b_1(\tilde{X}_i) = 0$, $b_2^+(\tilde{X}_i)>1$ is odd, and   $\tilde{X}_i$ has simple type. Moreover,  the canonical class $K_{\tilde{X}_i}$ is the only basic class $K$ of the Lefschetz fibration $\tilde{X}_i\to S^2$ for which \[K \cdot (-R) = 2 - 2g(R),\] as recorded  in Lemma \ref{lem:can-class}.  We may thus define an analytic function $F_i(s,t)$ in terms of the Donaldson series of $\tilde{X}_i$ by \[ F_i(s,t) = \cD^{\tilde{w}_i}_{\tilde{X}_i}\big(s(-R) + t\Sigma\big) = e^{Q(-sR+t\Sigma)/2} \sum_{r=1}^s \alpha_{\tilde{w}_i,r}e^{K_r \cdot (-sR + t\Sigma)} \]
where the $K_r$ are the basic classes of $\tilde{X}_i$ and each \[\alpha_{\tilde{w}_i,r} = (-1)^{(\tilde{w}_i^2 + K_r\cdot \tilde{w}_i)/2} \beta_r \in \Q\] is nonzero (the second equality follows from Theorem \ref{thm:km-structure}). Since $R$ has self-intersection zero and does not intersect $\Sigma$, we can write $F_i(s,t)$ more simply as
\[ F_i(s,t) = e^{Q(\Sigma)t^2/2} \sum_{r=1}^s \alpha_{\tilde{w}_i,r}e^{K_r \cdot (s(-R) + t\Sigma)}. \]
We will ultimately use this function and its derivatives to specify a certain Donaldson invariant which can be used to distinguish the $\Theta(\xi_i,\data)$, in the manner outlined at the beginning of this section. We first prove some preliminary results regarding these derivatives.

\begin{lemma} \label{lem:pbot-d-ds}
For all $i=1,\dots,n$, we have
\[ \pbot\left(\frac{\partial}{\partial s}\right) F_i(s,t)  = \alpha_i e^{(2-2g)s} \cdot \exp\left( \frac{Q(\Sigma)}{2}t^2 + \left(K_{\tilde{X}_i} \cdot \Sigma\right)t\right), \]
where $K_{\tilde{X}_i}$ is the canonical class of the Lefschetz fibration $\tilde{X}_i \to S^2$ and $\alpha_i\neq 0$ is defined to be $\alpha_{\tilde{w}_i,r}$ for the unique value of $r$ such that $K_r = K_{\tilde{X}_i}$.
\end{lemma}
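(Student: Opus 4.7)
The plan is to observe that $\pbot\bigl(\tfrac{\partial}{\partial s}\bigr)$ acts very cleanly on the expansion of $F_i(s,t)$, and then to apply the adjunction inequality to kill all but one term. First, I would note that $\tfrac{\partial}{\partial s}$ applied to any summand $e^{K_r\cdot(s(-R)+t\Sigma)}$ multiplies it by the scalar $K_r\cdot(-R)$. By iteration, for any $\pbot(t)\in\Q[t]$,
\[
\pbot\!\left(\tfrac{\partial}{\partial s}\right)e^{K_r\cdot(s(-R)+t\Sigma)} \;=\; \pbot\!\left(K_r\cdot(-R)\right)\cdot e^{K_r\cdot(s(-R)+t\Sigma)}.
\]
Since the factor $e^{Q(\Sigma)t^2/2}$ does not depend on $s$, applying $\pbot(\partial/\partial s)$ to $F_i(s,t)$ therefore multiplies the $r$th term in the Donaldson-series sum by $\pbot(K_r\cdot(-R))$.

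Next, I would use adjunction to see that only the canonical class contributes. The surface $R\subset\tilde X_i$ is a generic fiber of the Lefschetz fibration, hence smoothly embedded, oriented, of self-intersection $0$, and homologically nontrivial (for instance because $K_{\tilde X_i}\cdot R=2g-2\neq 0$). Theorem~\ref{thm:km-structure} then gives $|K_r\cdot R|\le 2g-2$ for every basic class $K_r$ of $\tilde X_i$. In particular, $K_r\cdot(-R)$ is an integer in $\{2-2g,3-2g,\dots,2g-2\}$. By the defining properties of $\pbot$ from Proposition~\ref{prop:pbot}, $\pbot$ vanishes on $\{3-2g,\dots,2g-2\}$ and takes the value $1$ at $2-2g$, so the only surviving summands are those with $K_r\cdot R = 2g-2$. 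By Lemma~\ref{lem:can-class}, there is a unique such basic class, namely $K_{\tilde X_i}$, with coefficient $\alpha_i:=\alpha_{\tilde w_i,r}$.

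Finally, I would simplify the surviving exponent. Using $R\cdot\Sigma=0$ (since $\Sigma$ was chosen disjoint from a generic fiber) and $K_{\tilde X_i}\cdot(-R)=2-2g$, we have
\[
K_{\tilde X_i}\cdot\bigl(s(-R)+t\Sigma\bigr)\;=\;(2-2g)s+(K_{\tilde X_i}\cdot\Sigma)\,t.
\]
Combining this with the prefactor $e^{Q(\Sigma)t^2/2}$ yields
\[
\pbot\!\left(\tfrac{\partial}{\partial s}\right)F_i(s,t) \;=\; \alpha_i\, e^{(2-2g)s}\cdot\exp\!\left(\tfrac{Q(\Sigma)}{2}t^2 + (K_{\tilde X_i}\cdot\Sigma)\,t\right),
\]
as claimed. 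The only nontrivial step is the application of adjunction together with Lemma~\ref{lem:can-class}; everything else is formal manipulation of exponentials, so I do not anticipate any real obstacle.
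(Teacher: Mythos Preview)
Your proof is correct and follows essentially the same approach as the paper: apply the adjunction inequality from Theorem~\ref{thm:km-structure} to bound $K_r\cdot(-R)$, use the vanishing properties of $\pbot$ to kill all terms except the one with $K_r\cdot(-R)=2-2g$, and invoke Lemma~\ref{lem:can-class} to identify the surviving basic class as $K_{\tilde X_i}$. The only cosmetic difference is that you use the eigenfunction observation $\pbot(\partial/\partial s)e^{as}=\pbot(a)e^{as}$ directly, whereas the paper factors $\pbot$ and argues term by term; both are the same computation.
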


\begin{proof}
Notice that $F_i(s,t)$ is a sum of terms of the form $c_r e^{(K_r\cdot(-R))s}$ where $c_r$ is a function of $t$ which does not depend on $s$.  Each operator $\frac{\partial}{\partial s} - m$ acts on such a term as
\[ \left(\frac{\partial}{\partial s} - m\right) c_r e^{(K_r \cdot (-R))s} = \left((K_r \cdot (-R)) - m\right) \cdot c_r e^{K_r \cdot (-R)s}, \]
so if $K_r \cdot (-R)$ is a root of $\pbot(t)$ then $\left(\frac{\partial}{\partial s} - (K_r \cdot (-R)\right)$ is a factor of $\pbot\left(\frac{\partial}{\partial s}\right)$ and hence $\pbot\left(\frac{\partial}{\partial s}\right)$ annihilates this term.  By Theorem \ref{thm:km-structure}, \[|K_r \cdot (-R)|\leq 2g-2\] for all $r$. Since $3-2g,4-2g,\dots,2g-2$ are all roots of $\pbot(t)$, it follows that $\pbot\left(\frac{\partial}{\partial s}\right)$ annihilates all terms in the sum except those for which $K_r \cdot (-R) = 2-2g$, and the only basic class satisfying this constraint is $K_r = K_{\tilde{X}_i}$.

For the term involving $K_r = K_{\tilde{X}_i}$, we observe that by construction $\pbot(2-2g) = 1$, so we can write \[\pbot(t) = q(t)(t-(2-2g)) + 1\] for some $q(t) \in \Q[t]$.  Since $\left(\frac{\partial}{\partial s} - (2-2g)\right)$ annihilates the $K_{\tilde{X}_i}$ term, the operator \[\pbot\left(\frac{\partial}{\partial s}\right) = q\left(\frac{\partial}{\partial s}\right)\left(\frac{\partial}{\partial s} - (2-2g)\right) + 1\] acts on it as multiplication by 1, so we are left with
\[ \pbot\left(\frac{\partial}{\partial s}\right) F_i(s,t) = e^{Q(\Sigma)t^2/2} \cdot \alpha_i e^{K_{\tilde{X}_i}\cdot(s(-R) + t\Sigma)}. \]
We can now simplify using $K_{\tilde{X}_i} \cdot s(-R) = (2-2g)s$ to obtain the desired expression.
\end{proof}

\begin{lemma} \label{lem:pbot-di} Let $k_i =  K_{\tilde{X}_i}\cdot\Sigma$ for each $i$. By Lemma \ref{lem:integral-class}, these $k_1,\dots,k_n$ are distinct. 
Then the operator
\[ d_i = \prod_{j \neq i} \frac{1}{k_i-k_j} \left(\frac{\partial}{\partial t} - Q(\Sigma)t - k_j\right) \]
satisfies \[\pbot\left(\frac{\partial}{\partial s}\right) \left(d_i F_j(s,t)\right) = 0\] for all $i\neq j$, and
\[ \pbot\left(\frac{\partial}{\partial s}\right) \left(d_i F_i(s,t)\right) = \alpha_i e^{(2-2g)s} \cdot \exp\left(\frac{Q(\Sigma)}{2}t^2 + k_i t\right). \]
\end{lemma}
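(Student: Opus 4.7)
The plan is to exploit the fact that the two operators in play act on independent variables: $\pbot\!\left(\tfrac{\partial}{\partial s}\right)$ involves only $s$, while $d_i$ is built from $\tfrac{\partial}{\partial t}$ and multiplication by functions of $t$. The two therefore commute, so I would push $\pbot\!\left(\tfrac{\partial}{\partial s}\right)$ through and apply Lemma~\ref{lem:pbot-d-ds} first. This collapses $\pbot\!\left(\tfrac{\partial}{\partial s}\right) F_j(s,t)$ to $\alpha_j\, e^{(2-2g)s}\, G_j(t)$, where I set $G_j(t) = \exp\!\bigl(\tfrac{Q(\Sigma)}{2}t^2 + k_j t\bigr)$, so everything reduces to computing $d_i G_j(t)$.

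Next I would observe, by a direct one-line differentiation, that each elementary factor of $d_i$ acts on $G_j(t)$ as scalar multiplication:
$$\Bigl(\tfrac{\partial}{\partial t} - Q(\Sigma)\,t - k_\ell\Bigr) G_j(t) = (k_j - k_\ell)\, G_j(t).$$
The factors appearing in $d_i$ all commute with one another, since they differ from the common operator $\tfrac{\partial}{\partial t} - Q(\Sigma)\,t$ only by additive scalars. Iterating the displayed identity then shows that the full product $d_i$ acts on $G_j(t)$ as scalar multiplication by
$$\prod_{\ell\neq i}\frac{k_j-k_\ell}{k_i-k_\ell}.$$

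The point is that this scalar is engineered by Lagrange interpolation to equal the Kronecker delta $\delta_{ij}$: for $j \neq i$ the product contains the factor $\ell = j$, which vanishes, while for $j = i$ every factor equals $1$. Restoring the prefactor $\alpha_j\, e^{(2-2g)s}$ yields both claims of the lemma. I do not expect any real obstacle here: the entire content is the eigenvalue computation together with commutativity of the factors of $d_i$, both of which are immediate. The only small point worth checking carefully is the commutativity of $d_i$ with $\pbot\!\left(\tfrac{\partial}{\partial s}\right)$, but since the former acts purely in $t$ and the latter purely in $s$ this is clear.
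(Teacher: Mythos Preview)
Your proposal is correct and follows essentially the same approach as the paper: commute $\pbot\!\left(\tfrac{\partial}{\partial s}\right)$ past $d_i$, apply Lemma~\ref{lem:pbot-d-ds} to reduce to computing $d_i$ on $\exp\!\bigl(\tfrac{Q(\Sigma)}{2}t^2 + k_j t\bigr)$, and then observe that each factor of $d_i$ scales this function by $\tfrac{k_j-k_\ell}{k_i-k_\ell}$, yielding $\delta_{ij}$ in the product. Your extra remark that the factors of $d_i$ commute (differing only by scalars from a common operator) is a nice clarification, though strictly unnecessary once you know each factor acts as a scalar on the fixed eigenfunction $G_j$.
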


\begin{proof}
The operators $\pbot\left(\frac{\partial}{\partial s}\right)$ and $d_i$ commute since they are differential operators in $s$ and $t$ respectively, so for fixed $j$ it suffices to consider
\begin{align*}
d_i \left(\pbot\left(\frac{\partial}{\partial s}\right) F_j(s,t)\right) &= d_i \left(\alpha_j e^{(2-2g)s} \cdot \exp\left(\frac{Q(\Sigma)}{2}t^2 + k_j t\right)\right) \\
&= \alpha_j e^{(2-2g)s} \cdot d_i \left(\exp\left(\frac{Q(\Sigma)}{2}t^2 + k_j t\right)\right).
\end{align*}
For each fixed $l\neq i$, we compute that 
\[ \frac{1}{k_i - k_l}\left(\frac{\partial}{\partial t} - Q(\Sigma)t - k_l\right) \exp\left(\frac{Q(\Sigma)}{2}t^2 + k_jt\right) = \frac{k_j - k_l}{k_i - k_l}\exp\left(\frac{Q(\Sigma)}{2}t^2 + k_jt\right), \]
and so $d_i$ acts on $\exp\left(\frac{Q(\Sigma)}{2}t^2 + k_jt\right)$ as multiplication by
\[ \prod_{l \neq i} \frac{k_j-k_l}{k_i-k_l} = \begin{cases} 1, & j=i \\ 0, & j \neq i, \end{cases} \]
which proves the lemma.
\end{proof}
The  proposition below explains how the derivatives of $F_j(s,t)$ studied in Lemma \ref{lem:pbot-di} are related to Donaldson invariants of  $\tilde{X}_j$.

\begin{proposition} \label{prop:compute-constants}
There are polynomials $g_1(t),g_2(t),\dots,g_n(t) \in \Q[t]$ such that
\[ \left. D^{\tilde{w}_j}_{\tilde{X}_j}\left(\pbot(-R) \cdot \left(1+\frac{x}{2}\right)g_i(\Sigma)\right)=\pbot\left(\frac{\partial}{\partial s}\right) d_i F_j(s,t)\right|_{s=t=0} = \begin{cases} \alpha_i, & i=j \\ 0, & i \neq j \end{cases}\]
where $x \in H_0(\tilde{X}_j;\Z)$ is the class of a point, for all $i$ and $j$.
\end{proposition}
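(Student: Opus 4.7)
The plan is to establish the two equalities of the proposition separately. The rightmost equality is immediate from Lemma~\ref{lem:pbot-di}: setting $s=t=0$ in
\[ \pbot\!\left(\tfrac{\partial}{\partial s}\right)\!(d_i F_j(s,t)) = \delta_{ij}\,\alpha_i\, e^{(2-2g)s}\exp\!\left(\tfrac{Q(\Sigma)}{2}t^2 + k_i t\right) \]
gives exactly $\alpha_i$ when $i=j$ and $0$ otherwise, using $e^0=1$. So the content is the first equality, which I would prove by matching both sides as \emph{linear functionals} on the rational vector space spanned by the $D^{\tilde w_j}_{\tilde X_j}((1+\tfrac x2)\pbot(-R)\Sigma^k)$, for varying $k\geq 0$.

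To do this, I would begin by expanding $F_j$ as a formal Taylor series using the definition of $\cD^{\tilde w_j}_{\tilde X_j}$ and the fact that $(-R)$ and $\Sigma$ commute in the symmetric algebra $\bA(\tilde X_j)$:
\[ F_j(s,t) = \cD^{\tilde w_j}_{\tilde X_j}\!\left(s(-R)+t\Sigma\right) = \sum_{a,b\geq 0} \frac{s^a t^b}{a!\,b!}\, D^{\tilde w_j}_{\tilde X_j}\!\left(\left(1+\tfrac{x}{2}\right)(-R)^a \Sigma^b\right). \]
Because $\pbot(\partial/\partial s)$ and $d_i$ act on disjoint variables they commute, so I can apply them in either order. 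Applying $\pbot(\partial/\partial s)$ first and setting $s=0$ collapses the $a$-sum by linearity of $D^{\tilde w_j}_{\tilde X_j}$, producing the power series
\[ G_j(t) := \sum_{b\geq 0} \frac{t^b}{b!}\, D^{\tilde w_j}_{\tilde X_j}\!\left(\left(1+\tfrac{x}{2}\right)\pbot(-R)\,\Sigma^b\right). \]

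It then remains to apply $d_i$ to $G_j(t)$ and evaluate at $t=0$. Since $d_i$ is a polynomial in $t$ and $\partial/\partial t$ with rational coefficients (depending only on $i$, on $Q(\Sigma)\in\Q$, and on the rational numbers $k_1,\dots,k_n$), I would use the commutation relation $[\partial/\partial t,t]=1$ to put it in normal form $d_i = \sum_k p_k(t)(\partial/\partial t)^k$ with each $p_k\in\Q[t]$. Because $G_j^{(k)}(0)$ equals precisely the Donaldson invariant $D^{\tilde w_j}_{\tilde X_j}((1+\tfrac x2)\pbot(-R)\Sigma^k)$, evaluation at $t=0$ picks up only the constant terms $p_k(0)$, so
\[ d_i G_j(t)\big|_{t=0} = \sum_{k} p_k(0)\, D^{\tilde w_j}_{\tilde X_j}\!\left(\left(1+\tfrac{x}{2}\right)\pbot(-R)\Sigma^k\right) = D^{\tilde w_j}_{\tilde X_j}\!\left(\pbot(-R)\left(1+\tfrac{x}{2}\right)g_i(\Sigma)\right), \]
where $g_i(t):=\sum_{k} p_k(0)t^k\in\Q[t]$, using linearity of $D^{\tilde w_j}_{\tilde X_j}$ once more. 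Crucially, $g_i$ is built from $d_i$ alone and hence does not depend on $j$, giving the required polynomials.

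I do not anticipate any genuine obstacle: the hard inputs (the form of $\pbot$ from Proposition~\ref{prop:pbot}, the identification of the canonical class as the unique basic class pairing with $-R$ extremally in Lemma~\ref{lem:can-class}, and the computation in Lemma~\ref{lem:pbot-di}) have all been established. The only bookkeeping is the normal-form rewrite of $d_i$, which is routine via $[\partial/\partial t,t]=1$ and automatically preserves rationality, since every coefficient encountered lies in $\Q$.
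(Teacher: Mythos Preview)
Your proposal is correct and follows essentially the same approach as the paper: expand $F_j$ as a double Taylor series, rewrite $d_i$ in normal form via $[\partial/\partial t,t]=1$, and define $g_i$ from the constant-in-$t$ coefficients, with the second equality coming directly from Lemma~\ref{lem:pbot-di}. The only cosmetic difference is that you apply $\pbot(\partial/\partial s)$ and set $s=0$ before handling $d_i$, whereas the paper does the $t$-variable first; since the operators act on disjoint variables this is immaterial, and your $g_i(t)=\sum_k p_k(0)t^k$ agrees with the paper's $g_i(\Sigma)=\sum_l c_{0,l}\Sigma^l$.
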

\begin{proof}
The second equality follows from Lemma \ref{lem:pbot-di}, so we will focus on the first equality. We first observe that we can write
\[ d_i = \sum_{k+l \leq n-1} c_{kl}t^k\left(\frac{\partial}{\partial t}\right)^l \]
for some rational constants $c_{kl}$, since $d_i$ is defined as a product of $n-1$ terms which are linear in the noncommuting operators $\frac{\partial}{\partial t}$ and $t$, and since when we expand this product we can use the identity $[\frac{\partial}{\partial t}, t] = 1$ to replace each product $\frac{\partial}{\partial t} \cdot t$ in each monomial with the expression $t \frac{\partial}{\partial t} + 1$.  We expand the power series \[F_j(s,t) = \cD^{\tilde{w}_j}_{\tilde{X}_j}(s(-R)+t\Sigma)\] as
\begin{align*}
\sum_{d=0}^\infty \frac{D^{\tilde{w}_j}_{\tilde{X}_j}\left(\left(1+\frac{x}{2}\right)(s(-R)+t\Sigma)^d\right)}{d!} 
&= \sum_{d=0}^\infty \sum_{p+q=d} {d\choose p} \frac{D^{\tilde{w}_j}_{\tilde{X}_j}\left(\left(1+\frac{x}{2}\right)(-R)^p\Sigma^q\right)}{d!} s^pt^q \\
&= \sum_{p,q \geq 0} D^{\tilde{w}_j}_{\tilde{X}_j}\left( \left(1+\frac{x}{2}\right) (-R)^p \Sigma^q\right) \frac{s^p}{p!} \frac{t^q}{q!}.
\end{align*}
For any integers $k,l \geq 0$ we then have
\begin{align*}
\left.t^k \left(\frac{\partial}{\partial t}\right)^l F_j(s,t)\right|_{t=0}
&= \left.\sum_{p=0}^\infty \sum_{q=l}^\infty D^{\tilde{w}_j}_{\tilde{X}_j}\left( \left(1+\frac{x}{2}\right) (-R)^p \Sigma^q\right) \frac{s^p}{p!} \frac{t^{q-l+k}}{(q-l)!}\right|_{t=0} \\
&= \delta_{k,0} \cdot \sum_{p=0}^\infty D^{\tilde{w}_j}_{\tilde{X}_j} \left( \left(1+\frac{x}{2}\right) (-R)^p \Sigma^l\right) \frac{s^p}{p!}.
\end{align*}
So if we define $g_i(\Sigma) = \sum c_{0,l}\Sigma^l$, it follows that
\[ \left.d_i F_j(s,t)\right|_{t=0} = \sum_{p=0}^\infty D^{\tilde{w}_j}_{\tilde{X}_j}\left((-R)^p \cdot \left(1+\frac{x}{2}\right)g_i(\Sigma)\right) \frac{s^p}{p!}. \]

We argue similarly for the polynomial $\pbot(t) = \sum_k e_k t^k$, with $e_k \in \Q$. Namely, we compute that 
\begin{align*}
\left.\pbot\left(\frac{\partial}{\partial s}\right)\left(\left.d_iF_j(s,t)\right|_{t=0}\right)\right|_{s=0}
&= \sum_k e_k\sum_{p=0}^\infty D^{\tilde{w}_j}_{\tilde{X}_j}\left((-R)^p \cdot \left(1+\frac{x}{2}\right)g_i(\Sigma)\right) \left.\left(\frac{\partial}{\partial s}\right)^k \frac{s^p}{p!}\right|_{s=0} \\
&= \sum_k D^{\tilde{w}_j}_{\tilde{X}_j}\left(e_k(-R)^k \cdot \left(1+\frac{x}{2}\right)g_i(\Sigma)\right).
\end{align*} 
On the left side, the operations of specializing to $t=0$ and applying $\pbot\left(\frac{\partial}{\partial s}\right)$ commute, and we have $\sum_k e_k(-R)^k = \pbot(-R)$, so this completes the proof.
\end{proof}

We are now ready to prove Theorem \ref{thm:main-linear-independence}. 
\begin{proof}[Proof of Theorem~\ref{thm:main-linear-independence}]
According to Proposition \ref{prop:compute-constants}, we have that \begin{equation*}\label{eqn:donaldson}D^{\tilde{w}_j}_{\tilde{X}_j}\left(\pbot(-R) \cdot \left(1+\frac{x}{2}\right)g_i(\Sigma)\right)=\begin{cases} \alpha_i, & i=j \\ 0, & i \neq j. \end{cases}\end{equation*} Viewing $\pbot(-R)$ as an element of $\bA(X_j)$ and $\left(1+\frac{x}{2}\right)g_i(\Sigma)$ as an element of $\bA(\tilde{C})$, this Donaldson invariant  is given by the pairing 
\[ \left\langle \Psi_{\tilde{w}_j,X_j}(\pbot(-R)), \Psi_{\tilde{w},\tilde{C}}\left(\left(1+\frac{x}{2}\right)g_i(\Sigma)\right) \right\rangle = \begin{cases} \alpha_i, & i=j \\ 0, & i \neq j \end{cases} \]
on $I_*(-\bar{Y})$.  Proposition~\ref{prop:pbot} reduces this identity to
\[ \left\langle \Theta(\xi_j,\data), \Psi_{\tilde{w},\tilde{C}}\left(\left(1+\frac{x}{2}\right)g_i(\Sigma)\right) \right\rangle = \begin{cases} \alpha_i, & i=j \\ 0, & i \neq j. \end{cases} \]
So, for any fixed $i$, the kernel of the linear map
\[ \left\langle \,\cdot\,, \Psi_{\tilde{w},\tilde{C}}\left(\left(1+\frac{x}{2}\right)g_i(\Sigma)\right) \right\rangle: I_*(-\bar{Y}) \to \C \]
contains all of the elements $\iinvt(\xi_j,\data)$ for $j \neq i$, but does not contain $\iinvt(\xi_i,\data)$ since $\alpha_i \neq 0$, so $\iinvt(\xi_i,\data)$ cannot be a linear combination of the other $\iinvt(\xi_j,\data)$.  We conclude that the elements $\iinvt(\xi_1,\data),\dots,\iinvt(\xi_n,\data)$ are linearly independent.
\end{proof}

%
%
%
%

\section{Representations of fundamental groups and L-spaces} \label{sec:representations}
In Subsection \ref{ssec:reps}, we describe some connections between the rank of the sutured instanton homology of $Y(p)$ and the existence of irreducible $SU(2)$ representations of $\pi_1(Y)$. We then use these connections, in combination with Theorem \ref{thm:main-linear-independence}, to prove Theorems \ref{thm:main} and \ref{thm:mainqhs}. As we shall see, we will be able to deduce, under favorable circumstances, the existence of an irreducible representation when \[\rank\SHItfun(Y(p))>|H_1(Y)|.\] In Subsection \ref{ssec:lspaces}, we prove several results describing when manifolds obtained via Dehn surgery on knots in the 3-sphere satisfy the above inequality. We will use these results in Section \ref{sec:examples}, in combination with Theorem \ref{thm:main-linear-independence} and the results from Subsection \ref{ssec:reps}, to prove the existence of irreducible representations for various manifolds obtained via Dehn surgery.

\subsection{Representations and the rank of instanton homology}
\label{ssec:reps}
Below, we make concrete some relationships between  instanton Floer homology and irreducible $SU(2)$ representations of $\pi_1$.
For the reader not used to thinking about $SU(2)$, we recall that reducible representations $\pi_1(Y) \to SU(2)$ are precisely those with abelian image; or, equivalently, those which factor through  $H_1(Y)$.  Thus, if $Y$ is an integer homology sphere then a representation is reducible iff it is trivial. More generally, if $Y$ is a rational homology sphere then one can show that the image of any reducible representation is a finite cyclic subgroup of $SU(2)$.

As mentioned in the introduction, the connections between sutured instanton homology and $SU(2)$ representations come from a relationship between $\SHItfun$ and another version of instanton Floer homology, as follows.

Let $I^\natural$ denote the singular instanton knot homology of \cite{km-khovanov}. Following Scaduto \cite{scaduto},  we  define the closed 3-manifold invariant
\[ I^\#(Y) := I^\natural(Y,U), \] where $U$ is an unknot in $Y$. We have the following sequence of isomorphisms
\[ \SHItfun(Y(p)) \cong \SHItfun(Y(U)) = \KHItfun(Y,U) \cong I^\natural(Y,U)\otimes \C = I^\#(Y) \otimes \C, \]
where $Y(U)$ refers to the complement of $U$ with meridional sutures.  The first of these  comes from identifying $Y(U)$ as the result of attaching a contact 1-handle to $Y(p)$, as in the proof of \cite[Lemma~4.14]{bs-shi}; the second is the definition of the instanton knot homology  $\KHItfun$; and the third is \cite[Proposition~1.4]{km-khovanov}.  Since 
\[ I^\#(Y; \Q)\cong I^\#(-Y; \Q) \]
\cite[Section~7.4]{scaduto}, Theorem~\ref{thm:main-linear-independence} implies the following.

\begin{corollary} \label{cor:rank-i-sharp}
Suppose $W$ is a  compact 4-manifold with boundary $Y$ which admits $n$ Stein structures  whose Chern classes  are distinct in $H^2(W;\R)$.  Then $\rank I^\#(Y)\geq n$.\qed
\end{corollary}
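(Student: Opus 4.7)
The plan is to reduce the statement directly to Theorem \ref{thm:main-linear-independence} by chasing it through the isomorphisms assembled in the paragraphs immediately preceding the corollary. First, I would apply Theorem \ref{thm:main-linear-independence} to the $n$ Stein structures $J_1,\dots,J_n$ on $W$: they induce contact structures $\xi_1,\dots,\xi_n$ on $Y$, and the hypothesis on Chern classes gives, for any basepoint $p\in Y$, a marked odd closure $\data$ of $Y(p)$ such that the contact invariants $\iinvt(\xi_1,\data),\dots,\iinvt(\xi_n,\data)\in\SHIt(-\data)$ are linearly independent. In particular,
\[ \rank_\C \SHItfun(-Y(p)) \;\geq\; n. \]

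Next, I would transport this inequality to $I^\#$ using the chain of isomorphisms displayed just above the corollary, applied to $-Y$ in place of $Y$:
\[ \SHItfun(-Y(p)) \;\cong\; \SHItfun(-Y(U)) \;=\; \KHItfun(-Y,U) \;\cong\; I^\natural(-Y,U)\otimes\C \;=\; I^\#(-Y)\otimes\C. \]
This gives $\rank I^\#(-Y) \geq n$. Finally, I would invoke Scaduto's identification $I^\#(Y;\Q)\cong I^\#(-Y;\Q)$ to replace $-Y$ by $Y$, obtaining the desired bound $\rank I^\#(Y)\geq n$.

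There is no real obstacle at this stage: every ingredient has already been established. The only thing worth double-checking is that the conventions for orientation reversal are compatible, both in identifying $\SHItfun(-Y(p))$ with $I^\#(-Y)\otimes\C$ (which is guaranteed by the cited isomorphisms, since the contact $1$-handle attachment relating $Y(p)$ to $Y(U)$ and the definitional identifications $\KHItfun = I^\natural\otimes\C$ commute with orientation reversal) and in passing from $-Y$ to $Y$ via Scaduto. With these checks in place, the corollary follows in one line from Theorem \ref{thm:main-linear-independence}.
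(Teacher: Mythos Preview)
Your proposal is correct and follows essentially the same route as the paper: Theorem~\ref{thm:main-linear-independence} gives $\rank_\C \SHItfun(-Y(p)) \geq n$, the displayed chain of isomorphisms (applied with $-Y$ in place of $Y$) identifies this with $\rank I^\#(-Y)$, and Scaduto's $I^\#(Y;\Q)\cong I^\#(-Y;\Q)$ converts this to the stated bound on $\rank I^\#(Y)$. The paper treats this as immediate (hence the \qed\ in the statement), and your orientation checks are exactly the point implicit in that passage.
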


We can now use Corollary~\ref{cor:rank-i-sharp} to prove Theorem~\ref{thm:main}, restated for convenience below.

{
\renewcommand{\thetheorem}{\ref{thm:main}}
\begin{theorem}
If $Y$ is the boundary of a Stein domain which is not an integer homology ball, then there is a nontrivial homomorphism $\pi_1(Y) \to SU(2)$.
\end{theorem}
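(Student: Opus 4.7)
The plan is to argue by contradiction, combining Corollary~\ref{cor:rank-i-sharp} with a careful analysis of the contribution of the trivial representation to $I^\#(Y)$. Suppose $Y = \partial W$ for a Stein domain $(W,J)$ which is not an integer homology ball, and suppose toward a contradiction that every homomorphism $\pi_1(Y)\to SU(2)$ is trivial.

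First I would show that under this assumption $\rank I^\#(Y) = 1$. The generators of the chain complex computing $I^\#(Y) = I^\natural(Y,U)$ are, after a small holonomy perturbation, in bijection with critical orbits of a Chern-Simons functional whose critical set is naturally identified with $\Hom(\pi_1(Y),SU(2))$, together with the extra basepoint/unknot data built into $I^\natural$. Under the standing assumption this representation variety reduces to the trivial representation alone, which I would verify is a nondegenerate (or at worst Morse-Bott nondegenerate) critical point in this singular instanton setting; establishing this nondegeneracy is precisely one of the issues Section~\ref{sec:representations} is devoted to. Once established, it forces the chain complex to have a single generator, and hence $\rank I^\#(Y) = 1$.

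Next I would reduce to the case $c_1(J) \neq 0$ in $H^2(W;\R)$. If instead $c_1(J)$ vanishes over $\R$, then an argument with Fr\o yshov's $h$-invariant, applied to $Y = \partial W$ together with the almost complex structure $J$ on $W$, should force $W$ to be negative semi-definite; combined with the vanishing of $c_1(J)$ over $\R$ this would force $b_2(W) = 0$, and together with $b_1(W) = 0$ (a standard consequence of $W$ being Stein) it would make $W$ an integer homology ball, contradicting the hypothesis. Hence we may assume $c_1(J) \neq 0$ in $H^2(W;\R)$.

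With this reduction in hand, the conjugate Stein structure $\bar J$ on $W$ has $c_1(\bar J) = -c_1(J)$, which is distinct from $c_1(J)$ in $H^2(W;\R)$. Corollary~\ref{cor:rank-i-sharp} applied to the pair $J$ and $\bar J$, equivalently Theorem~\ref{thm:main-linear-independence} composed with the isomorphism $\SHItfun(-Y(p)) \cong I^\#(Y)\otimes\C$, then yields $\rank I^\#(Y) \geq 2$, contradicting the first step. The step I expect to be the main obstacle is the Fr\o yshov reduction: one needs a sufficiently sharp instanton analogue of the Lisca-Mati\'c-type inequality that converts ``$c_1(J) = 0$ in real cohomology'' into ``$W$ is a homology ball,'' and this is where the Stein hypothesis on $W$ (as opposed to merely bounding some smooth 4-manifold) is used most seriously. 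The first and third steps, by contrast, reduce to inputs already in the paper: Morse theory for the Chern-Simons functional on the one hand, and Theorem~\ref{thm:main-linear-independence} applied to the pair $(J,\bar J)$ on the other.
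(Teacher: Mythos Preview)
Your overall strategy---reduce to $c_1(J)\neq 0$ and then apply Corollary~\ref{cor:rank-i-sharp} to $J$ and $\bar J$---matches the paper, and your first step (the assumption forces $Y$ to be an integer homology sphere and then $\rank I^\#(Y)=1$) is fine. The gap is entirely in your reduction step. The implication ``$c_1(J)=0$ in $H^2(W;\R)$ and $W$ negative semi-definite $\Rightarrow$ $b_2(W)=0$'' is simply false: $c_1(J)=0$ only tells you that $Q_W$ is even, and even unimodular negative-definite forms such as $-E_8$ have positive rank. Moreover, Fr\o yshov's $h$-invariant does not by itself force $b_2^+(W)=0$; his theorem constrains the intersection forms of \emph{negative definite} fillings, so you need that hypothesis as input, not output.

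The paper handles this in two stages. First, it uses the Lisca--Mati\'c holomorphic embedding of $(W,J)$ into a minimal K\"ahler surface $X$ of general type with $b_2^+(X\ssm W)>1$: if $b_2^+(W)\geq 1$, the nonvanishing Donaldson invariants of $X$ factor through the pairing on $HF(Y)$, giving $HF(Y)\neq 0$ directly. This is what reduces to $b_2^+(W)=0$, and it has nothing to do with Fr\o yshov. Second, once $Q_W$ is negative definite and unimodular with $b_2(W)>0$, the case $c_1(J)=0$ makes $Q_W$ even, hence non-diagonalizable over $\Z$; \emph{now} Fr\o yshov's theorem applies and gives $h(Y)>0$, which via $h(Y)=\tfrac12(\chi(HF(Y))-\chi(\hfhat(Y)))$ forces $HF(Y)\neq 0$. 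So Fr\o yshov is used to extract a contradiction in the $c_1=0$, negative-definite case---not to conclude $b_2(W)=0$---and the Lisca--Mati\'c embedding is the missing ingredient you need for the $b_2^+\geq 1$ case.
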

\addtocounter{theorem}{-1}
}

\begin{proof}Suppose $Y$ is the boundary of a Stein domain $(W,J)$ which is not an integer homology ball. We will assume that $Y$ is an integer homology sphere, since otherwise there is clearly a nontrivial representation \[\pi_1(Y) \to H_1(Y) \to U(1)\subset SU(2)\] as mentioned in the introduction. The fact that $W$ can be obtained from the $4$-ball by attaching handles of index $1$ and $2$ implies that $H_3(W)=0$ and that $H_2(W)$ is torsion-free. In reverse, $W$ can be obtained by thickening $Y$ and attaching handles of index $2$, $3$, and $4$, which implies that $H_1(Y)$ surjects onto $H_1(W)$ and, therefore, that $H_1(W) = 0$.  The universal coefficient theorem then tells us that $H^2(W) \cong H_2(W)$. The assumption that $W$ is not a homology ball, together with this isomorphism and the fact that $H_2(W)$ is torsion-free, then implies that $b_2(W) > 0$.  
With this established, we prove below that the instanton Floer homology $HF(Y)$ defined in \cite{floer-invariant} is nontrivial, which implies the existence of a nontrivial homomorphism $\pi_1(Y)\to SU(2)$.

By work of Lisca and Mati\'c \cite[Theorem~3.2]{lisca-matic}, we can embed $(W,J)$ holomorphically into a minimal K\"ahler surface $X$ of general type, such that $Y$ separates $X$ into pieces $W$ and $S = \overline{X \smallsetminus W}$ with $b_2^+(S) > 1$.  The Donaldson invariants of $X$ are nontrivial \cite[Theorem~C]{donaldson-invariants}, and if $b_2^+(W) \geq 1$ then they can be recovered from a pairing on $HF(Y)$, which implies that $HF(Y)$ is nontrivial, as desired. Let us therefore assume that $b_2^+(W)=0$. Note that $Q_W$  is unimodular since $Y$ is a homology sphere; thus, $b_2^+(W)=0$ implies that $Q_W$ is  negative definite. That is, $b_2^-(W) = b_2(W)>0.$
We  consider two  cases below.

First, suppose $c_1(J)\neq 0$. Then the conjugate Stein structure $\bar{J}$ satisfies $c_1(\bar{J}) = -c_1(J)$. These are distinct as real cohomology classes since $H_2(W)$ is torsion-free. Thus $I^\#(Y)$ has rank at least 2 by Corollary~\ref{cor:rank-i-sharp}. It then follows from \cite[Theorem~1.3]{scaduto} that Fr{\o}yshov's reduced Floer homology $\hfhat(Y)$ \cite{froyshov} is nontrivial (taking coefficients in $\C$).  But $\hfhat(Y)$ is defined as a subquotient of $HF(Y)$, so  $HF(Y)$ must be nontrivial as well.

Next, suppose $c_1(J)=0$.  Since the first Chern class of $(X,J)$ is characteristic for the intersection form on $X$ and restricts to $c_1(J)$ on $W$, we have that
\[ 0=c_1(J) \cdot \Sigma \equiv \Sigma \cdot \Sigma \pmod{2} \]
for any smoothly embedded surface $\Sigma \subset W$. Thus, $Q_W$ is an even unimodular form.  Observe that $Q_W$ cannot be diagonalizable over $\Z$, since if it were, there would have to be a surface in $W$ of self-intersection $-1$. Since $Q_W$ is not diagonalizable, the Fr{\o}yshov invariant
\[ h(Y) = \frac{1}{2}\left(\chi(HF(Y)) - \chi(\hfhat(Y))\right) \]
must be positive by \cite[Theorem~3]{froyshov}.  This means that at least one of $HF(Y)$ and $\hfhat(Y)$ must be nontrivial. In either case, $HF(Y)$ is nontrivial.
\end{proof}

In the proof of Theorem \ref{thm:main}, we used a relationship between $I^\#(Y)$ and $\hfhat(Y)$ due to Scaduto \cite[Theorem~1.3]{scaduto} to argue for the existence of nontrivial $SU(2)$ representations when $Y$ is a homology sphere with $\rank(I^\#(Y)) > 1$. But that relationship is much stronger than was  needed.  Indeed, one can deduce the existence of nontrivial $SU(2)$ representations  more directly from the construction of $I^\#(Y)$, by an approach which generalizes much more readily to proving the existence of \emph{irreducible}  representations when $Y$ is  merely a rational homology sphere (in which case $HF(Y)$ is generally not defined). More concretely, Scaduto \cite[Corollary~1.4]{scaduto} showed that $I^\#(Y)$ has Euler characteristic $|H_1(Y)|$, which implies that \[\rank I^\#(Y) \geq |H_1(Y)|.\]  If this inequality is strict then we can often show that there exists an irreducible representation.  The simplest version of this, for homology spheres, is as below (recall that irreducible is equivalent to nontrivial for homology spheres); we will generalize it to rational homology spheres in Theorem~\ref{thm:reducibles-morse-bott}.

\begin{theorem} \label{thm:su2-reps}
Let $Y$ be a homology sphere with $\rank I^\#(Y) >|H_1(Y)|=1$.  Then there is a nontrivial representation $\pi_1(Y) \to SU(2)$.
\end{theorem}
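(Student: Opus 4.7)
The plan is to exploit the Morse-theoretic definition of $I^\#(Y) = I^\natural(Y,U)$. Recall that this group is computed from a chain complex whose generators correspond to (suitably perturbed) critical points of a Chern--Simons functional on a configuration space of $SO(3)$ connections on a bundle over $Y$ singular along an unknot $U$ together with the Kronheimer--Mrowka earring structure. Under the standard identification, the set of critical points of this functional is in bijection with the $SU(2)$-character variety $R(Y) = \Hom(\pi_1(Y), SU(2))$; this is the fundamental link between $I^\#$ and the fundamental group that the paper emphasizes in the introduction.

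Given this, the strategy for the proof is a straightforward counting/contrapositive argument. First, I would assume for contradiction that every homomorphism $\pi_1(Y) \to SU(2)$ is trivial, so that $R(Y)$ consists of a single point corresponding to the trivial representation $\rho_0$. Second, I would invoke the nondegeneracy statement for $\rho_0$ developed in the body of Section~\ref{sec:representations}: with the earring perturbation built into the $I^\natural$ construction, the trivial representation contributes a single nondegenerate critical point to the Chern--Simons functional (this is the content of the parenthetical ``see Section~\ref{sec:representations}'' in the sketch of the proof of Theorem~\ref{thm:main}). Third, since there is only one critical point, the chain complex computing $I^\#(Y)$ has rank one, and hence $\rank I^\#(Y) \leq 1$. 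This contradicts the hypothesis $\rank I^\#(Y) > 1$, proving that some nontrivial representation must exist.

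As a sanity check, one can also confirm consistency with the Euler characteristic: Scaduto's formula $\chi(I^\#(Y)) = |H_1(Y)| = 1$ is compatible with rank equal to $1$ in the all-reducible case, and forces the parity of the ``extra'' generators to be compatible whenever the rank exceeds $1$.

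The main obstacle is the perturbation-theoretic input, namely verifying that the trivial representation remains an isolated, nondegenerate critical point of the perturbed Chern--Simons functional used to define $I^\#(Y)$, and that the Morse chain complex is indeed generated by such critical points. In particular, one must ensure that the holonomy perturbation can be chosen small enough so as not to create spurious new critical points beyond those coming from $R(Y)$, while still achieving nondegeneracy at $\rho_0$. This is precisely the technical ingredient that Section~\ref{sec:representations} is set up to provide (and which, in the form of the ``cyclical finiteness'' hypothesis, is needed in the more delicate rational homology sphere generalization appearing later as Theorem~\ref{thm:reducibles-morse-bott}); in the homology sphere case treated here it reduces to the classical nondegeneracy of $\rho_0$ in singular instanton theory, so once that is in hand the remainder of the argument is the one-line counting argument above.
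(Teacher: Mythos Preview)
Your proposal is correct and follows essentially the same approach as the paper: argue by contradiction, identify the critical set of the Chern--Simons functional with $R(Y)$ (via the identification $R^\natural(Y,U)\cong R(Y)$), invoke nondegeneracy of the trivial representation (Proposition~\ref{prop:cs-morse-bott}), and conclude that the chain complex has a single generator. The paper also makes explicit the point you flag as the main obstacle, citing \cite[Proposition~3.5]{km-khovanov} to ensure that a small holonomy perturbation achieving regularity does not alter the critical set.
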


Before proving Theorem \ref{thm:su2-reps}, we recall   the connection between $I^\#(Y)$ and $SU(2)$ representations. This will be important later as well.

Let $m \subset Y$ denote a meridian of $U$, let $U^\natural$ be the Hopf link $U \cup m$, and let $\omega \subset Y$ be an arc from $U$ to $m$ inside of a meridional disk bounded by $m$.  Then $I^\#(Y):=I^\natural(Y,U)$ is obtained as the Morse homology of a Chern-Simons functional whose critical points are conjugacy classes of representations
\[ \rho: \pi_1(Y \ssm (U^\natural \cup \omega)) \to SU(2) \]
with holonomy conjugate to $i$ (identifying $SU(2)$ with the unit quaternions) around meridians $\mu_U, \mu_m$ of the Hopf link $U^\natural$ and equal to $-1$ around a meridian $\mu_\omega = [\mu_U,\mu_m]$ of $\omega$.  We will write \[(Y, U^\natural \cup \omega) = Y \# (S^3,U^\natural \cup \omega),\] so that each such representation is a homomorphism
\[ \rho: \pi_1(Y) \ast \pi_1(S^3 \ssm (U^\natural \cup \omega)) \to SU(2). \]
The restriction of any such $\rho$ to $\pi_1(S^3 \ssm (U^\natural \cup \omega))$ is conjugate to the unique homomorphism sending $\mu_U$ and $\mu_m$ to $i$ and $j$, respectively; see the proof of Lemma \ref{lem:natident}.

\begin{proof}[Proof of Theorem \ref{thm:su2-reps}]
Suppose all homomorphisms $\pi_1(Y) \to SU(2)$ are trivial. Then it follows from the discussion above that there is only one homomorphism \[ \rho: \pi_1(Y) \ast \pi_1(S^3 \ssm (U^\natural \cup \omega)) \to SU(2), \] up to conjugation: that which is trivial on $\pi_1(Y)$ and sends meridians of $U$ and $m$ to $i$ and $j$, respectively. Its conjugacy class $[\rho]$ is therefore the unique critical point of  the unperturbed Chern-Simons functional defining $I^\natural(Y,U)$ as above. We claim, moreover, that this critical point is nondegenerate, though we postpone the proof to Proposition~\ref{prop:cs-morse-bott}. 

It follows that $I^\#(Y)=I^\natural(Y,U)$ is the homology of a chain complex whose only generator is $[\rho]$; we may need a small holonomy perturbation to ensure the regularity of the moduli spaces which define the differential but, according to \cite[Proposition~3.5]{km-khovanov}, we can do this without changing the critical point set. We have thus shown that if there are no nontrivial representations $\pi_1(Y) \to SU(2)$ then  $\rank I^\#(Y)=1$, proving the theorem.\end{proof}

We now discuss the question of whether the conjugacy class $[\rho]$ in the proof of Theorem \ref{thm:su2-reps} is a nondegenerate critical point, or, more generally, whether that of a reducible representation is a Morse-Bott critical point.  For that, we consider three different representation varieties:
\begin{align*}
R(Y) &= \Hom(\pi_1(Y), SU(2)), \\
R(Y,U) &= \{ \rho: \pi_1(Y) \ast \langle \mu_U,\mu_m \rangle \to SU(2) \mid \rho([\mu_U,\mu_m]) = -1 \},   \\
R^\natural(Y,U) &= R(Y,U) / SU(2),
\end{align*}
where $SU(2)$ acts on $R(Y,U)$ by conjugation.

\begin{lemma}
\label{lem:natident}
There is a natural identification
\[ R(Y) \cong R^\natural(Y,U) \]
sending $\rho\in R(Y)$ to the equivalence class of the unique $\tilde{\rho}\in R(Y,U)$ such that \[\tilde{\rho}|_{\pi_1(Y)} = \rho\,\,\,\textrm{ and }\,\,\,\tilde{\rho}(\mu_U) = i\,\,\,\textrm{ and }\,\,\,\tilde{\rho}(\mu_m) = j.\] 
\end{lemma}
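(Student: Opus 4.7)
The plan is to exhibit the claimed bijection explicitly and verify it using the elementary algebra of anticommuting pairs in $SU(2)$.

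First, I would check the map is well-defined. Given $\rho \in R(Y)$, since $\pi_1(Y) \ast \langle \mu_U,\mu_m\rangle$ is a free product, the prescription $\tilde\rho|_{\pi_1(Y)} = \rho$, $\tilde\rho(\mu_U)=i$, $\tilde\rho(\mu_m)=j$ determines a unique homomorphism to $SU(2)$. To see that $\tilde\rho$ lies in $R(Y,U)$, compute in the quaternions: $[i,j]=iji^{-1}j^{-1} = k\cdot(-i)\cdot(-j) = k\cdot ij = k^2 = -1$. Thus $\tilde\rho([\mu_U,\mu_m])=-1$ and the assignment $\rho\mapsto[\tilde\rho]$ gives a well-defined map $\Phi: R(Y) \to R^\natural(Y,U)$.

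Next, I would establish a normal form for pairs $(A,B)\in SU(2)\times SU(2)$ with $[A,B]=-1$. The relation $ABA^{-1}=-B$ shows that conjugation by $A$ sends $B$ to $-B$; taking traces forces $\tr B = 0$, and symmetrically $\tr A=0$. Writing $A,B$ as pure imaginary unit quaternions and expanding $AB + BA = -2\langle \vec A, \vec B\rangle$, the anticommutativity $AB=-BA$ becomes the statement that $A$ and $B$ are orthogonal as vectors in $\mathrm{Im}(\mathbb{H})\cong\mathbb{R}^3$. Since $SU(2)$ acts by conjugation on $\mathrm{Im}(\mathbb{H})$ as $SO(3)$, and $SO(3)$ acts transitively on ordered orthonormal pairs with trivial stabilizer, there exists $g\in SU(2)$, unique up to the kernel $\{\pm 1\}$, conjugating $(A,B)$ to $(i,j)$.

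From the normal form, surjectivity of $\Phi$ is immediate: for any $\tilde\rho\in R(Y,U)$, choose $g$ conjugating $(\tilde\rho(\mu_U),\tilde\rho(\mu_m))$ to $(i,j)$; then $g\tilde\rho g^{-1}$ is of the prescribed form and hence $[\tilde\rho]=\Phi(g\tilde\rho g^{-1}|_{\pi_1(Y)})$. For injectivity, if $\Phi(\rho_1)=\Phi(\rho_2)$, there is $g\in SU(2)$ with $g\tilde\rho_1 g^{-1}=\tilde\rho_2$; in particular $g$ fixes both $i$ and $j$ under conjugation, which by the stabilizer calculation forces $g\in\{\pm 1\}$ and therefore $\rho_2 = g\rho_1 g^{-1} = \rho_1$. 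I do not anticipate any serious obstacle; the only substantive input is the quaternion normal form, which is a routine computation.
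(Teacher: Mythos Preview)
Your proof is correct and follows essentially the same approach as the paper: both arguments hinge on the quaternion normal form showing that any pair $(A,B)\in SU(2)^2$ with $[A,B]=-1$ consists of orthogonal purely imaginary unit quaternions and hence is conjugate to $(i,j)$ by a unique element of $SO(3)$. You supply a bit more detail on well-definedness and injectivity than the paper (which simply declares injectivity ``clear''), but the substance is identical.
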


\begin{proof}
The map is clearly injective. To see that it is surjective, we recall that the adjoint representation $SU(2) \to SO(3)$ can be realized as the action by conjugation on the space $\R^3$ of purely imaginary quaternions.  If unit quaternions $x,y \in SU(2)$ satisfy $xyx^{-1}y^{-1} = -1$, then $x$ and $y$ are  conjugate to $-x$ and $-y$, respectively. Therefore, $x$ and $y$ are traceless and thus conjugate to $i$. In particular, $x$ and $y$ are purely imaginary, meaning that we may view them as unit vectors of this $\R^3$. The fact that $xy=-yx$ then implies that these unit vectors are orthogonal, so there is a unique element of $SO(3)$ taking $x,y$ to $i,j$.  Thus, any element of $ R(Y,U)$ is conjugate to a unique $\tilde{\rho}$ such that $\tilde{\rho}(\mu_U) = i$ and $\tilde{\rho}(\mu_m) = j$.
\end{proof}

Note that $R^\natural(Y,U) \cong R(Y)$ is the critical point set of the unperturbed Chern-Simons functional used to construct $I^\#(Y)=I^\natural(Y,U)$, since, as above, $\pi_1(Y \ssm (U^\natural \cup \omega))$ is the free product of $\pi_1(Y)$ with the group 
\[ \pi_1(S^3 \ssm (U^\natural \cup \omega)) = \langle \mu_U, \mu_m, \mu_\omega \mid \mu_\omega = [\mu_U,\mu_m] \rangle. \]
We remark that every element of $R(Y,U)$ has stabilizer $\{\pm 1\} \subset SU(2)$, since these are the only elements which commute with both $i$ and $j$.

We may now study the issue of nondegeneracy of critical points of the Chern-Simons functional computing $I^\#(Y) = I^\natural(Y,U).$ Recall that a critical point $x$ is nondegenerate if the Hessian at $x$ is nondegenerate. The critical point $x$ is Morse-Bott nondegenerate if the kernel of the Hessian at $x$ is equal to the tangent space to the component of the critical manifold containing $x$. The functional is said to be Morse (resp.\ Morse-Bott) if all of its critical points are nondegenerate (resp.\ Morse-Bott nondegenerate). Our goal below is to describe nondegeneracy (or Morse-Bott nondegeneracy) for points of $R^\natural(Y,U)$ more naturally in terms of the corresponding points of the conceptually simpler $R(Y)$. 

To carry this out, we first compute the Zariski tangent spaces to these varieties at various representations, following \cite[Section~7.2]{hhk}.  
Given a finite presentation
\[ \pi_1(Y) = \langle g_1,\dots,g_m \mid w_1, \dots, w_n \rangle, \]
let
\[F: SU(2)^m \to SU(2)^n\,\,\,\textrm{ and }\,\,\,\tilde{F}:SU(2)^{m+2} \to SU(2)^{n+1}\] be the maps defined by
\begin{align*}
F(x_1,\dots,x_m) &= (w_1(x_1,\dots,x_m), w_2(x_1,\dots,x_m), \dots, w_n(x_1,\dots,x_m)) \\
\tilde{F}(x_1,\dots,x_m,y,z) &= (F(x_1,\dots,x_m), yzy^{-1}z^{-1}),
\end{align*}
so that \[R(Y) = F^{-1}(1,1,\dots,1)\,\,\,\textrm{ and } \,\,\,R(Y,U) = \tilde{F}^{-1}(1,1,\dots,1,-1).\]
The Zariski tangent space to $R(Y)$ at a point $\rho$ is therefore given by $T_\rho R(Y)=\ker(dF_\rho)$. Similarly, for each $\tilde{\rho} \in R(Y,U)$, we have that $T_{\tilde{\rho}}R(Y,U) = \ker(d\tilde{F}_{\tilde{\rho}})$.  The kernel of the Hessian at the corresponding class $[\tilde\rho] \in R^\natural(Y,U)$ is identified with the quotient of $T_{\tilde\rho}R(Y,U)$ by the tangent space to the $SU(2)$ orbit of $\tilde\rho$; the orbit is 3-dimensional since $SU(2)$ acts with stabilizer $\{\pm 1\}$, so this kernel has dimension $\ker(d\tilde{F}_{\tilde\rho}) - 3$.

Suppose $\rho \in R(Y)$ corresponds to the class $[\tilde{\rho}] \in R^\natural(Y,U)$ via the bijection in Lemma \ref{lem:natident}, where $\tilde{\rho}(\mu_U)=i$ and $\tilde{\rho}(\mu_m)=j$. Then we have that $d\tilde{F}_{\tilde{\rho}}$ splits in block form as
\[ d\tilde{F}_{\tilde{\rho}} = \left(\begin{array}{c|c} dF_{\rho} & 0 \\ \hline 0 & dF'_{(i,j)} \end{array}\right) \]
where $F': SU(2) \times SU(2) \to SU(2)$ is the commutator map $F'(y,z) = yzy^{-1}z^{-1}$.  
It was observed in \cite[Section~7.2]{hhk} that $\ker dF'_{(i,j)}$ is 3-dimensional, so the kernel of the Hessian of the Chern-Simons functional at $[\tilde\rho]$ has dimension
\[ \dim \ker(d\tilde{F}_{\tilde\rho}) - 3 = \dim \ker(dF_\rho) = \dim T_\rho R(Y). \]
This allows us to relate questions of nondegeneracy to the dimension of $T_\rho R(Y)$. We use these observations to prove the following.

\begin{proposition} \label{prop:cs-morse-bott}
Suppose $Y$ is a rational homology sphere.  Then all representations in $R(Y)$ with image contained in $\{\pm 1\}$ correspond to isolated, nondegenerate critical points  of the Chern-Simons functional defining $I^\#(Y)=I^\natural(Y,U)$.  More generally, if all representations in $R(Y)$ are reducible then this  functional is Morse-Bott if and only if $H^1(Y;\ad(\rho)) = 0$ for all $\rho \in R(Y)$.
\end{proposition}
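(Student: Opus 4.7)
The plan is to combine the dimension identity $\dim \ker(\mathrm{Hess}) = \dim Z^1(\pi_1(Y);\ad\rho)$ at a critical point $[\tilde\rho]$, already established in the discussion preceding the proposition, with the standard cohomological decomposition $\dim Z^1 = \dim H^1 + \dim B^1$ and the identity $\dim B^1(\pi_1(Y);\ad\rho) = 3 - \dim H^0(\pi_1(Y);\ad\rho)$, where $H^0$ is the Lie algebra of the $SU(2)$-stabilizer of $\rho$. With these inputs, I can translate the kernel of the Hessian into information about $H^1(Y;\ad\rho)$ and compare it with the tangent space of the component of the critical manifold $R^\natural(Y,U) \cong R(Y)$ through $[\tilde\rho]$.

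For the first claim, if $\rho$ has image in $\{\pm 1\}$ then $\ad\rho$ is trivial on $\mathfrak{su}(2) \cong \R^3$, so $Z^1 = \Hom(\pi_1(Y),\mathfrak{su}(2)) = \Hom(H_1(Y;\Z),\R^3)$. This vanishes because $H_1(Y;\Z)$ is finite (by the rational homology sphere hypothesis), so $\dim\ker(\mathrm{Hess}) = 0$ and $T_\rho R(Y) = 0$. The first yields nondegeneracy; the second forces $\rho$ to be an isolated point of $R(Y)$, and hence $[\tilde\rho]$ to be isolated in $R^\natural(Y,U)$.

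For the ``more generally'' statement, I first analyze the structure of $R(Y)$ under the hypothesis that every representation is reducible. Each such $\rho$ factors through $H_1(Y;\Z)$, so its image lies either in $\{\pm 1\}$ (central) or in a conjugate of a maximal torus $U(1) \subset SU(2)$ (non-central reducible). Since $\Hom(H_1(Y;\Z), U(1))$ is finite, the non-central reducibles split into finitely many $SU(2)$-conjugation orbits, each diffeomorphic to $SU(2)/U(1) = S^2$, and together with the isolated central representations these comprise the connected components of $R(Y)$. Under the bijection $R(Y) \cong R^\natural(Y,U)$ of Lemma~\ref{lem:natident}, the component of the critical manifold through $[\tilde\rho]$ is therefore a single point when $\rho$ is central and a $2$-sphere when $\rho$ is non-central reducible.

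The Morse-Bott condition at $[\tilde\rho]$ then requires $\dim \ker(\mathrm{Hess})$ to equal the dimension of this component. For central $\rho$ we have $Z^1 = 0$ by the first part, so $H^1(Y;\ad\rho) = 0$ automatically and Morse-Bott holds trivially. For a non-central reducible $\rho$, the $U(1)$-stabilizer gives $\dim H^0 = 1$ and hence $\dim B^1 = 2$; this agrees with the dimension of the $S^2$-component, as expected since $B^1$ is the tangent space to the $SU(2)$-conjugation orbit. Consequently $\dim \ker(\mathrm{Hess}) = \dim H^1 + 2$, so the Morse-Bott condition at $[\tilde\rho]$ reduces to $H^1(Y;\ad\rho) = 0$. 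Combining, the functional is Morse-Bott if and only if $H^1(Y;\ad\rho) = 0$ for every $\rho \in R(Y)$. The most delicate step I anticipate is confirming that the conjugation orbit of a non-central reducible $\rho$ genuinely exhausts the connected component of $R(Y)$ through $\rho$ (so that the Morse-Bott critical-manifold dimension really equals $\dim B^1$), which rests on the finiteness of $\Hom(H_1(Y;\Z), U(1))$.
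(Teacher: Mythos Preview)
Your argument is correct and follows essentially the same route as the paper's proof: both combine the identity $\dim\ker(\mathrm{Hess}_{[\tilde\rho]}) = \dim T_\rho R(Y)$ from the preceding discussion with the Weil identification $T_\rho R(Y)\cong Z^1(\pi_1(Y);\ad\rho)$ and the decomposition $\dim Z^1 = \dim B^1 + \dim H^1$, and both use finiteness of $H_1(Y)$ to conclude that the connected components of $R(Y)$ (when all representations are reducible) are exactly the $SU(2)$-conjugation orbits. One small attribution point: the paper's preceding discussion only establishes $\dim\ker(\mathrm{Hess}) = \dim T_\rho R(Y)$; the further identification with $Z^1$ is invoked (via Weil) inside the proof itself, not before it, so your phrase ``already established in the discussion preceding the proposition'' slightly overstates what is available at that point.
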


\begin{proof}
Weil \cite{weil} showed  that if $H^1(Y;\ad(\rho)) = 0$ then $\rho \in R(Y)$ has some neighborhood consisting only of points in the $SU(2)$-orbit of $\rho$.  This occurs, for instance, when $\img(\rho)\subset\{\pm 1\}$, since $\ad(\rho)$ is then a trivial representation and $H^1(Y;\R) = 0$. It follows that such $\rho$ are isolated points of $R(Y)$.

More generally, recall that a representation $\rho \in R(Y)$ is reducible precisely when it factors through a homomorphism $H_1(Y) \to SU(2)$, where $H_1(Y)$ is finite by assumption.  It is then easy to check that if $R(Y)$ consists only of reducibles then once again some neighborhood of $\rho$ consists only of conjugates of $\rho$, so its connected component is precisely this $SU(2)$-orbit, and therefore has dimension $3-\dim Z(\rho)$ where $Z(\rho)$ is the centralizer of $\img(\rho)$. (In fact, these components are either points or 2-spheres, as elaborated in the proof of Theorem \ref{thm:reducibles-morse-bott}.)

As in \cite{weil}, the tangent space $T_\rho R(Y)$ is naturally identified with the space of 1-cocycles $Z^1(Y;\ad(\rho))$ on $Y$ valued in $\ad(\rho)$, where the tangent space to the $SU(2)$-orbit of $\rho$ is the space of coboundaries $B^1(Y;\ad(\rho))$. It follows that
\begin{equation}\label{eqn:dimzad}
\dim T_\rho R(Y) = 3-\dim Z(\rho) + \dim H^1(Y;\ad(\rho)).
\end{equation}

Suppose that $\img(\rho)\subset \{\pm 1\}$, so that $\dim Z(\rho)=3$ and  $\dim H^1(Y;\ad(\rho))=0$. Then  \eqref{eqn:dimzad} implies that $\dim T_\rho R(Y) = 0$.  According to the discussion above, the kernel of the Hessian of the Chern-Simons functional at the corresponding $[\tilde\rho]\in R^\natural(Y,U)$ is therefore trivial, so $[\tilde\rho]$ is an isolated, nondegenerate critical point, as claimed.

Next, suppose that  $R(Y)$ consists only of reducibles.  Then for each $\rho \in R(Y)$, the critical manifold containing the corresponding $[\tilde\rho]\in R^\natural(Y,U)$ has dimension $3-\dim Z(\rho)$, as argued above. The point $[\tilde\rho]$ is then a Morse-Bott critical point if and only if the kernel of the Hessian of the Chern-Simons functional also has dimension $3-\dim Z(\rho)$. But the latter is equivalent to $\dim T_\rho R(Y)=3 - \dim Z(\rho)$, which, from \eqref{eqn:dimzad}, holds if and only if $H^1(Y;\ad(\rho)) = 0$, as claimed.
\end{proof}

The dimension of $T_\rho R(Y)$ at a reducible representation $\rho$ was previously studied by Boyer and Nicas \cite{boyer-nicas}.  The adjoint representation \[\ad(\rho): G \to \Aut(\mathfrak{su}(2)) = SO(3)\] sends $x \in G$ to the derivative at $1 \in SU(2)$ of conjugation by $\rho(x)$. If $b_1(Y) = 0$ then $\img(\rho)$ is a finite cyclic subgroup of $SU(2)$, as mentioned at the beginning of this subsection, and it follows that $\ad(\rho)$ has finite cyclic image as well.  Suppose that the image of $\ad(\rho)$ is a cyclic group $C_n$, realized as the $n$th roots of unity in a $U(1)$ subgroup of $SO(3)$, and assume that $n > 1$ since we have $\img(\rho) \subset \{\pm 1\}$ otherwise.  For all $d \mid n$ we define 
\[ \pi_d = \ker\left( \pi_1(Y) \xrightarrow{\ad(\rho)} C_n \xrightarrow{x \mapsto x^{n/d}} C_d \right) = \ker\left(\pi_1(Y) \xrightarrow{\ad(\rho^{n/d})} C_d\right), \]
noting that $\rho^{n/d}$ is in this case a well-defined representation of $\pi_1(Y)$ onto $C_d$, and that $\pi_d$ is the fundamental group of a normal, $d$-fold cyclic cover $Y_d$ of $Y=Y_1$.  Then \cite[Theorem~1.1 and Remark~1.2]{boyer-nicas} asserts that 
\begin{equation}
\label{eqn:dimad} \dim H_1(Y; \ad(\rho)) = \frac{2}{\varphi(n)} \sum_{d \mid n} \mu\left(\frac{n}{d}\right) b_1(Y_d), \end{equation}
where $\varphi$ and $\mu$ are the Euler totient function and M\"obius function, respectively.  We  use this formula to prove the following.

\begin{proposition} \label{prop:morse-bott-criterion}
Suppose $Y$ is a rational homology sphere and  every representation $\rho:\pi_1(Y)\to SU(2)$ is reducible. Then the unperturbed Chern-Simons functional defining $I^\#(Y)$ is Morse-Bott if and only if the finite cyclic cover of $Y$ corresponding to each subgroup $\ker(\ad(\rho)) \subset \pi_1(Y)$ is  a rational homology sphere. 
\end{proposition}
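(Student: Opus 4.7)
The plan is to combine Proposition~\ref{prop:cs-morse-bott} with the Boyer--Nicas formula \eqref{eqn:dimad}. Since every representation in $R(Y)$ is assumed reducible, Proposition~\ref{prop:cs-morse-bott} tells us the unperturbed Chern--Simons functional is Morse--Bott if and only if $H^1(Y;\ad(\rho)) = 0$ for every $\rho \in R(Y)$. So the task is to show that the simultaneous vanishing of these cohomology groups is equivalent to the condition that, for every $\rho \in R(Y)$, the cyclic cover $Y_n$ corresponding to $\ker(\ad(\rho))$ (where $n = |\img(\ad(\rho))|$) is a rational homology sphere.

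Fix $\rho \in R(Y)$ and let $n = |\img(\ad(\rho))|$. If $n=1$ then $\ad(\rho)$ is trivial, so $H^1(Y;\ad(\rho)) = H^1(Y;\R)^{\oplus 3} = 0$ for free, and $Y_1 = Y$ is a rational homology sphere by hypothesis. If $n > 1$, the Boyer--Nicas formula \eqref{eqn:dimad} expresses $\dim H^1(Y;\ad(\rho))$ as a $\Q$-linear combination of the Betti numbers $b_1(Y_d)$ for $d \mid n$, where each $Y_d$ is the cover associated to $\ker(\ad(\rho^{n/d}))$. For the easy direction, if every cover $Y_d$ appearing in the statement is a rational homology sphere, then each $b_1(Y_d)$ on the right-hand side vanishes (noting that $Y_d$ is itself the cover corresponding to $\ker(\ad(\rho^{n/d}))$, to which the hypothesis applies), so $H^1(Y;\ad(\rho)) = 0$ and we are done by Proposition~\ref{prop:cs-morse-bott}.

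For the converse, assume Morse--Bottness, so $H^1(Y;\ad(\rho')) = 0$ for every $\rho' \in R(Y)$. Given $\rho$ with $n = |\img(\ad(\rho))|$, consider the representations $\rho^{n/n'}$ for each divisor $n' \mid n$; these lie in $R(Y)$ and their adjoints have image of order exactly $n'$. The key bookkeeping observation is that the intermediate covers appearing in the Boyer--Nicas formula for $\rho^{n/n'}$ coincide with those for $\rho$: indeed, $\ker(\ad((\rho^{n/n'})^{n'/d})) = \ker(\ad(\rho^{n/d}))$, so they are the same $Y_d$ for $d \mid n'$. Applying \eqref{eqn:dimad} to each $\rho^{n/n'}$ and using the Morse--Bott hypothesis thus yields
\[ \sum_{d \mid n'} \mu(n'/d)\, b_1(Y_d) = 0 \qquad \text{for every } n' \mid n. \]
A short induction on $n'$ (or M\"obius inversion) then forces $b_1(Y_d) = 0$ for all $d \mid n$: the $n'=1$ case gives $b_1(Y_1) = 0$, and at stage $n'$ the $d=n'$ summand has coefficient $\mu(1)=1$, isolating $b_1(Y_{n'})$ after the smaller-$d$ terms are known to vanish. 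In particular $Y_n$ is a rational homology sphere, as required.

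The main obstacle here is really just the bookkeeping in the converse direction: one must notice that the family of covers indexed by divisors of $n$ is the \emph{same} family whether read off from $\rho$ or from any of its powers $\rho^{n/n'}$, which is precisely what lets the M\"obius inversion close up. Everything else is a direct application of the quoted formula and of Proposition~\ref{prop:cs-morse-bott}.
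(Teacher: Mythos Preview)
Your proof is correct and follows essentially the same approach as the paper: reduce via Proposition~\ref{prop:cs-morse-bott} to the vanishing of $H^1(Y;\ad(\rho))$, then use the Boyer--Nicas formula \eqref{eqn:dimad} together with its behavior under passage to powers $\rho^{n/d}$. The only cosmetic differences are that in the forward direction the paper uses a transfer argument ($b_1(Y_d)\le b_1(Y_n)$) to kill the intermediate $b_1(Y_d)$ from the single hypothesis $b_1(Y_n)=0$, whereas you invoke the hypothesis separately for each power $\rho^{n/d}$; and in the converse the paper picks the minimal divisor $d$ with $b_1(Y_d)>0$ rather than phrasing it as M\"obius inversion, but this is the same computation.
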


\begin{proof}Suppose $Y$ satisfies the hypotheses of Proposition \ref{prop:morse-bott-criterion}. Borrowing notation from above, it suffices by Proposition \ref{prop:cs-morse-bott} to show that the finite cyclic cover $Y_n$  corresponding to the subgroup $\ker(\ad(\rho)) \subset \pi_1(Y)$ is a rational homology sphere iff $\dim H_1(Y;\ad(\rho)) = 0$. 

Note that  $Y_n$ is a normal, $n/d$-fold cyclic cover of each  $Y_d$, so a transfer argument shows that $b_1(Y_d) \leq b_1(Y_n)$ for all $d \mid n$. Hence, if $b_1(Y_n)=0$ then  $b_1(Y_d)=0$ for each $Y_d$, which implies that $\dim H_1(Y;\ad(\rho)) = 0$ by the formula in \eqref{eqn:dimad}.

Conversely, if $b_1(Y_n) > 0$, let $d \geq 1$ be the smallest divisor of $n$ for which $b_1(Y_d) > 0$, and note that in fact $d \geq 2$ since $Y_1 = Y$.  Replacing $\rho$ with the representation $\rho^{n/d}$ (satisfying $\img(\ad(\rho^{n/d})) = C_d$), we have
\[ \dim H_1(Y; \ad(\rho^{n/d})) = \frac{2}{\varphi(d)} \sum_{e \mid d} \mu\left(\frac{d}{e}\right) b_1(Y_{e}) = \frac{2 b_1(Y_d)}{\varphi(d)} > 0. \qedhere \]
\end{proof}

If all of the finite cyclic covers of $Y$ corresponding to  subgroups $\ker(\ad(\rho)) \subset \pi_1(Y)$ as in Proposition \ref{prop:morse-bott-criterion} are rational homology spheres, then $\pi_1(Y)$ is said to be \emph{cyclically finite} in the language of \cite{boyer-nicas}.  For example, if the universal abelian cover $\tilde{Y}$ is a rational homology sphere, then $\pi_1(Y)$ is cyclically finite since $\tilde{Y}$ covers all finite cyclic covers of $Y$.  The notion of cyclical finiteness allows us to generalize Theorem~\ref{thm:su2-reps} as follows.

\begin{theorem} \label{thm:reducibles-morse-bott}
Suppose $Y$ is a rational homology sphere with $\pi_1(Y)$ cyclically finite and $\rank I^\#(Y)>|H_1(Y)|$. Then there is an irreducible representation $\pi_1(Y) \to SU(2)$.
\end{theorem}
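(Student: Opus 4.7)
The plan is to argue by contradiction: assume every representation $\rho\colon \pi_1(Y)\to SU(2)$ is reducible, and use the Morse-Bott structure of the Chern-Simons functional together with a count of components of $R(Y)$ to show $\rank I^\#(Y)\le |H_1(Y)|$, contradicting the hypothesis.

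Under the assumption that all $\rho\in R(Y)$ are reducible, each such $\rho$ factors through a character $\chi\colon H_1(Y)\to U(1)$, and $\pi_1(Y)$ being cyclically finite then lets us invoke Proposition~\ref{prop:morse-bott-criterion} to conclude that the unperturbed Chern-Simons functional defining $I^\natural(Y,U)$ is Morse-Bott.  Via the identification $R^\natural(Y,U)\cong R(Y)$ of Lemma~\ref{lem:natident}, I next analyze the components of the critical manifold.  Since any abelian subgroup of $SU(2)$ lies in a maximal torus $T$, and since any two such tori are conjugate with Weyl group $W=\Z/2$ acting on characters by $\chi\mapsto\bar\chi$, the $SU(2)$-conjugacy classes of reducible representations are in bijection with the orbits of the involution $\chi\leftrightarrow \bar\chi$ on the $|H_1(Y)|$-element set $\Hom(H_1(Y),U(1))$.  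Writing $n_1$ for the number of fixed characters (those with image in $\{\pm 1\}$) and $n_2$ for the number of non-fixed characters, one has $n_1+n_2=|H_1(Y)|$.  A fixed character $\chi$ has centralizer $Z(\rho)=SU(2)$ and therefore gives an isolated point in $R(Y)$, while each pair $\{\chi,\bar\chi\}$ of non-real characters has centralizer $Z(\rho)=U(1)$ and therefore contributes an $SU(2)$-orbit diffeomorphic to $SU(2)/T\cong S^2$.  Thus $R(Y)$ is a disjoint union of $n_1$ points and $n_2/2$ two-spheres.

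Applying the standard Morse-Bott inequality for instanton Floer homology to this Morse-Bott functional (using a small holonomy perturbation in the sense of \cite[Proposition~3.5]{km-khovanov} that does not alter the critical set), one obtains
\[
\rank I^\#(Y)\;\le\;\sum_i \rank H_*(C_i;\C)\;=\;n_1\cdot 1\,+\,(n_2/2)\cdot 2\;=\;n_1+n_2\;=\;|H_1(Y)|,
\]
where the $C_i$ are the components of $R^\natural(Y,U)$.  This contradicts the assumption $\rank I^\#(Y)>|H_1(Y)|$, so some representation must be irreducible.

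The main technical point is the use of the Morse-Bott estimate in the singular instanton setting for $I^\natural$; one must verify that a sufficiently small holonomy perturbation preserves the Morse-Bott nondegeneracy established by Proposition~\ref{prop:morse-bott-criterion} while making the relevant moduli spaces of trajectories regular, so that the Morse-Bott spectral sequence (or a direct chain-level argument) yields the above rank bound.  The component count of $R(Y)$ in terms of characters of $H_1(Y)$ is elementary once all representations are assumed reducible, and the final arithmetic $n_1+2\cdot(n_2/2)=|H_1(Y)|$ is precisely the Euler-characteristic lower bound $\rank I^\#(Y)\ge |H_1(Y)|$ of Scaduto being sharp in the purely reducible case; our hypothesis forces strict inequality and hence the existence of an irreducible.
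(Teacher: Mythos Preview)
Your proposal is correct and follows essentially the same approach as the paper: argue by contradiction, use cyclical finiteness via Proposition~\ref{prop:morse-bott-criterion} to get Morse--Bott nondegeneracy, enumerate the components of $R(Y)$ as points and $2$-spheres indexed by $W$-orbits of characters of $H_1(Y)$, and apply a Morse--Bott rank inequality to conclude $\rank I^\#(Y)\le |H_1(Y)|$. The only cosmetic difference is that the paper cites \cite{furuta-steer} for the Morse--Bott spectral sequence rather than discussing the perturbation issue informally as you do.
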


\begin{proof}
Suppose $Y$ satisfies the hypotheses of Theorem \ref{thm:reducibles-morse-bott}. And suppose, for a contradiction, that every representation $\pi_1(Y) \to SU(2)$ is reducible. Then all representations have finite cyclic image, so up to conjugation they can be arranged to lie in a fixed $U(1)$ subgroup $\{e^{i\theta}\}$ of $SU(2)$.  This defines a surjection from the 1-dimensional characters $\chi: H_1(Y) \to U(1)$ to the conjugacy classes of representations $R(Y)$. Let $C_\chi$ denote the conjugacy class corresponding to $\chi$. It is straightforward to check that for two distinct characters $\chi,\chi'$, $C_\chi = C_{\chi'}$ iff $\chi' = \chi^{-1}$. The number of conjugacy classes in $R(Y)$ is therefore equal to \[ \#\{\chi\mid\chi=\chi^{-1}\}+\frac{1}{2}\cdot\#\{\chi\mid\chi\neq \chi^{-1}\}.\]
Since we are assuming that every representation is reducible, the connected components of $R(Y)$ are precisely these conjugacy classes, as in the proof of Proposition \ref{prop:cs-morse-bott}. Moreover, $C_\chi$ is a point if $\img(\chi)$ is central in $SU(2)$, which happens precisely when $\chi= \chi^{-1}$;  and $C_\chi$ is a 2-sphere otherwise. That is,
\[H_*(C_\chi) = \begin{cases}
\mathbb{Z}& \textrm{if } \chi=\chi^{-1},\\
\mathbb{Z}^2 & \textrm{if }\chi \neq \chi^{-1}.
\end{cases}
\]
It follows that $H_*(R(Y))$ is free abelian of rank equal to \[ \#\{\chi\mid\chi=\chi^{-1}\}+2\cdot\frac{1}{2}\cdot\#\{\chi\mid\chi\neq \chi^{-1}\},\] which is just the total number of characters of $H_1(Y)$, which is equal to $|H_1(Y)|$.

According to Proposition~\ref{prop:morse-bott-criterion} and the hypotheses of Theorem \ref{thm:reducibles-morse-bott}, the unperturbed Chern-Simons functional defining $I^\#(Y)$ is Morse-Bott with critical points given precisely by $R(Y)$.  We then have a Morse-Bott spectral sequence as in \cite{furuta-steer}, which has $E_2$ page $H_*(R(Y))$ and which converges to $I^\#(Y)$.  It follows that \[\rank I^\#(Y)\leq \rank H_*(R(Y)) = |H_1(Y)|.\] On the other hand, we also have the reverse inequality since $|H_1(Y)|$ is the Euler characteristic of $I^\#(Y)$. Thus, $\rank I^\#(Y) = |H_1(Y)|$, a contradiction.
\end{proof}

Theorem \ref{thm:mainqhs}, restated below for convenience, then follows immediately from Theorem \ref{thm:reducibles-morse-bott} and Corollary \ref{cor:rank-i-sharp}.
{
\renewcommand{\thetheorem}{\ref{thm:mainqhs}}
\begin{theorem} Suppose $Y$ is a rational homology sphere with $\pi_1(Y)$ cyclically finite. If $Y$ bounds a $4$-manifold $W$ which admits $n>|H_1(Y)|$ Stein structures whose Chern classes are distinct in $H^2(W;\R)$, then there is an irreducible representation $\pi_1(Y)\to SU(2)$. \qed
\end{theorem}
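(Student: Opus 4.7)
The plan is to deduce the theorem directly from two earlier results: Corollary \ref{cor:rank-i-sharp} and Theorem \ref{thm:reducibles-morse-bott}. The first step is to apply Corollary \ref{cor:rank-i-sharp} to the given $W$: since $W$ admits $n$ Stein structures whose Chern classes are distinct as elements of $H^2(W;\R)$, we obtain $\rank I^\#(Y) \geq n$. Combined with the hypothesis $n > |H_1(Y)|$, this yields the strict inequality
\[ \rank I^\#(Y) > |H_1(Y)|. \]

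The second step is to invoke Theorem \ref{thm:reducibles-morse-bott}, whose hypotheses are precisely that $Y$ is a rational homology sphere, $\pi_1(Y)$ is cyclically finite, and $\rank I^\#(Y) > |H_1(Y)|$; its conclusion is exactly the existence of the desired irreducible representation $\pi_1(Y) \to SU(2)$.

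There is no real obstacle at this final stage, since all of the analytic and topological work has been packed into the two inputs. Theorem \ref{thm:main-linear-independence} (via its Corollary \ref{cor:rank-i-sharp}) supplies the lower bound on $\rank I^\#(Y)$ by producing $n$ linearly independent contact classes $\iinvt(\xi_i) \in \SHItfun(-Y(p))$, and the identification $\SHItfun(-Y(p)) \cong I^\#(Y)\otimes\C$ transfers this rank bound to $I^\#(Y)$. On the other side, the cyclical finiteness hypothesis ensures, via Proposition \ref{prop:morse-bott-criterion}, that if every representation $\pi_1(Y) \to SU(2)$ were reducible then the Chern-Simons functional defining $I^\#(Y)$ would be Morse-Bott along the reducible locus, and the Morse-Bott spectral sequence argument internal to Theorem \ref{thm:reducibles-morse-bott} then forces $\rank I^\#(Y) = |H_1(Y)|$. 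The strict inequality above rules this out, so an irreducible representation must exist.
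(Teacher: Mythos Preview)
Your proof is correct and matches the paper's own argument exactly: the theorem is stated there as an immediate consequence of Corollary~\ref{cor:rank-i-sharp} and Theorem~\ref{thm:reducibles-morse-bott}, which is precisely the two-step deduction you give.
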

\addtocounter{theorem}{-1}
}

We conclude with some examples in which $\pi_1(Y)$ is known to be cyclically finite. The first two concern Dehn surgeries on knots in $S^3$.

\begin{proposition}[{\cite[Lemma~1.4]{boyer-nicas}}] \label{prop:cf-alexander-polynomial}
Suppose $K\subset S^3$ is a knot and fix some rational $p/q\neq 0$.  Then \[\pi_1(S^3_{p/q}(K))\]  is cyclically finite iff no zero of $\Delta_K(t^2)$ is a $p$th root of unity, where $\Delta_K(t)$ is the Alexander polynomial of $K$.
\end{proposition}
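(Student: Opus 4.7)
My plan is to reformulate cyclical finiteness of $\pi_1(Y)$, where $Y = S^3_{p/q}(K)$, as a vanishing condition on the rational first homology of cyclic covers of $Y$, and then compute that homology using the Alexander module of $K$.

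First I would identify the relevant covers. Since $H_1(Y) \cong \mathbb{Z}/p$, any reducible $\rho: \pi_1(Y) \to SU(2)$ has cyclic image $C_m \subset SU(2)$ for some $m \mid p$. Because $\ker(\mathrm{Ad}\colon SU(2) \to SO(3)) = \{\pm 1\}$ and $-1 \in C_m$ iff $m$ is even, $\img(\ad(\rho))$ has order $d = m$ when $m$ is odd and $d = m/2$ when $m$ is even. As $m$ ranges over divisors of $p$, the resulting set $\mathcal{D}$ of such $d$'s is closed under divisibility, with maximum $p$ if $p$ is odd and $p/2$ if $p$ is even. By Proposition~\ref{prop:morse-bott-criterion}, cyclical finiteness amounts to $b_1(Y_d) = 0$ for each $d \in \mathcal{D}$, where $Y_d$ is the $d$-fold cyclic cover of $Y$.

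Second I would compute $H_1(Y_d;\mathbb{Q})$ using the decomposition $Y = X_K \cup V$, where $X_K$ is the knot exterior and $V$ is the filling solid torus glued along $p\mu+q\lambda$. Because $d \mid p$, the standard $d$-fold cyclic cover $X_K(d) \to X_K$ restricts to a single connected $d$-fold cover of $\partial X_K$, in which the filling curve lifts (all $d$ lifts being parallel of class $(p/d)(d\mu)+q\lambda$), and the cover of $V$ is itself a solid torus $V'$; hence $Y_d \cong X_K(d) \cup V'$. Using the standard identification
\[ H_1(X_K(d);\mathbb{Q}) \cong \mathbb{Q} \oplus H_1(\tilde{X}_K;\mathbb{Q})/(t^d - 1), \]
with the $\mathbb{Q}$ summand generated by the lifted meridian $d\mu$, together with the fact that a lift of the preferred longitude $\lambda$ bounds a lift of a Seifert surface and hence vanishes in $H_1(X_K(d);\mathbb{Q})$, a Mayer--Vietoris argument gives
\[ H_1(Y_d;\mathbb{Q}) \cong H_1(\tilde{X}_K;\mathbb{Q})/(t^d - 1), \]
of $\mathbb{Q}$-dimension equal to $\deg \gcd(\Delta_K(t), t^d - 1)$. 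The key point ensuring that the $\mathbb{Q}$-summand dies in the cokernel is that the meridian of $V'$ has first coordinate $p/d \neq 0$ in the basis $(d\mu,\lambda)$.

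Third I would assemble the equivalence. Since $\mathcal{D}$ is closed under divisibility with maximum $p$ or $p/2$, the vanishing of $\deg\gcd(\Delta_K(t), t^d - 1)$ for every $d \in \mathcal{D}$ is equivalent to no zero of $\Delta_K$ being a $p$-th (respectively $(p/2)$-th) root of unity. The map $\zeta \mapsto \zeta^2$ sends $p$-th roots of unity surjectively onto the $p$-th roots of unity when $p$ is odd and onto the $(p/2)$-th roots of unity when $p$ is even, so this is exactly the statement that $\Delta_K(\zeta^2) \neq 0$ for every $p$-th root of unity $\zeta$. I expect the main obstacle to be the Mayer--Vietoris bookkeeping in Step~2: correctly identifying the lifted meridian and longitude in $X_K(d)$, verifying that the filling slope $(p/d,q)$ remains primitive and nontrivial in the new basis, and invoking the Alexander-module description of $H_1(X_K(d);\mathbb{Q})$. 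The parity analysis in Step~3 requires attention but is elementary.
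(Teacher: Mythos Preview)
The paper does not prove this proposition; it simply cites it as \cite[Lemma~1.4]{boyer-nicas}. Your proposal is a correct reconstruction of the standard argument, so there is nothing to compare it against in the paper itself.

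One small imprecision worth noting: in Step~2 you assert that $\dim_{\mathbb{Q}} H_1(Y_d;\mathbb{Q}) = \deg\gcd(\Delta_K(t), t^d-1)$. This is only guaranteed when the rational Alexander module is cyclic. In general the module decomposes as $\bigoplus_i \mathbb{Q}[t,t^{-1}]/(f_i)$ with $\prod_i f_i \doteq \Delta_K$, and the correct dimension is $\sum_i \deg\gcd(f_i, t^d-1)$, which can exceed $\deg\gcd(\Delta_K, t^d-1)$ if some $d$-th root of unity occurs as a root of more than one $f_i$. However, this does not affect your argument: both quantities vanish precisely when no root of $\Delta_K$ is a $d$-th root of unity, and that is all you need for the equivalence.

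Otherwise your identification of the relevant set $\mathcal{D}$ of cover degrees (divisors of $p$ when $p$ is odd, divisors of $p/2$ when $p$ is even), the Mayer--Vietoris computation showing $b_1(Y_d) = \dim_{\mathbb{Q}} \big(H_1(\tilde X_K;\mathbb{Q})/(t^d-1)\big)$, and the parity argument converting the condition on roots of $\Delta_K$ into one on roots of $\Delta_K(t^2)$ are all correct.
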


\begin{corollary} 
\label{cor:cycfinitesurg}Fix some rational $p/q\neq 0$ and suppose that \[\rank I^\#(S^3_{p/q}(K))>|p|=|H_1(Y)|.\] Then there is an irreducible representation \[\pi_1(S^3_{p/q}(K)) \to SU(2)\] if no zero of $\Delta_K(t^2)$ is a $p$th root of unity. 
\end{corollary}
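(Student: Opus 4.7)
The plan is to obtain the corollary as essentially immediate from the two preceding results: Proposition~\ref{prop:cf-alexander-polynomial} and Theorem~\ref{thm:reducibles-morse-bott}. Set $Y = S^3_{p/q}(K)$. Since $p \neq 0$, the manifold $Y$ is a rational homology sphere with $|H_1(Y)| = |p|$, so the hypothesis $\rank I^\#(Y) > |p|$ really is the strict inequality $\rank I^\#(Y) > |H_1(Y)|$ needed for Theorem~\ref{thm:reducibles-morse-bott}.

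Next I would verify the cyclical finiteness hypothesis. By assumption, no zero of $\Delta_K(t^2)$ is a $p$th root of unity, so Proposition~\ref{prop:cf-alexander-polynomial} applies and gives us that $\pi_1(Y)$ is cyclically finite. This is the only place where the Alexander polynomial condition enters.

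With $Y$ a rational homology sphere, $\pi_1(Y)$ cyclically finite, and $\rank I^\#(Y) > |H_1(Y)|$, all the hypotheses of Theorem~\ref{thm:reducibles-morse-bott} are satisfied, and that theorem produces an irreducible representation $\pi_1(Y) \to SU(2)$. There is no obstacle here since both ingredients have already been established; the corollary is just a packaging of them in the context of Dehn surgery.
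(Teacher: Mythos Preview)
Your proof is correct and matches the paper's approach: the corollary is stated without proof in the paper precisely because it is the immediate combination of Proposition~\ref{prop:cf-alexander-polynomial} (to get cyclical finiteness) and Theorem~\ref{thm:reducibles-morse-bott} (to get the irreducible representation from the rank inequality), exactly as you lay out.
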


\begin{proposition}[\cite{cohen}]
\label{prop:cohen}
If $H_1(Y)$ is cyclic of finite order $p^e$ for some prime $p$, then the universal abelian cover $\tilde{Y}$ is a rational homology sphere.
\end{proposition}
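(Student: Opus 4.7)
The plan is to reduce to the case of a single $\Z/p$-cover by working up the natural tower of intermediate cyclic covers, and then prove that case using a transfer argument together with Smith theory.

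First I would set up the tower. The universal abelian cover $\tilde Y \to Y$ has deck group $G = H_1(Y;\Z) = \Z/p^e$. The subgroups $p^{e-k}G \cong \Z/p^k$ for $k = 0, 1, \dots, e$ form the unique chain of subgroups of $G$, and by the Galois correspondence these give rise to a tower
\[
\tilde Y = Y_e \longrightarrow Y_{e-1} \longrightarrow \cdots \longrightarrow Y_1 \longrightarrow Y_0 = Y
\]
in which each $Y_k \to Y_{k-1}$ is a regular $\Z/p$-cover arising from a surjection $H_1(Y_{k-1};\Z) \twoheadrightarrow \Z/p$. Thus it suffices to establish the following key lemma: \emph{a regular $\Z/p$-cover of a rational homology $3$-sphere is itself a rational homology $3$-sphere.} Once this is in hand, applying it inductively up the tower, starting from $Y_0 = Y$, shows that each $Y_k$ is a rational homology sphere, and in particular so is $\tilde Y = Y_e$.

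To prove the key lemma, let $\pi\colon Y' \to Y$ be a regular $\Z/p$-cover with $Y$ a rational homology sphere, and write $G = \Z/p$. The usual transfer argument (available because $p$ is invertible in $\Q$) gives $H_1(Y';\Q)^G \cong H_1(Y;\Q) = 0$. Maschke's theorem then yields a splitting
\[
H_1(Y';\Q) = H_1(Y';\Q)^G \oplus V = V,
\]
where $V$ is a direct sum of copies of the non-trivial irreducible $\Q[G]$-module $\Q[\zeta_p]$. It remains to show $V = 0$. For this I would pass to $\F_p$-coefficients and analyze the Cartan--Leray spectral sequence
\[
E_2^{s,t} = H^s\bigl(G;\,H^t(Y';\F_p)\bigr) \Longrightarrow H^{s+t}(Y;\F_p)
\]
of the free $G$-action, exploiting the fact that $\F_p[G]$ is a local Artinian ring of length $p$ (because $G$ is a $p$-group) together with Poincar\'e duality on both $Y$ and $Y'$ to constrain $\dim_{\F_p} H^*(Y';\F_p)$. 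Combined with the equality $H_1(Y';\Q)^G = 0$ already established, this rules out a non-trivial $V$.

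The main obstacle is this final spectral-sequence bookkeeping: tracking the differentials and matching $\F_p$-dimensions carefully enough to exclude a free $\Z$-summand in $H_1(Y';\Z)$. The full argument at this level of detail is carried out by Cohen in \cite{cohen}.
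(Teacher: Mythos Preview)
Your key lemma---that every regular $\Z/p$-cover of a rational homology $3$-sphere is again a rational homology $3$-sphere---is false, so the tower reduction cannot work as stated. A concrete counterexample: take $Y=\mathbb{RP}^3\#\mathbb{RP}^3$, which is a rational homology sphere with $H_1(Y)\cong(\Z/2)^2$, and let $Y'\to Y$ be the double cover classified by the sum map $(a,b)\mapsto a+b$. Then $\pi_1(Y)=\Z/2*\Z/2$, the kernel of the sum map is the infinite cyclic subgroup $\langle ab\rangle$, and one checks directly that $Y'\cong S^2\times S^1$, which has $b_1=1$. So the spectral-sequence step you flagged as ``the main obstacle'' is not merely bookkeeping that can be filled in: the statement you are trying to prove at that point is simply not true.

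The cyclicity hypothesis on $H_1(Y)$ is therefore essential and must be used directly rather than discarded in favor of an iterated $\Z/p$-cover argument. Note too that even a weakened key lemma restricted to $Y$ with cyclic $p$-power $H_1$ would not iterate: nothing forces the intermediate covers $Y_k$ in your tower to have cyclic first homology. The paper does not supply a proof of this proposition---it is quoted from Cohen's paper \cite{cohen}---so you should consult that reference for the actual argument, which necessarily exploits the structure of $\Z[\Z/p^e]$-modules (or equivalently the vanishing of twisted homology for characters of $p$-power order) in a way that genuinely uses $H_1(Y)\cong\Z/p^e$.
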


\begin{corollary} \label{cor:rank-at-most-5}
Suppose that $|H_1(Y)| \leq 5$ and $\rank(I^\#(Y)) > |H_1(Y)|$. There there is an irreducible representation $\pi_1(Y) \to SU(2).$
\end{corollary}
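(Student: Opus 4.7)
The plan is to reduce Corollary~\ref{cor:rank-at-most-5} to Theorem~\ref{thm:reducibles-morse-bott} by verifying that $\pi_1(Y)$ is cyclically finite in every allowed case. Since the hypothesis $|H_1(Y)| \le 5$ forces $H_1(Y)$ to be finite, $Y$ is automatically a rational homology sphere, so it remains to case-check on the isomorphism type of the finite abelian group $H_1(Y)$. Up to isomorphism, the possibilities are the cyclic groups $\Z/n$ for $n\in\{1,2,3,4,5\}$ together with $\Z/2\oplus\Z/2$.

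In the cyclic cases, the order $n$ is always a prime power, since $1,2,3,5$ are prime powers and $4=2^2$. Hence Proposition~\ref{prop:cohen} applies and the universal abelian cover $\tilde Y$ of $Y$ is a rational homology sphere. For any reducible $\rho\colon\pi_1(Y)\to SU(2)$, the adjoint $\ad(\rho)$ factors through $H_1(Y)$, so the finite cyclic cover $Y_d$ associated to $\ker(\ad(\rho))$ is an intermediate cover $\tilde Y\to Y_d\to Y$. A standard transfer argument then yields $b_1(Y_d)\le b_1(\tilde Y)=0$, so $Y_d$ is a rational homology sphere, and $\pi_1(Y)$ is cyclically finite.

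The remaining case is $H_1(Y)\cong \Z/2\oplus\Z/2$, which is not covered by Proposition~\ref{prop:cohen}. Here I would observe that every element of $\Z/2\oplus\Z/2$ has order dividing $2$, so every homomorphism $H_1(Y)\to U(1)$ lands in $\{\pm 1\}$. Consequently every reducible $\rho\colon\pi_1(Y)\to SU(2)$ has image contained in the center $\{\pm 1\}\subset SU(2)$; then $\ad(\rho)$ is trivial and $\ker(\ad(\rho))=\pi_1(Y)$, so the associated ``cover'' is $Y$ itself, which is trivially a rational homology sphere. Thus $\pi_1(Y)$ is cyclically finite in this case as well.

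With cyclical finiteness established in every case, Theorem~\ref{thm:reducibles-morse-bott}, combined with the hypothesis $\rank I^\#(Y) > |H_1(Y)|$, produces the desired irreducible representation $\pi_1(Y)\to SU(2)$. The only substantive obstacle is handling the Klein four group, where Proposition~\ref{prop:cohen} does not apply directly, but the exponent-$2$ observation above dispatches that case cleanly.
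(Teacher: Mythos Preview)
Your proof is correct and follows essentially the same approach as the paper's: case on the isomorphism type of $H_1(Y)$, apply Proposition~\ref{prop:cohen} in the cyclic (prime-power order) cases, and in the $\Z/2\oplus\Z/2$ case observe that every reducible representation has image in $\{\pm 1\}$ so that $\ad(\rho)$ is trivial. The paper frames this as a contradiction argument while you verify cyclical finiteness directly, but this difference is purely cosmetic.
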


\begin{proof}
Suppose  $Y$ satisfies the hypotheses of Corollary \ref{cor:rank-at-most-5}. And suppose, for a contradiction, that every representation $\pi_1(Y)\to SU(2)$ is reducible.  If $H_1(Y)$ is cyclic then its order is a prime power, which implies that $\pi_1(Y)$ is cyclically finite by Proposition \ref{prop:cohen}.  If instead $H_1(Y)$ is not cyclic then it must be $\Z/2\Z \oplus \Z/2\Z$, which implies that every $SU(2)$ representation $\rho$ is reducible with image in $\{\pm 1\}$ and hence that $\ad(\rho)$ is trivial, so $\pi_1(Y)$ is again cyclically finite.  Theorem~\ref{thm:reducibles-morse-bott} then says that $I^\#(Y)$ has rank $|H_1(Y)| $, a contradiction.
\end{proof}


\subsection{Dehn surgery and L-spaces}
\label{ssec:lspaces}

We recall that if $Y$ is a rational homology sphere, then $I^\#(Y)$ has rank at least $|H_1(Y;\Z)|$ since the latter quantity is its Euler characteristic.  (If instead $b_1(Y) > 0$, then this is interpreted as $\chi(I^\#(Y)) = 0$.)  If in fact we have
\[ \rank I^\#(Y) = |H_1(Y;\Z)|, \]
then we will call $Y$ an \emph{instanton L-space}, in analogy with L-spaces in Heegaard Floer homology \cite{osz-lens}.  This class was shown in \cite{scaduto} to contain all rational homology spheres which are branched double covers of quasi-alternating links, which notably includes all lens spaces, as well as the Poincar\'e homology sphere $\Sigma(2,3,5)$.

\begin{remark} \label{rem:homology-sphere-l-space}
In the proof of Theorem~\ref{thm:main}, we saw as an application of  Corollary~\ref{cor:rank-i-sharp} that a homology sphere $Y$ is not an instanton L-space if it admits a Stein filling $(W,J)$ with $c_1(J) \neq 0$.
\end{remark}

In this subsection we will study when Dehn surgeries on knots in $S^3$ can produce instanton L-spaces, where we take the coefficients to be a field of characteristic zero.  The results in this subsection will be familiar to experts in Floer homology, since they are analogues of results about L-space surgeries in both monopole Floer homology \cite{kmos} and Heegaard Floer homology \cite{osz-lens}.

Our principal tool is the surgery exact triangle, due originally to Floer \cite{floer-surgery} but proved in the form we use here by Scaduto \cite{scaduto}.  If $L$ is a framed knot in a 3-manifold $Y$, and $\lambda \subset Y$ is a closed 1-manifold disjoint from $L$, then there is a surgery exact triangle
\[ \dots \to I^\#(Y;\lambda) \to I^\#(Y_0(L);\lambda \cup \mu) \to I^\#(Y_1(L);\lambda) \to \dots, \]
where $\mu$ is a core of the 0-surgery (taken with respect to the given framing) and in each group we have twisted $I^\#$ by using an $SO(3)$ bundle with $w_2$ Poincar\'e dual to the indicated curve.  If $H_1(Y \ssm L) \cong \Z$ and all three manifolds in the exact triangle are rational homology spheres then at most one of them has $H_1$ of even order, so we can choose $\lambda$ to make the corresponding bundle trivial, and then the other two manifolds are $\Z/2\Z$-homology spheres so the twisting has no effect.  This gives us an exact triangle
\begin{equation} \label{eq:surgery-exact-triangle}
\dots \to I^\#(Y) \to I^\#(Y_0(L)) \to I^\#(Y_1(L)) \to \dots
\end{equation}
involving only the ordinary (i.e.\ untwisted) $I^\#$ groups.  (Likewise, if $Y$ is a homology sphere and $L$ is 0-framed then both $H_1(Y)$ and $H_1(Y_1(L))$ have odd order, so we can again take all three groups to be untwisted.)  As explained in \cite[Section~7.5]{scaduto}, the maps in this triangle are induced by 2-handle cobordisms with appropriate choices of bundles.

\begin{proposition} \label{prop:l-space-triangle}
If $S^3_n(K)$ is an instanton L-space for some integer $n \geq 1$, then $S^3_r(K)$ is an instanton L-space for all rational $r \geq n$.
\end{proposition}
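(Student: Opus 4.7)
The plan is to prove the proposition by iterated application of the surgery exact triangle \eqref{eq:surgery-exact-triangle}, combined with Scaduto's lower bound $\rank I^\#(Y) \geq |H_1(Y;\Z)|$. I will first climb through integer slopes and then fill in the remaining rationals using the Farey tessellation. For the integer case I induct on $m \geq n$: setting $Y = S^3$ and $L = K$ with framing $m$ (so that $H_1(Y\ssm L) = \Z$), one has $Y_0(L) = S^3_m(K)$ and $Y_1(L) = S^3_{m+1}(K)$, and \eqref{eq:surgery-exact-triangle} becomes
\[
\cdots \to I^\#(S^3) \to I^\#(S^3_m(K)) \to I^\#(S^3_{m+1}(K)) \to \cdots.
\]
Assuming inductively that $\rank I^\#(S^3_m(K)) = m$, exactness gives $\rank I^\#(S^3_{m+1}(K)) \leq m + 1$, and together with $\rank I^\#(S^3_{m+1}(K)) \geq |H_1(S^3_{m+1}(K))| = m + 1$ this forces equality, so $S^3_{m+1}(K)$ is an instanton L-space.

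To extend to rational slopes I will exploit the Farey structure on $\mathbb{Q}\cup\{\infty\}$: every positive rational $r\geq n$ is obtained from integer slopes $\geq n$ by iterated Farey mediants $(a/b, c/d)\mapsto(a+c)/(b+d)$ of neighbors (pairs with $|ad-bc|=1$). Given Farey neighbors $p/q, p'/q'\geq n$ whose surgeries are both already known to be L-spaces, I apply \eqref{eq:surgery-exact-triangle} to $Y = S^3_{p/q}(K)$ with $L = K^*\subset Y$ the dual knot, whose complement is the knot exterior $S^3\ssm K$ (so $H_1 = \Z$), choosing an integer framing on $K^*$ so that the slopes $\infty, 0, 1$ on $K^*$ realize the Farey triple $(p/q,\, p'/q',\, (p+p')/(q+q'))$ on $K$, with $Y_0(L) = S^3_{(p+p')/(q+q')}(K)$ and $Y_1(L) = S^3_{p'/q'}(K)$. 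Exactness then gives
\[
\rank I^\#(S^3_{(p+p')/(q+q')}(K)) \leq p + p' = |H_1(S^3_{(p+p')/(q+q')}(K))|,
\]
and the lower bound forces equality; iterating this mediant step, starting from consecutive integer slopes $\geq n$, reaches every positive rational $\geq n$.

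The main obstacle is the geometric setup of the triangle at rational slopes: one must verify that every Farey triple of peripheral slopes on $K$ is realized as the triple of $\infty$-, $0$- and $1$-surgeries on the dual knot $K^*\subset S^3_{p/q}(K)$ for some integer framing of $K^*$. This follows from the transitive $SL_2(\Z)$-action on ordered integer bases of the peripheral torus of $K$, which takes any Farey triple to $(\infty, 0, 1)$; the corresponding change of basis amounts precisely to a new choice of integer framing on $K^*$. One also needs to check the mod~$2$ conditions on $|H_1|$ required for \eqref{eq:surgery-exact-triangle} to be the untwisted triangle, but for positive Farey triples at most one of the three numerators is even, so this is automatic. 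Once the framing is identified, the rank argument proceeds exactly as in the integer case, with the additivity $p + p' = |H_1(S^3_{(p+p')/(q+q')}(K))|$ along positive Farey triples matching the upper and lower bounds.
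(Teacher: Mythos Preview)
Your proof is correct and follows the same strategy as the paper: iterate the surgery exact triangle together with the Euler-characteristic lower bound $\rank I^\# \geq |H_1|$, using additivity of $|H_1|$ along the triple. The only difference is cosmetic---the paper organizes the rational step via continued fractions (writing $r = [a_0;a_1,\dots,a_k]$ with $a_0\geq n+1$ and $a_i\geq 2$, realizing $S^3_r(K)$ as integer surgery on $K$ together with a chain of unknots, and inducting on the chain length), whereas you phrase the same induction in terms of Farey mediants and a choice of framing on the dual knot $K^*$.
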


\begin{proof}
Using the surgery exact triangle, it follows exactly as in \cite[Proposition~2.1]{osz-lens} that if $Y$, $Y_0(L)$, and $Y_1(L)$ are all rational homology spheres, where $Y$ and $Y_0(L)$ are instanton L-spaces and $|H_1(Y)|+|H_1(Y_0(L))| = |H_1(Y_1(L))|$, then $Y_1(L)$ is also an instanton L-space.  Since $S^3$ and $S^3_n(K)$ are instanton L-spaces, then, so is $S^3_p(K)$ for every integer $p \geq n$.

More generally, we can express any rational number $r>n$ as a continued fraction
\[ r = a_0 - \frac{1}{a_1-\frac{1}{\dots - \frac{1}{a_k}}}, \]
with $a_0 \geq n+1$ and $a_1,\dots,a_k \geq 2$.  We can then identify $S^3_r(K)$ as the result $Y_{[a_0,\dots,a_k]}$ of surgery on a framed link in $S^3$, in which $K$ has framing $a_0$ and a chain of $k$ unknots with framings $a_1,\dots,a_k$ is attached so that the first one is a meridian of $K$.  We induct on $k$, having already established the case $k=0$: when $k \geq 1$, we know by hypothesis that $Y_{[a_0,\dots,a_{k-1}]}$ is an instanton L-space, and so is $Y_{[a_0,\dots,a_{k-1},0]} = Y_{[a_0,\dots,a_{k-2}]}$ (which we interpret as $S^3$ if $k=1$), so arguing as above for the last unknot in the chain shows that $Y_{[a_1,\dots,a_{k-1},a_k]}$ is an instanton L-space for all integers $a_k \geq 0$ as well.
\end{proof}

We wish to show something similar to Proposition~\ref{prop:l-space-triangle} in the case where $n$ is rational, namely an instanton version of \cite[Proposition~7.4]{kmos}.  This will require the following analogue of \cite[Lemma~7.1]{kmos}, which in this setting is a particular case of the adjunction inequality for Donaldson invariants \cite[Theorem~1.1]{km-gauge2} (see also \cite{mmr}):

\begin{lemma} \label{lem:zero-sphere}
Let $W$ be a cobordism from $Y$ to $Y'$ which contains a homologically essential sphere $S$ of self-intersection zero and a closed surface $F$ such that $F\cdot S > 0$.  Then the induced map $I^\#(Y) \to I^\#(Y')$ is identically zero.
\end{lemma}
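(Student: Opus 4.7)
The plan is to apply the Kronheimer-Mrowka adjunction inequality for Donaldson invariants \cite{km-gauge2} to the essential sphere $S$ of self-intersection zero. First I would unpack the cobordism map: the induced map $I^\#(Y) \to I^\#(Y')$ arises by counting singular instantons on $W$ with singular locus the cylindrical unknot $U \times I \subset W$, together with appropriate bundle data. Since $S$ and $F$ lie in $W$ and can be perturbed to be disjoint from the $2$-dimensional singular locus $U \times I$, neither is affected by these choices.

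To show the map vanishes, I would reduce to showing each matrix coefficient vanishes. Using the pairing formalism described in Subsection~\ref{ssec:floer} (see \eqref{eqn:pairingrelinvts}), any such matrix coefficient $\langle \alpha, F_W(\beta)\rangle$ can be computed as a Donaldson-type invariant of a closed $4$-manifold $X$ obtained by gluing auxiliary pieces to the boundary components $Y$ and $Y'$ of $W$. The sphere $S$ persists inside $X$, and since $F \cdot S > 0$ with $F$ still a closed surface in $X$, the sphere $S$ remains homologically essential in $X$.

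The key step is then to invoke the adjunction inequality: any Donaldson basic class $K$ of such an $X$ would be required to satisfy
\[ |K \cdot S| + S \cdot S \leq 2g(S) - 2 = -2, \]
which is impossible since $S \cdot S = 0$. Hence $X$ admits no basic classes, so its Donaldson invariants vanish identically, and consequently every matrix coefficient of $F_W$ is zero.

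The main obstacle will be ensuring that the capping-off construction produces a closed $4$-manifold $X$ actually satisfying the hypotheses needed for the adjunction inequality to be applied in its standard form, namely $b_2^+(X) > 1$ and simple type. This is handled exactly as in the Lefschetz fibration capping construction of Subsection~\ref{ssec:lf-capping}: by attaching caps containing high-genus tight surfaces and fiber-summing with pieces of large $b_2^+$, one arranges these hypotheses while preserving the class $S$ and its intersection with $F$. Alternatively, one may bypass the closed-manifold reduction altogether and apply the relative form of the adjunction inequality for singular instantons directly on $W$, as in \cite{mmr}; in either case the contribution of $S$ forces the cobordism map to vanish.
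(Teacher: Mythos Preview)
Your alternative at the end---applying the adjunction argument directly on the cobordism, as in \cite{km-gauge2} and \cite{mmr}---is exactly what the paper does: it treats the lemma as an immediate consequence of the Kronheimer--Mrowka adjunction inequality, with no closing-up step at all. The neck-stretching argument that proves the adjunction inequality is local near the surface and works just as well on a cobordism with cylindrical ends as on a closed manifold.

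Your primary route, however, has a genuine gap and also misidentifies the role of $F$. The pairing formalism of Subsection~\ref{ssec:floer} and the structure theorem (Theorem~\ref{thm:km-structure}) concern ordinary $SO(3)$ Donaldson invariants and the groups $I_*(Y)_\alpha$, whereas the map in this lemma is on $I^\#(Y) = I^\natural(Y,U)$, which is built from \emph{singular} instantons along the Hopf link and arc. Closing up such a cobordism does not produce a closed $4$-manifold to which Theorem~\ref{thm:km-structure} applies; you would need a singular analogue of the structure theorem, or else to pass through the closure construction of Section~\ref{sec:stein-fillings}, neither of which you do. Separately, your explanation of why $F$ is needed (``to keep $S$ essential after closing up'') is not the paper's reason. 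As explained in Remark~\ref{rem:zero-sphere-cobordism}, Kronheimer and Mrowka do not prove the adjunction inequality directly for spheres of square zero; they reduce to odd genus by replacing $[S]$ with $[F]+d[S]$ for $d \gg 0$. The hypothesis $F\cdot S > 0$ is there so that this genus-raising trick is available inside the cobordism, not merely to witness essentiality.
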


\begin{remark} \label{rem:zero-sphere-cobordism}
The reason for including $F$ in the statement of Lemma~\ref{lem:zero-sphere} is that Kronheimer and Mrowka do not prove the adjunction inequality $\Sigma \cdot \Sigma \leq 2g(\Sigma) - 2$ directly for essential spheres of self-intersection zero; rather, in \cite[Section~6(ii)]{km-gauge2} they reduce the problem to the case where the genus is odd, by finding such a surface in the homology class $[F] + d[S]$ which also violates the adjunction inequality for $d \gg 0$, and so their proof requires the surface $F$ to exist.  In \cite{km-gauge2} they work with closed 4-manifolds with $b_2^+ \geq 3$, so such $F$ are easy to find, but this is not automatic for cobordisms $W$ as in Lemma~\ref{lem:zero-sphere}.

In the cases we are interested in, we construct $W$ by attaching a 2-handle $H$ to $Y \times [0,1]$ along some rationally nullhomologous knot $L\times\{1\}$ and then attaching a 0-framed 2-handle $H'$ along the boundary of the cocore of $H$.  The 2-sphere $S$ is the union of the cocore of $H$ and the core of $H'$.  For the surface $F$, we take $n\geq 1$ such that $n[L] = 0$ in $H_1(Y)$, and then we glue $n$ parallel cores of the handle $H$ to a Seifert surface for the union of their boundaries in $Y \times \{1\}$.  It is clear that $F \cdot S = n$, as desired.
\end{remark}

We now prove the desired instanton version of \cite[Proposition~7.4]{kmos}; our proof is similar in spirit but somewhat trickier because we do not have anything analogous to the three variants of monopole Floer homology used there or the exact triangle relating them.

\begin{proposition} \label{prop:rational-l-space-surgery}
If $S^3_r(K)$ is an instanton L-space for some rational $r > 0$, and $m = \max(\lfloor r \rfloor, 1),$ then $S^3_m(K)$ is also an instanton L-space.
\end{proposition}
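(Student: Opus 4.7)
The argument is an adaptation to instanton Floer homology of \cite[Proposition~7.4]{kmos}, combining the surgery exact triangle \eqref{eq:surgery-exact-triangle} with the vanishing supplied by Lemma~\ref{lem:zero-sphere}. The case $r\in\mathbb{Z}$ is trivial with $m=r$, so I reduce to the case $r>0$ non-integer, and the plan is to descend from $S^3_r(K)$ to $S^3_m(K)$ through a finite sequence of rational surgeries whose slopes are the successive truncations of a continued fraction expansion of $r$.

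The principal building block is the family of surgery exact triangles obtained by applying \eqref{eq:surgery-exact-triangle} with $L$ a meridian of $K$ in $S^3_m(K)$ (possibly threaded through a chain of meridional unknots, so as to realize arbitrary rational slopes) equipped with varying integer framings. A slam-dunk calculation identifies the simplest such triangle as
\[ I^\#(S^3_m(K)) \to I^\#(S^3_{m-1/b}(K)) \to I^\#(S^3_{m-1/(b+1)}(K)) \to, \]
with the convention $S^3_\infty(K)=S^3$, and iterated versions realize any three consecutive truncations of a continued fraction for $r$. A parity argument supplies a suitable auxiliary curve $\lambda$ making the untwisted triangle \eqref{eq:surgery-exact-triangle} applicable at each stage. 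Given such a triangle in which the ``most complex'' corner is already known to be an L-space, the rank inequality coming from the triangle together with the universal bound $\rank I^\#(Z) \geq |H_1(Z)|$ forces the other two corners to be L-spaces \emph{provided} the appropriate connecting map in the triangle vanishes. Iterating this observation along the continued fraction expansion of $r$, starting from the L-space $S^3_r(K)$, one descends through progressively simpler rational surgeries and eventually reaches $S^3_m(K)$.

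The needed vanishing is supplied by Lemma~\ref{lem:zero-sphere} in the configuration made explicit by Remark~\ref{rem:zero-sphere-cobordism}. At each stage, the two-handle cobordism that induces the relevant connecting map embeds into a larger cobordism obtained by attaching an auxiliary $0$-framed $2$-handle along the boundary of the cocore of the original $2$-handle; the union of that cocore with the core of the auxiliary handle is an embedded, homologically essential $2$-sphere of self-intersection zero, and a closed surface $F$ built from parallel cores of the first $2$-handle together with a Seifert surface for their common boundary provides the transverse surface of positive intersection with $S$ required by Lemma~\ref{lem:zero-sphere}. The lemma forces the map induced by the enlarged cobordism to vanish, which in turn yields the vanishing of the original connecting map in the surgery triangle by a factoring argument.

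The main obstacle will be the careful bookkeeping at every step of the induction: verifying that the three members of the surgery triangle are rational homology spheres (with at most one having $|H_1|$ of even order, so that \eqref{eq:surgery-exact-triangle} applies without further twisting), that the meridional knot along which we attach the initial $2$-handle is rationally nullhomologous in the incoming 3-manifold (so that Remark~\ref{rem:zero-sphere-cobordism} supplies the essential zero-square sphere and the transverse surface $F$), and that the factoring through the enlarged cobordism actually transfers vanishing to the original map. These conditions can be checked arithmetically from the continued fraction data for $r$, though the details are somewhat intricate and must be carried out uniformly across all the stages of the descent.
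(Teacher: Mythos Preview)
Your overall strategy --- descend from the L-space slope $r$ to the integer slope $m$ through a sequence of surgery triangles indexed by continued-fraction (equivalently, Farey) convergents --- matches the paper's. The gap is in the vanishing step.

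You claim that in each triangle the ``appropriate connecting map'' vanishes, and you justify this by enlarging the single 2-handle cobordism with an auxiliary $0$-framed handle on the cocore boundary, then invoking Lemma~\ref{lem:zero-sphere}. But the enlarged cobordism is the \emph{composite} of two 2-handle attachments; what Lemma~\ref{lem:zero-sphere} and Remark~\ref{rem:zero-sphere-cobordism} give you is that this composite map is zero. There is no factoring argument that transfers this to the original single map: the vanishing of $A \to B \to C$ says only that $\img(A\to B) \subset \ker(B\to C)$, not that $A\to B$ is zero. In fact the relevant individual map typically has rank $1$, not $0$. (In the monopole setting of \cite{kmos} one can do better because of the $\widehat{HM}/\overline{HM}/\check{HM}$ exact triangle, which lets one show certain maps are zero on reduced homology; the paper explicitly notes that no such structure is available here.)

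The paper's workaround is more delicate. Rather than showing any single map vanishes, it sets up at each stage $i\geq 3$ a commutative square involving \emph{two} exact triangles (the stage-$i$ and stage-$(i{-}1)$ triangles, which share two vertices). The zero-sphere lemma applies to the composite $f_i\circ g_i$ (or $g_i\circ f_i$) of maps from the two different triangles, and combined with exactness this yields $\rank(g_i)\leq\rank(g_{i-1})$. Iterating, $\rank(g_k)\leq\rank(g_2)\leq\rank I^\#(S^3)=1$. Writing $\rank I^\#(S^3_{p/q}(K))=p+2e_{p/q}$, the mapping-cone identity then gives $e_{r_{k-1}}+e_{r_{j_k}}=\rank(g_k)\leq 1$, so one of the two earlier excesses is zero; induction on $k$ forces $e_n=0$ or $e_{n+1}=0$, and a final exact-triangle argument finishes. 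Your proposal would need this rank-bounding mechanism in place of the claimed vanishing of individual maps.
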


\begin{proof}
If $r$ is an integer then there is nothing to show, so assume it is not and let $n = \lfloor r \rfloor$.  We define a sequence $r_i = \frac{p_i}{q_i} \in \Q \cup \{\infty\}$ for $i=0,\dots,k$ by 
\begin{align*}
\frac{p_0}{q_0} &= \frac{1}{0}, & \frac{p_1}{q_1} &= \frac{n}{1}, & \frac{p_2}{q_2} &= \frac{n+1}{1} = \frac{p_0+p_1}{q_0+q_1}
\end{align*}
and then for $i \geq 3$ we define $r_i = \frac{p_i}{q_i}$ as follows: if $r_{i-1} = \frac{p_{j_{i-1}}+p_{i-2}}{q_{j_{i-1}}+q_{i-2}}$ for some $j_{i-1} \leq i-3$, then $r$ lies between $r_{i-1}$ and either $r_{i-2}$ or $r_{j_{i-1}}$, so we let $j_i$ be either $i-2$ or $j_{i-1}$ respectively and then set $r_i = \frac{p_{j_i} + p_{i-1}}{q_{j_i} + q_{i-1}}$.  (Note that $j_2 = 0$ and $j_3 = 1$.)  At each step we have 
\begin{align} \label{eq:farey-neighbors}
|p_{j_i}q_{i-1} - q_{j_i}p_{i-1}|&=1, & |p_iq_{i-1} - q_ip_{i-1}|&=1, & |p_iq_{j_i}-q_ip_{j_i}|&=1;
\end{align}
and eventually we reach $r_k = r$, at which point the sequence ends.  (For example, if $r=\frac{10}{7}$ then the sequence $\{r_i\}$ is $\frac{1}{0}, \frac{1}{1}, \frac{2}{1}, \frac{3}{2}, \frac{4}{3}, \frac{7}{5}, \frac{10}{7}$, and since $r_6 = \frac{10}{7} = \frac{p_3+p_5}{q_3+q_5}$ we have $j_6 = 3$.)  This works by properties of the Farey sequence of order $m$, in which all nonnegative rational numbers with denominator at most $m$ are listed in increasing order: given adjacent terms $\frac{a}{b} < \frac{c}{d}$ we always have $bc-ad=1$, and the sequence is built inductively from the Farey sequence of order $m-1$ by inserting $\frac{a+c}{b+d}$ between adjacent $\frac{a}{b} < \frac{c}{d}$ whenever $b+d=m$.

Letting $Y_{a/b} = S^3_{a/b}(K)$ for convenience, there is a surgery exact triangle relating $I^\#(Y_{r_i})$, $I^\#(Y_{r_{i-1}})$, and $I^\#(Y_{r_{j_i}})$ for each $i \geq 2$.  This follows from a standard argument as in the proof of Proposition~\ref{prop:l-space-triangle}, once we know that \eqref{eq:farey-neighbors} is satisfied.  Namely, using the continued fraction expansions of these slopes, we can attach a chain of unknots to $K$ and then perform integer-framed surgeries on $K$ and all but the last unknot in the chain, so that the $r_i$--, $r_{i-1}$--, and $r_{j_i}$-surgeries on $K$ result from surgeries on the last unknot with framings $\infty,k,k+1$ in some order for some integer $k$.  The claimed exact triangle is then a special case of \eqref{eq:surgery-exact-triangle}.

For each $i \geq 3$, we now have a commutative diagram of one of the two following forms, in which both triangles are exact and $f_i\circ g_i = g_i \circ f_i = 0$ (assuming in each case that the composite is not a map $I^\#(Y_0) \to I^\#(Y_0)$) by Lemma~\ref{lem:zero-sphere} and Remark~\ref{rem:zero-sphere-cobordism}:
\[ \xymatrix{
I^\#(Y') \ar[r] \ar@<-0.5ex>[dr]_{f_i} & I^\#(Y_{r_i}) \ar[d] & &
I^\#(Y') \ar[d]_{g_{i-1}} \ar@<0.5ex>[dr]^{g_i} & I^\#(Y_{r_i}) \ar[l] \\
I^\#(Y'') \ar[u]^{g_{i-1}} & I^\#(Y_{r_{i-1}}) \ar[l] \ar@<-0.5ex>[ul]_{g_i} & &
I^\#(Y'') \ar[r] & I^\#(Y_{r_{i-1}}) \ar@<0.5ex>[ul]^{f_i} \ar[u]
} \]
The pair $(Y',Y'')$ is either $(Y_{r_{i-2}}, Y_{r_{j_{i-1}}})$ or $(Y_{r_{j_{i-1}}}, Y_{r_{i-2}})$ depending on whether $j_i=i-2$ or $j_i = j_{i-1}$.  In the left diagram, we have $\img(g_i) \subset \ker(f_i) = \img(g_{i-1})$, by the equation $f_i \circ g_i = 0$ and exactness.  In the right diagram, we have $\ker(g_{i-1}) = \img(f_i) \subset \ker(g_i)$, by exactness and $g_i \circ f_i = 0$.   (Note that if one of $Y'$ or $Y_{r_{i-1}}$ is $Y_0$, we must be in the case on the left with $Y'=Y_0$, $Y_{r_{i-1}} = Y_{1/q}$, and $Y_{r_i} = Y_{1/(q+1)}$; then the map $f_i\circ g_i$ which must vanish corresponds to the cobordism $Y_{1/q} \to Y_0 \to Y_{1/q}$, so Remark~\ref{rem:zero-sphere-cobordism} still applies.)  Thus in either case we have $\rank(g_i) \leq \rank(g_{i-1})$.

Repeating this argument for $3 \leq i \leq k$, we have $\rank(g_k) \leq \rank(g_2)$ where $g_2$ appears in the exact triangle
\[ \dots \to I^\#(S^3) \xrightarrow{g_2} I^\#(Y_{n}) \xrightarrow{f_3} I^\#(Y_{n+1}) \to \dots. \]
Since $I^\#(S^3)$ has rank 1, we conclude that $g_k$ has rank either 0 or 1.  Now $g_k$ is a map from $I^\#(Y_{r_{k-1}})$ to $I^\#(Y_{r_{j_k}})$ or vice versa, and its mapping cone is quasi-isomorphic to $I^\#(Y_{r_k}) = I^\#(Y_r)$, so we have
\begin{align*}
\rank(I^\#(Y_{r_k})) &= \rank \ker(g_k) + \rank \coker(g_k) \\
&= \rank(I^\#(Y_{r_{k-1}})) + \rank(I^\#(Y_{r_{j_k}})) - 2\rank(g_k).
\end{align*}
For any $\frac{p}{q} \geq 0$, we can write $\rank(I^\#(Y_{p/q})) = p + 2e_{p/q}$ for some integer $e_{p/q} \geq 0$, since $I^\#(Y_{p/q})$ has Euler characteristic $|H_1(Y_{p/q})| = p$, and then (assuming $p\neq 0$) the condition $e_{p/q} = 0$ is equivalent to $Y_{p/q}$ being an instanton L-space.  Since $p_k = p_{k-1}+p_{j_k}$ and $e_{r_k}=0$, the above equation simplifies to
\[ 2e_{r_{k-1}} + 2e_{r_{j_k}} = 2e_{r_k} + 2\rank(g_k) = 2\rank(g_k) \leq 2 \]
and so either $e_{r_{k-1}}$ or $e_{r_{j_k}}$ must be zero as well.

Since $k \geq 3$ and the sequence $\{j_i\}$ is nondecreasing with $j_3 = 1$, we have shown that $e_{r_i} = 0$ for some $i$ satisfying $1 \leq i < k$.  It follows by induction on $k$ that either $e_{r_1}=0$ or $e_{r_2}=0$, where we recall that $r_1 = n$ and $r_2 = n+1$.  We now consider the surgery exact triangle
\[ \dots \to I^\#(S^3) \xrightarrow{g_2} I^\#(Y_n) \xrightarrow{f_3} I^\#(Y_{n+1}) \to \dots. \]
If $n=0$, then either $e_1=0$, or we see that $e_0=0$ implies $e_1=0$, so we are done.  If $n \geq 1$, then we are likewise done except for the possibility that $e_{n+1}=0$ but $e_n \neq 0$.  In this case, the same exact triangle implies that $\rank(g_2) = 1$, hence $g_2$ is injective and $f_3$ is surjective.  Then since $n\geq 1$, the map $g_3$ in the triangle
\[ \dots \to I^\#(Y_{n+1}) \xrightarrow{g_3} I^\#(Y_n) \to I^\#(Y_{(2n+1)/2}) \to \dots \]
satisfies $g_3 \circ f_3 = 0$, so $g_3$ must be zero and hence $\rank(g_i) = 0$ for all $i \geq 3$.  But then we see as above that $e_{r_i} = e_{r_{i-1}} + e_{r_{j_i}}$ for $3 \leq i \leq k$.  In particular, if $e_{r_i} = 0$ and $i \geq 3$ then $e_{r_{i-1}} = e_{r_{j_i}} = 0$; since $e_{r_k} = 0$, we eventually have $e_{r_3} = 0$ and so $e_{r_{j_3}} = 0$ as well.  But $r_{j_3} = r_1 = n$, so $Y_n$ must have been an instanton L-space after all.
\end{proof}

The conclusion of Proposition~\ref{prop:rational-l-space-surgery} is not entirely satisfying if $0 < r < 1$, but we will see that this never happens unless $K$ is the unknot.

\begin{proposition} \label{prop:l-space-genus-2}
If $K \subset S^3$ is a knot of genus $g>1$, then $S^3_r(K)$ is not an instanton L-space for $0 < r < 2$.
\end{proposition}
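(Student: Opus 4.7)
The first step of the plan is to reduce to integer slopes. By Proposition \ref{prop:rational-l-space-surgery}, every rational $r \in (0,2)$ satisfies $m = \max(\lfloor r \rfloor, 1) = 1$, so if $S^3_r(K)$ is an instanton L-space then so is $S^3_1(K)$. It therefore suffices to show that $S^3_1(K)$ is not an instanton L-space whenever $g(K) \geq 2$.

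Suppose for contradiction that $\rank I^\#(S^3_1(K)) = 1$. The surgery exact triangle \eqref{eq:surgery-exact-triangle} applied to the $0$-framed knot $K\subset S^3$ gives
\[ \cdots \to I^\#(S^3) \to I^\#(S^3_0(K)) \to I^\#(S^3_1(K)) \to \cdots. \]
A direct exactness computation shows that for any exact triangle $A \xrightarrow{f} B \xrightarrow{g} C \xrightarrow{h} A$ of finite-dimensional vector spaces, $\dim A + \dim C - \dim B = 2\,\rank(h)$. With $\rank I^\#(S^3) = \rank I^\#(S^3_1(K)) = 1$, this forces $\rank I^\#(S^3_0(K)) \leq 2$ (and in fact even, so equal to $0$ or $2$).

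To obtain a contradiction, I would show that $\rank I^\#(S^3_0(K)) \geq 4$ whenever $g(K) \geq 2$. Let $\hat F \subset S^3_0(K)$ denote the closed surface of genus $g := g(K)$ obtained by capping off a minimal-genus Seifert surface with the core of the $0$-framed $2$-handle. By the spectral bound recalled in Subsection \ref{ssec:floer}, $\mu(\hat F)$ acts on $I^\#(S^3_0(K))$ with eigenvalues in $\{\pm 2k, \pm 2ki : 0 \leq k \leq g-1\}$. Kronheimer--Mrowka's instanton genus-detection theorem, applied to $\hat F$ via the isomorphism $\SHItfun(S^3_0(K)(p)) \cong I^\#(S^3_0(K)) \otimes \C$, guarantees that the generalized $(2g-2)$-eigenspace of $\mu(\hat F)$ is nonzero. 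The natural symmetries of instanton Floer homology (complex conjugation of the auxiliary $SO(3)$ bundle together with the $\Z/8$-graded action under which $\mu$ shifts degree by $2$) cyclically permute the four top eigenvalues $\{\pm 2(g-1),\, \pm 2(g-1)i\}$, so all four corresponding eigenspaces have the same positive dimension. Since $g \geq 2$ forces these four eigenvalues to be distinct, we conclude $\rank I^\#(S^3_0(K)) \geq 4$, contradicting the upper bound above.

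The principal obstacle is carefully justifying the four-fold symmetry and confirming that Kronheimer--Mrowka's detection theorem transfers cleanly to the closed-manifold invariant $I^\#$ in this form. A conceptually cleaner alternative---which may in fact be the preferred route in the paper---would be to establish directly an instanton analog of the Ozsv\'ath--Szab\'o inequality $\rank \hat{HF}(S^3_0(K)) \geq 2g(K)$, perhaps using a sutured-cobordism/eigenspace decomposition argument on $\KHI(K)$, which would bypass the symmetry discussion and immediately supply the needed lower bound.
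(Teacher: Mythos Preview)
Your overall strategy is right and is exactly the paper's: reduce to $S^3_1(K)$ via Proposition~\ref{prop:rational-l-space-surgery}, then use a surgery exact triangle to bound the rank of the middle term below by $4$ using a four-eigenspace argument for the capped Seifert surface. The gap is that you run this argument on the \emph{untwisted} group $I^\#(S^3_0(K))$. The eigenvalue bound you cite from Subsection~\ref{ssec:floer} is stated for $I_*(Y)_\alpha$ with $\alpha\cdot R$ odd, i.e.\ for an \emph{admissible} bundle; it is not established for untwisted $I^\#$ of a manifold with $b_1>0$. Likewise Kronheimer--Mrowka's nonvanishing of the top eigenspace and the $i^r(2g-2)$ four-fold symmetry are theorems about the admissible-bundle group $I_*(S^3_0(K))_w$, not about untwisted $I^\#$. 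So the ``principal obstacle'' you flag is not a loose end but the entire technical content of the proof.

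The paper resolves this by passing to the \emph{twisted} triangle
\[
\cdots \to I^\#(S^3) \to I^\#(S^3_0(K);\mu) \to I^\#(S^3_1(K)) \to \cdots,
\]
with $\mu$ the core of the $0$-surgery. Scaduto's structure theorem identifies $I^\#(S^3_0(K);\mu)$, as an ungraded group, with $\ker(u^2-64)$ acting on the admissible-bundle homology $I_*(S^3_0(K))_w$, using Fr{\o}yshov's nilpotency $(u^2-64)^{n_g}=0$. Now Kronheimer--Mrowka's result gives four distinct nontrivial generalized $\mu(\hat F)$-eigenspaces $i^r(2g-2)$ (distinct because $g\geq 2$); since $u$ commutes with $\mu(\hat F)$ and $u^2-64$ is nilpotent, its kernel meets each eigenspace nontrivially, yielding $\rank I^\#(S^3_0(K);\mu)\geq 4$. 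The point is that the twist is not cosmetic: it is precisely what lets you import the admissible-bundle eigenspace machinery into the $I^\#$ exact triangle.
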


\begin{proof}
By Proposition~\ref{prop:rational-l-space-surgery} it suffices to show that $S^3_1(K)$ is not an instanton L-space.  We use the surgery exact triangle
\[ \dots \to I^\#(S^3) \to I^\#(S^3_0(K); \mu) \to I^\#(S^3_1(K)) \to \dots \]
where we now use twisted coefficients for $Y_0 = S^3_0(K)$, equipping it with the nontrivial $SO(3)$ bundle $P \to Y_0$ where $w_2(P) = PD(\mu)$ and $\mu$ is the image of a meridian of $K$ inside $Y_0$.  Since $I^\#(S^3) \cong \Z$, it will suffice to show that $\rank(I^\#(Y_0;\mu)) > 2$.  We will work with coefficients in $\C$, though this holds more generally by the universal coefficient theorem.

The bundle $P \to Y_0$ is nontrivial and admissible, since $w_2(P)$ has nonzero pairing with a closed surface $\Sigma$ of genus $g$ built by capping off a Seifert surface for $K$ inside $Y_0$.  There is an  associated relatively $\Z/8\Z$-graded Floer homology group, denoted $I_*(P)$ in \cite{scaduto} and $I_*(Y_0)_w$ in \cite{km-excision} where $w \to Y_0$ is a Hermitian line bundle with $c_1(w) = PD(\mu)$.  It has an operator $u=4\mu(\mathrm{pt})$ of degree 4, and Fr{\o}yshov \cite[Theorem~9]{froyshov} showed that $(u^2-64)^{n_g} = 0$ for some constant $n_g \geq 1$ depending only on $g$.  In particular $u$ is invertible, so $I_*(Y_0)_w \cong I_{*+4}(Y_0)_w$, and Scaduto \cite[Theorem~1.3]{scaduto} showed that
\[ I^\#(Y_0;\mu) \cong \ker\left(\left.u^2-64\right|_{\bigoplus_{j=0}^3 I_{*+j}(Y_0)_w}\right) \otimes H_*(S^3). \]
This kernel has half the dimension of $\ker(u^2-64)$, so as ungraded groups we have $I^\#(Y_0;\mu) \cong \ker(u^2-64)$ where $u$ acts on all of $I_*(Y_0)_w$.

If $g \geq 1$, then Kronheimer and Mrowka \cite[Theorem~7.21]{km-excision} showed that the degree-2 operator $\mu(\Sigma)$ on $I_*(Y_0)_w$ has a nontrivial generalized $(2g-2)$-eigenspace, and remarked that it is isomorphic to each of the generalized $i^r(2g-2)$-eigenspaces for $r=0,1,2,3$; these are all distinct eigenspaces since $g \geq 2$.  Now $u = 4\mu(\mathrm{pt})$ commutes with $\mu(\Sigma)$, so $u^2-64$ acts nilpotently on each eigenspace, and in particular $\ker(u^2-64)$ has dimension at least 1 when restricted to each of these four eigenspaces.  The kernel of $u^2-64$ on all of $I_*(Y_0)_w$ must therefore have dimension at least 4, as desired.
\end{proof}

The case where $K \subset S^3$ has genus 1 requires some additional work.  In what follows, we will repeatedly use results of Gordon \cite[Corollary~7.3]{gordon} on Dehn surgeries on cables.  We use $C_{p,q}(K)$ to denote the $(p,q)$-cable of $K$, which has longitudinal winding $q$.  We will also use the K\"unneth formula for $I^\#$, which is a special case of \cite[Corollary~5.9]{km-khovanov}: it says that if either $I^\#(Y)$ or $I^\#(Y')$ is torsion-free, then $I^\#(Y\#Y') \cong I^\#(Y) \otimes I^\#(Y')$.  Notably, this applies when $Y'$ is a lens space, since $I^\#(L(p,q)) \cong \Z^p$ by \cite[Corollary~1.2]{scaduto}.

\begin{lemma} \label{lem:l-space-one-half}
If $K \subset S^3$ is not the unknot, then $S^3_{1/2}(K)$ is not an instanton L-space.
\end{lemma}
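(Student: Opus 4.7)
The plan is proof by contradiction, split by the Seifert genus of $K$. If $g(K) \geq 2$, then since $\tfrac{1}{2} \in (0,2)$ Proposition~\ref{prop:l-space-genus-2} immediately implies that $S^3_{1/2}(K)$ is not an instanton L-space, so it remains to handle $g(K) = 1$ — this is where the real work lies.

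So assume $g(K) = 1$ and that $S^3_{1/2}(K)$ is an instanton L-space. By Proposition~\ref{prop:rational-l-space-surgery} applied with $r = \tfrac{1}{2}$, $S^3_1(K)$ is also an L-space, and then by Proposition~\ref{prop:l-space-triangle} the manifold $S^3_r(K)$ is an L-space for every rational $r \geq 1$. I then pass to the $(2,1)$-cable $C = C_{2,1}(K)$, taken in the convention where the cable has longitudinal winding $2$, so that its Seifert genus is $g(C) = 2\,g(K) = 2$. Gordon's formula identifies the reducible Dehn surgery
\[
S^3_2(C) \,\cong\, L(2,1) \# S^3_{1/2}(K) \,\cong\, \mathbb{RP}^3 \# S^3_{1/2}(K),
\]
which is a connect sum of two instanton L-spaces and hence, by the K\"unneth formula for $I^\#$ recorded just before the lemma, is itself an instanton L-space. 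Applying Proposition~\ref{prop:l-space-triangle} to $C$ with $n=2$, the manifold $S^3_r(C)$ is an L-space for every rational $r \geq 2$.

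The desired contradiction would come from Proposition~\ref{prop:l-space-genus-2} applied to $C$: since $g(C) = 2 > 1$, no slope in $(0,2)$ on $C$ can give an L-space, so it suffices to produce an L-space slope on $C$ strictly less than $2$. The main obstacle is that the L-space property we have on $C$ sits exactly at the boundary slope $r=2$, and Propositions~\ref{prop:l-space-triangle} and \ref{prop:rational-l-space-surgery} only propagate upward or to the integer floor. To bridge this, I would combine the surgery exact triangle
\[
\dots \to I^\#(S^3) \to I^\#(S^3_0(C);\mu) \to I^\#(S^3_1(C)) \to \dots
\]
with the genus $\geq 2$ eigenspace bound $\rank I^\#(S^3_0(C);\mu) \geq 4$ extracted from the $\mu(\Sigma)$-analysis in the proof of Proposition~\ref{prop:l-space-genus-2}, together with the Farey-triangle exact sequences relating slopes $1$, $\tfrac{3}{2}$, $2$ on $C$ and the known ranks $\rank I^\#(S^3_n(C)) = n$ at integer slopes $n \geq 2$.

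The hard part will be squeezing a genuine numerical contradiction out of the triangle rank inequalities, since the rank of $I^\#(S^3_0(C);\mu)$ at $g(C) = 2$ is only barely larger than what the exact triangle permits when two adjacent surgeries are assumed to be L-spaces. If a direct rank calculation at $g(C) = 2$ is insufficient, the fallback plan is to iterate: for $n \geq 1$, the cable $C_{2,2n+1}(K)$ has genus $g = (2n+1) + n = 3n+1$ and Gordon's formula yields $S^3_{2(2n+1)}(C_{2,2n+1}(K)) \cong \mathbb{RP}^3 \# S^3_{(2n+1)/2}(K)$, which is again an L-space by the same K\"unneth argument because $(2n+1)/2 \geq 1$. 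Since the eigenspace-based lower bounds on $\rank I^\#(S^3_0(\cdot);\mu)$ grow with the genus while the contribution from the L-space surgeries on the right side of the exact triangle stays bounded by linear functions of the slope, taking $n$ sufficiently large should force a numerical contradiction via the Farey-tessellation descent developed in the proof of Proposition~\ref{prop:rational-l-space-surgery}.
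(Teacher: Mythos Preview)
Your setup matches the paper's: reduce to $g(K)=1$, pass to the cable $C$ with $g(C)=2$, use Gordon to see $S^3_2(C)\cong\mathbb{RP}^3\#S^3_{1/2}(K)$, and conclude via K\"unneth that $S^3_2(C)$ is an L-space. The problem is that your contradiction never materializes. You correctly observe that the eigenspace bound gives $\rank I^\#(S^3_0(C);\mu)\ge 4$, and the two exact triangles then force $\rank I^\#(S^3_1(C))=3$ exactly---no contradiction. Your fallback plan is built on a false premise: the eigenspace argument in Proposition~\ref{prop:l-space-genus-2} only uses the four eigenvalues $i^r(2g-2)$, so it yields $\rank\ge 4$ for every $g\ge 2$ and does \emph{not} grow with the genus. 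Meanwhile the reducible slope $2(2n+1)$ on your higher cables grows linearly, so the rank inequalities from the Farey descent will never be violated this way.

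What the paper does instead is bring in an extra invariant you never touch: the Alexander polynomial, via the Casson invariant. First, Fr{\o}yshov's result that $n_1=n_2=1$ upgrades the inclusion $I^\#(Y_0;\mu)\cong\ker(u^2-64)$ to an identification with all of $I_*(Y_0)_w$ when $g\le 2$. For $K$ itself (genus $1$), the L-space hypothesis forces $\rank I_*(S^3_0(K))_w=2$; Floer's original exact triangle for $HF$ then pins down $|\Delta''_K(1)|=2$, so the leading Alexander coefficient is $a=\pm 1$. Running the same argument on $C$ (genus $2$) gives $\rank I_*(S^3_0(C))_w\le 4$ and hence $|\Delta''_C(1)|\le 4$. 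But $\Delta_C(t)=\Delta_K(t^2)$ forces $\Delta''_C(1)=8a=\pm 8$, and \emph{that} is the contradiction. The point is that pure rank-counting in $I^\#$ is too coarse here; you need the grading/Euler-characteristic information carried by $HF$ and the Casson invariant.
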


\begin{proof}
If $K$ has genus $g \geq 2$ then this is implied by Proposition~\ref{prop:l-space-genus-2}, so assume that $g=1$ and that $S^3_{1/2}(K)$ is an L-space.  By assumption, Proposition~\ref{prop:rational-l-space-surgery} says that $S^3_1(K)$ is an L-space, so we argue exactly as in the proof of Proposition~\ref{prop:l-space-genus-2} to conclude that $I^\#(S^3_0(K); \mu) \cong \ker(u^2-64)$ where $u$ acts on $I_*(S^3_0(K))_w$.  In fact, we have $(u^2-64)^{n_1} = 0$, and Fr{\o}yshov remarks just before \cite[Theorem~9]{froyshov} that $n_1=n_2=1$, so $\ker(u^2-64) = I_*(S^3_0(K))_w$ and hence $I_*(S^3_0(K))_w$ has rank at most 2.  But $I_*(S^3_0(K))_w$ is nonzero since $K$ is nontrivial \cite[Corollary~7.22]{km-excision} (cf.\ \cite{km-p}), and so its rank is exactly 2.

Since $u$ is invertible of degree 4, the homology $I_*(S^3_0(K))_w$ is supported in two gradings which agree mod 4.  Using Floer's surgery exact triangle (see \cite{braam-donaldson}), it follows that the same is true of the instanton homology $HF(S^3_1(K))$, whose Euler characteristic is twice the Casson invariant $\lambda(S^3_1(K)) = \frac{1}{2}\Delta''_K(1)$, so we have $\Delta''_K(1) = \pm 2$.  But since $K$ has genus 1 and $\Delta_K(1)=1$, we can write $\Delta_K(t) = at-(2a-1)+at^{-1}$ for some integer $a$ (possibly zero), and then $\Delta''_K(1) = \pm 2$ implies that $a = \pm 1$.

Now let $C=C_{1,2}(K)$ denote the $(1,2)$-cable of $K$.  Gordon \cite{gordon} showed that
\[ S^3_2(C) \cong S^3_{1/2}(K) \# \mathbb{RP}^3, \]
and $I^\#(\mathbb{RP}^3) \cong \Z^2$, so if $S^3_{1/2}(K)$ is an L-space then $I^\#(S^3_2(C))$ has rank 2 by the K\"unneth formula.  Using the surgery exact triangle for $I^\#$ with the triads $(S^3,S^3_1(C),S^3_2(C))$ and $(S^3,S^3_0(C),S^3_1(C))$, we conclude that the twisted homology $I^\#(S^3_0(C);\mu)$ has rank at most 4.  Since $C$ has Seifert genus $2g=2$ \cite{shibuya}, and $(u^2-64)^{n_2} = 0$ with $n_2=1$, we once again have $I^\#(S^3_0(C);\mu) \cong I_*(S^3_0(C))_w$ as ungraded groups, so the latter has rank at most 4.

Using Floer's exact triangle again, we see that $HF(S^3_1(C))$ has rank at most 4, hence computing its Euler characteristic gives $|\Delta''_C(1)| \leq 4$.  On the other hand, we know that
\[ \Delta_C(t) = \Delta_K(t^2) = at^2 - (2a-1) + at^{-2}, \]
so $\Delta''_C(1) = 8a = \pm 8$ and we have a contradiction.
\end{proof}

We note one interesting consequence of the proof of Lemma~\ref{lem:l-space-one-half}: if $S^3_1(K)$ is an instanton L-space and $K$ is not the unknot, then $K$ has genus 1 and its Alexander polynomial is either $t-1+t^{-1}$ or $-t+3-t^{-1}$.

\begin{lemma} \label{lem:surgeries-below-1}
Let $K \subset S^3$ be a knot for which some $S^3_r(K)$ is an instanton L-space, where $0 < r < 1$.  If $m = \max(\lfloor\frac{r}{1-r}\rfloor, 1)$, then $S^3_s(K)$ is an instanton L-space for all $s$ such that $\frac{m}{m+1} \leq s \leq 1$.
\end{lemma}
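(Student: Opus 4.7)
The plan is to combine the two preceding propositions with a cascading Farey-mediant argument modeled on the proof of Proposition~\ref{prop:rational-l-space-surgery}. First I would apply Proposition~\ref{prop:rational-l-space-surgery} directly to the given $r\in(0,1)$: since $\max(\lfloor r\rfloor,1)=1$, this immediately gives that $S^3_1(K)$ is an instanton L-space, and Proposition~\ref{prop:l-space-triangle} upgrades this to $S^3_s(K)$ being an L-space for every rational $s\geq 1$. It therefore suffices to prove that $S^3_{m/(m+1)}(K)$ is an instanton L-space, because the slopes $1$ and $m/(m+1)$ are Farey neighbors (since $1\cdot(m+1)-1\cdot m=1$), so every rational $s\in[m/(m+1),1]$ arises as an iterated Farey mediant of these two endpoints; the surgery exact triangle together with the identity $|H_1(S^3_{(a+a')/(b+b')}(K))|=|H_1(S^3_{a/b}(K))|+|H_1(S^3_{a'/b'}(K))|$ for Farey mediants then propagates the L-space property throughout $[m/(m+1),1]$ by a straightforward induction on Farey depth.

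To show $S^3_{m/(m+1)}(K)$ is an L-space, I would concentrate on the main case $r\in[m/(m+1),(m+1)/(m+2))$, which is precisely the range $\lfloor r/(1-r)\rfloor=m$. The key observation is that $(1,\,m/(m+1),\,(m+1)/(m+2))$ is itself a Farey triple, which reduces to checking three determinants, each equal to $\pm 1$. I would then mimic the construction in the proof of Proposition~\ref{prop:rational-l-space-surgery} verbatim, with base triple $r_0=1$, $r_1=m/(m+1)$, $r_2=(m+1)/(m+2)$, and subsequent $r_3,\dots,r_k=r$ defined by the same mediant rule (each $r_i$ is the Farey mediant of $r_{i-1}$ and either $r_{i-2}$ or $r_{j_{i-1}}$, chosen so that $r$ lies between $r_{i-1}$ and $r_{j_i}$). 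The resulting surgery exact triangles produce connecting maps $g_i$ with $\rank(g_i)\leq\rank(g_{i-1})$, thanks to the vanishing composites $f_i\circ g_i=0$ from Lemma~\ref{lem:zero-sphere} and Remark~\ref{rem:zero-sphere-cobordism}. Since $\rank(g_2)\leq\rank(I^\#(S^3_1(K)))=1$, this gives $\rank(g_k)\leq 1$; combined with the identity $e_{r_i}=e_{r_{i-1}}+e_{r_{j_i}}-\rank(g_i)$ and $e_{r_k}=0$, a downward induction along the Farey tree forces either $e_{r_1}=e_{m/(m+1)}=0$ or $e_{r_2}=e_{(m+1)/(m+2)}=0$. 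In the first case we are done; in the second case, the $\rank(g_2)=1$ subcase analysis from the proof of Proposition~\ref{prop:rational-l-space-surgery} applies essentially verbatim, forcing $g_i=0$ for all $i\geq 3$; then $e_{r_i}=e_{r_{i-1}}+e_{r_{j_i}}$, and tracing $e_{r_k}=0$ back through the sequence (using $j_3=1$ in our setup, since $r$ lies on the $r_1$-side of $r_2$) ultimately forces $e_{r_1}=0$ as desired.

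The only remaining possibility is $r\notin[m/(m+1),(m+1)/(m+2))$, which forces $r<1/2$ and $m=1$. If $K$ is the unknot then every surgery on $K$ is a lens space, so the lemma is trivial. Otherwise, the conclusion would require $S^3_{1/2}(K)$ to be an instanton L-space, directly contradicting Lemma~\ref{lem:l-space-one-half}; combined with an analogue of the main case cascade built on a Farey expansion of $r$ passing through $1/2$, this forces the hypothesis itself to be vacuous for non-trivial $K$ in this range.

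The hardest step will be the bookkeeping of the cascading triangles; in particular, verifying that in our new construction the initial index satisfies $j_3=1$ rather than $j_3=0$. This is what guarantees that the $g_i=0$ propagation in the bad subcase terminates at $r_1=m/(m+1)$ rather than at the already-known L-space $r_0=1$, so that the argument extracts new information about $S^3_{m/(m+1)}(K)$ rather than merely restating that $e_1=0$.
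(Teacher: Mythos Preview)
Your approach is different from the paper's and considerably more laborious, though the main case appears sound. The paper avoids re-running any Farey cascade by a change of coordinates: since $S^3_1(K)$ is an instanton L-space (by Proposition~\ref{prop:rational-l-space-surgery}), set $\tilde Y = S^3_1(K)$ and let $\tilde K$ be the core of this surgery. The key identity is $\tilde Y_{a/(b-a)}(\tilde K) = S^3_{a/b}(K)$, so the slope transformation $s \mapsto s/(1-s)$ carries $(0,1)$ onto $(0,\infty)$. Now $\tilde Y$ is a homology-sphere L-space and $\tilde Y_{r/(1-r)}(\tilde K) = S^3_r(K)$ is an L-space with $r/(1-r) > 0$, so Propositions~\ref{prop:rational-l-space-surgery} and~\ref{prop:l-space-triangle} (whose proofs work verbatim with $S^3$ replaced by any homology-sphere L-space) give that $\tilde Y_t(\tilde K)$ is an L-space for all rational $t \geq m$. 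Translating back via $t = c/(d-c) \geq m \Leftrightarrow c/d \geq m/(m+1)$ proves the lemma uniformly for all $r \in (0,1)$ in a few lines.

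By contrast, you rebuild the entire mechanism of Proposition~\ref{prop:rational-l-space-surgery} with the shifted base triple $(1,\,m/(m+1),\,(m+1)/(m+2))$. This can be made to work --- your observation that $j_3=1$ is exactly what is needed for the bad-subcase propagation to land on $r_1$ --- but it duplicates the hardest part of that proposition rather than invoking it. Your treatment of the residual case $r<1/2$ is also not quite right as written: you say the conclusion would force $S^3_{1/2}(K)$ to be an L-space and then invoke Lemma~\ref{lem:l-space-one-half} to declare the hypothesis vacuous, but this is circular unless you first \emph{prove} that the hypothesis implies $e_{1/2}=0$. The cascade you gesture at would need base triple $(1,0,1/2)$, dragging in the twisted group $I^\#(S^3_0(K);\mu)$ and the special $n=0$ handling from Proposition~\ref{prop:rational-l-space-surgery}; once that is carried out you obtain $e_0=0$ or $e_{1/2}=0$, and either way $e_{1/2}=0$ via the triangle $(Y_1,Y_0,Y_{1/2})$ --- at which point the appeal to Lemma~\ref{lem:l-space-one-half} is unnecessary, since you have proved the lemma outright for $m=1$. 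The paper's coordinate change sidesteps this entire case split.
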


\begin{proof}
Write $r = \frac{a}{b}$, where $a$ and $b$ are relatively prime positive integers.  By Proposition~\ref{prop:rational-l-space-surgery}, we know that $\tilde{Y} = S^3_1(K)$ is also an instanton L-space; let $\tilde{K}$ be the core of this surgery.  We observe that $\tilde{Y}$ and $\tilde{Y}_{a/(b-a)}(\tilde{K}) = S^3_{a/b}(K)$ are both instanton L-spaces, and $\frac{a}{b-a} = \frac{r}{1-r} > 0$, so another application of Proposition~\ref{prop:rational-l-space-surgery} says that $\tilde{Y}_m(\tilde{K})$ is an L-space, hence so is $\tilde{Y}_s(\tilde{K})$ for all rational $s \geq m$ by Proposition~\ref{prop:l-space-triangle}.  (Both propositions are only stated for $S^3$, but their proofs still work with $S^3$ replaced by any homology sphere L-space, such as $\tilde{Y}$.)  Now if $s = \frac{c}{d}$ with $c,d > 0$ satisfies $\frac{m}{m+1} \leq s < 1$, then we have $\frac{c}{d-c} \geq m$ and so it follows that $\tilde{Y}_{c/(d-c)}(\tilde{K}) = S^3_{c/d}(K) = S^3_s(K)$ is an L-space, as desired.
\end{proof}

\begin{proposition} \label{prop:no-l-spaces-below-1}
If $K$ is not the unknot and $0 < r < 1$, then $S^3_r(K)$ is not an instanton L-space.
\end{proposition}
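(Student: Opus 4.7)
The plan is to argue by contradiction. Suppose $K$ is not the unknot and $S^3_r(K)$ is an instanton L-space with $0 < r < 1$. By Proposition~\ref{prop:l-space-genus-2}, we must have $g(K) \leq 1$, and since $K$ is nontrivial this forces $g(K) = 1$. Writing $r = a/b$ in lowest terms, my approach is strong induction on the numerator $a$, where the goal at every step is to produce $S^3_{a'/b'}(K)$ as an L-space for some smaller numerator $a' < a$, ultimately arriving at $S^3_{1/2}(K)$ and contradicting Lemma~\ref{lem:l-space-one-half}.

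The base case $a=1$, and more generally any $r \leq 1/2$, is immediate: the quantity $m = \max(\lfloor r/(1-r)\rfloor, 1)$ in Lemma~\ref{lem:surgeries-below-1} equals $1$, so that lemma gives $S^3_{1/2}(K)$ as an L-space, contradicting Lemma~\ref{lem:l-space-one-half}. For the inductive step, assume $a \geq 2$ and $r > 1/2$, and set $d = b-a$, so that $1 \leq d < a$ and $m = \lfloor a/d\rfloor \geq 1$. If $d \geq 2$ then $m \leq a/d < a$, and Lemma~\ref{lem:surgeries-below-1} produces $S^3_{m/(m+1)}(K)$ as an L-space with numerator $m < a$, contradicting the inductive hypothesis.

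The only remaining case is $d = 1$, i.e., $r = a/(a+1)$ with $a \geq 2$, where Lemma~\ref{lem:surgeries-below-1} returns the range $[a/(a+1), 1]$ and provides no L-space slope with smaller numerator. My plan here is to adapt the cable-surgery argument from the second half of the proof of Lemma~\ref{lem:l-space-one-half}: form $C = C_{a,a+1}(K)$ and apply Gordon's identity $S^3_{a(a+1)}(C) \cong S^3_{a/(a+1)}(K)\#L(a+1,a)$ together with the K\"unneth formula to conclude that $S^3_{a(a+1)}(C)$ is an instanton L-space. Then invoke the surgery exact triangle for $C$, use Scaduto's identification of $I^\#$ with an eigenspace of $u^2-64$ on $I_*(S^3_0(C))_w$, and relate this via Floer's exact triangle to $\chi(HF(S^3_1(C))) = \Delta_C''(1)$ to obtain an upper bound on $|\Delta_C''(1)|$. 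Since $S^3_1(K)$ is an L-space by Proposition~\ref{prop:rational-l-space-surgery}, the first part of the proof of Lemma~\ref{lem:l-space-one-half} pins down $\Delta_K(t) = \pm(t-1+t^{-1})$, and combined with $\Delta_C(t) = \Delta_{T_{a,a+1}}(t)\cdot\Delta_K(t^{a+1})$ this should give a lower bound on $|\Delta_C''(1)|$ that contradicts the upper bound.

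The main obstacle is precisely this last case. In the $g(C)=2$ setting of Lemma~\ref{lem:l-space-one-half}, the argument relied on Fr{\o}yshov's observation that $(u^2-64) \equiv 0$ on $I_*(S^3_0(C))_w$, which fails for the higher-genus cables $C_{a,a+1}(K)$ arising here. A successful argument will therefore need either a refined upper bound on $\dim\ker(u^2-64)$ in higher genus or a cleverer choice of cable whose genus can be kept small while still producing a usable connected sum decomposition.
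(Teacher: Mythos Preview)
Your reduction to the slopes $n/(n+1)$ is correct and matches the paper: Lemma~\ref{lem:surgeries-below-1} indeed reduces the general case to showing that no $S^3_{n/(n+1)}(K)$ is an instanton L-space for $n\geq 1$, with the base case $n=1$ handled by Lemma~\ref{lem:l-space-one-half}. Your treatment of the case $d\geq 2$ is fine.

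The genuine gap is exactly where you flag it: the case $r=a/(a+1)$ with $a\geq 2$. Your proposed Alexander-polynomial argument via $C_{a,a+1}(K)$ cannot be salvaged, because the identification $I^\#(S^3_0(C);\mu)\cong I_*(S^3_0(C))_w$ used in Lemma~\ref{lem:l-space-one-half} depended on Fr{\o}yshov's computation $n_1=n_2=1$, and for $a\geq 2$ the cable has genus $\geq 4$, where no such bound on $n_g$ is available.

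The paper avoids this entirely by reversing the direction of the induction and working only with ranks of $I^\#$. Set $C=C_{n,n+1}(K)$. In addition to the Gordon identity you use, there is a second one,
\[
S^3_{n(n+1)+1}(C)\;\cong\;S^3_{(n^2+n+1)/(n+1)^2}(K),
\]
and one checks that $\frac{n^2+n+1}{(n+1)^2}>\frac{n+1}{n+2}$. Now assume inductively that $S^3_{n/(n+1)}(K)$ is \emph{not} an L-space, so $\rank I^\#(S^3_{n/(n+1)}(K))\geq n+2$. By the first Gordon identity and K\"unneth, $\rank I^\#(S^3_{n(n+1)}(C))\geq (n+1)(n+2)$; the surgery triangle then gives $\rank I^\#(S^3_{n(n+1)+1}(C))\geq n^2+3n+1>n^2+n+1$, so $S^3_{(n^2+n+1)/(n+1)^2}(K)$ is not an L-space. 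But if $S^3_{(n+1)/(n+2)}(K)$ were an L-space, Lemma~\ref{lem:surgeries-below-1} would force every slope in $[\frac{n+1}{n+2},1]$ to be an L-space slope, contradicting what we just proved. This propagates the non-L-space property from $n/(n+1)$ to $(n+1)/(n+2)$ with no appeal to $u^2-64$ or $\Delta_C$. (The same ingredients can be rearranged into your downward induction: assuming $(n{+}1)/(n{+}2)$ is an L-space slope, the two Gordon identities plus K\"unneth and the surgery triangle force $\rank I^\#(S^3_{n/(n+1)}(K))\leq n+\frac{2}{n+1}$, hence $=n$ by parity.)
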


\begin{proof}
According to Lemma~\ref{lem:surgeries-below-1}, if some $r$-surgery is an instanton L-space and $0 < r < 1$, then so is $\frac{n}{n+1}$-surgery for some integer $n \geq 1$.  It therefore suffices to show that $\frac{n}{n+1}$-surgery on $K$ does not produce an L-space for any integer $n \geq 1$.  The case $n=1$ is Lemma~\ref{lem:l-space-one-half}, so we will proceed by induction.

If $S^3_{n/(n+1)}(K)$ is not an L-space for some $n \geq 1$, then $I^\#(S^3_{n/(n+1)}(K))$ has rank at least $n+2$ since its Euler characteristic is $n$.  If $C = C_{n,n+1}(K)$, then
\[ S^3_{n(n+1)}(C) \cong S^3_{n/(n+1)}(K) \# L(n+1,n) \]
by \cite{gordon}, and $I^\#(L(n+1,n)) \cong \Z^{n+1}$, so the K\"unneth formula says that $I^\#(S^3_{n^2+n}(C))$ has rank at least $(n+2)(n+1)$.  The surgery exact triangle 
\[ \dots \to I^\#(S^3) \to I^\#(S^3_{n^2+n}(C)) \to I^\#(S^3_{n^2+n+1}(C)) \to \dots \]
then implies that $I^\#(S^3_{n^2+n+1}(C))$ has rank at least $n^2+3n+1$, and $n \geq 1$, so $S^3_{n^2+n+1}(C)$ is not an L-space either.  But then we also know from \cite{gordon} that
\[ S^3_{n(n+1)+1}(C) \cong S^3_{(n(n+1)+1)/(n+1)^2}(K), \]
so $\frac{n^2+n+1}{(n+1)^2}$-surgery on $K$ does not produce an L-space.  We note that
\[ \frac{n^2+n+1}{(n+1)^2} = 1 - \frac{n}{(n+1)^2} > 1 - \frac{1}{n+2} = \frac{n+1}{n+2}, \]
and Lemma~\ref{lem:surgeries-below-1} says that if $\frac{n+1}{n+2}$-surgery on $K$ produces an instanton L-space then so does $s$-surgery whenever $\frac{n+1}{n+2} \leq s \leq 1$, so it follows that $S^3_{(n+1)/(n+2)}(K)$ cannot be an L-space either and we are done.
\end{proof}

We finish by combining Propositions~\ref{prop:l-space-triangle}, \ref{prop:rational-l-space-surgery}, \ref{prop:l-space-genus-2}, and \ref{prop:no-l-spaces-below-1} into a single result.

\begin{theorem} \label{thm:l-space-slopes}
Suppose that $K \subset S^3$ is not the unknot and that $S^3_r(K)$ is an instanton L-space for some rational $r > 0$.  Then $r \geq 1$ and $S^3_s(K)$ is an instanton L-space for every rational $s \geq \lfloor r \rfloor$.  Moreover, if $r < 2$ then $K$ has genus 1 and the same Alexander polynomial as either the trefoil or the figure eight.
\end{theorem}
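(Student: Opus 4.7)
The plan is to assemble the theorem by combining the four propositions cited in essentially the order they were proved, and then extracting the Alexander polynomial constraint from the proof of Lemma \ref{lem:l-space-one-half}.

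First, I would dispose of the slope bound $r \geq 1$ by directly quoting Proposition \ref{prop:no-l-spaces-below-1}: if $K$ is nontrivial and $0 < r < 1$, then $S^3_r(K)$ cannot be an instanton L-space, so our hypothesis forces $r \geq 1$. Second, applying Proposition \ref{prop:rational-l-space-surgery} to our L-space slope $r \geq 1$ yields that $S^3_{\lfloor r \rfloor}(K)$ is also an instanton L-space (this is the content of the proposition, since $m = \max(\lfloor r \rfloor,1) = \lfloor r\rfloor$ when $r \geq 1$). Third, Proposition \ref{prop:l-space-triangle} then propagates the L-space property from the integer slope $n = \lfloor r \rfloor$ to every rational $s \geq n$, completing the first two assertions of the theorem.

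For the final assertion, suppose $1 \leq r < 2$. By Proposition \ref{prop:l-space-genus-2}, if the Seifert genus $g(K)$ were at least $2$, no slope in $(0,2)$ could produce an instanton L-space; since $K$ is nontrivial, we conclude $g(K) = 1$. To pin down the Alexander polynomial, I would invoke the argument embedded in the proof of Lemma \ref{lem:l-space-one-half}: knowing $S^3_1(K)$ is an L-space (which we have, since $S^3_{\lfloor r\rfloor}(K) = S^3_1(K)$ is an L-space by step two above), the proof there shows that $\ker(u^2-64) = I_\ast(S^3_0(K))_w$ has rank exactly $2$, forcing the Floer homology $HF(S^3_1(K))$ to be supported in gradings congruent mod $4$. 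Computing Euler characteristics against the Casson invariant then gives $\Delta''_K(1) = \pm 2$, and combined with $g(K) = 1$ and $\Delta_K(1) = 1$ this forces $\Delta_K(t) = at - (2a-1) + at^{-1}$ with $a = \pm 1$; these are precisely the Alexander polynomials of the trefoil ($a=1$) and the figure eight ($a=-1$).

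The main obstacle here is really just organizational: each ingredient is already in place, and the only nontrivial synthesis is noticing that the Alexander polynomial computation in the proof of Lemma \ref{lem:l-space-one-half} applies verbatim as soon as we know $S^3_1(K)$ is an instanton L-space and $g(K) = 1$. The one thing to be a little careful about is the range of $r$: Proposition \ref{prop:l-space-genus-2} excludes L-spaces at slopes in $(0,2)$ for $g \geq 2$, so the conclusion $g(K) = 1$ is valid for any L-space slope $r$ with $r < 2$, not merely $r = 1$; and we use the induced L-space at slope $1$ (obtained via Proposition \ref{prop:rational-l-space-surgery}) as input to the Alexander polynomial argument, rather than the original slope $r$ itself.
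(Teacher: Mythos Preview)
Your proposal is correct and follows essentially the same approach as the paper: the paper explicitly frames the theorem as a combination of Propositions~\ref{prop:l-space-triangle}, \ref{prop:rational-l-space-surgery}, \ref{prop:l-space-genus-2}, and \ref{prop:no-l-spaces-below-1}, and separately notes (just before the theorem) that the Alexander polynomial constraint is a consequence of the proof of Lemma~\ref{lem:l-space-one-half} once $S^3_1(K)$ is known to be an instanton L-space. Your assembly of these ingredients, including the observation that $\lfloor r\rfloor = 1$ when $1 \leq r < 2$ so that the Lemma~\ref{lem:l-space-one-half} argument applies, matches the intended logic precisely.
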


We remark that if $K$ is either the left-handed trefoil or the figure eight, then $S^3_1(K)$ is not actually an instanton L-space: in each case it is a Seifert fibered homology sphere other than the Poincar\'e sphere, so this will follow from Corollary~\ref{cor:sfs-l-space}.

%
%
%
%

\section{Examples} \label{sec:examples}

In this section, we use Theorem~\ref{thm:main} to deduce the existence of nontrivial $SU(2)$ representations for integer homology spheres in a variety of examples, and similarly we use Corollaries~\ref{cor:rank-i-sharp} and \ref{cor:cycfinitesurg} to certify that many rational homology spheres have irreducible $SU(2)$ representations.


\subsection{Seifert fibered homology spheres}

In this subsection, we use Theorem~\ref{thm:main} to give a quick proof of the following, originally due to Fintushel-Stern \cite{fs-seifert}.
\begin{theorem}
\label{thm:sfs-nontrivial}
If $Y$ is a nontrivial Seifert fibered homology sphere, then there is a nontrivial representation $\pi_1(Y) \to SU(2)$.
\end{theorem}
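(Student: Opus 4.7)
The plan is to apply Theorem \ref{thm:main} after exhibiting every nontrivial Seifert fibered homology sphere as the boundary of a Stein domain that is not an integer homology ball.

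First I would recall the classification: any Seifert fibered integer homology sphere is of the form $\Sigma(a_1,\ldots,a_n)$ with the $a_i\geq 2$ pairwise coprime, and is diffeomorphic to $S^3$ precisely when $n\leq 2$. So a nontrivial $Y$ of this form always has $n\geq 3$. By a classical theorem of Brieskorn, such $Y$ is orientation-preservingly diffeomorphic to the link of an isolated complete intersection singularity in $\mathbb{C}^n$ cut out by a generic system of Brieskorn-type polynomials; in the special case $n=3$ this is simply the hypersurface singularity $\{z_1^{a_1}+z_2^{a_2}+z_3^{a_3}=0\}$.

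Next I would invoke Milnor's theorem (extended to complete intersections by Hamm): the Milnor fiber $F$ of such a singularity, obtained by intersecting a small ball around the singular point with a nearby smooth fiber, is a smooth affine complex variety, hence inherits a strictly plurisubharmonic exhaustion and is a Stein domain with oriented boundary $\partial F = Y$. Moreover $F$ has the homotopy type of a wedge of $2$-spheres whose number equals the Milnor number of the singularity, and this number is strictly positive for every nontrivial $\Sigma(a_1,\ldots,a_n)$; for instance when $n=3$ it equals $(a_1-1)(a_2-1)(a_3-1)>0$. In particular $b_2(F)>0$, so $F$ is not an integer homology ball.

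Applying Theorem \ref{thm:main} to the Stein filling $F$ of $Y$ then produces a nontrivial homomorphism $\pi_1(Y)\to SU(2)$, which is what we want. The only non-routine input is the identification of $Y$ with the link of a Brieskorn complete intersection singularity, a well-known but nontrivial piece of algebraic geometry; once this is granted, the rest of the argument is an immediate application of Theorem \ref{thm:main} together with Milnor's computation of $b_2(F)$.
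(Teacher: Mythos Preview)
Your argument is correct and reaches the same conclusion via a genuinely different construction of the Stein filling. The paper builds the filling by hand: it writes the Seifert fibered homology sphere as $M(e;r_1,\dots,r_k)$, expands the $-1/r_i$ in continued fractions, and realizes the resulting star-shaped plumbing diagram as a Legendrian link on which Legendrian surgery gives a Stein domain with $b_2>0$; when $e\ge -1$ it reverses orientation first. You instead invoke the identification of $\Sigma(a_1,\dots,a_n)$ with the link of a Brieskorn complete intersection singularity and use the Milnor fiber as the Stein filling, with Hamm's extension of Milnor's theorem giving $b_2>0$.

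Both arguments feed the same input into Theorem~\ref{thm:main}, so they are equally valid here. The paper's hands-on approach is more self-contained and has the advantage that the explicit Legendrian diagrams let one read off rotation numbers and hence $c_1(J)$ via Theorem~\ref{thm:gompf}; this is exactly what the paper exploits immediately afterward to prove Corollary~\ref{cor:sfs-l-space} (the classification of Seifert fibered instanton L-spaces). Your Milnor-fiber approach is more conceptual and ties the result to classical singularity theory, but it imports the nontrivial Brieskorn--Neumann--Hamm package and does not immediately yield the Chern class information needed for the L-space corollary. One minor point worth making explicit: your classification statement pins down $Y$ only up to orientation, but since $\pi_1(Y)=\pi_1(-Y)$ this is harmless for the conclusion.
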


A Seifert fibered integer homology sphere has base $S^2$, and can be described as
\[ Y = M(e; r_1,r_2,\dots,r_k), \]
where $e$ is an integer and $r_1 \leq r_2 \leq \dots \leq r_k$ all lie in $(0,1) \cap \Q$.  This manifold has a surgery diagram consisting of an $e$-framed unknot and $k$ of its meridians, each of which has surgery coefficient $-\frac{1}{r_i} < -1$.  This is illustrated on the left side of Figure~\ref{fig:sfs-example} for the Brieskorn sphere \[M(-2;\frac{1}{2},\frac{2}{3},\frac{9}{11}) = \Sigma(2,3,11).\] 
It is not hard to verify that \[-Y = M(-e-k; 1-r_k,\dots,1-r_1)\] in general.

\begin{figure}
\labellist
\small
\pinlabel $-2$ at 260 191
\pinlabel $-2$ at 290 157
\pinlabel $-\frac{3}{2}$ at 315 152
\pinlabel $-\frac{11}{9}$ at 340 155
\endlabellist
\centering
\includegraphics{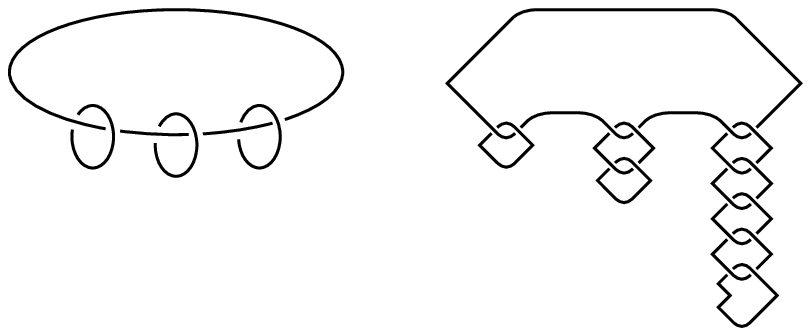}
\caption{A surgery diagram for the Seifert fibered homology sphere $Y=M(-2;\frac{1}{2},\frac{2}{3},\frac{9}{11})$, and a Legendrian link on which Legendrian surgery produces $Y$.  
}
\label{fig:sfs-example}
\end{figure}

\begin{proposition} \label{prop:sfs-fillings}
Let $Y$ be a Seifert fibered integer homology sphere.  If $Y$ is not $S^3$, then some contact structure on either $Y$ or $-Y$ admits a Stein filling $(W,J)$ with $b_2(W) > 0$.
\end{proposition}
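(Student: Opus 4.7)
The plan is to exhibit a Legendrian link $L$ in $(S^3,\xi_{\mathrm{std}})$, with at least one component, on which Legendrian surgery produces either $Y$ or $-Y$; Eliashberg's theorem (see Subsection \ref{ssec:steinlef}) then provides a Stein domain $W$ of the appropriate boundary with $b_2(W)=|L|>0$. The strategy has three steps: choose an orientation whose central framing is at most $-2$, convert to an integer surgery diagram via continued fractions, and then realize the resulting diagram by Legendrian unknots.

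Write $Y=M(e;r_1,\dots,r_k)$ with $r_i\in(0,1)$. The nontriviality of $Y$ forces $k\geq 3$, since Seifert fibered homology spheres with at most two singular fibers are $S^3$. The central framings of the standard surgery diagrams for $Y$ and $-Y$ are $e$ and $-e-k$, summing to $-k\leq-3$, so at least one of them is $\leq-2$. After possibly replacing $Y$ by $-Y$, I may assume $e\leq-2$. Next, each rational coefficient $-1/r_i<-1$ admits a unique Hirzebruch--Jung continued fraction expansion $-1/r_i=-[a_{i,1},\dots,a_{i,n_i}]$ with every $a_{i,j}\geq 2$, and a standard Kirby move replaces the $i$-th meridian with a chain of unknots carrying integer framings $-a_{i,1},\dots,-a_{i,n_i}$, all $\leq-2$. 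The result is a star-shaped plumbing presentation of $Y$ in which every component is an unknot and every framing is $\leq-2$.

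To promote this to a Legendrian link in $(S^3,\xi_{\mathrm{std}})$, recall that every integer $\leq -1$ occurs as the Thurston-Bennequin number of some Legendrian unknot, obtained by stabilizing the maximal unknot. I would draw each component as a Legendrian unknot with Thurston-Bennequin number one greater than its framing, and arrange the linking pattern as in Figure \ref{fig:sfs-example}: the first unknot of each arm clasps the central unknot once, and each subsequent unknot in an arm clasps only its immediate predecessor. The resulting Legendrian link $L$ has $|L|\geq 1$ components, and Eliashberg's theorem produces a Stein domain $(W,J)$ filling $Y$ with $b_2(W)=|L|>0$, as required.

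The one point that takes some care is verifying that the clasps producing the required linking numbers can always be introduced without altering the Thurston-Bennequin number of any individual component. This is automatic because $tb$ is determined by the self-writhe and cusps of a single component, whereas clasps between distinct components contribute only to inter-component linking numbers; in particular, no stabilization of any component is forced on us by the star-shaped linking pattern, so the prescribed $tb$ values remain achievable.
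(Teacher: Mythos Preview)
Your proof is correct and follows essentially the same approach as the paper's: reduce to the case $e\leq -2$ by passing to $-Y$ if necessary (using $k\geq 3$), expand the rational framings via continued fractions to get a star-shaped integral surgery diagram with all framings $\leq -2$, and realize it by Legendrian unknots so that Legendrian surgery yields the desired Stein filling. The paper handles the case split slightly differently (treating $e\leq -2$ first and then invoking $k\geq 3$ only when $e\geq -1$), but the argument is otherwise identical, and your extra paragraph justifying that the clasps do not disturb the Thurston--Bennequin numbers is a welcome bit of care.
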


\begin{proof}
Let $Y= M(e; r_1,\dots,r_k)$.  We write
\[ -\frac{1}{r_i} = a_{i,1} - \frac{1}{a_{i,2} - \frac{1}{\ldots - \frac{1}{a_{i,n_i}}}} \]
for some integers $a_{i,j} \leq -2$, and then replace each $-\frac{1}{r_i}$-framed meridian with a chain of $n_i$ unknots with framings $a_{i,j}$.  If $e \leq -2$ as well then the corresponding star-shaped diagram can be realized as a Legendrian link where every component $K$ has framing $tb(K)-1$, as shown on the right side of Figure~\ref{fig:sfs-example}, so attaching Weinstein 2-handles to $S^3 = \partial B^4$ along this link produces a Stein domain $(W,J)$ bounded by $Y$.  Since there are no 1-handles, we have \[b_2(W) = 1 + \sum_i n_i > 0\] and we are done.  On the other hand, if $e \geq -1$ then we note that $k \geq 3$ since $Y$ is not a lens space, and so $-e-k \leq -2$.  Applying the same argument to \[-Y = M(-e-k;1-r_k,\dots,1-r_1)\] then produces a Stein domain with boundary $-Y$ and $b_2$ positive, as desired.
\end{proof}

\begin{proof}[Proof of Theorem \ref{thm:sfs-nontrivial}]
Observe that Theorem \ref{thm:sfs-nontrivial} follows immediately from Theorem \ref{thm:main} together with Proposition \ref{prop:sfs-fillings}.
\end{proof}

Suppose $Y$ is a homology sphere as in the proof of Proposition \ref{prop:sfs-fillings} above, with $e<-2$ or some $a_{i,j} < -2$.  Then $Y$ is the boundary of a Stein domain $(W,J)$ given by Weinstein 2-handle attachment along a Legendrian link in which one of the components has positive rotation number. But  in this case,  we have that $c_1(J) \neq 0$ by Theorem \ref{thm:gompf}. It then follows, as noted in Remark~\ref{rem:homology-sphere-l-space}, that $Y$ is not an instanton L-space.  Now, we can determine when $Y$ is a homology sphere using the formula
\[ |H_1(Y)| = p_1p_2\dots p_k\left|e + \sum_{i=1}^k \frac{q_i}{p_i}\right| = p_1p_2\dots p_k\left|e+\sum_{i=1}^k r_i\right|, \]
where $-\frac{1}{r_i} = -\frac{p_i}{q_i}$ and $p_i > q_i \geq 1$ are relatively prime integers.  And it is not hard to verify that the only such nontrivial homology spheres for which the above procedure does not give a Stein filling of either $Y$ or $-Y$ with $c_1 \neq 0$ are $M(-2;\frac{1}{2},\frac{2}{3},\frac{4}{5})$ and $M(-2;\frac{1}{2},\frac{2}{3},\frac{6}{7})$.  These are $\Sigma(2,3,5)$ and $\Sigma(2,3,7)$ up to orientation reversal, but $-\Sigma(2,3,7)$ also results from Legendrian surgery on a right-handed trefoil with $tb=0$ and rotation number $\pm 1$.  This and the fact \cite{scaduto} that $\Sigma(2,3,5)$ is an instanton L-space then allows us to conclude the following, which may be of independent interest. 

\begin{corollary} \label{cor:sfs-l-space}
Let $Y$ be a nontrivial Seifert fibered integer homology sphere.  Then $Y$ is an instanton L-space if and only if $Y = \pm \Sigma(2,3,5)$. \qed
\end{corollary}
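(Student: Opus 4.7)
The plan is to handle the two directions of the biconditional separately.

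For the ``if'' direction, I would appeal to Scaduto's theorem, cited at the beginning of Subsection~\ref{ssec:lspaces}, that the branched double cover of any quasi-alternating link is an instanton L-space. Since $\Sigma(2,3,5)$ is the branched double cover of the alternating $(2,5)$-torus knot, it is an instanton L-space; the orientation symmetry $I^\#(Y;\Q)\cong I^\#(-Y;\Q)$ recorded just before Corollary~\ref{cor:rank-i-sharp}, combined with $|H_1(Y)|=|H_1(-Y)|$, then shows that the instanton L-space property is preserved under orientation reversal, so $-\Sigma(2,3,5)$ is also an instanton L-space.

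For the ``only if'' direction, let $Y=M(e;r_1,\dots,r_k)$ be a nontrivial Seifert fibered homology sphere with $Y\not\cong\pm\Sigma(2,3,5)$. Using the orientation symmetry just mentioned, I may replace $Y$ with $-Y$ if necessary and assume $e\leq -2$, as in the proof of Proposition~\ref{prop:sfs-fillings}. Expanding each $-1/r_i$ as a negative continued fraction $[a_{i,1},\dots,a_{i,n_i}]$ with $a_{i,j}\leq -2$ and realizing the resulting star-shaped plumbing as Legendrian surgery on a link $L\subset S^3$ with each component $K$ of framing $tb(K)-1$, Theorem~\ref{thm:gompf} tells us that the Stein filling $(W,J)$ has $c_1(J)\neq 0$ provided at least one component of $L$ admits a Legendrian representative with nonzero rotation number. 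Since a Legendrian unknot with framing $n\leq -3$ (i.e.\ $tb\leq -2$) has such a representative (e.g.\ with $r=\pm 1$), but framing $-2$ forces $r=0$ by the Bennequin inequality, the construction yields $c_1(J)\neq 0$ unless $e=-2$ and every $a_{i,j}=-2$. In all other cases Remark~\ref{rem:homology-sphere-l-space}, applied to $Y$ or $-Y$, implies that $Y$ is not an instanton L-space.

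It remains to analyze the exceptional case $e=-2$, $a_{i,j}=-2$, in which $r_i=(p_i-1)/p_i$ with $p_i=n_i+1\geq 2$ and $Y$ is a Brieskorn homology sphere $\pm\Sigma(p_1,\dots,p_k)$. The homology-sphere condition reduces to $\bigl|p_1\cdots p_k -\sum_i\prod_{j\neq i}p_j\bigr|=1$, and a short case analysis (using $k\geq 3$ since $Y$ is not a lens space, and ruling out $k\geq 4$ by direct estimation on pairwise coprime tuples $p_i\geq 2$) leaves only $(p_1,p_2,p_3)\in\{(2,3,5),(2,3,7)\}$. We have excluded $Y=\pm\Sigma(2,3,5)$ by hypothesis, so it remains to treat $Y=\pm\Sigma(2,3,7)$. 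For this I would use the alternative description of $-\Sigma(2,3,7)$ as Legendrian surgery on the right-handed trefoil $T$: since $\maxtb(T)=1$, stabilizing once produces a Legendrian representative with $tb(T)=0$ and $r(T)=\pm 1$, and the resulting Stein filling has $c_1(J)\neq 0$ by Theorem~\ref{thm:gompf}; Remark~\ref{rem:homology-sphere-l-space} together with the orientation symmetry then shows that neither $\Sigma(2,3,7)$ nor $-\Sigma(2,3,7)$ is an instanton L-space. The main obstacle is really just the bookkeeping of orientation conventions for the Brieskorn spheres and verifying the classical case analysis giving $(2,3,5)$ and $(2,3,7)$ as the only exceptional triples.
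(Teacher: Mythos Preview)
Your approach is essentially the same as the paper's: reduce to $e\leq -2$, observe that any component of framing $\leq -3$ gives a Legendrian representative with nonzero rotation and hence $c_1(J)\neq 0$, classify the all-$(-2)$ exceptions as $\Sigma(2,3,5)$ and $\Sigma(2,3,7)$, and dispatch $\Sigma(2,3,7)$ via Legendrian surgery on the right-handed trefoil. The paper phrases the reduction slightly differently (it checks both orientations of $Y$ simultaneously, which for $k\geq 4$ immediately produces a framing $\leq -3$ on the other side and so avoids your ``direct estimation''), but the content is the same.

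There is one genuine slip in your ``if'' direction: the branched double cover of the $(2,5)$-torus knot is the lens space $L(5,4)$, not the Poincar\'e sphere. More generally the double branched cover of $T(2,n)$ is a lens space, while $\Sigma(2,3,5)$ arises as the double branched cover of $T(3,5)$, which has determinant $1$ and is therefore \emph{not} quasi-alternating. So you cannot obtain $\Sigma(2,3,5)$ from Scaduto's quasi-alternating result in the way you describe. The fix is simply to cite Scaduto directly for the fact that $\Sigma(2,3,5)$ is an instanton L-space, as the paper does; Scaduto establishes this separately (via the computation of $I^\#$ for the Poincar\'e sphere), not as an instance of the quasi-alternating theorem.
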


In particular, this agrees with the classification of Heegaard Floer L-spaces among nontrivial Seifert fibered integer homology spheres.


\subsection{Hyperbolic manifolds which are not branched double covers}

In this subsection we construct infinitely many examples for which we can use Stein fillings (via Theorem \ref{thm:main}), but not other available results, to show the existence of nontrivial $SU(2)$ representations. In particular, we prove Theorem \ref{thm:intro-hyperbolic-examples}, restated below for convenience.

{
\renewcommand{\thetheorem}{\ref{thm:intro-hyperbolic-examples}}
\begin{theorem} 
There are infinitely many hyperbolic integer homology spheres $Y_n$ such that
\begin{itemize}
\item $Y_n$ has Casson invariant zero;
\item $Y_n$ is not a branched double cover of a knot in $S^3$;
\item $Y_n$ bounds a negative definite Stein manifold which is not a homology ball.
\end{itemize}
The last property implies there is a nontrivial homomorphism $\pi_1(Y_n) \to SU(2)$ by Theorem \ref{thm:main}.\end{theorem}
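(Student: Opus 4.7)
The plan is to exhibit an explicit infinite family of knots $K_n \subset S^3$ such that $Y_n := S^3_{-1}(K_n)$ satisfies all three bulleted properties, and then to apply Theorem \ref{thm:main} to conclude. I would choose $K_n$ so that (i) $\maxtb(K_n) \geq 0$, so that after stabilization some Legendrian representative has $tb = 0$; (ii) $\Delta''_{K_n}(1) = 0$; and (iii) $K_n$ is hyperbolic and $-1$ is a non-exceptional slope in the sense of Thurston. A natural candidate is a family of twisted Whitehead doubles of a fixed strongly quasipositive knot (these have trivial Alexander polynomial and, by Hedden's theorem, $\maxtb = 1$), or alternatively a suitable family of pretzel or two-bridge knots for which the Alexander and Legendrian conditions can be verified directly.

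Given such a family, Legendrian surgery along a Legendrian representative of $K_n$ with $tb = 0$ attaches a single $(-1)$-framed $2$-handle to $B^4$, producing a Stein domain $(W_n,J_n)$ of intersection form $\langle -1 \rangle$ with $\partial W_n = Y_n$; in particular $W_n$ is negative definite with $b_2 = 1$ and is not an integer homology ball, which both verifies the third bulleted property and, via Theorem \ref{thm:main}, supplies the desired nontrivial representation $\pi_1(Y_n)\to SU(2)$. Walker's surgery formula $\lambda(S^3_{-1}(K)) = -\tfrac{1}{2}\Delta_K''(1)$ combined with (ii) gives $\lambda(Y_n) = 0$, verifying the first bullet. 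Hyperbolicity of $Y_n$ follows from Thurston's hyperbolic Dehn surgery theorem applied to the hyperbolic exterior of $K_n$; for concrete $K_n$ this can be certified by rigorous SnapPy computation.

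The principal obstacle is the second bulleted property: showing $Y_n$ is not the double branched cover of any knot in $S^3$. If it were, there would be an orientation-preserving involution $\tau: Y_n \to Y_n$ with quotient $S^3$ and one-dimensional fixed-point set, and Mostow rigidity would force $\tau$ to be an isometry of the hyperbolic metric on $Y_n$. I would rule this out by computing $\mathrm{Isom}(Y_n)$ explicitly (again, rigorously via SnapPy) and verifying that no order-two isometry has $S^3$ quotient; this can be arranged by choosing $K_n$ whose exteriors are already sufficiently asymmetric so that the surgery $Y_n$ inherits a small, controllable isometry group. Finally, infinitely many genuinely distinct $Y_n$ are extracted by distinguishing them via their hyperbolic volumes, which take pairwise different values for infinitely many indices provided the family $\{K_n\}$ has suitably varying topology.
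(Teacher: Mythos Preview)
Your outline has the right shape for the first and third bullets, but the second bullet---ruling out the branched double cover structure---is where the argument breaks, and it breaks for both of your candidate families.

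First, your alternative suggestion of pretzel or two-bridge knots cannot work: these knots are strongly invertible, and by the Montesinos trick \emph{every} Dehn surgery on a strongly invertible knot is a branched double cover of $S^3$. So $S^3_{-1}(K_n)$ would automatically be a branched double cover, directly contradicting what you need. Second, twisted Whitehead doubles are satellite knots, so their complements contain an essential companion torus and are not hyperbolic; your appeal to Thurston's hyperbolic Dehn surgery theorem therefore does not apply, and in fact the companion torus typically remains incompressible after $-1$-surgery, so the resulting $Y_n$ is toroidal rather than hyperbolic.

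There is also a structural problem with using a sequence of genuinely different knots $K_n$: to certify that infinitely many closed hyperbolic $Y_n$ have no orientation-preserving involution with $S^3$ quotient, you would need to control $\mathrm{Isom}(Y_n)$ for all large $n$ with only finitely much computation. Your proposal to ``compute $\mathrm{Isom}(Y_n)$ explicitly via SnapPy'' cannot be carried out infinitely often, and the vague assertion that asymmetry of the $K_n$ exteriors is ``inherited'' by the fillings needs a precise theorem. The paper handles exactly this by working with a \emph{single} two-component hyperbolic link: after one Dehn filling, a single rigorous SnapPy computation shows the resulting one-cusped manifold has trivial isometry group, and then the exceptional symmetry theorem of Auckly guarantees that all but finitely many further fillings inherit trivial isometry group, hence cannot be branched double covers. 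Your approach would need an analogous mechanism, and neither of your proposed families supplies one.
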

\addtocounter{theorem}{-1}
}

Figure~\ref{fig:link-821-11a20} shows a Legendrian link $L$ whose components are knots of type $\overline{8_{21}}$ and $\overline{11a_{20}}$.  Using SnapPy \cite{SnapPy} within Sage \cite{sage}, we can verify that $L$ is a hyperbolic link:
\begin{verbatim}
sage: import snappy
sage: K0 = '(70,-58,-44,22,24,16,36,20,10,14,18,8,6,-68,-48,64,56,42,12)'
sage: K1 = '(-34,52,-38,26,-60,-32,40,62,-50,-2,28,66,54,30,46,4)'
sage: M = snappy.Manifold('DT:[%s,%s]'%(K0,K1))
sage: V = M.verify_hyperbolicity(bits_prec=100); V[0]
True
\end{verbatim}

\begin{figure}
\labellist
\small
\pinlabel $-\frac{1}{m}$ at 250 453
\pinlabel $-\frac{1}{n}$ at 240 328
\endlabellist
\centering
\includegraphics[scale=0.75]{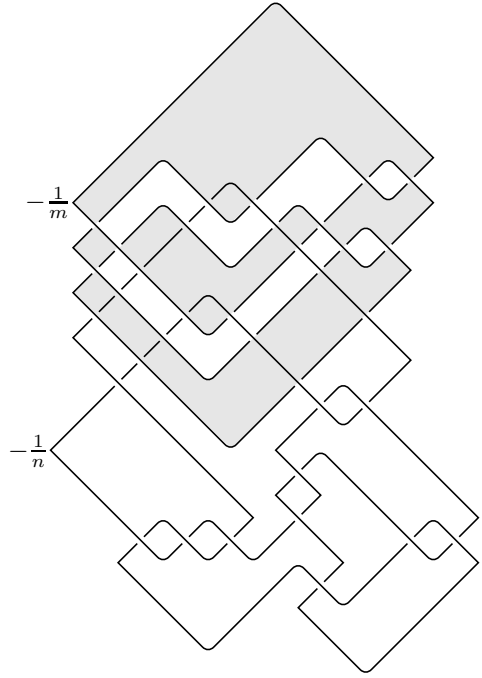}
\caption{A hyperbolic two-component link $L \subset S^3$, with a Seifert surface for the $\overline{8_{21}}$ component shaded to make the components easier to distinguish.  The closed $3$-manifold $Y(m,n)$ is constructed by Dehn surgery on $L$ with the indicated framings.}
\label{fig:link-821-11a20}
\end{figure}

Since the components of $L$ have linking number zero, we can perform $-\frac{1}{m}$-surgery on the $\overline{8_{21}}$ component and $-\frac{1}{n}$-surgery on the $\overline{11a_{20}}$ component to get an integer homology sphere $Y(m,n)$.  The Gromov-Thurston $2\pi$-theorem \cite{bleiler-hodgson} says that there is some finite list of slopes for each component such that any Dehn surgery on $L$ which avoids these slopes is hyperbolic, so there is an integer $N \geq 1$ such that $Y(m,n)$ is hyperbolic whenever $m,n \geq N$.

\begin{proposition} \label{prop:hyperbolic-casson}
The manifold $Y(m,n)$ has Casson invariant zero for all $m,n \geq 1$.
\end{proposition}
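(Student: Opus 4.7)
The strategy is to compute $\lambda(Y(m,n))$ via an iterated Casson surgery formula. Since $\operatorname{lk}(L_1,L_2)=0$, where $L_1 = \overline{8_{21}}$ and $L_2 = \overline{11a_{20}}$, the link $L$ is algebraically split. Concretely, I would first perform $-\tfrac{1}{m}$-surgery on $L_1$ to obtain an integer homology sphere $Z_m = S^3_{-1/m}(L_1)$; by Casson's surgery formula $\lambda(Z_m) = -m\, a_2(L_1)$, where $a_2(K) = \tfrac{1}{2}\Delta_K''(1)$ denotes the second Conway coefficient. Then I would apply Walker's extension of Casson's formula to the knot $K_2^{(m)} \subset Z_m$ obtained from $L_2$, using the Torres formula to rewrite the Alexander polynomial of $K_2^{(m)}$ in terms of the multivariable Alexander polynomial $\Delta_L(t_1,t_2)$. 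Collecting the contributions recovers Hoste's surgery formula for two-component algebraically split links:
\[
\lambda(Y(m,n)) \;=\; -m\, a_2(L_1) \;-\; n\, a_2(L_2) \;+\; mn\cdot \beta(L_1,L_2),
\]
where $\beta(L_1,L_2)$ is the Sato--Levine invariant.

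This reduces the proposition to three vanishing statements. I would first verify $a_2(L_1) = 0$ and $a_2(L_2) = 0$ by computing $\Delta_{8_{21}}''(1)$ and $\Delta_{11a_{20}}''(1)$ from standard knot tables, or directly from the diagram via a Seifert matrix. For the cross-term, I would compute $\beta(L_1,L_2)$ either by extracting the appropriate coefficient of $\Delta_L(t_1,t_2)$, or geometrically from the definition: since $\operatorname{lk}(L_1,L_2)=0$, one may choose Seifert surfaces $F_i$ for $L_i$ with $F_i \cap L_{3-i} = \emptyset$, and $\beta(L_1,L_2)$ is the self-linking number in $S^3$ of the closed curve $F_1 \cap F_2$. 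The shaded surface already drawn in Figure~\ref{fig:link-821-11a20} provides a candidate $F_1$, so the remaining task is to exhibit a compatible $F_2 \subset S^3 \setminus L_1$ and tabulate the resulting intersection.

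The main obstacle will be this explicit calculation of $\beta(L_1,L_2)$ from the given diagram, as constructing an $F_2$ avoiding $L_1$ and bookkeeping its intersections with $F_1$ through the many crossings in the picture is the only genuinely non-mechanical step; the Alexander polynomial checks for the individual components are routine. Once each of $a_2(L_1)$, $a_2(L_2)$, and $\beta(L_1,L_2)$ has been shown to vanish, the displayed formula gives $\lambda(Y(m,n)) = 0$ identically in $m$ and $n$, proving the proposition for all $m,n \geq 1$.
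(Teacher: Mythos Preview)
Your proposal follows the same framework as the paper: both invoke Hoste's surgery formula for algebraically split links, reducing the problem to verifying that $a_2(\overline{8_{21}}) = a_2(\overline{11a_{20}}) = 0$ and that the link coefficient (your $\beta$, the paper's $\varphi_1(L)$) vanishes. The knot terms are routine in either treatment.

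The only substantive difference is how the link term is handled. The paper identifies it with the $z^3$-coefficient of the Conway polynomial $\nabla_L(z)$ and evaluates this via a short skein-tree computation: resolving two crossings on the $\overline{8_{21}}$ component expresses $\nabla_L$ as a sum of terms each divisible by $z^5$, so the $z^3$-coefficient is visibly zero. You instead propose the geometric Sato--Levine definition or an extraction from $\Delta_L(t_1,t_2)$. These compute the same invariant up to sign, so your plan is valid, but the skein route is considerably more efficient here: the obstacle you anticipate --- building a Seifert surface for $\overline{11a_{20}}$ disjoint from $\overline{8_{21}}$ and bookkeeping its intersection with the shaded surface --- is bypassed entirely.
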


\begin{proof}
Since the components $K_1 = \overline{8_{21}}$ and $K_2 = \overline{11a_{20}}$ of $L$ have linking number zero, a formula of Hoste \cite{hoste} computes the Casson invariant as
\[ \lambda(Y(m,n)) = -m\varphi_1(K_1) - n\varphi_1(K_2) + mn\varphi_1(L), \]
where $\varphi_1$ of a $k$-component link is the $z^{k+1}$-coefficient of its Conway polynomial $\nabla(z)$.  The two components of the link $L$ have Conway polynomials
\begin{align*}
\nabla_{\overline{8_{21}}}(z) &= 1-z^4, &
\nabla_{\overline{11a_{20}}}(z) &= 1 - 5z^4 - 3z^6
\end{align*}
and so $\varphi_1(K_1) = \varphi_1(K_2) = 0$.  We must show that $\varphi_1(L) = 0$ as well.

In order to determine $\varphi_1(L)$, we recall that $\nabla_L(z)$ is defined by the relations
\begin{align*}
\nabla(\skeinpositive) &= \nabla(\skeinnegative) + z\nabla(\skeinresolve), &
\nabla(\unknot) &= 1.
\end{align*}
The skein relation implies that $\nabla(z)$ vanishes for split links, and hence it also implies that
\begin{align*}
\nabla(\positivemeridian) &= z\nabla(\upstrand), &
\nabla(\negativemeridian) &= -z\nabla(\upstrand)
\end{align*}
and that $\nabla_{K\#K'}(z) = \nabla_K(z)\nabla_{K'}(z)$.  As observed in \cite{hoste}, if $L$ has $k$ components then \[\nabla_L(z) = z^{k-1}(a_0+a_1z^2+\dots+a_mz^{2m}),\] where the $a_i$ are integers, and we have $\varphi_1(L) = a_1$.
\begin{figure}
\labellist
\small
\pinlabel $L$ at 105 18
\pinlabel $\to$ at 220 181
\pinlabel $=$ at 342 181
\pinlabel $L_1$ at 377 150
\pinlabel $\nearrow$ at 410 191
\pinlabel $\searrow$ at 410 171
\pinlabel $\leadsto$ at 481 205
\pinlabel $\nearrow$ at 481 183
\pinlabel $\to$ at 481 157
\pinlabel $\overline{11a_{20}}$ at 517 237
\pinlabel $3_1\#\overline{5_2^\circlearrowright}$ at 517 126
\pinlabel $\to$ at 220 65
\pinlabel $L_2$ at 282 10
\pinlabel $L_3$ at 387 10
\pinlabel $L_4$ at 491 10
\tiny
\pinlabel $-z$ at 481 214
\endlabellist
\centering
\includegraphics[scale=0.75]{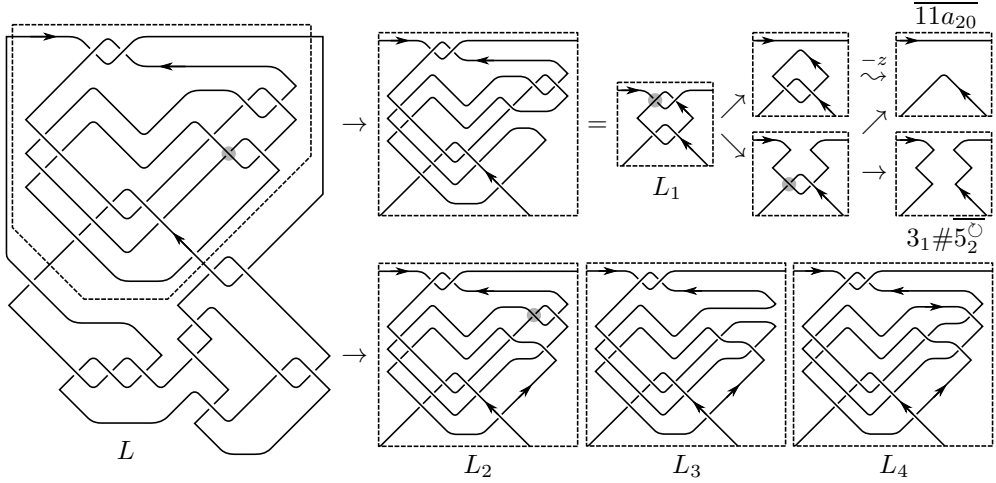}
\caption{We compute $\nabla_L(z)$ by applying the skein relation to $L$ at the indicated crossings.  Here the ``$\stackrel{-z}{\leadsto}$'' means that the Conway polynomial of the target is $-z$ times that of the source, and $\overline{5_2^\circlearrowright}$ denotes a $\overline{5_2}$ knot linked with a negatively oriented meridian.}
\label{fig:link-skein}
\end{figure}
Now we apply the skein relation to $L$ as indicated in Figure~\ref{fig:link-skein} to get $\nabla_L(z) = \nabla_{L_1}(z) + z\nabla_{L_2}(z)$, where
\begin{align*}
\nabla_{L_1}(z) &= \big(-z\nabla_{\overline{11a_{20}}}(z)\big) + z\big(\nabla_{\overline{11a_{20}}}(z) - z\nabla_{3_1\#\overline{5_2^\circlearrowright}}(z)\big) \\
&= z^3 \nabla_{3_1}(z)\nabla_{\overline{5_2}}(z) \\
\nabla_{L_2}(z) &= \nabla_{L_3}(z) + z\nabla_{L_4}(z) \\
&= -z^2 \nabla_{\overline{11a_{20}}}(z) + z\nabla_{L_4}(z);
\end{align*}
the last equation holds because the $L_3$ tangle is isotopic to the $\overline{11a_{20}}$ tangle with one meridian added around each strand.  Combining these, we have
\begin{align*}
\nabla_L(z) &= \big(z^3(1+z^2)(1+2z^2)\big) + z\big(-z^2(1-5z^4-3z^6) + z\nabla_{L_4}(z)\big) \\
&= 3z^5 + 7z^7 + 3z^9 + z^2\nabla_{L_4}(z).
\end{align*} 
But $L_4$ has four components, so $\nabla_{L_4}(z)$ is a multiple of $z^3$ and hence $\nabla_L(z)$ has $z^3$-coefficient equal to 0.  In other words, $\varphi_1(L) = 0$, and so $\lambda(Y(m,n))=0$ as desired.
\end{proof}

\begin{proposition} \label{prop:hyperbolic-stein}
For each $m,n \geq 1$, the manifold $Y(m,n)$ bounds a negative definite Stein manifold $(W,J)$ such that $c_1(J)\neq 0$.  In particular, we have $b_2(W) > 0$, and $Y(m,n)$ is not an instanton L-space.
\end{proposition}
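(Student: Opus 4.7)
My plan is to construct $(W,J)$ by Legendrian surgery on a suitable Legendrian link in $S^3$ and then use Gompf's formula to control $c_1(J)$. The main ingredient is the continued fraction identity
\[ -\frac{1}{m} = -1 - \cfrac{1}{-2 - \cfrac{1}{-2 - \cfrac{1}{\ddots - \cfrac{1}{-2}}}} \]
with $m-1$ copies of $-2$ (easily verified by induction). This lets me rewrite $-\frac{1}{m}$-surgery on a knot $K$ as integer surgery on the union of $K$ (with framing $-1$) and a chain $U_1,\dots,U_{m-1}$ of unknots, each with framing $-2$, where $U_1$ is a meridian of $K$ and $U_{i+1}$ is a meridian of $U_i$.

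With this in hand, I would first realize $L$ as a Legendrian link with both components having Thurston--Bennequin number $0$, chosen so that at least one component has nonzero rotation number. This requires $\maxtb \geq 0$ for each component, which can be verified diagrammatically from the figure or by consulting the Legendrian Knot Atlas. Applying the continued fraction rewrite above to each component, I introduce $m-1$ meridional Legendrian unknots (each a standard Legendrian unknot with $tb = -1$ and $r = 0$) around $K_1$ and $n-1$ such unknots around $K_2$. Legendrian surgery on the resulting Legendrian link $\Lambda$ produces a Stein domain $(W,J)$ with $\partial W = Y(m,n)$.

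To verify the remaining properties, observe that the intersection form of $W$ is the linking matrix of $\Lambda$ with diagonal entries $tb(\Lambda_i)-1$. Since $\mathrm{lk}(K_1,K_2)=0$, this form splits as a block-diagonal sum of two matrices, each of the form
\[ \begin{pmatrix} -1 & 1 & & & \\ 1 & -2 & 1 & & \\ & 1 & -2 & \ddots & \\ & & \ddots & \ddots & 1 \\ & & & 1 & -2 \end{pmatrix}. \]
An easy induction shows the $k$-th leading principal minor of such a block is $(-1)^k$, so by Sylvester's criterion each block (and hence the entire intersection form on $W$) is negative definite. By Gompf's formula (Theorem~\ref{thm:gompf}), $c_1(J)\cdot\Sigma_i = r(\Lambda_i)$ for each generating class $\Sigma_i$; since every auxiliary Legendrian unknot has rotation number $0$ while one of $K_1,K_2$ has nonzero rotation number by construction, $c_1(J)$ evaluates nontrivially on the corresponding $\Sigma_i$. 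The remaining claims ($b_2(W)>0$ and that $Y(m,n)$ is not an instanton L-space) then follow from Remark~\ref{rem:homology-sphere-l-space}.

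The main obstacle is the first step: producing an explicit Legendrian representation of $L$ in which each component has $tb = 0$ while at least one component has nonzero rotation number. This is a diagrammatic computation that should be carried out by drawing a front projection of $L$ and adjusting it via Legendrian isotopies or stabilizations, and it is what constrains the choice of the specific components $\overline{8_{21}}$ and $\overline{11a_{20}}$.
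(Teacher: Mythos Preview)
Your proposal is correct and matches the paper's approach almost exactly: the paper exhibits an explicit Legendrian front for $L$ in which both components have $tb=1$, stabilizes each once to reach $tb=0$ with nonzero rotation number (note that $tb-r$ is always odd, so $tb=0$ already forces $r\neq 0$), and then attaches the same chains of $tb=-1$ meridional unknots you describe. The only cosmetic difference is that the paper verifies negative definiteness by completing the square on the associated quadratic form rather than via Sylvester's criterion.
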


\begin{proof}
The $\overline{8_{21}}$ and $\overline{11a_{20}}$ components of $L$ both have Thurston-Bennequin number $1$, so we may stabilize them each once so that they have Thurston-Bennequin number $0$ and nonzero rotation number.  If we attach a chain of unknots with Thurston-Bennequin number $-1$ to each component, of lengths $m-1$ and $n-1$ respectively, as shown in Figure~\ref{fig:surgery-821-11a20}, then Legendrian surgery along the resulting link $L'$ produces a Stein manifold $(W,J)$ whose boundary is topologically a $-1$-surgery along each component of $K$ and $-2$-surgery along each unknot.  Removing the chains by a series of slam dunks shows that $\partial W = Y(m,n)$, and we have $c_1(J) \neq 0$ by Theorem \ref{thm:gompf} since the components of $L \subset L'$ had nonzero rotation number.

\begin{figure}
\labellist
\small
\pinlabel $-1$ at 100 280
\pinlabel $-1$ at 111 57
\tiny
\pinlabel $-2$ at 40 234
\pinlabel $-2$ at 57 234
\pinlabel $-2$ at 74 234
\pinlabel $-2$ at 91 234
\pinlabel $-2$ at 47 132
\pinlabel $-2$ at 64 132
\pinlabel $-2$ at 81 132
\endlabellist
\centering
\includegraphics[scale=0.6]{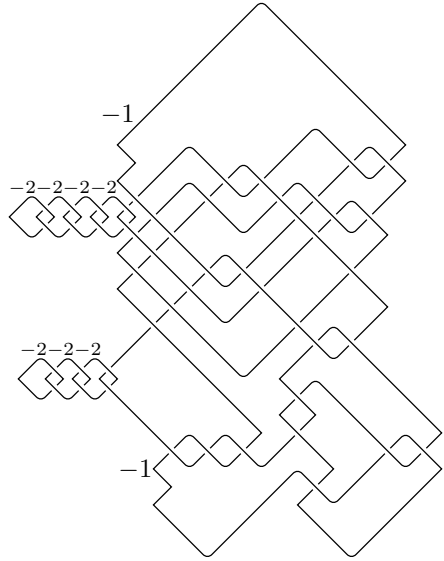}
\caption{A Legendrian link $L'$ on which Legendrian surgery produces $Y(5,4)$.  The surgery coefficient of each component is given with respect to the Seifert framing, and is one less than the Thurston-Bennequin invariant.}
\label{fig:surgery-821-11a20}
\end{figure}

The intersection form on $H_2(W)$ is specified by the linking matrix of $L'$.  Since the components of $L$ have linking number zero, this matrix has block form
$\left(\begin{smallmatrix} B_m & 0 \\ 0 & B_n \end{smallmatrix}\right)$, where $B_k$ is the $k\times k$ tridiagonal matrix
\[ \left(\begin{array}{rrrcr} -1 & 1 & 0 & \dots & 0 \\ 1 & -2 & 1 &  \dots & 0 \\ 0 & 1 & -2 & \dots & 0 \\ \vdots & \vdots & \vdots & \ddots & \vdots \\ 0 & 0 & 0 & \dots & -2 \end{array}\right) \]
whose corresponding quadratic form $Q(x) = \langle x, B_k x\rangle$ is equal to
\[ -(x_1 - x_2)^2 - (x_2 - x_3)^2 - \ldots - (x_{k-1}^2 - x_k)^2 - x_k^2. \]
It follows that each $B_k$ is negative definite, so $W$ is negative definite as well.
\end{proof}

\begin{proposition} \label{prop:hyperbolic-not-branched}
If $n$ is sufficiently large then $Y(1,n)$ is hyperbolic and is not a branched double cover of any link in $S^3$.
\end{proposition}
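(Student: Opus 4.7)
The plan is to combine Thurston's hyperbolic Dehn surgery theorem with Mostow--Prasad rigidity and an analysis of the isometry group of a cusped parent manifold.

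First, I would establish hyperbolicity. Consider the one-cusped manifold $M_1 := S^3_{-1}(\overline{8_{21}}) \smallsetminus \overline{11a_{20}}$ obtained from $S^3 \smallsetminus L$ by Dehn filling the $\overline{8_{21}}$ cusp with slope $-1$. Verify computationally (e.g., in SnapPy with a rigorous interval-arithmetic certificate, just as was done above for $S^3 \smallsetminus L$) that $M_1$ admits a complete finite-volume hyperbolic structure. Thurston's hyperbolic Dehn surgery theorem then guarantees that $Y(1,n)$, which is the $(-1/n)$-Dehn filling of $M_1$, is hyperbolic for all sufficiently large $n$.

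Second, I would rule out the branched-double-cover structure using rigidity. Suppose, for a contradiction, that $Y(1,n)$ is the branched double cover of a knot (a fortiori a link) in $S^3$. Then $Y(1,n)$ admits an orientation-preserving involution $\tau$ whose fixed point set is a non-empty disjoint union of circles. By Mostow--Prasad rigidity together with Waldhausen's theorem that homotopic homeomorphisms of closed irreducible 3-manifolds are isotopic, $\tau$ is isotopic to an isometry of the hyperbolic metric on $Y(1,n)$; in particular $\mathrm{Isom}^+(Y(1,n))$ is non-trivial. By Thurston's theorem, as $n \to \infty$ the length of the core geodesic $c_n$ of the Dehn filling tends to zero, so for all $n$ sufficiently large it is the unique shortest closed geodesic in $Y(1,n)$. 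Every isometry of $Y(1,n)$ must therefore preserve $c_n$ setwise, and hence restricts to an isometry of $M_1 \cong Y(1,n) \smallsetminus c_n$ whose induced action on $H_1$ of the cusp torus fixes the slope $-1/n$.

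Third, the explicit finite computation: use SnapPy to compute $\mathrm{Isom}^+(M_1)$ and its action on $H_1$ of the cusp torus. Any nontrivial element of this finite group acts on the set of rational slopes as a non-identity element of $GL_2(\mathbb{Z})$ and hence fixes only finitely many slopes. So apart from finitely many exceptional values of $n$, the only isometries of $Y(1,n)$ come from elements of $\mathrm{Isom}^+(M_1)$ acting trivially on the cusp. Verifying in SnapPy that $M_1$ has no such ``hidden'' orientation-preserving isometry (for instance, by checking that $\mathrm{Isom}^+(M_1)$ is either trivial or acts faithfully on the peripheral torus) then shows that $\mathrm{Isom}^+(Y(1,n))$ is trivial for $n$ large, contradicting the existence of $\tau$. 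The main obstacle is this last step, namely the rigorous computation of $\mathrm{Isom}^+(M_1)$ and its peripheral action; this is a routine but essential application of verified computer calculation (e.g., via HIKMOT or SnapPy's verified kernel), and once carried out it also shows the stronger statement that $Y(1,n)$ is not a branched double cover of any link in $S^3$ for $n$ large.
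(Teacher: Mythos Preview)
Your approach is essentially the same as the paper's. The paper verifies in SnapPy (via the canonical retriangulation) that the one-cusped manifold $M_1$ is hyperbolic with \emph{trivial} isometry group, and then invokes Auckly's ``exceptional symmetry theorem'' \cite[Theorem~5.2]{auckly}, which packages exactly the short-core-geodesic argument you outline: for all but finitely many filling slopes, $\mathrm{Isom}(Y(1,n))$ embeds in $\mathrm{Isom}(M_1)$. Since the latter is trivial, the peripheral-action analysis in your third step is unnecessary; you can simply conclude that $Y(1,n)$ has no nontrivial isometries at all for $n$ large, hence is not a branched cover of anything.

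One small point worth tightening in your write-up: the assertion that a nontrivial isometry of $M_1$ ``acts on the set of rational slopes as a non-identity element of $GL_2(\mathbb{Z})$ and hence fixes only finitely many slopes'' would need care in general, since $-I\in GL_2(\mathbb{Z})$ fixes every slope. This is moot here because the verified computation gives $|\mathrm{Isom}(M_1)|=1$, but if you intend your argument to stand on its own you should phrase the required check as ``$\mathrm{Isom}^+(M_1)$ is trivial'' (which is what the paper actually verifies) rather than a faithfulness condition on the peripheral action.
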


\begin{proof}
We continue the above Sage/SnapPy session to certify that after performing $-1$-surgery on the $\overline{8_{21}}$ component of $L$, the complement $M$ of the $\overline{11a_{20}}$ component remains hyperbolic and has no nontrivial isometries:
\begin{verbatim}
sage: M.dehn_fill((1,-1),1)
sage: V = M.verify_hyperbolicity(bits_prec=128); V[0]
True
sage: K = M.canonical_retriangulation(verified=True)
sage: len(K.isomorphisms_to(K)) == 1
True
\end{verbatim}
Readers interested in carrying out these computations should be warned that computing the canonical retriangulation may take about a minute.

The ``exceptional symmetry theorem'' of \cite[Theorem~5.2]{auckly} says that if we also perform Dehn surgery on the $\overline{11a_{20}}$ component, then all but finitely many slopes will produce closed hyperbolic $3$-manifolds with isometry group a subgroup of $\operatorname{Isom}(M)$, which is trivial.  It follows as in \cite{auckly} that only finitely many of the hyperbolic homology spheres $Y(1,n)$ are branched double covers of links in $S^3$, or indeed in any closed 3-manifold.
\end{proof}

\begin{remark}
Analogous computations prove that the result of Dehn filling along the $\overline{8_{21}}$ component of $S^3 \ssm L$ with slope $-\frac{1}{m}$ is an asymmetric hyperbolic manifold for $1 \leq m \leq 25$, so that $Y(m,n)$ is not a branched double cover for any such $m$ if $n$ is sufficiently large.
\end{remark}

\begin{proof}[Proof of Theorem \ref{thm:intro-hyperbolic-examples}]
According to Propositions~\ref{prop:hyperbolic-casson}, \ref{prop:hyperbolic-stein} and \ref{prop:hyperbolic-not-branched}, we can take the manifolds $Y_n$ to be the manifolds $Y(1,n)$ for $n$ sufficiently large. 
\end{proof}


\subsection{Surgeries on knots} In this subsection, we  study the existence of irreducible $SU(2)$ representations for rational homology spheres obtained as surgeries on knots in $S^3$. In particular, we prove Theorem \ref{thm:intro-irreps-sl-positive}, restated in a  slightly strengthened form below.

{
\renewcommand{\thetheorem}{\ref{thm:intro-irreps-sl-positive}}

\begin{theorem} 
Suppose $\maxsl (\overline K)\geq 0$ and fix a rational number $r = p/q > 0$. Then $S^3_r(K)$ is not an instanton L-space. Moreover, there is an irreducible representation \[\pi_1(S^3_{r}(K)) \to SU(2)\] if no zero of $\Delta_K(t^2)$ is a $p$th  root of unity, where $\Delta_K(t)$ is  the Alexander polynomial of $K$.
\end{theorem}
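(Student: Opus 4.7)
The plan is to first show that $S^3_m(K)$ is not an instanton L-space for all sufficiently large integers $m$, which by a monotonicity argument will imply the same for every rational $r>0$, and then to combine this with cyclical finiteness to obtain the irreducible representation. The monotonicity reduction proceeds as follows: if $S^3_{r_0}(K)$ were an instanton L-space for some rational $r_0>0$, then Proposition~\ref{prop:rational-l-space-surgery} would give $S^3_n(K)$ as an L-space for $n=\max(\lfloor r_0\rfloor,1)\geq 1$, and Proposition~\ref{prop:l-space-triangle} would then force $S^3_m(K)$ to be an L-space for every integer $m\geq n$, contradicting what I intend to show for large $m$.

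Since the self-linking number of any transverse knot in $S^3$ is odd, the hypothesis $\maxsl(\overline K)\geq 0$ in fact gives $s:=\maxsl(\overline K)\geq 1$. I will fix an oriented Legendrian representative $\Lambda$ of $\overline K$ realizing $tb(\Lambda)-r(\Lambda)=s$; its orientation-reverse $\Lambda^{\mathrm{op}}$ then satisfies $tb+r=s$. For $m$ sufficiently large, performing all combinations of $j$ positive and $l$ negative stabilizations with $j+l=tb(\Lambda)+m-1$ on each of $\Lambda$ and $\Lambda^{\mathrm{op}}$ produces Legendrian representatives of $\overline K$, all with $tb=1-m$, whose rotation numbers sweep out the full parity-consistent arithmetic progression $\{-(s+m-1),-(s+m-3),\dots,s+m-1\}$, giving $s+m$ distinct values. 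Legendrian surgery on each of these Legendrians equips the same smooth $4$-manifold $W$ (the trace of $(-m)$-surgery on $\overline K$) with a Stein structure; since $H_2(W)\cong\Z$, Theorem~\ref{thm:gompf} identifies each Chern class's pairing with the generator as the corresponding rotation number, so all $s+m$ Chern classes are distinct in $H^2(W;\R)\cong\R$. As $\partial W=S^3_{-m}(\overline K)=-S^3_m(K)$, Corollary~\ref{cor:rank-i-sharp} together with the isomorphism $I^\#(Y;\C)\cong I^\#(-Y;\C)$ yields $\rank I^\#(S^3_m(K))\geq s+m>m=|H_1(S^3_m(K))|$, so $S^3_m(K)$ is not an instanton L-space.

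For the moreover statement, the hypothesis that no zero of $\Delta_K(t^2)$ is a $p$th root of unity ensures via Proposition~\ref{prop:cf-alexander-polynomial} that $\pi_1(S^3_{p/q}(K))$ is cyclically finite. Combined with the strict inequality $\rank I^\#(S^3_{p/q}(K))>p=|H_1(S^3_{p/q}(K))|$ just established, Corollary~\ref{cor:cycfinitesurg} then produces the desired irreducible representation $\pi_1(S^3_{p/q}(K))\to SU(2)$. The main technical point to verify is the combinatorial claim that stabilizations of $\Lambda$ and $\Lambda^{\mathrm{op}}$ alone realize all $s+m$ rotation values at $tb=1-m$: an elementary interval calculation shows the two resulting arithmetic progressions of rotations join into a single one once $m\geq s-2\,tb(\Lambda)$, which is precisely why the reduction to the large-$m$ regime via monotonicity is essential when $tb(\Lambda)$ may be negative.
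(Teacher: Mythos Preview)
Your proof is correct and follows essentially the same approach as the paper's: reduce to large integer surgeries via the L-space monotonicity results, then produce $s+m$ Stein structures on the $(-m)$-trace of $\overline K$ by stabilizing a self-linking-maximizing Legendrian and its orientation-reverse, and finally invoke Corollary~\ref{cor:rank-i-sharp} and Corollary~\ref{cor:cycfinitesurg}. The only cosmetic differences are that the paper packages the monotonicity step as a single citation of Theorem~\ref{thm:l-space-slopes} rather than Propositions~\ref{prop:l-space-triangle} and~\ref{prop:rational-l-space-surgery} separately, and it records the observation $r(\Lambda)\leq 0$ explicitly before combining the two rotation-number intervals.
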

\addtocounter{theorem}{-1}
}

Recall that $\maxsl(K)$ denotes the maximal self-linking number over all transverse representatives of $K$ in the standard contact $S^3$, or, equivalently, the maximum value of $tb(\Lambda)-r(\Lambda)$ over all Legendrian representatives $\Lambda$ of $K$.  As mentioned in the introduction, the knots $K$ for which $\maxsl(K)\geq 0$ includes all nontrivial strongly quasipositive knots; these even have maximal Thurston-Bennequin number satisfying $\maxtb(K) \geq 0$ \cite{rudolph}.

\begin{proof}[Proof of Theorem \ref{thm:intro-irreps-sl-positive}]
Suppose $\maxsl (\overline K)\geq 0$ and $r = p/q > 0$. We will first show that $S^3_r(K)$ is not an instanton L-space. By Theorem~\ref{thm:l-space-slopes}, if  $S^3_{r}(K)$ is an instanton L-space then so is $S^3_n(K)$ for all integers $n \geq \lfloor r \rfloor$. So,  it will suffice to prove that \[\rank(I^\#(S^3_{n}(K))) \geq n+1\] for all sufficiently large integers $n > 0$.

Take a Legendrian representative $\Lambda$ of $\overline K$ whose transverse pushoff achieves the maximum self-linking number, so that $tb(\Lambda) - r(\Lambda) = \maxsl(\overline K)$, and hence, by assumption, $tb(\Lambda) \geq r(\Lambda)$.  Note that reversing orientation changes the sign of $r(\Lambda)$, so we must have that $r(\Lambda) \leq 0$ or else a transverse pushoff of $-\Lambda$ would have strictly larger self-linking number.  Also, since self-linking number is always odd, we actually have $\maxsl(\overline K) \geq 1$.

Now, if we take $n \geq \max(1-tb(\Lambda),0)$ and stabilize $\Lambda$ exactly $n+tb(\Lambda)-1$ times, with $k$ positive stabilizations and $n-k+tb(\Lambda)-1$ negative stabilizations, we get a Legendrian representative of $k$ with
\[ (tb,r) = (-n+1, -\maxsl(\overline K) - n + 1 + 2k) \]
for any integer $k = 0,\dots,n+tb(\Lambda)-1$.  Thus, we can find such a representative with rotation number equal to any integer of the same parity as $-\maxsl(\overline K)-n+1$ between $-\maxsl(\overline K)-n+1$ and
\[ -\maxsl(\overline K)+n+2tb(\Lambda)-1 = n + (tb(\Lambda) + r(\Lambda) - 1). \]
Note that the lower bound is negative while the upper bound is positive if $n$ is sufficiently large, so in this case we can also reverse the orientations of these representatives to achieve every integer of this parity between $-\maxsl(\overline K)-n+1$ and $\maxsl(\overline K)+n-1$.  We conclude that for all sufficiently large integers $n$, there are Legendrian representatives of $\overline K$ with $tb = -n+1$ and at least $\maxsl(\overline K)+n \geq n+1$ different rotation numbers.

The traces of Legendrian surgeries on each of these $n+1$ representatives of $\overline K$, obtained topologically by attaching a $-n$-framed handle to $B^4$ along $\overline K$, have Stein structures with distinct Chern classes since their rotation numbers are all distinct, by Theorem \ref{thm:gompf}.  Corollary \ref{cor:rank-i-sharp} therefore implies that \[\rank I^\#(S^3_{n}(K))=\rank I^\#(S^3_{-n}(\overline K)) \geq n+1\] for all sufficiently large integers $n$, as desired. Thus, $S^3_r(K)$ is not an instanton L-space.  We may now appeal to Corollary \ref{cor:cycfinitesurg} to conclude that there exists an irreducible $SU(2)$ representation of $\pi_1(S^3_r(K))$, as desired.
\end{proof}

We saw in the introduction that $K=5_2$ satisfies the hypotheses of Theorem \ref{thm:intro-irreps-sl-positive}.

One can sometimes do slightly better for knots whose mirrors have sufficiently large $\maxtb$, as illustrated in the proof of the proposition below.

\begin{proposition} \label{prop:positive-knots}
Suppose $\overline K$ is a positive knot of genus $g \geq 1$ and fix a nonzero rational number $r=p/q$ with $r>-g$. Then $S^3_r(K)$ is not an instanton L-space. Moreover, there exists an irreducible representation \[\pi_1(S^3_r(K)) \to SU(2)\] if no zero of $\Delta_K(t^2)$ is a $p$th  root of unity.
\end{proposition}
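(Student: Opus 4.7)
My plan is to split into two cases according to the sign of $r$, in each case reduce the L-space question to a handful of integer surgery slopes on $\overline{K}$, and then produce the required Stein fillings by Legendrian surgery on $\overline{K}$. Once the L-space obstruction is verified in both cases, the irreducible representation falls out of Corollary~\ref{cor:cycfinitesurg} using the Alexander polynomial hypothesis. The setup is provided by Rudolph's theorem and Bennequin's inequality: since $\overline{K}$ is positive of genus $g$, there is a Legendrian representative $\Lambda_0$ of $\overline{K}$ with $tb(\Lambda_0) = 2g-1$ and (forced by $|r| + tb \leq 2g-1$) rotation $r(\Lambda_0) = 0$. In particular $\maxsl(\overline{K}) \geq 2g-1 \geq 1$, so for $r>0$ the proposition follows directly from Theorem~\ref{thm:intro-irreps-sl-positive}.

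For $-g < r < 0$, I would set $r' = -r \in (0,g)$ and exploit $S^3_r(K) \cong -S^3_{r'}(\overline{K})$, together with the fact that $I^\#$ (over $\C$) and $\pi_1$ are invariant under orientation reversal, to reduce to showing that $S^3_{r'}(\overline{K})$ is not an instanton L-space. When $0<r'<1$, this is immediate from Proposition~\ref{prop:no-l-spaces-below-1} since $\overline{K}$ is nontrivial. When $r' \geq 1$, the integer $n := \lfloor r' \rfloor$ satisfies $1 \leq n \leq g-1$, and by Proposition~\ref{prop:rational-l-space-surgery} it suffices to show that $S^3_n(\overline{K})$ is not an L-space for every such $n$.

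The crux is the last step, which I would handle as follows: stabilize $\Lambda_0$ a total of $m := 2g-n-2 \geq 0$ times, using $k$ positive and $m-k$ negative stabilizations for each $k \in \{0,1,\dots,m\}$, producing Legendrian representatives $\Lambda_k$ of $\overline{K}$ with $tb(\Lambda_k) = n+1$ and rotation numbers $2k-m$ ranging over $m+1 = 2g-n-1$ pairwise distinct integers. Legendrian surgery on $\Lambda_k$ equips the single-handle cobordism $W$ obtained from $B^4$ by attaching a $(-n)$-framed 2-handle along $\overline{K}$ with Stein structures $J_k$ whose Chern classes pair with the generator of $H_2(W)\cong \Z$ as the rotation numbers by Theorem~\ref{thm:gompf}, hence are distinct in $H^2(W;\R)$. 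Corollary~\ref{cor:rank-i-sharp} then yields
\[ \rank I^\#(S^3_n(\overline{K})) \;\geq\; 2g-n-1 \;>\; n \;=\; |H_1(S^3_n(\overline{K}))|, \]
where the strict inequality uses $n \leq g-1$. The main obstacle, such as it is, is bookkeeping: keeping straight the mirror/orientation-reversal across Corollaries~\ref{cor:rank-i-sharp} and \ref{cor:cycfinitesurg}, confirming the range of $n$ gives the desired strict inequality for every $g \geq 1$, and knowing that the interval $(-g,0)$ is covered by combining Propositions~\ref{prop:no-l-spaces-below-1} and \ref{prop:rational-l-space-surgery}. Once $S^3_r(K)$ is seen not to be an instanton L-space, the inequality $\rank I^\#(S^3_r(K)) > |p|$ combined with the hypothesis on $\Delta_K(t^2)$ delivers the irreducible representation via Corollary~\ref{cor:cycfinitesurg}.
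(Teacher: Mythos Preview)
Your proof is correct and essentially the same as the paper's, which also stabilizes a $tb$-maximizing Legendrian representative of $\overline{K}$ (the paper cites Tanaka rather than Rudolph for $\maxtb(\overline{K})=2g-1$), applies Corollary~\ref{cor:rank-i-sharp} to the resulting Stein structures on the surgery trace, and then feeds this into the L-space surgery results of Section~\ref{ssec:lspaces}; the only packaging difference is that the paper works at the single slope $g-1$ and invokes Theorem~\ref{thm:l-space-slopes} once, whereas you treat each integer $n\le g-1$ separately via Propositions~\ref{prop:rational-l-space-surgery} and~\ref{prop:no-l-spaces-below-1}. One slip: the 2-handle is $n$-framed, not $(-n)$-framed, since $tb(\Lambda_k)-1=n$; the rest of your argument is already consistent with the correct framing.
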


\begin{remark}
Proposition \ref{prop:positive-knots} is sharp when $K$ is the left-handed trefoil, on which $-1$-surgery produces an instanton L-space.
\end{remark}

\begin{proof}[Proof of Proposition \ref{prop:positive-knots}]
Tanaka \cite{tanaka} showed that $\maxtb(\overline K) = 2g-1$ since $\overline K$ is positive.  (To be precise, he showed that $\maxtb(\overline K) = 2g'-1$, where $g'$ is the genus of a surface obtained by applying Seifert's algorithm to a positive diagram of $\overline K$. The Bennequin inequality \[\maxtb(\overline K) \leq 2g-1\] then implies that $g'=g$.)  Suppose we stabilize a $tb$-maximizing Legendrian representative of $\overline K$ $g-1$ times, with $k$ of these positive and $g-1-k$ negative.  As $k$ varies between 0 and $g-1$, we obtain Legendrian representatives  with $tb=g$ and $g$ different rotation numbers.  The traces of Legendrian surgeries on each of these  $g$ representatives of $\overline K$, obtained topologically by attaching a $(g-1)$-framed handle to $B^4$ along $\overline K$, have Stein structures with distinct Chern classes since their rotation numbers are all distinct, by Theorem \ref{thm:gompf}. It then follows from Corollary~\ref{cor:rank-i-sharp} as before that \[\rank I^\#(S^3_{g-1}(\overline K)) \geq g.\] Theorem~\ref{thm:l-space-slopes} then implies that \[S^3_{r}(K) = -S^3_{-r}(\overline K)\] is not an instanton L-space in the case $-g<r<0$. And an application of Corollary~\ref{cor:cycfinitesurg} then tells us that there exists an irreducible $SU(2)$ representation of $\pi_1(S^3_r(K))$ in this case.
The $r >0$ case follows already from Theorem \ref{thm:intro-irreps-sl-positive}, since $\maxtb(\overline K)\geq 0$ implies that $\maxsl(\overline K)\geq 0$.
\end{proof}

For example, given positive integers $k,p,q$ with $p,q$ odd, one can see from its standard diagram that the pretzel knot $P(-2k,p,q)$ is positive with Seifert genus $g = \frac{p+q}{2}$.  It is known that several negative surgeries on \[K = \overline{P(-2,3,7)}\] do not admit irreducible $SU(2)$ representations. For example, the $-18$- and $-19$-surgeries on $K$ do not because they are lens spaces \cite{fs-lens}. In fact, neither does the $-\frac{37}{2}$-surgery  even though its fundamental group is noncyclic (attributed to Dunfield in \cite{km-dehn}). However, since the Alexander polynomial of $K$ has no roots of unity among its zeroes, we may conclude (appealing to \cite{km-dehn} in the case $r=0$) that for all rational $r>-5$ there exist irreducible representations $\pi_1(S^3_r(K)) \to SU(2)$.


%
%
%
%

\newcommand{\appendixtitle}{Appendix: Some remarks on \cite{sivek-donaldson}}
\bgroup
\newcommand{\nocontentsline}[3]{}
\let\addcontentsline=\nocontentsline
\section*{\appendixtitle} \label{sec:appendix}
\egroup
\addcontentsline{toc}{section}{\appendixtitle}

The proofs of several of the main results in \cite{sivek-donaldson} made use of \cite[Theorem~1.4]{stipsicz}, which asserts that a homologically essential fiber of a Lefschetz fibration $X \to S^2$ represents a primitive class in $H_2(X)$, but Baykur \cite[Appendix]{baykur} pointed out that the result is not proved correctly in the case where $\pi_1(X) \neq 1$. It still holds when $\pi_1(X)=1$ or when $X\to S^2$ has a section.  This does not affect any of the results of \cite{sivek-donaldson}, since its use in the more general cases can be avoided.  We indicate how to do so here, since we invoke some of these results in the proof of Proposition~\ref{prop:modifyZ}. All numbered results and sections in the following discussion refer to \cite{sivek-donaldson}.

In the statement of Lemma~3.1, the class $[\Sigma]$ need not be primitive, but it is nontorsion since \[K_X\cdot\Sigma = 2g(\Sigma)-2 \neq 0.\]  As a result, in Proposition~3.2, we can only assert that $c_0 \in \Q$ rather than $c_0 \in \Z$; and otherwise the rest of Section~3, which establishes Theorem~1.3 among other things, remains the same.

In proving Theorem~4.1, the initial choice of $Z \to S^2$ should have vanishing cycles which generate not just $H_1(\Sigma)$ but $\pi_1(\Sigma)$, so that $\pi_1(Z) = 1$ and the fiber of $Z$ is thus homologically primitive.  The application of Theorem~5.2 then continues to work verbatim (notably, the class $w_Z$ exists), so the argument of Section~5 still proves Theorem~1.2 in the case when $g\geq 8$.  Then the proof of Theorem~1.1 still works as written (and likewise for Corollary~8.6), since the Lefschetz fibrations $\tilde{X}\to S^2$ used in its proof admit sections and hence a class $w$ dual to the fiber $\tilde{\Sigma}$ exists; and at the end of Section~7, Theorem~1.1 can still be used to complete the proof of Theorem~1.2 as long as we ensure once again that $\pi_1(Z)=1$.

\bibliographystyle{alpha}
\bibliography{References}

\end{document}